\numberwithin{equation}{section}
\newtheorem{theorem}[equation]{Theorem}
\newtheorem{proposition}[equation]{Proposition}
\newtheorem{lemma}[equation]{Lemma}
\newtheorem{corollary}[equation]{Corollary}
\newtheorem{conjecture}[equation]{Conjecture}
\newtheorem{problem}[equation]{Problem}
\theoremstyle{definition}
\newtheorem{remark}[equation]{Remark}
\newtheorem{example}[equation]{Example}
\newtheorem{definition}[equation]{Definition}
\newcommand{\cC}{\mathcal{C}}
\newcommand{\fC}{\mathfrak{C}}
\newcommand{\sE}{\mathscr{E}}
\newcommand{\bF}{\mathbf{F}}
\newcommand{\cF}{\mathcal{F}}
\newcommand{\sF}{\mathscr{F}}
\newcommand{\bK}{\mathbf{K}}
\newcommand{\rK}{\mathrm{K}}
\newcommand{\bN}{\mathbf{N}}
\newcommand{\rN}{\mathrm{N}}
\newcommand{\bO}{\mathbf{O}}
\newcommand{\cO}{\mathcal{O}}
\newcommand{\sO}{\mathscr{O}}
\newcommand{\cP}{\mathcal{P}}
\newcommand{\bQ}{\mathbf{Q}}
\newcommand{\bS}{\mathbf{S}}
\newcommand{\fS}{\mathfrak{S}}
\newcommand{\cT}{\mathcal{T}}
\newcommand{\bU}{\mathbf{U}}
\newcommand{\bV}{\mathbf{V}}
\newcommand{\fX}{\mathfrak{X}}
\newcommand{\bZ}{\mathbf{Z}}
\newcommand{\fp}{\mathfrak{p}}
\newcommand{\BB}{\mathbb{B}}
\newcommand{\GG}{\mathbb{G}}
\newcommand{\HH}{\mathbb{H}}
\newcommand{\arxiv}[1]{\href{http://arxiv.org/abs/#1}{{\tiny\tt arXiv:#1}}}
\newcommand{\DOI}[1]{\href{http://doi.org/#1}{\color{purple}{\tiny\tt DOI:#1}}}
\newcommand{\stacks}[1]{\cite[\href{http://stacks.math.columbia.edu/tag/#1}{Tag~#1}]{stacks}}
\newcommand{\defn}[1]{\emph{#1}}
\let\ol\overline
\let\ul\underline
\renewcommand{\phi}{\varphi}
\DeclareMathOperator{\avg}{avg} 
\DeclareMathOperator{\im}{im} 
\DeclareMathOperator{\End}{End}
\DeclareMathOperator{\Et}{Et}
\DeclareMathOperator{\Res}{Res}
\DeclareMathOperator{\Ind}{Ind}
\DeclareMathOperator{\Aut}{Aut}
\DeclareMathOperator{\Fun}{Fun}
\DeclareMathOperator{\Hom}{Hom}
\DeclareMathOperator{\Rep}{Rep}
\DeclareMathOperator{\Spec}{Spec}
\newcommand{\id}{\mathrm{id}}
\newcommand{\op}{\mathrm{op}}
\newcommand{\GL}{\mathbf{GL}}
\DeclareMathOperator{\lev}{lev}
\DeclareMathOperator{\Sp}{\mathbf{Sp}}
\DeclareMathOperator{\tr}{tr}
\DeclareMathOperator{\utr}{\ul{tr}}
\DeclareMathOperator{\udim}{\ul{dim}}
\DeclareMathOperator{\dcl}{dcl}
\newcommand{\bone}{\mathbf{1}}
\newcommand{\uotimes}{\mathbin{\ul{\otimes}}}
\let\lbb\llbracket
\let\rbb\rrbracket
\newcommand{\myuline}[1]{%
  \uline{\phantom{#1}}%
  \llap{\contour{white}{#1}}%
}
\DeclareMathOperator{\uRep}{\text{\myuline{\rm Rep}}}
\DeclareMathOperator{\uPerm}{\ul{Perm}}
\title{Classical interpolation categories}
\author{Nate Harman}
\address{Department of Mathematics, University of Georgia, Athens, GA, USA}
\email{\href{mailto:nharman@uga.edu}{nharman@uga.edu}}
\urladdr{\url{https://www.nateharman.com/}}
\thanks{NH was supported by NSF grant DMS-2401515}
\author{Andrew Snowden}
\address{Department of Mathematics, University of Michigan, Ann Arbor, MI, USA}
\email{\href{mailto:asnowden@umich.edu}{asnowden@umich.edu}}
\urladdr{\url{http://www-personal.umich.edu/~asnowden/}}
\thanks{AS was supported by NSF grant DMS-2301871.}
\date{\today}
\begin{document}

\begin{abstract}
We study tensor categories that interpolate the representation categories of finite classical groups. There are (at least) two ways to approach these categories: via ultraproducts and via oligomorphic groups. Both have strengths and weaknesses. The ultraproduct categories are easy to define, but their structure is not clear. On the other hand, the oligomorphic approach requires a certain kind of measure as an input, and the space of measures is not obvious. Furthermore, it is not a priori clear that the two approaches yield the same categories in general. We handle all of these issues: we determine all measures on the oligomorphic groups, and we show that the oligomorphic and ultraproduct categories agree, which gives us basic structural results about the latter. Our results rely upon (and in some sense repackage) enumerative results in finite geometry.
\end{abstract}

\maketitle
\tableofcontents

\section{Introduction}

Pre-Tannakian categories are a natural class of tensor categories generalizing representation categories of algebraic (super)groups. Two general sources of pre-Tannakian categories that are not (super) Tannakian are the method of interpolation introduced by Deligne \cite{Deligne}, and the oligomorphic theory introduced by us \cite{repst}. The purpose of this paper is to connect these two theories and establish fundamental results for the categories in the most important cases, namely, for classical groups over finite fields.

\subsection{Interpolation categories} \label{ss:intro-interp}

To explain the ideas in this paper with minimal technical overhead, we focus on a specific case, namely that of the symplectic groups; some comments on other cases are given below. Fix a finite field $\bF$ of odd cardinality $q$ in what follows.

We begin by recalling Deligne's construction. Let $H_n=\Sp_{2n}(\bF)$ be the symplectic group, with standard representation $V_n=\bF^{2n}$. Let $k_n$ be an algebraically closed field, and let $\cT_n$ be the category of finite dimensional representations of $H_n$ over $k_n$. We view the permutation module $X_n=k_n[V_n]$ as the basic object of $\cT_n$. Fix an ultrafilter $\cF$ on $\bN$, and let $k$ be the ultraproduct of the $k_n$'s, which we assume to be of characteristic~0. Let $\tilde{\cT}$ be the ultraproduct of the categories $\cT_n$. The sequence $X=(X_n)_{n \ge 0}$ defines an object of $\tilde{\cT}$, and we let $\cT$ be the tensor subcategory of $\tilde{\cT}$ generated by $X$. This is the interpolation category.

While the construction of $\cT$ is straightforward enough, the basic structure of $\cT$ is not at all obvious. To illustrate this, we discuss two conjectures about $\cT$. As far as we know, these specific conjectures have not been previously discussed, but they are modeled on known results from the case of symmetric groups. Let $t$ be the categorical dimension of $X$; this is the element of $k$ represented by the sequence $(q^{2n})_{n \ge 1}$.

\begin{conjecture} \label{conj:interp1}
The category $\cT$ (essentially) only depends on $t$, and not the fields $k_n$'s or the ultrafilter $\cF$.
\end{conjecture}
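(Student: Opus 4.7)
The plan is to prove the conjecture by exhibiting an equivalence $\cT \simeq \uRep(G;\mu_t)$ between $\cT$ and an oligomorphic category whose construction depends only on the parameter $t \in k$. Here $G$ is the symplectic group $\Sp(V_\infty)$ of a countable-rank $\bF$-symplectic space $V_\infty = \bigcup_n V_n$. By Witt's theorem, $G$ acts oligomorphically on $V_\infty$: orbits on $r$-tuples are classified by their ranks and Gram matrices, and are thus finite in number. The classification of measures on $G$ (one of the advertised results of the paper) should then show that the $k$-valued measures on $G$ form a one-parameter family $\{\mu_t\}_{t \in k}$, parametrized by the mass $t = \mu(V_\infty)$ assigned to the basic point-orbit and normalized so that $\mu_{q^{2n}}$ recovers normalized counting measure on $V_n$.

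To construct the equivalence, I would compare the Hom spaces directly. For each $a,b \ge 0$, the space $\Hom_{H_n}(X_n^{\otimes a}, X_n^{\otimes b})$ has a canonical basis indexed by $H_n$-orbits on $V_n^{a+b}$. By Witt's theorem, for $n$ sufficiently large (depending on $a+b$) these orbits stabilize to the set of $G$-orbits on $V_\infty^{a+b}$, which is precisely the indexing set for a basis of $\Hom_{\uRep(G;\mu_t)}(X^{\otimes a}, X^{\otimes b})$. Passing orbit-basis vectors through the ultraproduct $\cF$ gives a candidate isomorphism of Hom-spaces. The task is then to show that composition and tensor structure agree on the two sides, after which the equivalence extends to the tensor subcategory generated by $X$. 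Once this is in hand, the conjecture is immediate: $\uRep(G;\mu_t)$ is constructed from $(G,\mu_t)$ and depends on $k$ only through $t$, so any two choices of $(k_n, \cF)$ giving the same $t$ (up to field extension) yield equivalent $\cT$'s.

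The main obstacle will be verifying the matching of structure constants. In $\cT$ the composition of two basis morphisms is computed by counting pairs of $H_n$-orbits projecting to a given $H_n$-orbit on $V_n^{a+c}$, an orbit-count in the finite group $\Sp_{2n}(\bF)$; in $\uRep(G;\mu_t)$ the analogous composition is determined by the measure $\mu_t$. Matching them requires that these finite orbit-counts be given by polynomials in $q^n$ whose specialization at $t = q^{2n}$ reproduces the oligomorphic structure constants evaluated against $\mu_t$. This is the enumerative input from finite symplectic geometry alluded to in the abstract; once that uniformity is established, ultraproducts transport the finite identities into $k$, sealing the equivalence and thereby the conjecture.
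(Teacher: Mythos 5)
Your proposal correctly reconstructs the first half of the paper's strategy, but it stops exactly where the real difficulty begins. The Hom-space comparison you describe --- orbit bases for $\Hom_{H_n}(X_n^{\otimes a}, X_n^{\otimes b})$, stabilization of orbit counts via Witt's theorem, polynomiality in $q^n$ of the structure constants, and transport through the ultrafilter --- is precisely the content of Proposition~\ref{prop:ultra-func}: it produces a fully faithful tensor functor $\Phi_0 \colon \uPerm(G,\mu_t) \to \cT$. That much is correct and does match the paper. (One caveat even here: for the conjecture as stated one must compare two ultraproduct categories over \emph{different} fields with equivalent parameters, so the matching is really between points of $\Spec \Theta(G)$; this is handled by the classification of measures together with Proposition~\ref{prop:nil}, but it is a routine addition.)

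The gap is the sentence ``after which the equivalence extends to the tensor subcategory generated by $X$.'' The category $\cT$ is not the category generated by the $\cP(X)$ under tensor products and direct sums; it is the abelian category of all \emph{subquotients} of the $\cP(X)$ inside the ultraproduct $\tilde\cT$. Knowing the Hom spaces between the $\cP(X)$ themselves does not determine which subquotients exist: when the $k_n$ have positive characteristic the permutation modules $k_n[V_n^a]$ are not semisimple, and a priori $\cT$ could contain simple objects invisible at the $\uPerm$ level, or fail to have the $\cP(X)$ as projective generators. This is exactly the scenario the paper flags after the statement of the conjecture (semisimple versus non-semisimple inputs with the same $t$). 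What is needed is that the induced exact functor $\Phi \colon \uRep(G,\mu_t) \to \cT$ on abelian envelopes is essentially surjective, equivalently (Proposition~\ref{prop:Phi-equiv}) that every object of $\cT$ is a quotient of some $\cP(X)$. The paper proves this by a genuinely new ingredient absent from your outline: the Farahat--Higman algebra acts on $\cT$, non-trivial simples of $\uRep(G,\mu_t)$ have non-trivial central character (Theorems~\ref{thm:GL-char} and~\ref{thm:Sp-ultra}, proved by an induction on level using restriction to $\GL_{t/q}$ and linear independence of characters), hence $\Phi(L)$ has no trivial subquotient, hence $\bone$ is projective in $\cT$ (Theorem~\ref{thm:ultra-equiv2}), with the quasi-regular case reduced to the regular one via compatible smooth approximations of an open subgroup (Proposition~\ref{prop:ultra-equiv3}). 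Without some substitute for this step, your argument establishes only that the two categories have a common non-abelian ``skeleton,'' which does not yield the conjecture.
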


We elaborate on the meaning of this statement. Suppose $\cT'$ is constructed using some other choices $k'_n$ and $\cF'$, and let $t'$ be the dimension of $X'$. If $t$ and $t'$ are both transcendental over $\bQ$, or both algebraic with the same minimal polynomial, then there is a field isomorphism $k \to k'$ mapping $t$ to $t'$, and we conjecture that $k' \otimes_k \cT$ and $\cT'$ are equivalent tensor categories.

We note that Conjecture~\ref{conj:interp1} is far from obvious. Indeed, if each $k_n$ has characteristic~0 then $t$ is transcendental. We can also make choices of positive characteristic fields $k'_n$ such that $t'$ is transcendental. In this case, $\cT$ is constructed from an ultraproduct of semi-simple categories, while $\cT'$ is constructed from an ultraproduct of non-semi-simple categories. Nonetheless, we are predicting that $\cT$ and $\cT'$ are equivalent (after an extension of scalars).

\begin{conjecture} \label{conj:interp2}
Suppose $t \ne 0$. Then $\cT$ is pre-Tannakian and has enough projective objects. Moreover, if $t$ is not of the form $q^{2n}$ with $n \in \bN$ then $\cT$ is semi-simple.
\end{conjecture}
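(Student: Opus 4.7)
The plan is to reduce Conjecture~\ref{conj:interp2} to results on the oligomorphic side, using the ultraproduct–oligomorphic equivalence that the paper promises to establish. Let $G = \Sp(V_\infty)$ denote the oligomorphic group of isometries of an infinite-dimensional nondegenerate symplectic $\bF$-space $V_\infty$, with open stabilizers given by pointwise fixators of finite vector configurations. For an appropriate measure $\mu_t$ on $G$ indexed by the scalar $t$, I expect a tensor equivalence between $\cT$ and the oligomorphic interpolation category $\uPerm(G;\mu_t)$, with the object $X$ on each side corresponding and having categorical dimension $t$. Once this equivalence is in place, each of the three assertions in the conjecture becomes a statement about $\uPerm(G;\mu_t)$.

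To build the equivalence, I would first classify measures on $G$. The $G$-orbits on $V_\infty^n$ are labeled by isometry classes of $n$-tuples (Gram data modulo the relevant equivalence), and a measure is pinned down by its values on these orbits. The consistency conditions for a measure translate into a recursive system whose coefficients come from $q$-binomial counts of isotropic and nondegenerate subspaces in finite symplectic geometry; the goal is to show that the solution set is a one-parameter family $\{\mu_t\}_{t \in k^\times}$. On the ultraproduct side, $\Hom$-spaces in $\cT$ are asymptotic orbit counts for $H_n \times H_n$ acting on powers of $V_n$, and these counts, when normalized by $|H_n|$, converge along $\cF$ to exactly the $\mu_t$-weights. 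This yields a full and faithful tensor functor $\uPerm(G;\mu_t) \to \cT$; essential surjectivity, together with $t \ne 0$ preventing $X$ from being negligible, completes the equivalence.

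Given the equivalence, pre-Tannakianness and enough projectives for $t \ne 0$ follow from general structural results for oligomorphic categories attached to a positive (regular) measure, developed in our earlier work on oligomorphic representation theory. For semi-simplicity, one studies the Gram pairing on $\Hom(X^{\otimes m}, X^{\otimes n})$, whose natural basis is indexed by $G$-orbits; its determinant is a polynomial in $t$ with coefficients read off from the measure classification, and $\cT$ is semi-simple at a given $t$ precisely when these polynomials are nonzero for all $m,n$. The main obstacle is identifying the roots of these Gram determinants as exactly the values $q^{2n}$, $n \in \bN$. This is the analogue of Deligne's Gram computation for $\uRep(S_t)$ (where the roots are $t \in \bN$), but here the computation rests on considerably more delicate finite-geometry enumerations — Witt-index formulas and orbit counts on flags in symplectic spaces over $\bF$ — which is why the abstract advertises the final package as a repackaging of such enumerative identities. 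I expect the exceptional locus to coincide with dimensions of actual $H_n$-representations through the specialization functor $\uPerm(G;\mu_{q^{2n}}) \to \Rep(H_n)$, whose kernel forces extra negligible morphisms precisely at $t = q^{2n}$.
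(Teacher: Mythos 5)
Your high-level strategy---compare $\cT$ with the oligomorphic category for a measure $\mu_t$ on the infinite symplectic group---is exactly the paper's, and your sketch of the measure classification (one-parameter family, pinned down by embedding counts in finite symplectic geometry) matches Proposition~\ref{prop:Sp-Theta}. However, there are two genuine gaps and one place where you propose a substantially harder route than necessary.

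First, you never address the condition (Nil): that $G$-invariant nilpotent endomorphisms in $\uPerm(G,\mu_t)$ have trace zero. The ``general structural results for oligomorphic categories'' you invoke for pre-Tannakianness and enough projectives require both quasi-regularity of $\mu_t$ \emph{and} (Nil); without (Nil) the abelian envelope $\uRep(G,\mu_t)$ is not known to exist. This is Problem~\ref{prob:olig3}, and the paper resolves it only by exploiting the interplay in the opposite direction from your reduction: the functor $\Phi_0\colon \uPerm(G,\mu_t)\to\cT$ (which exists before any equivalence is established) transports the obvious fact that nilpotents in the ultraproduct category have trace zero back to the oligomorphic side (Corollary~\ref{cor:ultra-nil}). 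Your proposal treats the oligomorphic category as already well-behaved and deduces properties of $\cT$ from it, which gets the logical dependency backwards on this point.

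Second, the equivalence itself. Full faithfulness of $\Phi_0$ on permutation objects is indeed the easy orbit-counting step, but the phrase ``essential surjectivity \ldots completes the equivalence'' hides the hardest part of the paper. Since $\uPerm$ is not abelian, the equivalence must be between $\cT$ and the abelian envelope $\uRep(G,\mu_t)$, and by Proposition~\ref{prop:Phi-equiv} it reduces to showing every object of $\cT$ is a quotient of some $\cP(X)$, equivalently that $\bone$ is projective in $\cT$. The paper proves this via the Farahat--Higman algebra and a delicate induction showing that non-trivial simples have non-trivial central characters (Theorem~\ref{thm:Sp-ultra}, which in the symplectic case is bootstrapped from the $\GL$ case via the restriction functor of Proposition~\ref{prop:Sp-res-GL}). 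Your ``$t\neq 0$ prevents $X$ from being negligible'' does not engage with this; some argument of this depth is unavoidable. Finally, for semi-simplicity you propose computing Gram determinants of $\Hom(X^{\otimes m},X^{\otimes n})$ and locating their roots, which is essentially Deligne's original method and is precisely what the paper avoids: instead one checks that $\mu_t$ is a \emph{regular} measure for $t\notin\{0,q^{2i}\}$ directly from the closed formula for $P_{a,b}(t)$ (Proposition~\ref{prop:Sp-P}), and then regularity plus (Nil) gives semi-simplicity of all endomorphism algebras by the general theory. Your route could in principle work but would require controlling infinitely many determinants rather than the single family of polynomials $P_{a,b}$.
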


The semi-simplicity statement essentially means that, under the assumption on $t$, small tensor powers of the permutation modules $X_n$ are semi-simple for sufficiently many values of $n$. This is obvious when the $k_n$'s have characteristic~0, but is not at all clear in positive characteristic.

\subsection{Oligomorphic theory}

Let $\bV=\bigcup_{n \ge 1} V_n$, an infinite dimensional symplectic space, and let $G$ be its automorphism group. It is not difficult to see that $G$ has finitely many orbits on $\bV^n$ for each $n \ge 0$, that is, the action is \defn{oligomorphic}. In \cite{repst}, we developed a representation theory of oligomorphic groups that leads to new tensor categories. We now explain some of the ideas that go into this, and the concrete problems that need to be solved to apply the general theory in this specific case.

It will be important to understand the $G$-orbits on powers of $\bV$, so we take a moment to describe them. Let $V_{n,m}$ be an $\bF$-vector space of dimension $2n+m$ equipped with an alternating form whose kernel has dimension $m$; note that $V_{n,0}$ coincides with $V_n$. Let $X_{n,m}$ be the space of all embeddings $V_{n,m} \to \bV$ compatible with the forms. It is not difficult to see that $X_{n,m}$ is a transitive $G$-set, and all orbits on powers of $\bV$ have this form. Moreover, one can show that maps of $G$-sets $X_{n,m} \to X_{a,b}$ correspond to embeddings $V_{a,b} \to V_{n,m}$ compatible with the forms.

A \defn{measure} on $G$ valued in a field $k$ is a rule $\mu$ that assigns to each $G$-map $f \colon X_{n,m} \to X_{a,b}$ a number $\mu(f) \in k$ such that three axioms hold: (a) $\mu(f)=1$ if $f$ is an isomorphism; (b) $\mu(gf)=\mu(g) \cdot \mu(f)$, when this makes sense; and (c) $\mu$ is invariant under pull-backs, in a certain sense; see Definition~\ref{defn:meas2} for details. There is a universal measure valued in a certain ring $\Theta(G)$, and so determing all measures on $G$ amounts to computing this single ring. This is an important problem:

\begin{problem} \label{prob:olig1}
Determine all measures on $G$, that is, compute $\Theta(G)$.
\end{problem}

There is a related ring that will play an important role in our analysis. The (relative) \defn{Burnside ring} of $G$, denoted $\BB$, is the free $\bZ$-module spanned by classes $x_{n,m}$ (which correspond to the $G$-sets $X_{n,m}$), and in which the product $x_{n,m} x_{a,b}$ is the sum of classes corresponding to the orbit decomposition of $X_{n,m} \times X_{a,b}$. For example, computing $x_{1,0}^2$ amounts to determining the orbit decomposition of the set of all 4-tuples $(x, y, x', y') \in \bV^4$ such that $\langle x, y \rangle=\langle x', y' \rangle=1$; there are various possibilities for the orbits depending on how $(x,y)$ and $(x',y')$ interact. Thus $\BB$ encapsulates certain aspects of the enumerative combinatorics of finite symplectic spaces. Understanding its structure is a key problem:

\begin{problem} \label{prob:olig2}
Determine the structure of the Burnside ring $\BB$.
\end{problem}

Suppose $\mu$ is a measure for $G$ valued in a field $k$ of characteristic~0. In \cite{repst}, we construct a (non-abelian) tensor category $\uPerm(G, \mu)$ of ``permutation modules.'' We would like to have a pre-Tannakian abelian envelope of this category. The following is the key problem that needs to be solved to obtain such envelopes:

\begin{problem} \label{prob:olig3}
Show that nilpotent endomorphisms in $\uPerm(G, \mu)$ have trace zero.
\end{problem}

Granted a positive solution to this problem, general results of \cite{repst} provide us with an abelian envelope $\uRep(G, \mu)$. We now have two pre-Tannakian categories associated to the symplectic group: this oligomorphic one, and the ultraproduct category constructed before. There is one final problem to resolve:

\begin{problem} \label{prob:compar}
Show that the ultraproduct approach and oligomorphic approach produce the same class of pre-Tannakian categories.
\end{problem}

\subsection{Results} \label{ss:results}

The main accomplishment of this paper is a resolution of the two conjectures and four problems discussed above, for all infinite rank classical groups. We now give some additional details, again in the case of the symplectic group.

\textit{(a) Enumeration.} We first study the number of embeddings $V_{a,b} \to V_{m,n}$. The main result (Proposition~\ref{prop:Sp-Q}) is that there is a polynomial $Q_{a,b}(t,u)$ such that the number of such embeddings is given by $Q_{a,b}(q^{2m}, q^n)$. This result is completely elementary, but the existence of these polynomials is a fundamental combinatorial feature of the situation that plays an important role in subsequent analysis. The polynomial $P_{a,b}(t)=Q_{a,b}(t,1)$, which counts embeddings of $V_{a,b}$ into the non-degenerate spaces $V_{m,0}$ is particularly nice. We give an explicit formula for it (Proposition~\ref{prop:Sp-P}), which eventually gives an explicit formula for measures.

\textit{(b) The Burnside ring.} We next determine the Burnside ring $\BB$, thus resolving Problem~\ref{prob:olig2}. We show that, after inverting $q$, it is freely generated by the two classes $x_{1,0}$ and $x_{0,1}$ (Proposition~\ref{prop:Sp-burnside}). The fact that the Burnside ring is so small is what ultimately gives us tight control on measures and allows us to prove Conjecture~\ref{conj:interp1}.

A natural approach to proving this is to compute the products of $x_{1,0}$ and $x_{0,1}$ with general classes, and use this to show they generate. While these computations are possible in principle, they quickly become quite complicated; we therefore take a different approach. We observe that $\BB$ carries a filtration whose graded pieces have the same dimension as the number of monomials in $x_{1,0}$ and $x_{0,1}$ of the appropriate degree. Thus, to prove the result, it suffices to show that $x_{1,0}$ and $x_{0,1}$ are algebraically independent. For this, we need a source of homomorphisms out of the Burnside ring. This is provided by a version of the classical marks homomorphisms: for each $n$, $m$, there is a ring homomorphism
\begin{displaymath}
\BB \to \bZ, \qquad x_{a,b} \mapsto \# \Hom_G(X_{n,m}, X_{a,b}).
\end{displaymath}
We note that the number above is equal to the number of embeddings $V_{a,b} \to V_{n,m}$, and this allows us to relate the marks homomorphisms to the $Q$ polynomials discussed above. With this ample supply of homomorphisms in hand, it is not difficult to show that $x_{1,0}$ and $x_{0,1}$ are independent.

\textit{(c) The infinitesimal Burnside ring.} Let $G(n)$ be the subgroup of $G$ fixing the first $2n$ standard basis vectors. The \defn{infinitesimal Burnside ring} $\hat{\BB}$ is the direct limit of the Burnside rings of $G(n)$ over $n$, where the transition maps are given by restriction. In fact, each $G(n)$ is isomorphic to $G$, and so the restriction map on Burnside rings corresponds to a map $\delta \colon \BB \to \BB$ (it is easily seen to be independent of $n$). Thus $\hat{\BB}$ is the direct limit of the system $\BB \to \BB \to \cdots$ where all transition maps are $\delta$.

We compute $\delta$ explicitly, and show that it is an isomorphism after inverting $q$ (Proposition~\ref{prop:Sp-delta}). Thus $\hat{\BB}$ and $\BB$ coincide after inverting $q$ (Corollary~\ref{cor:Sp-inf-burnside}), and so $\hat{\BB}[1/q]$ is freely generated by $x_{1,0}$ and $x_{0,1}$. A curious point here is that $\delta$ is diagonalizable on $\BB[1/q]$, and induces a canonical grading on this ring (Remark~\ref{rmk:Sp-grading}).

\textit{(d) Measures.} By general theory, there is a ring homomorphism of rings $\hat{\BB} \to \Theta(G)$ that is surjective after tensoring to $\bQ$. We thus see that $\Theta(G) \otimes \bQ$ is generated by the images $t$ and $u$ of the classes $x_{1,0}$ and $x_{0,1}$. An explicit computation shows that $u$ is generated by $t$, and so $\Theta(G) \otimes \bQ$ is generated by the single element $t$.

The spaces $V_m=V_{m,0}$ are homogeneous, in the sense that any two embeddings $V_{a,b} \to V_m$ differ by an automorphism of $V_m$. The fact that every $V_{a,b}$ embeds into a finite dimensional homogeneous space is very significant: it means that the group $G$ is \defn{smoothly approximable}. By general theory, the marks homomorphisms defined by the homogeneous spaces give a ring homomorphism out of $\Theta(G)$. Using this, we show that $t$ is transcendental over $\bQ$, and so $\Theta(G) \otimes \bQ=\bQ[t]$ (Proposition~\ref{prop:Sp-Theta}). In other words, in characteristic~0, the measures for $G$ form a 1-parameter family. This resolves Problem~\ref{prob:olig1}.

\textit{(e) Tensor categories.} Suppose now that $k$ is algebraically closed of characteristic~0, and let $t \in k$ be non-zero. Let $\mu_t$ be the $k$-valued measure for $G$ corresponding to $t$. Using a number theoretic result of Laxton \cite{Laxton}, we show that one can choose the fields $k_n$ and the ultrafilter $\cF$ from \S \ref{ss:intro-interp}, such that the element $(q^n)_{n \ge 0}$ of the ultraproduct of the $k_n$'s has the same minimal polynomial over $\bQ$ as $t$. Let $\cT$ be the associated ultraproduct category. We show that (after an appropriate extension of scalars) there is a tensor functor
\begin{equation} \label{eq:functor}
\Phi \colon \uPerm(G, \mu_t) \to \cT.
\end{equation}
Since nilpotent endomorphisms in $\cT$ have trace zero, it follows that the same is true for $\uPerm(G, \mu_t)$. This resolves Problem~\ref{prob:olig3}. With this result in hand, the general theory of \cite{repst} provides an abelian envelope $\uRep(G, \mu_t)$ of the category $\uPerm(G, \mu_t)$, which, by definition, is a pre-Tannakian category; moreover, the abelian envelope is semi-simple if $t$ is a regular parameter, i.e., not of the form $q^{2n}$ with $n \in \bN$.

\textit{(f) Characters.} Let $\GG$ be the subgroup of $G$ consisting of elements that are centralized by some $G(n)$. This group is very large: it contains $\bigcup_{n \ge 1} H_n$, the union of the finite symplectic groups. Let $M$ be an object of $\uRep(G, \mu_t)$. Given $g \in \GG$ there is an endomorphism $g \colon M \to M$ in $\uRep(G(n), \mu_t)$ for $n$ sufficiently large. We define $\chi_M(g)$ to be its trace. We thus have a function
\begin{displaymath}
\chi_M \colon \GG \to k,
\end{displaymath}
which we call the \defn{character} of $M$. We prove that the characters of simple modules are linearly independent (at regular parameters); see \S \ref{ss:GL-char} for details in the general linear group case. Using this, we show that the functor \eqref{eq:functor} is an equivalence. This resolves Problem~\ref{prob:compar}, and, also proves Conjectures~\ref{conj:interp1} and~\ref{conj:interp2}.

In terms of the ultraproduct categories, the independence of characters admits a more concrete interpretation: it essentially says that if $V$ and $W$ are irreducible representations of $\GL_n(\bF)$ that appearing in small tensor powers of the permutation representation, then the characters of $V$ and $W$ can be distinguished at a small element of $\GL_n(\bF)$, meaning one living in the standard copy of $\GL_m(\bF)$ for some small $m$. This even holds if $V$ and $W$ are in positive characteristic.

\subsection{Further comments}

We make a few additional comments on aspects of this paper.

\textit{(a) Interplay.} One interesting aspect of the above story is that the oligomorphic and ultraproduct approaches are played off of one another. For instance, it is easy to see that nilpotents have trace~0 in the ultraproduct category, and the functor \eqref{eq:functor} allows us to transfer this to the oligomorphic side. Conversely, to see that the ultraproduct categories depend only on $t$, we compare them to the oligomorphic categories, instead of attempting directly compare two ultraproduct categories directly (which would be more difficult).

\textit{(b) Other groups.} We have discussed case of the symplectic group above. We establish similar results for the general linear group, the orthogonal group, and the unitary groups. The general linear group plays a special role, as we prove all results in full detail in this case. For the other groups, some results are easily reduced to the $\GL$ case, while other results have nearly identical proofs to the $\GL$ case, which allows us to omit some details. The orthogonal group has a unique feature: it admits two 1-parameter families of measures (first observed by Deligne \cite{DeligneLetter}). This means that the analog of Conjecture~\ref{conj:interp1} in this case is a little more complicated: the ultraproduct category $\cT$ depends on $t$ and one additional piece of parity data.

\textit{(c) The two $\GL$'s.} Let $G=\bigcup_{n \ge 1} \GL_n(\bF)$ be the infinite general linear group over the finite field $\bF$. This acts on $\bV=\bigcup_{n \ge 1} \bF^n$, and also on the restricted dual $\bV_*=\bigcup_{n \ge 1} (\bF^n)^*$. The actions on $\bV$ and on $\bV \times \bV_*$ are both oligomorphic, and induce on $G$ two different topologies, which we call the \defn{parabolic} and \defn{Levi} topologies. This means that the general theory of \cite{repst} can be applied in each case. A priori, there is not an obvious reason why these should agree. The Levi case is what is most analogous to the other classical groups; in a sense, the parabolic topology is a special feature that exists only for $\GL$. This is somewhat analogous to what occurs in the algebraic representation theory of $\GL_n$: the class of rational representations behaves similarly to the representation theory of other algebraic groups, while the class of polynomial representations is a special feature of $\GL_n$.

Previous work on the interpolation categories for the groups $\GL_n(\bF)$ is related to the parabolic version of $G$. This includes Knop's work \cite{Knop}, the diagrammatic description given by Entova-Aizenbud and Heidersdorf \cite{EntovaAizenbudHeidersdorf}, and (most of) our work in \cite[\S 15]{repst}.

In this paper, we focus on the Levi version of $G$. The most important thing to know is that, in the end, the two versions have the same spaces of measures, and yield the same tensor categories. We do not have a conceptual reason for why the two cases have the same measures: this is simply a result of going through both calculations. On the other hand, we do have a conceptual understanding of why the tensor categories agree: the key point is that the modules $\cC(\bV)$ and $\cC(\bV_*)$ are isomorphic by a version of the Fourier transform (see Proposition~\ref{prop:GL-gen}).

Despite yielding the same tensor categories, there are some differences between the two cases worth noting. One particularly nice feature of the Levi case is that the principal open subgroups of $G$ are isomorphic to $G$ itself; this gives restriction functors from $\uRep(\GL_t)$ to $\uRep(\GL_{t/q^n})$. These functors are less transparent in the parabolic picture. Another difference is that the Burnside ring is a much more interesting (and complicated) object in the Levi case. We determine it completely.

\textit{(d) Semi-simplification.} A prevalent feature in the subject of interpolation categories is that there are ``good'' parameters, at which the category is semi-simple, and ``bad'' parameters, at which it is not. Moreover, at bad parameters, the semi-simplification of an appropriate version of the interpolation category recovers the representation category of the associated finite group. For instance, for Deligne's category $\uRep(\fS_t)$, the bad parameters are natural numbers, and if $t=n$ is a natural number then the semi-simplification of (an appropriate subcategory of) $\uRep(\fS_t)$ recovers $\Rep(\fS_n)$. This phenomenon remains true in the cases considered in this paper, though we do not discuss it in detail. We remark that \cite[\S 4.6]{arboreal} can be useful in proving such results in the oligomorphic setting.

\textit{(e) Smooth approximation.} There is an important and well-studied class of oligomorphic groups called the \defn{smoothly approximable} groups\footnote{Typically, ``smoothly approximable'' is applied to the structure the group acts on, but we will use it for the group as well.}. See \S \ref{s:ultra} for our definition, and \cite{CherlinHrushovski} for background. In a rough sense, an oligomorphic group is smoothly approximable if it densely contains an appropriate union of finite groups. The basic examples of such groups are the infinite symmetric group, and the groups studied in this paper. However, there are more examples, such as: (a) infinite classical groups over finite rings like $\bZ/p^n \bZ$; (b) the automorphism group of a free pro-finite $p$-group on a countable generating set\footnote{Technically, this is pro-oligomorphic.}; and (c) one coming from quadratic spaces in characteristic~2 \cite[Definition~2.1.4]{CherlinHrushovski}. For any such group, one has ultraproduct tensor categories and oligomorphic tensor categories, and so one can attempt to prove results like those in this paper. It would be interesting to do this in more cases, or even to obtain general results in this direction.

\subsection{Related work}

We briefly discuss some related literature.
\begin{itemize}
\item Deligne \cite{Deligne} first constructed the interpolation categories discussed in this paper. Knop \cite{Knop, Knop2} later gave a different construction in some cases, and used this to prove some finer results. The oligomorphic construction was first give in \cite{repst}.
\item One can also approach the tensor categories we consider diagrammatically, though this is challenging. Entova--Aizenbud and Heidersdorf give diagrammatics in the $\GL$ case \cite{EntovaAizenbudHeidersdorf}. It would be interesting to extend their work to other cases.
\item Much of what we do here extends known results from the symmetric group case: Harman \cite{Harman1, Harman2} established results on ultraproducts, \cite{ComesOstrik1, ComesOstrik} proved a number of basic results about Deligne's category, and Farahat and Higman interpolated the centers of the group algebras \cite{FarahatHigman}. As far as we are aware, the symmetric group versions of Conjectures~\ref{conj:interp1} and~\ref{conj:interp2} do not appear in published form, but Deligne did propose a version of the first conjecture in a letter to Ostrik (in positive characteristic).
\item Kriz \cite[\S 7.2]{Kriz} discusses the interpolation category for finite symplectic groups through her T-algebra formalism. This category also appears in \cite{azumaya}.
\item As discussed in \S \ref{ss:results}(f), our results have implications about the characters of representations of $\GL_n(\bF)$. Some (deeper) results of a similar flavor are found in \cite{GLT, GH}.
\item In \S \ref{ss:fh}, we define a version of the Farahat--Higman algebra for an arbitrary oligomorphic group. In the case of the infinite symmetric group, we recover the classical algebra of \cite{FarahatHigman}. Some special cases of our construction were studied in \cite{Ryba,KannanRyba}.
\item Our results on Burnside rings and measures rely on some results in enumerative geometry over finite fields. See \cite{Pless, Yoo} for some related results.
\end{itemize}

\subsection{Outline}

Part~\ref{part:th} studies general oligomorphic groups. In \S \ref{s:oligo} and \S \ref{s:tencat}, we review the general theory of \cite{repst}, and prove a few new results along the way. In \S \ref{s:smooth}, we introduce the smooth group algebra of an oligomorphic group. The center of this algebra generalizes the Farahat--Higman algebra, and is closely connected to character theory for oligomorphic groups. In \S \ref{s:ultra}, we study smooth approximation. We give a general method for constructing measures on smoothly approximable groups, and show that the corresponding tensor categories admit a natural functor from an ultraproduct category. We give some general criteria for showing that this functor is an equivalence.

Part~\ref{part:ex} studies specific oligomorphic groups. In \S \ref{s:gl} and \S \ref{s:gl2}, we treat the general linear group. As mentioned above, there are many more details in this case than the others, which is why we split it between two sections. We then handle the symplectic group (\S \ref{s:Sp}), orthogonal group (\S \ref{s:O}), and unitary group (\S \ref{s:U}).

\subsection{Notation}

We list some of the important notation.
\begin{description}[align=right,labelwidth=2.5cm,leftmargin=!]
\item[ $\bN$ ] The set of non-negative integers
\item[ $k$ ] The coefficient field, typically of characteristic~0
\item[ $\bF$ ] A finite field of cardinality $q$
\item[ $\bZ_q$ ] The ring $\bZ[1/q]$
\item[ $G$ ] An oligomorphic group (\S \ref{ss:oligo})
\item[ $\bS(G)$ ] The category of finitary smooth $G$-sets (\S \ref{ss:oligo})
\item[ $\BB(G)$ ] The Burnside ring (\S \ref{ss:burnside})
\item[ $\Theta(G)$ ] The ring carrying the universal measure (\S \ref{ss:meas})
\item[ $\fC$ ] The category of structures for an oligomorphic group (\S \ref{ss:struct})
\item[ $\cC(X)$ ] The Schwartz space on the $G$-set $X$ (\S \ref{ss:int})
\end{description}

\part{Theory} \label{part:th}

\section{Measures on oligomorphic groups} \label{s:oligo}

\subsection{Oligomorphic groups} \label{ss:oligo}

We begin by recalling some fundamental definitions. An action of a group $G$ on a set $\Omega$ is \defn{oligomorphic} if $G$ has finitely many orbits on $\Omega^n$ for all $n \ge 0$. An \defn{oligomorphic group} is a group equipped with a given faithful oligomorphic action. A \defn{pro-oligomorphic group} is a topological group $G$ that is Hausdorff, non-archimedean (open subgroups for a neighborhood basis of the identity), and Roelcke precompact (the set $U \backslash G/V$ is finite for all open subgroups $U$ and $V$).

Suppose $(G, \Omega)$ is a permutation group. For a finite subset $A$ of $\Omega$, let $G(A)$ be the subgroup of $G$ fixing each element of $A$. These subgroups are a neighborhood basis of the identity for a topology on $G$. If $(G, \Omega)$ is oligomorphic then this topology makes $G$ into a pro-oligomorphic group \cite[\S 2.2]{repst}. All pro-oligomorphic groups of interest in this paper are oligomorphic. However, it is convenient to develop the general theory in the pro-oligomorphic setting since most constructions depend only on the topology and not the choice of $\Omega$.

Let $G$ be a pro-oligomorphic group. An action of $G$ on a set $X$ is \defn{smooth} if for every $x \in X$ the stabilizer $G_x$ is an open subgroup of $G$. In what follows, we use the term ``$G$-set'' to mean ``set equipped with a smooth action of $G$.'' The action is \defn{finitary} if $G$ has finitely many orbits on $X$. We let $\bS(G)$ be the category of finitary smooth $G$-sets. This category is closed under finite products. If $U$ is an open subgroup of $G$ then finitary $G$-sets restrict to finitary $U$-sets, so there is a functor $\bS(G) \to \bS(U)$. See \cite[\S 2.3]{repst} for details.

Suppose $X$ is a $G$-set. A \defn{$\hat{G}$-subset} of $X$ is a subset stable under some open subgroup of $G$. Here $\hat{G}$ is just a formal symbol; one should think of it as a kind of infinitesimal neighborhood of the identity in $G$. The collection of $\hat{G}$-subsets of $X$ is an algebra, i.e., it is closed under complements, finite intersections, and finite unions. We can also make sense of a $\hat{G}$-set without an ambient $G$-set: this is a set equipped with a smooth action of some open subgroup, called the group of definition. The notion of finitary is well-defined for $\hat{G}$-sets. See \cite[\S 2.5]{repst} for details.

\begin{example} \label{ex:sym}
Let $\fS$ be the infinite symmetric group, i.e., the group of all permutations of the set $\Omega=\{1, 2, \ldots\}$. One easily sees that $\fS$ acts oligomorphically on $\Omega$, and so $(\fS, \Omega)$ is an oligomorphic group. Let $\fS(n)$ be the subgroup of $\fS$ fixing each of the numbers $1, \ldots, n$. These subgroups form a neighborhood basis for the identity in $\fS$. One can show that every open subgroup of $\fS$ is conjugate to one of the form $H \times \fS(n)$, where $H$ is a subgroup of the finite symmetric group $\fS_n$ \cite[Proposition~14.1]{repst}. The $\hat{G}$-subsets of $\Omega$ are the subsets that are either finite or cofinite. More generally, that $\hat{G}$-subsets of $\Omega^n$ are those that can be defined by a first-order formula using equality and finitely many constants.
\end{example}

\subsection{Burnside rings} \label{ss:burnside}

Let $G$ be a pro-oligomorphic group. The \defn{Burnside ring} of $G$, denoted $\BB(G)$, is the free abelian group with basis indexed by isomorphism classes of transitive $G$-sets. We write $\lbb X \rbb$ for the basis vector corresponding to $X$. More generally, if $X$ is a finitary $G$-set and $X=\coprod_{i=1}^n X_i$ is the orbit decomposition of $X$, we define $\lbb X \rbb = \sum_{i=1}^n \lbb X_i \rbb$. The Burnside ring is indeed a ring, via $\lbb X \rbb \cdot \lbb Y \rbb = \lbb X \times Y \rbb$.

Let $X$ be a transitive $G$-set. If $Y$ is a finitary $G$-set then the set $\Hom_G(X, Y)$ of $G$-maps is finite \cite[Proposition~2.8]{repst}. We define the \defn{marks homomorphism} by
\begin{displaymath}
m_X \colon \BB(G) \to \bZ, \qquad m_X(\lbb Y \rbb) = \# \Hom_G(X, Y).
\end{displaymath}
If we choose an isomorphism $X \cong G/U$ then $\Hom_G(X, Y)=Y^U$, and from this description it is evident that $m_X$ is a ring homomorphism. If $G$ is finite and $X_1, \ldots, X_r$ are representatives for the isomorphism classes of transitive $G$-sets then the corresponding marks homomorphisms give a ring homomorphism $\BB(G) \to \bZ^r$ that becomes an isomorphism after inverting $\# G$; see \cite{Solomon}.

If $U$ is an open subgroup of $G$ then restriction induces a ring homomorphism $\BB(G) \to \BB(U)$. We define the \defn{infinitesimal Burnside ring} of $G$, denoted $\BB(\hat{G})$, to be the direct limit of $\BB(U)$ over open subgroups $U$. If $X$ is a finitary $\hat{G}$-set then the class $\lbb X \rbb$ is well-defined in $\BB(\hat{G})$.

Suppose $(G, \Omega)$ is oligomorphic. A transitive $G$-set $X$ is then a subquotient of some $\Omega^n$; we define the \defn{level} of $X$ to be the minimal such $n$. More generally, the \defn{level} of a $G$-set $X$, denoted $\lev(X)$, is the supremum of the levels of its orbits. If $X$ and $Y$ are $G$-sets then we have
\begin{displaymath}
\lev(X \times Y) \le \lev(X) + \lev(Y).
\end{displaymath}
Define $F^n=F^n \BB(G)$ to be the span of the classes $\lbb X \rbb$ with $\lev(X) \le n$. We have $1 \in F^0$, and the above formula shows that $F^n \cdot F^m$ is contained in $F^{n+m}$, so this gives $\BB(G)$ the structure of a filtered ring. We note that this filtration depends on the choice of $G$-set $\Omega$, i.e., it is not intrinsic to $G$ as a topological group.

\subsection{Measures} \label{ss:meas}

Let $G$ be a pro-oligomorphic group and let $k$ be a commutative ring. We now recall the important notion of measure on $G$. In fact, there are two equivalent definitions.

\begin{definition} \label{defn:meas1}
A \defn{measure} on $G$ valued in $k$ is a rule $\mu$ that assigns to every finitary $\hat{G}$-set $X$ a value $\mu(X) \in k$ such that the following conditions hold:
\begin{enumerate}
\item If $X \cong Y$ then $\mu(X)=\mu(Y)$.
\item If $X$ is a singleton set then $\mu(X)=1$.
\item We have $\mu(X \amalg Y)=\mu(X)+\mu(Y)$.
\item We have $\mu(X^g)=\mu(X)$, where $X^g$ is $X$ with the action of $\hat{G}$ twisted by $g \in G$.
\item Suppose $Y \to X$ is a map of transitive $U$-sets, for some open subgroup $U$, and $F$ is the fiber over some point. Then $\mu(Y)=\mu(X) \cdot \mu(F)$. \qedhere
\end{enumerate}
\end{definition}

\begin{definition} \label{defn:meas2}
A \defn{measure} on $G$ valued in $k$ is a rule $\mu$ that assigns to every map $f \colon Y \to X$ of transitive $G$-sets a quantity $\mu(f) \in k$ such that the following conditions hold:
\begin{enumerate}
\item We have $\mu(f)=1$ if $f$ is an isomorphism.
\item If $g \colon Z \to Y$ is a second map of transitive $G$-sets then $\mu(g \circ f)=\mu(g) \cdot \mu(f)$.
\item Suppose $X' \to X$ is another map of transitive $G$-sets, and $f' \colon Y' \to X'$ is the base change of $f$. Let $Y'=\bigsqcup_{i=1}^n Y'_i$ be the orbit decomposition of $Y'$, and let $f'_i$ be the restriction of $f'$ to $Y'_i$. Then $\mu(f)=\sum_{i=1}^n \mu(f'_i)$. \qedhere
\end{enumerate}
\end{definition}

We briefly explain how the two definitions correspond (see \cite[\S 3.6]{repst} for details). Suppose $\mu$ is given in the first sense. Given a map $f \colon Y \to X$ of transitive $G$-sets, we define $\mu(f)=\mu(F)$, where $F$ is the fiber of $f$ over any point. Now suppose $\mu$ is given in the second sense. Let $X$ be a finitary $\hat{G}$-set, and suppose $U$ is a group of definition of $X$. Letting $f \colon G \times_U X \to G/U$ be the projection on the first coordinate, we define $\mu(X)=\mu(f)$. In what follows, we freely pass between the two points of view.

There is a universal measure valued in a ring $\Theta(G)$. To define $\Theta(G)$, start with the polynomial ring on symbols $[X]$, where $X$ is a finitary $\hat{G}$-set, and then quotient by the ideal generated by the equations corresponding to the axioms of measure. Of course, if $f \colon Y \to X$ is a map of transitive $G$-sets there is also a class $[f]$ in $\Theta(G)$. Universality means that giving a measure valued in $k$ is equivalent to giving a ring homomorphism $\Theta(G) \to k$.

There is a natural surjective ring homomorphism $\BB(\hat{G}) \to \Theta(G)$, defined by mapping the class $\lbb X \rbb$ to the class $[X]$. The kernel of this map is the ideal generated by elements corresponding to axioms (d) and (e) in Definition~\ref{defn:meas1}; see \cite[Proposition~3.10]{repst} for details. In our cases of interest, we will first compute $\BB(\hat{G})$, and then use this surjection to obtain generators for $\Theta(G)$.

There are two ``niceness'' conditions on measures that play an important role in this paper. We say that a measure $\mu$ is \defn{regular} if $\mu(X)$ is a unit of the ring $k$ whenever $X$ is a transitive $G$-set. We say that $\mu$ is \defn{quasi-regular} if there is some open subgroup $U$ such that the restriction of $\mu$ to $U$ is regular\footnote{One easily sees that a measure on $G$ restricts to a measure on any open subgroup.}. These definitions will typically be applied when $k$ is a field. See \cite[\S 3.7]{repst} for additional details.

\begin{remark} \label{rmk:burn-prod}
Let $G$ and $H$ be pro-oligomorphic groups. If $G$ is finite then there is a forgetful functor $\bS(G \times H) \to \bS(H)$, which induces a ring homomorphism
\begin{displaymath}
\BB(G \times H) \to \BB(H).
\end{displaymath}
If $G$ is infinite this clearly does not work. However, one can show that there is a natural ring homomorphism
\begin{displaymath}
\BB(G \times H) \to \Theta(G) \otimes \BB(H), \qquad
\lbb X \rbb \mapsto [F] \otimes \lbb X/G \rbb,
\end{displaymath}
where $X$ is transitive and $F$ is the fiber of $X \to X/H$. This homomorphism reverts to the previous one when $G$ is finite, as then $\Theta(G)=\bZ$; see \cite[\S 3.4(b)]{repst}.
\end{remark}

\subsection{Constant maps} \label{ss:const}

Let $G$ be a pro-oligomorphic group equipped with a measure $\mu$ valued in a ring $k$. We say that a map $f \colon Y \to X$ of non-empty finitary $G$-sets is \defn{$\mu$-constant} if there exists $c \in k$ such that $\mu(f^{-1}(x))=c$ for all $x \in X$; in this case, we define $\mu(f)=c$. Essentially by the definition of measure, if $X$ is transitive then $f$ is $\mu$-constant. These maps have the following properties:
\begin{enumerate}
\item If $f \colon Y \to X$ is an isomorphism of $G$-sets then $f$ is $\mu$-constant and $\mu(f)=1$.
\item Suppose $g \colon Z \to Y$ and $f \colon Y \to X$ are $\mu$-constant. Then $g \circ f$ is $\mu$-constant, and $\mu(gf)=\mu(g) \cdot \mu(f)$.
\item Suppose $f \colon Y \to X$ is $\mu$-constant and $X' \to X$ is any map of $G$-sets, with $X'$ non-empty. Let $f' \colon Y' \to X'$ be the base change of $f$. Then $f'$ is $\mu$-constant, and $\mu(f')=\mu(f)$.
\end{enumerate}
We leave the verifications to the reader.

\subsection{Pulling back measures} \label{ss:pullback}

Let $G$ and $H$ be pro-oligomorphic groups, and let $H \to G$ be a group homomorphism. Consider the following condition:
\begin{itemize}
\item[$(\dag)$] Finitary $G$-sets restrict to finitary (and smooth!) $H$-sets.
\end{itemize}
This condition implies that the map $H \to G$ is continuous: indeed, if $U$ is an open subgroup of $G$ then $G/U$ is a smooth $H$-set, and so $H \cap U$ must be an open subgroup of $H$, as it is the stabilizer in $H$ of $1 \in G/U$. If $(G, \Omega)$ is oligomorphic then $(\dag)$ holds if and only if $\Omega$ is a finitary $H$-set. Suppose that $(\dag)$ holds. We then have a restriction functor
\begin{displaymath}
\bS(G) \to \bS(H)
\end{displaymath}
We now investigate how this functor interacts with measures.

\begin{proposition} \label{prop:pullback}
Let $\nu$ be a measure for $H$. Suppose that the following condition holds:
\begin{itemize}
\item[$(\ast)$] Whenever $f \colon Y \to X$ is a map transitive $G$-sets, $f$ is $\nu$-constant as a map of $H$-sets.
\end{itemize}
We then obtain a measure $\mu$ for $G$ by defining $\mu(f)=\nu(f)$, for $f$ as above.
\end{proposition}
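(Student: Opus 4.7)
The plan is to verify, for $\mu$ as defined, each of the three axioms of Definition~\ref{defn:meas2}, treating each $G$-map $f\colon Y\to X$ of transitive $G$-sets as a $\nu$-constant $H$-map (which is exactly the content of hypothesis $(\ast)$) and then invoking the corresponding properties of $\nu$-constant maps listed in \S\ref{ss:const}. In particular, $\mu$ will be well-defined on the nose because the value $\nu(f)$ is unambiguously given by hypothesis $(\ast)$.

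Axioms (1) and (2) of Definition~\ref{defn:meas2} are essentially immediate. An isomorphism of transitive $G$-sets is an isomorphism of $H$-sets, so property~(a) of $\nu$-constant maps gives axiom~(1). For axiom~(2), given $g\colon Z\to Y$ and $f\colon Y\to X$ of transitive $G$-sets, both are $\nu$-constant by $(\ast)$; property~(b) then says that $g\circ f$ is $\nu$-constant with $\nu(g\circ f)=\nu(g)\cdot\nu(f)$, which translates directly into axiom~(2).

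The main content of the proof lies in axiom~(3), the base-change additivity, and this is where I would spend the most care. Given a map $X'\to X$ of transitive $G$-sets, the base change $f'\colon Y'\to X'$ of $f$ is a $G$-map whose target is $G$-transitive but whose source need not be. Applying base-change property~(c) for $\nu$-constant maps to $f$ as an $H$-map yields that $f'$ is $\nu$-constant with $\nu(f')=\nu(f)$; meanwhile, for the $G$-orbit decomposition $Y'=\bigsqcup_i Y'_i$, each restriction $f'_i\colon Y'_i\to X'$ is a $G$-map of transitive $G$-sets, hence $\nu$-constant by $(\ast)$, with $\mu(f'_i)=\nu(f'_i)$. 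The remaining step is to establish the fiberwise identity $\nu(f')=\sum_i\nu(f'_i)$. For this, I would pick a point $x\in X'$, note that the fibers satisfy $(f')^{-1}(x)=\bigsqcup_i (f'_i)^{-1}(x)$ as finitary $\hat H$-sets (using $(\dag)$ to ensure $H$-finitariness), and then apply the additivity axiom~(c) of Definition~\ref{defn:meas1} for $\nu$. Chaining these equalities gives $\mu(f)=\nu(f)=\nu(f')=\sum_i\nu(f'_i)=\sum_i\mu(f'_i)$, which is exactly axiom~(3).

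The only place where a genuine obstacle could arise is this last step, and it comes down to making sure the $G$-orbit decomposition of $Y'$ refines in a controlled way the finer $H$-orbit decomposition relevant to $\nu$: fortunately, one does not need to understand the $H$-orbit structure explicitly because the additivity axiom of $\nu$ applies to any disjoint decomposition into finitary $\hat H$-sets, and $\nu$-constancy of the pieces is already guaranteed by $(\ast)$.
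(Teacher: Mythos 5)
Your proof is correct and takes exactly the route the paper does: the paper's entire proof is the one-line remark that the axioms follow from the properties of constant maps in \S\ref{ss:const}, and your write-up is simply the careful expansion of that remark, including the only step with real content (deducing axiom (c) of Definition~\ref{defn:meas2} from $\nu$-constancy of the base change $f'$ and of each orbit restriction $f'_i$, via the fiberwise decomposition $(f')^{-1}(x)=\bigsqcup_i (f'_i)^{-1}(x)$).
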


\begin{proof}
The axioms for measure follow from the properties of constant maps in \S \ref{ss:const}.
\end{proof}

We now give a group-theoretic criterion for verifying the condition appearing in the above proposition. Let $G'$ be the set of elements of $G$ whose centralizer in $H$ is open. This is easily seen to be a subgroup of $G$.

\begin{proposition} \label{prop:pullback-crit}
Suppose $G'$ and $H$ generate a dense subgroup of $G$. Then condition $(\ast)$ from Proposition~\ref{prop:pullback} holds for any measure $\nu$.
\end{proposition}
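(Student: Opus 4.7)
The plan is to reduce condition $(\ast)$ — that $\nu(f^{-1}(x_1)) = \nu(f^{-1}(x_2))$ for any $x_1, x_2 \in X$ — to two separate cases corresponding to the generators $H$ and $G'$ of a dense subgroup of $G$, by exploiting the transitivity of $X$ together with a density argument.

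First I would fix $x_1, x_2 \in X$ and choose some $g \in G$ with $gx_1 = x_2$. The set of all such elements is the coset $gG_{x_1}$, which is open in $G$ because the stabilizer $G_{x_1}$ is open (the $G$-action on $X$ is smooth). Density of the subgroup $\Gamma = \langle G', H \rangle$ then produces an element of $\Gamma$ lying in $gG_{x_1}$, so I may assume $g \in \Gamma$. Factoring $g = g_1 g_2 \cdots g_n$ with each $g_i \in G' \cup H$ and passing through the intermediate points $y_i = g_{n-i+1} \cdots g_n \cdot x_1$, the desired equality reduces to the single-factor case: it suffices to show $\nu(f^{-1}(x)) = \nu(f^{-1}(g x))$ whenever $g \in G' \cup H$.

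When $g \in H$, left multiplication by $g$ is a set bijection $f^{-1}(x) \to f^{-1}(gx)$ which intertwines the action of $H_x$ with that of $H_{gx} = gH_xg^{-1}$. Viewed as $\hat{H}$-sets, this exhibits $f^{-1}(gx)$ as the $g$-twist of $f^{-1}(x)$ in the sense of axiom (d) of Definition~\ref{defn:meas1}, and that axiom gives the desired invariance. When $g \in G'$, let $U = C_H(g)$, which is open in $H$ by the definition of $G'$. Because $g$ commutes with every $u \in U$, the self-bijection $g \colon Y \to Y$ is $U$-equivariant; restricting to fibers over $x$ and $gx$, it gives a $U$-equivariant bijection $f^{-1}(x) \to f^{-1}(gx)$. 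Viewing both fibers as finitary $\hat{H}$-sets with group of definition $U$ (after restriction), this is an honest isomorphism of $\hat{H}$-sets, so their $\nu$-measures agree.

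The main point to get right is the $G'$ case: one has to check that once we restrict the action from $H_x$ and $H_{gx}$ down to a common open subgroup contained in $U$, the fibers are still finitary (which follows from Roelcke precompactness applied to $U \cap H_x \subset H_x$, since $H_x$ already has finitely many orbits on $f^{-1}(x)$ as $f^{-1}(x)$ is a $\hat{H}$-subset of the finitary $H$-set $Y$), and that the bijection provided by $g$ really is an isomorphism of $\hat{H}$-sets. The key conceptual input is precisely the commutation $gu = ug$ for $u \in U$, which is exactly what the definition of $G'$ was engineered to provide; everything else is bookkeeping.
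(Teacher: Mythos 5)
Your proof is correct and follows essentially the same route as the paper: handle $h\in H$ by the twisting axiom, handle $g\in G'$ via the open centralizer $C_H(g)$ giving an honest isomorphism of $\hat H$-sets, and then use density (via openness of stabilizers) to see that $\langle G',H\rangle$ acts transitively on $X$. The extra bookkeeping you flag about finitariness of the fibers under restriction to smaller open subgroups is fine and is implicit in the paper's argument.
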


\begin{proof}
Let $f \colon Y \to X$ be a map of transitive $G$-sets. We must show that $f$ is $\nu$-constant. Let $x$ be an element of $X$. If $h \in H$ then the $\hat{H}$-sets $f^{-1}(x)$ and $f^{-1}(hx)$ are isomorphic up to twisting the action by $h$, and thus have the same measure. If $g \in G'$ then $g \colon f^{-1}(x) \to f^{-1}(gx)$ is an isomorphism of $\hat{H}$-sets (since the centralizer of $g$ in $H$ is open), and so the two sets have the same measure. We thus see that if $g$ belongs to the subgroup generated by $G'$ and $H$ then $f^{-1}(x)$ and $f^{-1}(gx)$ have the same measure. Since this group is dense in $G$, it acts transitively on $X$, and so the result follows.
\end{proof}

\subsection{Stabilizer classes} \label{ss:stab}

Let $G$ be a pro-oligomorphic group. Sometimes one does not want to work with all of $\bS(G)$, but only a subcategory of it. Stabilizer classes give a convenient way of parametrizing particularly nice subcategories.

Let $\sO(G)$ be the collection of all open subgroups of $G$. A \defn{stabilizer class} is a subset $\sE$ of $\sO(G)$ satisfying the following conditions: (a) $\sE$ contains $G$; (b) $\sE$ is closed under finite intersections; (c) $\sE$ is closed under conjugation; and (d) $\sE$ is a neighborhood basis of the identity. Of course, $\sO(G)$ is itself an example of a stabilizer class.

Let $\sE$ be a stabilizer class. We say that an action of $G$ on a set $X$ is \defn{$\sE$-smooth} if for each $x \in X$ the stabilizer $G_x$ belongs to $\sE$. We let $\bS(G; \sE)$ be the category of finitary $\sE$-smooth $G$-sets. It is again closed under products. One has a Burnside ring $\BB(G, \sE)$ relative to $\sE$, as well as the infinitesimal Burnside ring. One also has a notion of measure relative to $\sE$ (see \cite[Remark~3.2(d)]{repst}), and a corresponding universal ring $\Theta(G, \sE)$.

We say that a stabilizer class $\sE$ is \defn{large} if for every open subgroup $U$ in $G$ there is some $V \in \sE$ such that $V$ is contained in $U$ with finite index. In this case, the natural map
\begin{displaymath}
\Theta(G; \sE) \otimes \bQ \to \Theta(G) \otimes \bQ
\end{displaymath}
is an isomorphism. See \cite[\S 2.6]{distal} for a proof, and a more precise result.

Suppose $(G, \Omega)$ is oligomorphic. We obtain a stabilizer class $\sE(\Omega)$ by taking all subgroups of the form $G(A)$, with $A$ a finite subset of $\Omega$. We typically use the term \defn{$\Omega$-smooth} in place of $\sE(\Omega)$-smooth. We note that a transitive $G$-set is $\Omega$-smooth if and only if it occurs as an orbit on $\Omega^n$ for some $n$. We typically write $\bS(G, \Omega)$ in place of $\bS(G, \sE(\Omega))$, and similarly for $\BB$ and $\Theta$.

\subsection{Structures} \label{ss:struct}

Fix an oligomorphic group $(G, \Omega)$. Let $A$ be a finite subset of $\Omega$. The \defn{definable closure} of $A$, denoted $\dcl(A)$, is the fixed point set of $G(A)$. We say that $A$ is \defn{definably closed} if $A=\dcl(A)$. The definable closure of $A$ is finite, definably closed, and satisfies $G(\dcl{A})=G(A)$.

We define a category $\fC$ as follows. The objects are the definably closed subsets of $\Omega$. A morphism $i \colon A \to B$ is a function for which there exists $g \in G$ such that $i(x)=gx$ for all $x \in A$. We call objects of $\fC$ \defn{structures} and morphisms \defn{embeddings}. The importance of this perspective comes from the fact that, in practice, $\fC$ will often have a concrete description (e.g., see Example~\ref{ex:sym-struc}).

Let $A$ be a structure. We say that a function $i \colon A \to \Omega$ is an \defn{embedding} if the induced function $A \to i(A)$ is one, i.e., there exists $g \in G$ such that $i(x)=gx$ for all $x \in A$. We let $X(A)$ be the set of all such embeddings. This is naturally a $G$-set, for if $i$ is an embedding and $g \in G$ then $x \mapsto gi(x)$ is also an embedding. If $i \colon A \to B$ is an embedding in $\fC$ then an embedding $B \to \Omega$ restricts to one on $A$; this defines a $G$-equivariant map $i^* \colon X(B) \to X(A)$.

\begin{proposition} \label{prop:struct-equiv}
The functor $A \mapsto X(A)$ is an anti-equivalence between the category $\fC$ of structures and the category of transitive $\Omega$-smooth $G$-sets.
\end{proposition}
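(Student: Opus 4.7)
The plan is to verify that the functor $A \mapsto X(A)$ is well-defined into the target category, essentially surjective, and fully faithful. None of these steps should pose a serious obstacle; the main thing to keep track of is the role of definable closure in the full faithfulness argument.

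First I would check that $X(A)$ is indeed a transitive $\Omega$-smooth $G$-set. The inclusion $\iota_A \colon A \hookrightarrow \Omega$ is an embedding, so $X(A)$ is non-empty. Any two embeddings $i,j \in X(A)$ are of the form $i(x) = gx$ and $j(x) = hx$ for elements $g,h \in G$, hence $j = (hg^{-1}) \cdot i$, which proves transitivity. The stabilizer of $\iota_A$ is precisely $G(A)$, which belongs to the stabilizer class $\sE(\Omega)$ since $A$ is a finite subset of $\Omega$, so $X(A)$ is $\Omega$-smooth.

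Next I would show essential surjectivity. Any transitive $\Omega$-smooth $G$-set $X$ is isomorphic to $G/U$ for some $U \in \sE(\Omega)$, so $U = G(A)$ for some finite $A \subset \Omega$. Replacing $A$ by $\dcl(A)$ (which does not change $G(A)$) we may assume $A$ is definably closed, hence a structure. Then $g \mapsto g \cdot \iota_A$ descends to a $G$-equivariant bijection $G/G(A) \to X(A)$, so $X \cong X(A)$.

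The substantive step is full faithfulness. For faithfulness, an embedding $i \colon A \to B$ is recovered from $i^*$ by the formula $i^*(\iota_B) = \iota_B \circ i \colon A \to \Omega$, so the map $i \mapsto i^*$ is injective. For fullness, let $\phi \colon X(B) \to X(A)$ be a $G$-equivariant map. The element $\phi(\iota_B) \in X(A)$ is an embedding $A \to \Omega$; let $A'$ denote its image. The key observation is that $G(B)$ stabilizes $\iota_B$, so by $G$-equivariance it stabilizes $\phi(\iota_B)$, and this forces each point of $A'$ to be fixed by $G(B)$. Hence
\begin{displaymath}
A' \subseteq \mathrm{Fix}(G(B)) = \dcl(B) = B,
\end{displaymath}
using definable closedness of $B$. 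Thus $\phi(\iota_B)$ corestricts to a function $i \colon A \to B$, and since $\phi(\iota_B) \in X(A)$ there is some $g \in G$ with $i(x) = gx$ for all $x \in A$, so $i$ is a morphism in $\fC$. By construction $i^*$ and $\phi$ are $G$-equivariant maps out of the transitive $G$-set $X(B)$ that agree on $\iota_B$, hence they are equal. This completes the proof. The only delicate point is the use of $B = \dcl(B)$ to conclude that $A'$ lands inside $B$; all other verifications are formal consequences of transitivity and the definitions.
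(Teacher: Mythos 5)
Your proposal is correct and follows essentially the same route as the paper: faithfulness and fullness are both handled by evaluating at the canonical embedding $\iota_B$ and using $\mathrm{Fix}(G(B))=\dcl(B)=B$, exactly as in the paper's argument. The only cosmetic difference is in essential surjectivity, where you identify $X\cong G/G(A)$ directly from the definition of $\sE(\Omega)$ while the paper realizes $X$ as an orbit on $\Omega^n$ and builds an explicit bijection $X(A)\to X$; these are equivalent.
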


\begin{proof}
Write $\Phi$ for the functor in question. We first show that $\Phi$ is faithful. Suppose that $i$ and $j$ are embeddings $A \to B$ such that $i^*=j^*$. Let $k \in X(B)$ be the canonical embedding, i.e., the identity map $B \to \Omega$. Then $i^*(k)=i$ and $j^*(k)=k$, and so we see $i=j$, as required.

We next show that $\Phi$ is full. Suppose $f \colon X(B) \to X(A)$ is a $G$-map. Let $i \in X(B)$ be the canonical embedding (as above), and let $i'=f(i)$, so that $i' \colon A \to \Omega$ is an embedding. The stabilizer of $i$ in $G$ is the group $G(B)$ fixing each element of $B$. Since $f$ is a $G$-map, it follows that $G(B)$ fixes $i'$, which exactly means that image of $i'$ is contained in $\Omega^{G(B)}$. This is the definable closure of $B$, which coincides with $B$ by our definition of structure. Thus $i'$ factors as $i'=i \circ j$, where $j \colon A \to B$ is an embedding. We have $gi'=gij$ for all $g \in G$. Since every element of $X(B)$ is of the form $gi$, we see that $f$ coincides with $j^*$, and so $\Phi$ is full.

Finally, we show that $\Phi$ is essentially surjective. Let $X$ be a $\Omega$-smooth $G$-set. By definition, $X$ is isomorphic to an orbit on some $\Omega^n$, say the orbit of $(x_1, \ldots, x_n)$. Let $A$ be the definable closure of $A_0=\{x_1, \ldots, x_n\}$. We have a natural $G$-map $f \colon X(A) \to X$ by $f(i)=(i(x_1), \ldots, i(x_n))$. This map is surjective since $X$ is transitive. We now show it is injective. Thus suppose we have $i,j \in X(A)$ such that $f(i)=f(j)$, meaning $i(x)=j(x)$ for $x \in A_0$. By definition, there exist $g,h \in G$ such that $i(x)=gx$ and $j(x)=hx$ for all $x \in A$. We thus see that $gh^{-1}$ belongs to $G(A_0)$. Since $G(A_0)=G(A)$, we see that $i=j$, as required.
\end{proof}

Suppose $i \colon A \to B$ and $j \colon A \to C$ are embeddings of structures. An \defn{amalgamation} of $B$ and $C$ over $A$ is a structure $D$ equipped with maps $j' \colon B \to D$ and $i' \colon C \to D$ such that $j' \circ i = i' \circ j$ and $D$ is the definable closure of $\im(i') \cup \im(j')$. One easily sees that the fiber product $X(B) \times_{X(A)} X(C)$ is naturally isomorphic to the disjoint union of the sets $X(D)$ as $(D, i', j')$ varies over isomorphism classes of amalgamations. (Key point: if $(j', i')$ is an element of the fiber product then the definable closure $D$ of $\im(i') \cup \im(j')$ is an amalgamation.)

The above discussion allows one to translate concepts from $G$-sets to structures. For example, $\BB(G, \Omega)$ has a basis indexed by isomorphism classes of structures. The product of two structures in $\BB(G, \Omega)$ is the sum of their amalgamations (over the empty structure). Similarly, a measure can be seen as a rule that assigns a number to each embedding $A \to B$, such that some conditions hold.

\begin{example} \label{ex:sym-struc}
Consider the infinite symmetric group $(\fS, \Omega)$ from Example~\ref{ex:sym}. All finite subsets of $\Omega$ are definably closed. The category of structures $\fC$ is equivalent to the category of finite sets, with morphisms being injections. If $A$ is a finite set then $X(A)$ is simply the set of all injections $A \to \Omega$.
\end{example}

\section{Tensor categories from oligomorphic groups} \label{s:tencat}

\subsection{Integration} \label{ss:int}

Let $G$ be a pro-oligomorphic group and let $\mu$ be a measure for $G$ valued in a field $k$. Suppose $X$ is a $G$-set and $\phi \colon X \to k$ is a function. We say that $\phi$ is \defn{smooth} if it is $U$-invariant for some open subgroup $U$, and \defn{Schwartz} if additionally its support consists of finitely many $U$-orbits. We let $\cC(X)$ be the collection of all Schwartz functions on $X$; we call this the \defn{Schwartz space} of $X$.

Given a Schwartz function $\phi$ on $X$, we can form the integral
\begin{displaymath}
\int_X \phi(x) dx.
\end{displaymath}
Integration is a linear functional on $\cC(X)$, and uniquely characterized by the condition that the integral of the indicator function $1_A$ is $\mu(A)$, where $A$ is a finitary $\hat{G}$-subset of $X$. See \cite[\S 4.3]{repst} for details. More generally, if $f \colon X \to Y$ is a morphism in $\bS(G)$ then there is a push-forward map $f_* \colon \cC(X) \to \cC(Y)$. These constructions have the basic properties one would expect; in particular, push-forwards are transitive, which implies a version of Fubini's theorem holds in this setting. See \cite[\S 4.4]{repst}.

\subsection{Permutation modules}

Let $X$ and $Y$ be finitary $G$-sets. A \defn{$Y \times X$ matrix} is a Schwarz function $A \in \cC(Y \times X)$. If $Z$ is a third such set and $B$ is a $Z \times Y$ matrix then we define $BA$ to be the $Z \times X$ matrix given by
\begin{displaymath}
(BA)(z, x) = \int_Y B(z,y) A(y,z) dy.
\end{displaymath}
This version of matrix multiplication has the expected properties. Note that an element of $\cC(X)$ can be viewed as a $X \times \bone$ matrix, i.e., a column vector indexed by $X$, and a $Y \times X$ matrix $A$ induces a linear map $A \colon \cC(X) \to \cC(Y)$ by matrix multiplication. The matrix $A$ is determined by its associated linear map: the $x$th column of $A$ is the Schwartz function $A \delta_x$. See \cite[\S 7]{repst} for additional details.

We now define a category $\uPerm(G, \mu)$ of formal permutation modules. The objects are the Schwartz spaces $\cC(X)$, where $X$ is a finitary $G$-set. A morphism $\cC(X) \to \cC(Y)$ is a $G$-invariant $Y \times X$ matrix (or the linear map it induces); we note that the space of morphisms is a finite dimensional $k$-vector space. Composition is matrix multiplication (or composition of linear transformations). This category has direct sums and a tensor product $\uotimes$. On objects, these operations are given by
\begin{displaymath}
\cC(X) \oplus \cC(Y) = \cC(X \amalg Y), \qquad
\cC(X) \uotimes \cC(Y) = \cC(X \times Y).
\end{displaymath}
On morphisms, the direct sum is the usual block matrix construction, while tensor product is the usual Kronecker product of matrices. Every object of $\uPerm(G, \mu)$ is rigid and self-dual. See \cite[\S 8]{repst} for more details.

\subsection{Nilpotents}

Let $\mu$ be a measure for $G$ valued in a field $k$. If $A$ is an $X \times X$ matrix with entries in $k$, its \defn{trace} is defined by
\begin{displaymath}
\tr(A)=\int_X A(x,x) dx.
\end{displaymath}
If $A$ is $G$-invariant then it defines an endomorphism $A$ of $\cC(X)$ in $\uPerm(G, \mu)$, and the trace of this endomorphism (in the sense of tensor categories) coincides with the above trace \cite[Proposition~8.10]{repst}. The following is a very important condition on $\mu$:
\begin{itemize}
\item[(Nil)] If $A$ is a $G$-invariant nilpotent matrix then $A$ has trace~0.
\end{itemize}
In \cite{repst}, we showed that (Nil) automatically holds for measures valued in a field of positive characteristic. We observe one simple result about (Nil) here.

\begin{proposition} \label{prop:nil}
Let $\mu$ be a measure for $G$ valued in a field $k$, let $k'/k$ be an extension field, and let $\mu'$ be the corresponding $k'$-valued measure. Then $\mu$ satisfies (Nil) if and only if $\mu'$ does.
\end{proposition}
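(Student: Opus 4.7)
The plan is to reduce to characteristic zero and then to a purely algebraic statement about trace functionals on finite-dimensional $k$-algebras. First, if $k$ has positive characteristic then both $\mu$ and $\mu'$ satisfy (Nil) automatically by the result from \cite{repst} recalled just above, so there is nothing to do; I may therefore assume $\mathrm{char}(k) = 0$. Fix a finitary $G$-set $X$ and set $E = \End_{\uPerm(G,\mu)}(\cC(X))$. Since $G$ has only finitely many orbits on $X \times X$, $E$ is finite-dimensional over $k$ (with basis the indicator matrices of the orbits), and the corresponding algebra for $\mu'$ is $E' = E \otimes_k k'$, whose trace $\tau'$ is the $k'$-linear extension of $\tau$. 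The easy half of the equivalence is then immediate: any nilpotent $A \in E$ is still nilpotent in $E'$ and $\tau'(A) = \tau(A)$, so (Nil) for $\mu'$ implies (Nil) for $\mu$.

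The remaining direction rests on the following algebraic lemma: for any finite-dimensional $k$-algebra $F$ in characteristic zero with a cyclic functional $\sigma : F \to k$, condition (Nil) is equivalent to $\sigma|_{\cN(F)} = 0$, where $\cN(F)$ denotes the (nilpotent) Jacobson radical. One direction is trivial since $\cN(F)$ consists of nilpotents. For the converse, $\sigma$ descends to a cyclic functional on the semisimple quotient $F/\cN(F)$, and the image of any nilpotent element of $F$ is still nilpotent there, so it suffices to prove that on a finite-dimensional semisimple $k$-algebra in characteristic zero every cyclic functional annihilates nilpotent elements. By Wedderburn one writes the semisimple algebra as $\prod M_{n_i}(D_i)$ and notes that cyclic functionals are $k$-linear combinations of the reduced traces of the simple factors; in characteristic zero, the reduced characteristic polynomial of a nilpotent element is a pure power of $T$, so its reduced trace vanishes.

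With the lemma in hand, I need only compare nilradicals across base change: I claim $\cN(E') = \cN(E) \otimes_k k'$ in characteristic zero. The inclusion $\supseteq$ is immediate, since $\cN(E) \otimes_k k'$ is a nilpotent ideal of $E'$ of the same nilpotency index as $\cN(E)$. For $\subseteq$, the quotient $(E/\cN(E)) \otimes_k k'$ is semisimple: in characteristic zero the center of any simple factor is a separable extension of $k$, whose extension by $k'$ is \'etale, and central simple algebras remain central simple under field extension of their centers. Faithful flatness of $k \to k'$ gives $\tau|_{\cN(E)} = 0$ iff $\tau'|_{\cN(E) \otimes_k k'} = 0$, and combining with the lemma applied to both $(E,\tau)$ and $(E',\tau')$ proves the equivalence. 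I expect the main obstacle to be the semisimple case of the lemma, which uses Wedderburn structure together with the vanishing of reduced traces of nilpotent elements in characteristic zero; the rest is formal base-change bookkeeping.
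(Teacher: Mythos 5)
Your proof is correct, and its overall skeleton matches the paper's: dispose of positive characteristic via the general result from \cite{repst}, note the easy direction, and for the hard direction reduce everything to the behaviour of the trace on the Jacobson radical, which is compatible with base change because the semisimple quotient stays semisimple in characteristic zero. The difference lies in the key algebraic step on the semisimple quotient. The paper first reduces to $k'$ algebraically closed, so that $E'/J(E')$ is a product of matrix algebras, and then uses the elementary fact that a nilpotent (indeed, any trace-zero) element of a matrix algebra is a sum of commutators; the trace of a nilpotent $A\in E'$ is then the trace of a sum of commutators plus elements of $J(E)$, all of which vanish. You instead prove that any cyclic functional on a finite-dimensional semisimple algebra in characteristic zero kills nilpotents, by factoring it through the reduced traces of the Wedderburn factors (strictly, through $k$-linear functionals on their centers composed with reduced traces --- your phrasing ``$k$-linear combinations'' elides that the centers may be proper extensions of $k$, but the conclusion that nilpotents are killed is unaffected since their reduced characteristic polynomials are pure powers of $T$). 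What your route buys is that you never need to pass to an algebraic closure, and the criterion ``(Nil) $\Leftrightarrow$ the trace vanishes on $J(E)$ for every $X$'' is isolated as a clean reusable statement; what the paper's route buys is brevity, since ``nilpotent implies sum of commutators'' in a split semisimple algebra is a one-line linear-algebra fact. One hypothesis you use implicitly and should record is that the trace functional on $E$ is cyclic, i.e.\ $\tr(AB)=\tr(BA)$; this follows either from the integral formula together with Fubini, or from the identification of this trace with the categorical trace in $\uPerm(G,\mu)$.
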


\begin{proof}
If $k$ has positive characteristic then both $\mu$ and $\mu'$ satisfy (Nil). Suppose then that $k$ has characteristic~0. If $\mu'$ satisfies (Nil) then clearly $\mu$ does as well. Conversely, suppose that $\mu$ satisfies (Nil); it suffices to prove that $\mu'$ satisfies (Nil) when $k'$ is algebraically closed, so we assume this. Let $A$ be a $G$-invariant nilpotent $X \times X$ matrix with values in $k'$. Let $R$ be the ring of $G$-invariant $X \times X$  matrices with entries in $k$, and let $R'$ be the corresponding ring over $k'$, so that $R' = k' \otimes_k R$. Since $(R/J(R)) \otimes_k k'$ is semi-simple, it follows that $J(R')=k' \otimes_k J(R)$, where $J(-)$ denotes the Jacobson radical \cite{Steinberg}. The image of $A$ in $R'/J(R')$ is a nilpotent element, and therefore a sum of commutators; here we use that $R'/J(R')$ is semi-simple, and therefore a product of matrix algebras (since $k'$ is algebraically closed), and so the claim is elementary. We can therefore write
\begin{displaymath}
A = BC-CB + \sum_{i=1}^n \alpha_i D_i,
\end{displaymath}
where $B,C \in R'$, $\alpha_i \in k'$, and $D_i \in J(R)$. Since $\mu$ satisfies (Nil), the $D_i$ have trace~0. Of course, commutators have trace~0 as well, so $A$ has trace~0, as required.
\end{proof}

The above proposition implies that the (Nil) property for $\mu$ only depends on the kernel of the ring homomorphism $\mu \colon \Theta(G) \to k$. In other words, we can consider the locus in $\Spec(\Theta(G))$ where (Nil) holds.

\subsection{Abelian envelopes} \label{ss:abenv}

The category $\uPerm(G, \mu)$ is essentially never abelian. We now discuss two ways to make an abelian version. The simplest abelian result is the following:

\begin{theorem}
Suppose that $\mu$ is regular and satisfies (Nil). Then the Karoubi envelope of $\uPerm(G, \mu)$ is a semi-simple pre-Tannakian category.
\end{theorem}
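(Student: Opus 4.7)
My plan is to reduce the theorem to showing that for every finitary $G$-set $X$, the finite-dimensional $k$-algebra $R(X) := \End_{\uPerm(G, \mu)}(\cC(X)) = \cC(X \times X)^G$ is semisimple. Once this is known, a standard argument gives that the Karoubi envelope of any $k$-linear rigid symmetric monoidal category with finite-dimensional semisimple endomorphism algebras is abelian and semisimple; combined with $\End(\bbone) = \cC(\mathrm{pt})^G = k$ and the essential smallness of $\uPerm(G, \mu)$, this gives the pre-Tannakian property.

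To show $R(X)$ is semisimple I would play regularity of $\mu$ and the $(\mathrm{Nil})$ condition against each other via the trace pairing. Fix the distinguished basis $\{e_Z := 1_Z\}$ of $R(X)$ indexed by the $G$-orbits $Z$ on $X \times X$, and let $\sigma$ be the coordinate swap; since $\sigma$ is $G$-equivariant, it permutes these orbits. Using the trace formula $\tr(A) = \int_X A(x,x)\, dx$ from \cite[Proposition~8.10]{repst} and Fubini, I would compute
\[
\tr(e_Z e_W) \;=\; \int_{X \times X} 1_Z(x, y)\, 1_W(y, x)\, dx\, dy \;=\; \mu\bigl(Z \cap \sigma(W)\bigr) \;=\; \mu(Z)\, \delta_{Z, \sigma(W)},
\]
the last equality using that two $G$-orbits are either equal or disjoint. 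Hence the Gram matrix of the trace form on $R(X)$ is a scaled permutation matrix whose diagonal entries are the numbers $\mu(Z)$. Since each orbit $Z$ is a transitive $G$-set, regularity gives $\mu(Z) \in k^\times$, so the trace form is nondegenerate.

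Nondegeneracy plus $(\mathrm{Nil})$ then forces $R(X)$ to be semisimple. Elements of the Jacobson radical $J := J(R(X))$ are nilpotent, so by $(\mathrm{Nil})$ they have trace zero; since $J$ is a two-sided ideal, $\tr(J \cdot R(X)) \subseteq \tr(J) = 0$, so $J$ lies in the radical of the trace pairing, which is zero. Hence $J = 0$ and $R(X)$ is semisimple.

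The only substantive obstacle is the trace pairing computation, which reduces to bookkeeping once the integration framework of \S \ref{ss:int} is invoked; everything else is either immediate from the hypotheses or is standard general nonsense about rigid tensor categories with semisimple Hom-algebras.
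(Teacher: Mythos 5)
Your proof is correct, and it follows the same reduction as the paper: the paper's entire proof is to invoke \cite[Proposition~7.19]{repst}, which states that regularity plus (Nil) forces the endomorphism algebra of every object of $\uPerm(G,\mu)$ to be semisimple, and then to apply the standard fact about Karoubi envelopes of rigid categories with finite-dimensional semisimple endomorphism algebras. What you have done is supply a self-contained proof of that cited input via the computation $\tr(e_Z e_W)=\mu(Z)\,\delta_{Z,\sigma(W)}$ and the observation that (Nil) places the Jacobson radical inside the radical of the (nondegenerate) trace form; this is a legitimate and complete argument, essentially reconstructing the proof the paper outsources to \cite{repst}.
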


\begin{proof}
By \cite[Proposition~7.19]{repst}, the endomorphism algebra of any object in $\uPerm(G, \mu)$ is semi-simple, and so the result follows.
\end{proof}

The next abelian result we discuss is more general, but also significantly more complicated. Since the details will not matter so much for this paper, we just indicate the main ideas. We define the \defn{completed group algebra} of $G$, denoted $A$, to be the inverse limit of the Schwartz spaces $\cC(G/U)$ as $U$ runs over open subgroups. This has an algebra structure via convolution of functions. We say that an $A$-module $M$ is \defn{smooth} if for each $x \in M$ there is some $U$ such that the action of $A$ on $x$ factors through $\cC(G/U)$. We let $\uRep(G, \mu)$ be the category of finite length\footnote{In \cite{repst}, we do not impose the finite length condition, but it will be more convenient to do so here.} smooth $A$-modules. If $X$ is a finitary $G$-set then the Schwartz space $\cC(X)$ is naturally a smooth $A$-module, and this allows one to define a functor
\begin{displaymath}
\uPerm(G, \mu) \to \uRep(G, \mu),
\end{displaymath}
at least if the spaces $\cC(X)$ are known to have finite length. See \cite[Part~III]{repst} for details. The following is the main result about the $\uRep$ category:

\begin{theorem}
Suppose that $\mu$ is quasi-regular and satisfies (Nil). Then $\uRep(G, \mu)$ is naturally a pre-Tannakian category. Moreover, the above functor is a tensor functor, and makes $\uRep$ the abelian envelope of $\uPerm$ in the sense of \cite[Definition~3.1.2]{CEAH}.
\end{theorem}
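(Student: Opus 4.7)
The plan is to establish four things: first, that every Schwartz space $\cC(X)$ has finite length in the category of smooth $A$-modules, so the functor $\uPerm(G,\mu) \to \uRep(G,\mu)$ lands in the specified subcategory; second, that $\uRep(G,\mu)$ is pre-Tannakian; third, that the functor is monoidal; and fourth, that it satisfies the CEAH universal property of abelian envelopes. The hypotheses enter as follows: quasi-regularity is what controls finite length, while (Nil) is what guarantees a well-behaved symmetric monoidal structure on $\uRep$.

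For finite length I would choose an open subgroup $U$ for which $\mu|_U$ is regular. The previous theorem then says the Karoubi envelope of $\uPerm(U,\mu|_U)$ is semisimple pre-Tannakian, so $\End_{\cC(G/U)}(\cC(X))$ (the $U$-equivariant endomorphisms) is finite dimensional semisimple for every finitary $G$-set $X$. Since $A$-module maps out of $\cC(X)$ are detected by $U$-equivariant endomorphisms via adjunction on $G/U$, one deduces $\End_A(\cC(X))$ is finite dimensional. Combined with the standard fact that every smooth $A$-module is a quotient of some $\cC(X)$, this forces finite length.

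For the abelian-envelope structure, the tensor product on permutation modules is $\cC(X) \uotimes \cC(Y) = \cC(X \times Y)$; to extend this to finite-length smooth $A$-modules I would present $M$ and $N$ as cokernels of maps of permutation modules, take the associated bicomplex, and define $M \otimes N$ as the induced cokernel. The real content is showing well-definedness and exactness: two presentations of $M$ differ by a map in $\uPerm$, and one must know this map induces the identity on cokernels after tensoring — the obstruction is a nilpotent invariant endomorphism, so the (Nil) hypothesis (via Proposition on nilpotents, after base change if needed) kills it. Rigidity descends from the self-duality of permutation modules by presenting $M^\vee$ as a kernel. The identity object is $\cC(\mathrm{pt})$, and $\End(\bbone) = k$ reduces to the computation in $\uPerm$.

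The universal property (iv) is then the cleanest step: given any tensor functor $F \colon \uPerm(G,\mu) \to \cT'$ to a pre-Tannakian category, one extends $F$ by sending a cokernel diagram to its image cokernel in $\cT'$; the (Nil) condition on $\cT'$ ensures independence of the presentation, and the tensor structure is preserved by the construction of $\otimes$ in $\uRep$. I expect the main obstacle to be step two, specifically proving that the cokernel-based tensor product on $\uRep$ is bifunctorial and exact; this is the technically delicate point where one must simultaneously use (Nil), quasi-regularity, and the structural results of \cite[Part III]{repst}, and where a careful treatment of traces of idempotents and nilpotents in $\End_A$ of permutation modules is needed.
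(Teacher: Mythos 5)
First, a point of reference: the paper does not prove this theorem at all --- its ``proof'' is a citation to \cite[Theorems~13.2 and~13.13]{repst}, where the construction of $\uRep(G,\mu)$ as finite-length smooth modules over the completed group algebra and the verification of the abelian-envelope property are carried out. Your sketch therefore has to be measured against that reference rather than against anything in the present paper. At the level of architecture you are on the right track: finite length is indeed obtained by restricting to an open subgroup $U$ on which $\mu$ is regular, the tensor product is extended from Schwartz spaces by presentations, and the universal property is checked by right-exact extension. But two of your key implications do not hold as stated.

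The finite-length step is a non sequitur. Finite-dimensionality (even semisimplicity) of $\End_A(\cC(X))$ does not bound the length of $\cC(X)$; there are modules with small endomorphism rings and infinite submodule lattices. The argument that actually works is different: regularity of $\mu|_U$ together with (Nil) forces the $U$-equivariant endomorphism algebra of $\cC(X)$ to be \emph{semisimple} (the trace form is non-degenerate by regularity and kills the radical by (Nil)), whence $\cC(X)$ is a finite direct sum of simple objects already over $U$; since any chain of $A(G)$-submodules is in particular a chain of $A(U)$-submodules, its length is bounded. This also corrects your account of where (Nil) enters. The well-definedness and bifunctoriality of the cokernel tensor product is a purely formal consequence of right-exactness and requires no hypothesis on traces; the obstruction you describe (``a nilpotent invariant endomorphism'') does not arise there. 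What (Nil) buys is precisely the semisimplicity of endomorphism algebras at the regular level, which then propagates to exactness of duality, faithfulness of $\uPerm \to \uRep$, and $\End(\bbone)=k$. Finally, for the CEAH universal property you would still need to show that the right-exact extension of a tensor functor out of $\uPerm$ is automatically exact and faithful; independence of presentation alone is not the hard part. As written, the proposal identifies the correct skeleton but the load-bearing steps are either invalid or deferred to exactly the technical content of \cite[Part~III]{repst} that one would need to reproduce.
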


\begin{proof}
See \cite[Theorem~13.2]{repst} and \cite[Theorem~13.13]{repst}.
\end{proof}

We note that if $\mu$ is regular and satisfies (Nil) then $\uRep(G, \mu)$ is equivalent to the Karoubi envelope of $\uPerm(G, \mu)$.

\subsection{Restriction functors}

Let $G$ and $H$ be pro-oligomorphic groups and let $H \to G$ be a group homomorphism that induces a functor
\begin{displaymath}
\bS(G) \to \bS(H)
\end{displaymath}
as in \S \ref{ss:pullback}. Let $\nu$ be a measure on $H$ valued in a field $k$, and suppose condition $(\ast)$ from Proposition~\ref{prop:pullback} holds, so that we have an induced measure $\mu$ on $G$.

\begin{proposition} \label{prop:res-ab}
We have the following:
\begin{enumerate}
\item There is a natural faithful tensor functor
\begin{displaymath}
\Phi_0 \colon \uPerm(G, \mu) \to \uPerm(H, \nu), \qquad \cC(X) \mapsto \cC(X).
\end{displaymath}
\item If $\nu$ satisfies (Nil) then so does $\mu$.
\item Suppose $\mu$ and $\nu$ are quasi-regular and $\nu$ satisfies (Nil). Then $\Phi_0$ induces an exact faithful tensor functor
\begin{displaymath}
\Phi \colon \uRep(G, \mu) \to \uRep(H, \nu).
\end{displaymath}
\end{enumerate}
\end{proposition}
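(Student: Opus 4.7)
My plan is to establish the three parts in sequence, with part (a) containing the essential technical content and parts (b) and (c) following by formal arguments.

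For (a), the functor $\Phi_0$ sends a finitary $G$-set $X$ to itself, now viewed as a finitary smooth $H$-set by $(\dag)$, and sends a $G$-invariant matrix to itself, which is automatically $H$-invariant. To see this matrix remains Schwartz over $H$, I would note that continuity of $H \to G$ promotes $U$-invariance to $\phi^{-1}(U)$-invariance, while the support, being a finite union of $G$-orbits, is a finite union of $H$-orbits by finitariness. The core point is functoriality: for a composition entry $(BA)(z,x) = \int_Y B(z,y) A(y,x)\,dy$, one must verify that the $\mu$-integral equals the $\nu$-integral. By linearity and the indicator-function definition of integration, this reduces to the identity $\mu(A) = \nu(A)$ for every finitary $\hat{G}$-subset $A$ of a $G$-set $Y$. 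The construction of $\mu$ via $(\ast)$ delivers exactly this: decompose $A$ into $U$-orbits $A = \bigsqcup_i A_i$ for $U$ a group of definition; interpret each $\mu(A_i)$ via Definition~\ref{defn:meas2} as $\mu(f_i)$ for the fiber map $f_i \colon G \times_U A_i \to G/U$ of transitive $G$-sets; apply $(\ast)$ to identify $\mu(f_i) = \nu(f_i) = \nu(A_i)$ with $A_i$ now viewed as a finitary $\hat{H}$-set; and sum. Since each $U$-orbit $A_i$ breaks further as a finite union of $\phi^{-1}(U)$-orbits and $\nu$ is additive, $\sum_i \nu(A_i) = \nu(A)$, giving the desired equality. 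Preservation of $\oplus$ and $\uotimes$ is then formal since disjoint unions and products of $G$-sets commute with restriction, and faithfulness is immediate because $\Phi_0$ is literally the identity on morphism spaces viewed as function spaces.

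Part (b) is a direct corollary of the integration identity. Given a $G$-invariant nilpotent endomorphism $A$ of $\cC(X)$, the diagonal $x \mapsto A(x,x)$ is a $G$-invariant Schwartz function, so
$$\tr_\mu(A) = \int_X A(x,x)\,d\mu = \int_X A(x,x)\,d\nu = \tr_\nu(A),$$
and the right side vanishes by (Nil) for $\nu$ since $A$ remains nilpotent as an $H$-invariant matrix.

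For (c), part (b) yields (Nil) for $\mu$, so together with quasi-regularity both $\uRep(G, \mu)$ and $\uRep(H, \nu)$ are pre-Tannakian and are the abelian envelopes of their respective $\uPerm$ categories by the theorem of \S\ref{ss:abenv}. The composition $\uPerm(G, \mu) \xrightarrow{\Phi_0} \uPerm(H, \nu) \hookrightarrow \uRep(H, \nu)$ is a tensor functor into a pre-Tannakian target, so the universal property of the abelian envelope in the sense of \cite[Definition~3.1.2]{CEAH} produces a unique exact tensor extension $\Phi \colon \uRep(G, \mu) \to \uRep(H, \nu)$. Faithfulness is a general fact about pre-Tannakian categories: the kernel of an exact symmetric monoidal $k$-linear functor is a tensor ideal not containing the preserved unit object, and in a rigid symmetric monoidal abelian category with simple unit any such ideal is zero.

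The main obstacle is the integration compatibility $\int_Y \phi\,d\mu = \int_Y \phi\,d\nu$ in part (a); everything downstream follows by essentially formal manipulations. The verification itself is not conceptually deep but requires careful bookkeeping of how the two equivalent definitions of measure interact with the restriction of scalars from $G$-sets to $H$-sets, which is precisely the content condition $(\ast)$ is designed to encode.
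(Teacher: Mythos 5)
Your proposal is correct, and for parts (b) and (c) it matches the paper's (very terse) argument: the paper likewise deduces (b) from (a) and obtains (c) from the mapping property of abelian envelopes. For part (a) you take a more hands-on route than the paper. The paper does not verify functoriality of the identity-on-matrices map directly; instead it observes that $X \mapsto \cC(X)$ is a functor $\bS(G) \to \uPerm(H,\nu)$ for which $f_* f^*$ is multiplication by $\nu(f) = \mu(f)$ whenever $f$ is a map of transitive $G$-sets (this is where $(\ast)$ and the theory of $\nu$-constant maps enter), and then invokes the universal mapping property of $\uPerm(G,\mu)$ from \cite[\S 9.4]{repst} to produce $\Phi_0$. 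You instead build $\Phi_0$ explicitly and reduce everything to the identity $\mu(A) = \nu(A)$ for finitary $\hat{G}$-subsets, proved by passing through Definition~\ref{defn:meas2} and condition $(\ast)$ orbit by orbit; this is a correct unpacking of the same underlying fact (that restriction along $H \to G$ does not change measures of $\hat{G}$-sets), at the cost of some bookkeeping (e.g., checking that a $G$-Schwartz function is $H$-Schwartz, which you handle correctly via $(\dag)$ and continuity) that the citation to \cite[\S 9.4]{repst} absorbs. Either route is acceptable; the paper's is shorter because the heavy lifting was already done in the general linearization formalism, while yours is more self-contained.
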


\begin{proof}
We have a functor $\bS(G) \to \uPerm(H, \nu)$ by $X \mapsto \cC(X)$. If $f \colon Y \to X$ is a map of $G$-sets then there are corresponding maps $f_*$ and $f^*$ in $\uPerm(H, \nu)$. If $X$ and $Y$ are transitive then the composition $f_* f^*$ is multiplication by $\nu(f)=\mu(f)$ since $f$ is $\nu$-constant. The mapping property for $\uPerm(G, \mu)$ established in \cite[\S 9.4]{repst} now provides the functor $\Phi_0$, and so (a) holds. Statements (b) follows directly from (a), while (c) follows from the mapping property of abelian envelopes.
\end{proof}

\subsection{Vector space objects} \label{ss:vec}

Let $G$ be a pro-oligomorphic group equipped with a measure $\mu$ valued in an algebraically closed field $k$. Let $\bF$ be a finite field of cardinality $q$, with $q$ non-zero in $k$. Fix a non-trivial additive character $\psi$ of $\bF$ valued in $k$.

By a \defn{vector space} in $\bS(G)$, we mean an $\bF$-vector space object in this category. Concretely, this is an $\bF$-vector space $V$ equipped with an action of $G$ that is finitary, smooth, and $\bF$-linear. For the oligomorphic groups we consider in Part~\ref{part:ex}, the basic $G$-set $\Omega$ will be a vector space object in this sense. Another example of a vector space in $\bS(G)$ occurs in Kriz's quantum Delannoy category \cite{Kriz2}.

Suppose $V$ and $W$ are vector spaces in $\bS(G)$, and we have a $G$-invariant bilinear pairing
\begin{displaymath}
\langle, \rangle \colon V \times W \to \bF
\end{displaymath}
that is \defn{non-degenerate}, in the sense that for every non-zero $x \in V$ there exists some $y \in W$ such that $\langle x, y \rangle \ne 0$, and similarly with the roles of $V$ and $W$ switched. Moreover, assume that $\mu(V)$ and $\mu(W)$ are non-zero. We define the \defn{Fourier transform} and the \defn{inverse Fourier transform} to be the linear maps
\begin{align*}
\sF &\colon \cC(V) \to \cC(W), & (\sF \phi)(y) &= \int_W \psi(\langle x, y \rangle) \phi(x) dy \\
\sF' &\colon \cC(W) \to \cC(V), & (\sF' \phi)(x) &= \mu(V)^{-1} \int_V \psi(-\langle x, y \rangle) \phi(x) dx
\end{align*}
We have the following version of Fourier inversion:

\begin{proposition} \label{prop:fourier}
$\sF$ and $\sF'$ are inverse isomorphisms; in particular, $\mu(V)=\mu(W)$.
\end{proposition}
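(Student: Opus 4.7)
The plan is classical Fourier inversion, adapted to the oligomorphic setting. The two key identities will be $\sF\sF' = \id_{\cC(W)}$ and $\sF'\sF = (\mu(W)/\mu(V))\,\id_{\cC(V)}$; combined with the (easy) non-vanishing of $\sF$, these force $\mu(V) = \mu(W)$ and that $\sF, \sF'$ are mutually inverse. Concretely, applying Fubini \cite[\S 4.4]{repst} to the nested integrals gives, for $\phi \in \cC(V)$ and $\eta \in \cC(W)$,
\begin{align*}
\sF'\sF\phi(x_0) &= \mu(V)^{-1}\int_V \phi(x)\, K(x-x_0)\, dx, & K(z) &:= \int_W \psi(\langle z, y\rangle)\, dy,\\
\sF\sF'\eta(y_0) &= \mu(V)^{-1}\int_W \eta(y)\, L(y_0-y)\, dy, & L(w) &:= \int_V \psi(\langle x, w\rangle)\, dx.
\end{align*}
These inner integrals are legitimate: for fixed $z$, the integrand $\psi(\langle z, \cdot\rangle)$ is $G_z$-invariant by $G$-invariance of the pairing, $G_z$ is open by smoothness, and $W$ is a finite union of $G_z$-orbits by Roelcke precompactness together with the finitariness of $W$. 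The whole argument thus reduces to the character-orthogonality identities $K(z) = \mu(W)\,\delta_{z,0}$ and $L(w) = \mu(V)\,\delta_{w,0}$.

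The two orthogonality statements are proved identically, so consider $K$. The $z=0$ case is immediate. For $z \ne 0$, non-degeneracy gives $y_0 \in W$ with $\langle z, y_0\rangle \ne 0$, and non-triviality of $\psi$ then gives $\psi(\langle z, y_0\rangle) \ne 1$. Changing variables $y \mapsto y+y_0$ in $K(z)$ yields
\begin{displaymath}
K(z) = \int_W \psi(\langle z, y_0\rangle)\,\psi(\langle z, y\rangle)\, dy = \psi(\langle z, y_0\rangle)\,K(z),
\end{displaymath}
forcing $K(z) = 0$. The principal obstacle is justifying this substitution: the translation $T_{y_0}\colon W \to W$ is not $G$-equivariant in general, so translation-invariance of the integral is not a formal consequence of axiom~(d) of Definition~\ref{defn:meas1}. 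To circumvent this, shrink to an open subgroup $U \le G$ fixing $y_0$; then $T_{y_0}$ commutes with $U$, so it restricts, for every finitary $\hat G$-subset $B \subset W$, to a $\hat G$-isomorphism (with group of definition $U$) from $B$ onto $B + y_0$. Axiom~(a) of Definition~\ref{defn:meas1} then yields $\mu(B) = \mu(B+y_0)$, and translation-invariance of the integration functional on $\cC(W)$ follows by linearity.

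Given the two orthogonality identities, $\sF\sF' = \id$ and $\sF'\sF = c\cdot\id$ with $c = \mu(W)/\mu(V)$, whence
\begin{displaymath}
c\cdot\sF = \sF\cdot(\sF'\sF) = (\sF\sF')\cdot\sF = \sF.
\end{displaymath}
So either $\sF = 0$ or $c = 1$. The former is ruled out by $\sF(1_{\{0\}})(y) = \psi(0)\mu(\{0\}) = 1$, the constant function~$1$ on $W$, which is Schwartz (since $W$ is finitary) and non-zero. Thus $\mu(V) = \mu(W)$ and $\sF, \sF'$ are inverse isomorphisms.
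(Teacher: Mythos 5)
Your proof is correct and follows essentially the same route as the paper's: swap the order of integration and invoke orthogonality of non-trivial characters to get $\sF\sF'=\id$ and $\sF'\sF=c\cdot\id$ with $c=\mu(W)/\mu(V)$. Two minor differences worth noting: the paper deduces $\mu(V)=\mu(W)$ by observing that the isomorphic objects $\cC(V)$ and $\cC(W)$ have equal categorical dimension, whereas you use the purely algebraic identity $c\,\sF=\sF$ together with $\sF\neq 0$; and your justification of translation-invariance of the integral (passing to an open subgroup fixing $y_0$ so that translation becomes a $\hat{G}$-isomorphism) spells out a step that the paper's Lemma~\ref{lem:fourier} leaves implicit.
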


\begin{proof}
We have
\begin{displaymath}
(\cF'\cF \phi)(x) = \mu(V)^{-1} \int_V \int_W \psi(\langle x'-x,y \rangle) \phi(x') dx' dy.
\end{displaymath}
Switch the order of integration. If $x \ne x'$ then $\psi(\langle x-x', - \rangle)$ is a non-trivial character of $W$ (since the pairing is non-degenerate), and so the inner integral vanishes (Lemma~\ref{lem:fourier} below). If $x=x'$ then the inner integral is just $\mu(W)$. We thus find $\cF' \circ \cF$ is the identity. A similar computation shows that $\cF \circ \cF'$ is equal to $c \cdot \id$, where $c=\mu(W)/\mu(V)$ is non-zero. We thus see that $\cF$ is an isomorphism, and so $\cC(V)$ and $\cC(W)$ have equal categorical dimension, i.e., $\mu(V)=\mu(W)$. It follows that $c=1$, and so $\cF$ and $\cF'$ are mutually inverse.
\end{proof}

\begin{lemma} \label{lem:fourier}
Let $\alpha \colon W \to k^{\times}$ be a Schwartz function that is also a non-trivial group homomorphism. Then $\int_W \alpha(y) dy = 0$.
\end{lemma}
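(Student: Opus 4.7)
The plan is to run the classical harmonic-analytic argument: since $\alpha$ is a non-trivial homomorphism, there exists some $y_0 \in W$ with $\alpha(y_0) \ne 1$; applying the substitution $y \mapsto y + y_0$ together with the homomorphism property $\alpha(y+y_0) = \alpha(y_0) \alpha(y)$ should yield
\begin{displaymath}
\int_W \alpha(y)\, dy \;=\; \int_W \alpha(y+y_0)\, dy \;=\; \alpha(y_0) \int_W \alpha(y)\, dy,
\end{displaymath}
from which $(1 - \alpha(y_0)) \int_W \alpha\, dy = 0$, and hence $\int_W \alpha\, dy = 0$, follows in one line.

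Only the first equality---translation invariance of the integral---needs justification, and this is the subtle point, because the translation map $T_{y_0} \colon W \to W$, $y \mapsto y + y_0$, is \emph{not} a morphism in $\bS(G)$, so one cannot directly quote a change-of-variables formula from \S \ref{ss:int}. The observation I would exploit is that $G$ acts $\bF$-linearly on the vector-space object $W$, so $T_{y_0}$ is at least equivariant for the open subgroup $G_{y_0}$. I would fix an open subgroup $U \subseteq G_{y_0}$ small enough that $\alpha$ is $U$-invariant (intersect $G_{y_0}$ with any open subgroup witnessing that $\alpha$ is smooth). Because $\alpha$ is Schwartz and nowhere vanishing, all of $W$ must be a finitary $U$-set, so $W$ decomposes into finitely many $U$-orbits $O$ with measures $\mu(O)$, and $\int_W \alpha\, dy = \sum_O \alpha(O)\, \mu(O)$, where $\alpha(O)$ denotes the constant value of $\alpha$ on $O$.

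With this picture in place, translation invariance should come out cleanly: since $T_{y_0}$ is a $U$-equivariant bijection, it permutes the set of $U$-orbits, and for each orbit $O$ the restriction $T_{y_0}|_O \colon O \to T_{y_0}(O)$ is a $U$-isomorphism, so $\mu(O) = \mu(T_{y_0}(O))$. Reindexing $\sum_O \alpha(T_{y_0}(O))\, \mu(O)$ by $O' := T_{y_0}(O)$ converts it into $\sum_{O'} \alpha(O')\, \mu(O') = \int_W \alpha\, dy$, which is exactly the required invariance. I expect the main (and essentially only) obstacle to be precisely this verification that a $U$-equivariant---but not $G$-equivariant---bijection preserves the integral; once that is in hand, the lemma reduces to the two-line character-theoretic computation above.
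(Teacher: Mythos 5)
Your proof is correct and is essentially the paper's own argument: the paper also picks $y_0$ with $\alpha(y_0)\ne 1$ and concludes $\alpha(y_0)c=c$ "by a change of variables." Your careful justification of that change of variables—restricting to an open $U\subseteq G_{y_0}$ on which $\alpha$ is invariant, noting $W$ is a finite union of $U$-orbits, and observing that the $U$-equivariant bijection $T_{y_0}$ permutes these orbits preserving their measures—is exactly the right way to fill in the step the paper leaves implicit.
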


\begin{proof}
Call the integral $c$. Since $\alpha$ is non-trivial, there is some $y_0 \in W$ such that $\alpha(y_0) \ne 1$. Multiplying the integral by $\alpha(y_0)$ and making a change of variables, we find $\alpha(y_0)c=c$, and so $c=0$.
\end{proof}

The maps $\sF$ and $\sF'$ are given by $G$-invariant matrices: e.g., $\sF$ is given by the matrix $(x,y) \mapsto \psi(\langle x, y \rangle)$. It follows that they define morphisms in the category $\uPerm(G, \mu)$. We thus obtain the following corollary:

\begin{corollary}
The Fourier transform $\sF \colon \cC(V) \to \cC(W)$ is an isomorphism in the category $\uPerm(G, \mu)$.
\end{corollary}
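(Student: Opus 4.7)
The plan is to bootstrap the corollary directly from Proposition~\ref{prop:fourier}, which already establishes that $\sF$ and $\sF'$ are mutually inverse linear isomorphisms between the Schwartz spaces $\cC(V)$ and $\cC(W)$. All that remains is to observe that both maps represent morphisms in $\uPerm(G,\mu)$, so that the inversion established at the level of linear maps automatically lifts.

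First I would check the representability. By the description of $\uPerm(G,\mu)$ recalled in \S\ref{ss:int}, a morphism $\cC(V) \to \cC(W)$ is a $G$-invariant Schwartz function on $W \times V$, acting via the matrix-multiplication formula $(A\phi)(y) = \int_V A(y,x)\phi(x)\,dx$. The Fourier transform $\sF$ has kernel $K(y,x) = \psi(\langle x,y\rangle)$: this is $G$-invariant because the pairing is $G$-invariant and $\psi$ is a fixed character of $\bF$. Since $V$ and $W$ are finitary $G$-sets, so is $W \times V$, and any $G$-invariant function on a finitary $G$-set is automatically Schwartz (take an open subgroup $U$ contained in the stabilizers of finitely many orbit representatives and use Roelcke precompactness to see that there are only finitely many $U$-orbits). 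The inverse Fourier transform $\sF'$ has kernel $\mu(V)^{-1}\psi(-\langle x,y\rangle)$; the same $G$-invariance argument applies, and the scalar $\mu(V)^{-1}$ makes sense because $\mu(V) \ne 0$ by hypothesis.

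Next I would transfer the inversion. The composition law in $\uPerm(G,\mu)$ is matrix multiplication, which by construction corresponds precisely to composition of the induced linear maps $\cC(V) \to \cC(W)$ and $\cC(W) \to \cC(V)$. Proposition~\ref{prop:fourier} gives $\sF'\sF = \id_{\cC(V)}$ and $\sF\sF' = \id_{\cC(W)}$ as linear maps, so the same identities hold in the hom-sets $\End_{\uPerm(G,\mu)}(\cC(V))$ and $\End_{\uPerm(G,\mu)}(\cC(W))$, exhibiting $\sF'$ as a two-sided inverse to $\sF$ inside $\uPerm(G,\mu)$.

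There is no real obstacle here; the corollary is pure bookkeeping once the analytic statement in Proposition~\ref{prop:fourier} is in hand. The only mildly nontrivial point to spell out is the observation that a $G$-invariant function on a finitary $G$-set is Schwartz, which I would include as a short parenthetical so the reader sees why the kernel functions defining $\sF$ and $\sF'$ actually live in the morphism spaces of $\uPerm(G,\mu)$.
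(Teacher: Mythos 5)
Your proposal is correct and matches the paper's argument: the paper likewise observes that $\sF$ and $\sF'$ are given by the $G$-invariant kernels $(x,y)\mapsto\psi(\langle x,y\rangle)$ and $\mu(V)^{-1}\psi(-\langle x,y\rangle)$, hence are morphisms in $\uPerm(G,\mu)$, and then invokes Proposition~\ref{prop:fourier} for the inversion. Your parenthetical on why a $G$-invariant function on a finitary $G$-set is Schwartz is fine, though it can be shortened: take $U=G$ itself in the definition, since the support then meets only finitely many $G$-orbits.
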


This is significant since it shows that one does not obtain new representations of $G$ by using the dual $W$ of $V$; that is, in the abelian category $\uRep(G, \mu)$ the modules $\cC(V)$ and $\cC(V) \oplus \cC(W)$ generate the same tensor subcategory. This is the key fact needed in \S \ref{s:gl} when we compare the two versions of the general linear group (parabolic and Levi).

The above corollary is all that we will require in this paper. However, we make a few additional comments here. Let $V$ be a vector space in $\bS(G)$. Then $\cC(V)$ is a commutative co-commutative Hopf algebra in $\uPerm(G, \mu)$. It also has operations induced by scalar multiplication by elements of $\bF$. We call this structure an \defn{$\bF$-Hopf algebra}. If $A$ is an $\bF$-Hopf algebra then the dual object $A^{\vee}$ is naturally an $\bF$-Hopf algebra as well; we call it the \defn{Cartier dual} of $A$. We write $\cC^*(V)$ for the Cartier dual of $\cC(V)$. The underlying object is $\cC(V)$, since it is self-dual, but the $\bF$-Hopf structure is changed; for instance, the algebra structure on $\cC^*(V)$ is given by convolution. One can show that $\cC^*(V)$ is an \'etale algebra if and only if $\mu(V) \ne 0$. Moreover, if $V$ and $W$ have a non-degenerate pairing then the Fourier transform $\cC^*(V) \to \cC(W)$ is an isomorphism of $\bF$-Hopf algebras.

Here is one interesting application of the above discussion. Suppose $\uPerm(G, \mu)$ has an abelian envelope $\uRep(G, \mu)$. Let $\Et$ be the category of \'etale algebras in $\uRep(G, \mu)$. In \cite{discrete}, we show that $\Et^{\op}$ is a pre-Galois category, meaning it is equivalent to $\bS(H)$ for some pro-oligomorphic group $H$. There is a natural inclusion $\bS(G) \to \Et^{\op}$ given by $X \mapsto \cC(X)$. Now suppose that $V$ is a vector space in $\bS(G)$ with $\mu(V) \ne 0$. Then $\cC^*(V)$ is a vector space in $\Et^{\op}$, and one can show that is admits a non-degenerate pairing with $V$. Thus, while $V$ may not have a dual in the pre-Galois category $\bS(G)$, it does have a canonical dual in a (possibly) larger pre-Galois category (under the assumptions we have made). It would be interesting to see this without going through tensor categories.

\section{The smooth group algebra} \label{s:smooth}

\subsection{c-smooth elements}

Fix a pro-oligomorphic group $G$. We say that an element $g \in G$ is \defn{c-smooth} if it is smooth for the conjugation action of $G$ on itself. In other words, $g$ is c-smooth if its centralizer
\begin{displaymath}
Z_G(g) = \{ h \in G \mid gh=hg \}
\end{displaymath}
is an open subgroup of $G$. We let $\GG$ be the set of c-smooth elements in $G$, which is easily seen to be a normal subgroup of $G$. In many cases, the group $\GG$ will be trivial; when this happens, the constructions in \S \ref{s:smooth} are uninteresting. However, for the groups of interest in this paper, $\GG$ will be a dense subgroup of $G$.

The object $\GG$ is interesting since $G$ acts smoothly on it. We can therefore regard it as an ind-object of $\bS(G)$, i.e., we have $\GG=\varinjlim_{\beta \in J} \GG^{\le \beta}$ where $J$ is a directed set and each $\GG^{\le \beta}$ is an object of $\bS(G)$. In fact, $\GG$ is a group object in the category of ind-objects. Moreover, if $X$ is any object of $\bS(G)$ then the action map
\begin{displaymath}
\GG \times X \to X, \qquad (g, x) \mapsto gx
\end{displaymath}
is a morphism in the ind-category of $\bS(G)$.

\begin{remark}
The definition of $\GG$ given above is not ``correct'' in all cases; see the discussion in \S \ref{ss:better-GG}. However for all examples in this paper, the above definition will suffice.
\end{remark}

\begin{example}
Let $\fS$ be the infinite symmetric group. Recall that we have defined this as the group of all permutations of the set $\Omega=\{1,2,\ldots\}$. An element of $\fS$ is c-smooth if and only if it is finitary, i.e., it fixes all but finitely many elements of $\Omega$. The group $\GG$ of c-smooth elements is thus the finitary version of the infinite symmetric group, which is the ascending union $\bigcup_{n \ge 1} \fS_n$ of the finite symmetric groups. The $G$-orbits on $\GG$ are indexed by partitions with infinite first row, corresponding to cycle decompositions in the usual manner.
\end{example}

\subsection{The smooth group algebra}

Fix a measure $\mu$ on $G$ valued in a field $k$. Let $B=\cC(\GG)$ be the Schwartz space on the set $\GG$. Recall that Schwartz functions are required to have finitary support, and so $B=\varinjlim \cC(\GG^{\le \beta})$. The multiplication map on $\GG$ induces a convolution product on $\GG$. Precisely, for $\phi_1, \phi_2 \in B$, their convolution product is given by
\begin{displaymath}
(\phi_1 \ast \phi_2)(g) = \int_{\GG} \phi_1(h) \phi_2(h^{-1}g) dh.
\end{displaymath}
One easily sees that $\phi_1 \ast \phi_2$ is indeed a Schwartz function on $\GG$, and that this gives $B$ the structure of an associative algebra; the identity element is the point mass $\delta_1$. We call $B=B_k(G, \mu)$ the \defn{smooth group algebra} of $G$.

The expression $B=\varinjlim \cC(\GG^{\le \beta})$ shows that we can regard $B$ as an ind-object in $\uPerm(G, \mu)$. One easily sees that it is an algebra object in the category of ind-objects. In fact, it is actually a Hopf algebra: the antipode is induced by inversion, the co-unit is given by integration over $\GG$, and the co-multiplication is push-forward along the diagonal $\GG \to \GG \times \GG$. Note that
\begin{displaymath}
\cC(\GG \times \GG) = \cC(G) \uotimes \cC(G) = B \uotimes B,
\end{displaymath}
which is why co-multiplication has the correct target. Generally, the above identifications are invalid if we use the ordinary tensor product of vector spaces, and so $B$ is not a Hopf algebra in the category of $k$-vector spaces.

Suppose $X$ is a finitary $G$-set. Then the Schwartz space is naturally a $B$-module. Indeed, given $\phi_1 \in B$ and $\phi_2 \in \cC(X)$, the product the Schwartz function $\phi_1 \ast \phi_2$ on $X$ defined by
\begin{displaymath}
(\phi_1 \ast \phi_2)(x) = \int_{\GG} \phi_1(g) \phi_2(g^{-1} x) dg.
\end{displaymath}
One eaily verifies that this gives a well-defined module structure. In fact, this construction defines a map
\begin{displaymath}
B \uotimes \cC(X) \to \cC(X)
\end{displaymath}
in (the ind-category of) $\uPerm(G, \mu)$, and so $\cC(X)$ is a module over $B$ in this category.

The previous paragraph holds in the abelian setting too. Precisely, suppose $\mu$ is quasi-regular and satisfies (Nil). Then for any $M$ in $\uRep(G, \mu)$ we have a natural map
\begin{displaymath}
B \uotimes M \to M
\end{displaymath}
in the (ind-category of) $\uRep(G, \mu)$, which gives $M$ the structure of a $B$-module. In the regular case, this follows from the previous paragraph since $\uRep(G, \mu)$ is then equivalent to the Karoubi envelope of $\uPerm(G, \mu)$. In general, the action of $\phi \in B$ on $x \in M$ is given by
\begin{displaymath}
\phi \cdot x = \int_{\GG} \phi(g) gx dg
\end{displaymath}
The integral here is taken in the sense of \cite[\S 11.9]{repst}, and it is routine to verify that this defines a $B$-module structure on $M$.

\subsection{The Farahat--Higman algebra} \label{ss:fh}

We define the \defn{Farahat--Higman algebra} of $G$, denoted $Z=Z_k(G, \mu)$, to be the $G$-invariant subalgebra of $B$. Thus, essentially by definition, we have
\begin{displaymath}
Z = \Hom_{\uPerm}(\bone, B),
\end{displaymath}
where here we are really taking $\Hom$ spaces in the ind-category of $\uPerm(G, \mu)$. As a vector space, $Z$ admits a fairly concrete description. Suppose that $\GG=\coprod_{\alpha \in I} C_{\alpha}$ is the decomposition of $\GG$ into $G$-orbits, i.e., the $C_{\alpha}$ are the conjugacy classes of c-smooth elements in $G$. Then $Z$ has a basis given by the indicator functions of the $C_{\alpha}$'s. We also have the following:

\begin{proposition}
The Farahat--Higman algebra $Z$ is contained in the center of $B$. If $\GG$ is dense in $G$ then $Z$ is equal to the center of $B$.
\end{proposition}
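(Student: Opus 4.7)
The plan is to verify both inclusions by direct convolution computations, using two invariance properties of the measure on $\GG$: conjugation-invariance (axiom~(d) of Definition~\ref{defn:meas1}), and translation-invariance by elements of $\GG$ (which holds because each $a \in \GG$ has open centralizer $Z_G(a)$, making left translation $L_a$ a $\hat{G}$-set isomorphism of $\GG$).

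For $Z \subseteq Z(B)$, I take $\phi \in Z$ and $\psi \in B$ and expand
\[(\phi \ast \psi)(g) = \int_\GG \phi(h)\,\psi(h^{-1}g)\,dh.\]
Changing variables $h \mapsto g h g^{-1}$ (measure-preserving by $G$-invariance of $\mu$), the integrand becomes $\phi(ghg^{-1})\,\psi(gh^{-1})$, which by $G$-invariance of $\phi$ collapses to $\phi(h)\,\psi(gh^{-1})$. A parallel substitution $h \mapsto g h^{-1}$ in the formula for $(\psi \ast \phi)(g)$ (measure-preserving for $g \in \GG$ as the composite of inversion and left translation by $g$) yields the same expression, so $\phi$ is central in $B$.

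For the converse under density, given $\phi \in Z(B)$ I first establish $\GG$-conjugation-invariance, then upgrade it to $G$-invariance. Running the above computation in reverse, centrality of $\phi$ rewrites, for each fixed $g \in \GG$, as
\[\int_\GG \bigl(\phi(ghg^{-1}) - \phi(h)\bigr)\,\eta(h)\,dh = 0 \qquad \text{for all } \eta \in B,\]
where $\eta(h) = \psi(g h^{-1})$ runs through all Schwartz functions as $\psi$ does. Since $f_g(h) := \phi(ghg^{-1}) - \phi(h)$ is itself a Schwartz function, supported on finitely many $U$-orbits (for a common open $U$ fixing $\phi$), pairing against the indicator of each such orbit forces $f_g \equiv 0$. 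Hence $\phi$ is $\GG$-invariant. To promote to $G$-invariance, note that $\phi \in B$ is fixed by conjugation by some open subgroup $U$, so the orbit map $g \mapsto g \cdot \phi$ factors through $G/U$; by density of $\GG$, every coset $gU$ meets $\GG$, and for $g' \in gU \cap \GG$ I get $g \cdot \phi = g' \cdot \phi = \phi$, as required.

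The main technical point I anticipate is the passage from vanishing integral to pointwise vanishing of $f_g$: this requires the orbits in its support to have nonzero measure, which is automatic for the quasi-regular measures relevant in this paper but would need care in full generality. Beyond that, the argument is bookkeeping with the convolution formula and the group-theoretic invariance axioms.
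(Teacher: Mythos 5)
Your first inclusion ($Z \subseteq Z(B)$) is correct and is essentially the paper's computation: a change of variables in the convolution integral followed by conjugation-invariance of $\phi$. The problem is in the converse. You reduce centrality of $\phi$ to the vanishing of $\int_{\GG} f_g(h)\,\eta(h)\,dh$ for all $\eta \in B$, where $f_g(h)=\phi(ghg^{-1})-\phi(h)$, and then conclude $f_g\equiv 0$ by pairing against the indicator functions of the $U$-orbits in its support. That pairing only yields $f_g(h_0)\cdot\mu(O)=0$ for each orbit $O$, so it says nothing when $\mu(O)=0$. The proposition carries no regularity or quasi-regularity hypothesis --- it is stated for an arbitrary measure valued in a field, and such measures can and do assign zero to transitive sets (this is exactly what happens at singular parameters). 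So as written your converse does not prove the stated result; you flag the issue yourself, but flagging it does not close the gap.

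The repair is immediate and is what the paper does: test against point masses rather than orbit indicators. For $h_0\in\GG$ the function $\delta_{h_0}$ lies in $B$ (it is invariant under the open subgroup $Z_G(h_0)$ and supported on a single point), and singletons always have measure $1$ by axiom (b) of Definition~\ref{defn:meas1}, so $\int_{\GG} f_g\,\delta_{h_0}\,dh=f_g(h_0)$ and the non-degeneracy you need is automatic. Equivalently, in the paper's formulation, $\delta_g\ast\phi=\phi\ast\delta_g$ gives $\phi(g^{-1}x)=\phi(xg^{-1})$ pointwise, hence invariance under conjugation by $\GG$, with no integration against possibly degenerate measures. Your final step upgrading $\GG$-invariance to $G$-invariance via density of $\GG$ and openness of the stabilizer of $\phi$ is fine and matches the paper.
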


\begin{proof}
Suppose $\phi_1 \in Z$ and $\phi_2 \in B$. Then
\begin{align*}
(\phi_1 \ast \phi_2)(g)
&= \int_{\GG} \phi_1(h) \phi_2(h^{-1} g) dh
= \int_{\GG} \phi_1(gh^{-1}) \phi_2(h) dh \\
&= \int_{\GG} \phi_1(h^{-1}g) \phi_2(h) dh
= (\phi_2 \ast \phi_1)(g)
\end{align*}
In the second step, we made the change of variables $h \to gh^{-1}$, and in the third step we used that $\phi_1$ is $G$-invariant (where $G$ acts by conjugation on $\GG$). This shows that $\phi_1$ is central.

Now suppose that $\GG$ is dense in $G$ and $\phi$ belongs to the center of $\BB$. For any $g \in \GG$, the point mass $\delta_g$ belongs to $B$, and so we have $\delta_g \ast \phi = \phi \ast \delta_g$. This shows that $\phi$ is invariant under $\GG$. Since $\phi$ is also invariant under some open subgroup and $\GG$ is dense, it follows that $\phi$ is invariant under all of $G$.
\end{proof}

We have seen that a Schwartz space $\cC(X)$ is naturally a module over $B$. It is therefore a module over the subalgebra $Z$ as well. In fact:

\begin{proposition}
The action of $Z$ on Schwartz spaces commutes with morphisms in the category $\uPerm(G, \mu)$. In other words, the category $\uPerm(G, \mu)$ is naturally $Z$-linear.
\end{proposition}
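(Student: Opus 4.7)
The plan is to unwind definitions and prove the claim by a direct computation. A morphism $A\colon \cC(X)\to\cC(Y)$ in $\uPerm(G,\mu)$ is given by a $G$-invariant matrix $A\in\cC(Y\times X)$ acting as $(A\phi)(y)=\int_X A(y,x)\phi(x)\,dx$, while the action of $z\in Z\subset B$ on a Schwartz space is by convolution, $(z\cdot\phi)(x)=\int_{\GG}z(g)\phi(g^{-1}x)\,dg$. I want to show $A\circ(z\cdot-)=(z\cdot-)\circ A$ as linear maps $\cC(X)\to\cC(Y)$.

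First I would compute $(A(z\cdot\phi))(y)$ by substituting the definitions, obtaining a double integral over $X\times\GG$, then invoke Fubini (valid in this setting by the push-forward/transitivity discussion in \S\ref{ss:int}) to swap the order of integration. Next I would change variables $x\mapsto gx$ in the inner integral; this is justified because the measure on the $G$-set $X$ is $G$-invariant (axiom (d) in Definition~\ref{defn:meas1}), so $\int_X f(gx)\,dx=\int_X f(x)\,dx$ for $g\in\GG\subseteq G$. This yields
\[
(A(z\cdot\phi))(y)=\int_{\GG}z(g)\int_X A(y,gx)\phi(x)\,dx\,dg.
\]

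Second, I would apply the $G$-invariance of the matrix $A$, which says $A(gy,gx)=A(y,x)$, equivalently $A(y,gx)=A(g^{-1}y,x)$, turning the above into
\[
\int_{\GG}z(g)\int_X A(g^{-1}y,x)\phi(x)\,dx\,dg=\int_{\GG}z(g)(A\phi)(g^{-1}y)\,dg=(z\cdot(A\phi))(y),
\]
which is exactly what is required. Note that $G$-invariance of $z$ itself is not needed for commuting with morphisms; only invariance of $A$ is used — the hypothesis $z\in Z$ enters implicitly through the requirement that $z$ is supported on $\GG$, so that the change of variables is licit and the convolution makes sense as a morphism in the ind-category.

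The manipulations are routine; the only point requiring care is the justification of Fubini and of the change-of-variables formula inside our measure-theoretic formalism, and these are built into the integration theory recalled in \S\ref{ss:int}. There is no real obstacle: once the two sides are written as integrals over $X\times\GG$ and the $G$-invariance of $A$ is applied, they manifestly agree. This gives the commutation; since compositions, direct sums, and tensor products of morphisms in $\uPerm(G,\mu)$ are built out of such invariant matrices, the $Z$-linear structure extends canonically to the whole category.
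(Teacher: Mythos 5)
Your computation is correct and is essentially the explicit form of the paper's argument: the paper phrases the same fact categorically, viewing $a\in Z$ as a map $\bone\to B$ and observing that the action map $B\uotimes\cC(X)\to\cC(X)$ is natural in $X$ (which it deduces from functoriality of $\GG\times X\to X$ in $\bS(G)$), whereas you verify that naturality by hand via Fubini, $G$-invariance of the integral under $x\mapsto gx$, and $G$-invariance of the matrix $A$. Your side remark that conjugation-invariance of $z$ is not used in the commutation is accurate and matches the paper's setup, where the entire $B$-action is natural; but note that membership in $Z$ is still needed for the conclusion as stated, since $Z$-linearity of the category requires $A\mapsto z\cdot A$ to preserve the Hom-spaces of $\uPerm(G,\mu)$, i.e.\ the operator $T_z$ must itself be given by a $G$-invariant matrix (equivalently, be a morphism $\cC(X)\to\cC(X)$ in $\uPerm(G,\mu)$ rather than merely in $\uPerm(U,\mu)$ for a group of definition $U$), and this is exactly where $G$-invariance of $z$ enters.
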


\begin{proof}
Let $a \in Z$, thought of as a map $\bone \to B$ in (the ind-category) of $\uPerm(G, \mu)$. Given a morphism $A \colon \cC(X) \to \cC(Y)$ in $\uPerm(G, \mu)$, consider the following diagram:
\begin{displaymath}
\xymatrix{
\cC(X) \ar[r]^-{a \uotimes \id} \ar[d]^A &
B \otimes \cC(X) \ar[r] \ar[d]^{\id \uotimes A} &
\cC(X) \ar[d]^A \\
\cC(Y) \ar[r]^-{a \uotimes \id} &
B \otimes \cC(Y) \ar[r] &
\cC(Y) }
\end{displaymath}
The left square obviously commutes, and one easily sees that the right one does too: this follows from the fact that the action map $\GG \times X \to X$ in $\bS(G)$ is functorial. Thus the outer square commutes, which proves the proposition.
\end{proof}

The above proposition extends to abelian envelopes: if $\mu$ is quasi-regular and satisfies (Nil) then $\uRep(G, \mu)$ is naturally $Z$-linear. This structure can be very helpful when analyzing this category.

\begin{example}
For the infinite symmetric group, the Farahat--Higman algebra defined here coincides with the classical one from \cite{FarahatHigman}. For other choices of $G$, we obtain some of the generalizations of the classical Farahat--Higman algebra that have been recently studied \cite{KannanRyba, Ryba, WanWang}.
\end{example}

\subsection{Characters}

We now suppose that $\mu$ is quasi-regular and satisfies (Nil), so that we have the pre-Tannakian category $\uRep(G, \mu)$. We also assume that $k$ is algebraically closed, for simplicity. For finite groups one can study representations using characters, we'd like to do something similar for oligomorphic groups.  We now define and study a few kinds of characters in this setting.

First, suppose that $M$ is an arbitrary object of $\uRep(G, \mu)$. As we have seen above, $M$ is naturally a $B$-module. It follows that there is a canonical map $B \to M^{\vee} \uotimes M$ in (the ind-category of) $\uRep(G, \mu)$. Composing with the evaluation map, we obtain a map
\begin{displaymath}
\chi^*_M \colon B \to k
\end{displaymath}
in (the ind-category of) $\uRep(G, \mu)$. Concretely, this map can be described as follows. Given $\phi \in B$, let $U$ be an open subgroup of $G$ such that $\phi$ is $U$-invariant. Then the map $M \to M$ induced by $\phi$ is actually a map in $\uRep(U, \mu)$, and $\chi^*_M(\phi)$ is the trace of this map. For $\phi_1, \phi_2 \in B$, we have
\begin{displaymath}
\chi^*_M(\phi_1 \ast \phi_2) = \chi^*_M(\phi_2 \ast \phi_1)
\end{displaymath}
by basic properties of trace. Moreover, $\chi^*_M$ is $G$-invariant since it is a map in $\uRep(G, \mu)$.

If $g \in \GG$ then $\delta_g \in B$. We define the \defn{character} of $M$ to be the function
\begin{displaymath}
\chi_M \colon \GG \to k, \qquad \chi_M(g)=\chi_M^*(\delta_g) = \utr(g \vert M).
\end{displaymath}
It is invariant under conjugation by $G$ since $\chi^*_M$ is. We have $\chi_M(1)=\udim{M}$, as usual. Moreover:

\begin{proposition}
Suppose $M$ and $N$ are objects of $\uRep(G, \mu)$. Then
\begin{displaymath}
\chi_{M \oplus N} = \chi_M + \chi_N, \qquad
\chi_{M \uotimes N} = \chi_M \cdot \chi_N, \qquad
\chi_{M^{\vee}}(g) = \chi_M(g^{-1}),
\end{displaymath}
where addition and multiplication are done pointwise.
\end{proposition}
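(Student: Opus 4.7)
The plan is to reduce each of the three identities to standard properties of the categorical trace $\utr$ in the pre-Tannakian category $\uRep(G, \mu)$, applied to the morphism $g \colon M \to M$ which really lives in $\uRep(U, \mu)$ for any open $U$ centralizing $g$. The one thing that needs care is to identify how the $B$-module structure interacts with $\oplus$, $\uotimes$, and $(-)^\vee$; once this is understood, additivity, multiplicativity and the duality formula are formal consequences of the corresponding properties of $\utr$.

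First I would treat direct sums. The $B$-module structure on $M \oplus N$ is by definition the one induced from the product decomposition, so the morphism $g \colon M \oplus N \to M \oplus N$ is the direct sum of $g \colon M \to M$ and $g \colon N \to N$. Since $\utr$ is additive on direct sums of endomorphisms, $\chi_{M \oplus N}(g) = \chi_M(g) + \chi_N(g)$.

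Next, for the tensor product, the key observation is that the $B$-module structure on $M \uotimes N$ is obtained by first using the coproduct $B \to B \uotimes B$ (push-forward along the diagonal $\GG \to \GG \times \GG$) and then acting factor by factor. Under this coproduct the point mass $\delta_g$ maps to $\delta_g \uotimes \delta_g$, because the diagonal pushforward of a delta is a delta on the diagonal. Hence the morphism $g \colon M \uotimes N \to M \uotimes N$ induced by the $B$-action equals the tensor product of the two endomorphisms $g \colon M \to M$ and $g \colon N \to N$. Multiplicativity of the categorical trace on tensor products of endomorphisms then yields $\chi_{M \uotimes N}(g) = \chi_M(g) \cdot \chi_N(g)$. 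The main thing to check carefully here is this compatibility of the $B$-action with the tensor structure, which I would verify either directly from the integral formula $\phi \cdot x = \int_{\GG} \phi(g)\, gx\, dg$ (using Fubini for Schwartz integration from \S \ref{ss:int}) or, more cleanly, from the fact that $B$ is a Hopf algebra object in the ind-category, with comultiplication given by the diagonal, and that the $B$-action on any object is a morphism in $\uRep(G,\mu)$.

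Finally, for the dual, the $B$-action on $M^\vee$ is obtained by composing the $B$-action on $M$ with the antipode $S \colon B \to B$ induced by inversion on $\GG$; concretely, $S$ sends $\delta_g$ to $\delta_{g^{-1}}$. Thus the endomorphism $g$ of $M^\vee$ is the transpose of the endomorphism $g^{-1}$ of $M$. Since the categorical trace of a morphism and its transpose agree, we get $\chi_{M^\vee}(g) = \utr(g^{-1} | M) = \chi_M(g^{-1})$. The step most likely to require real work is the tensor-product case, specifically the verification that the coproduct on $B$ intertwines the action maps on $M$, $N$, and $M \uotimes N$; everything else is a direct application of standard trace identities in a rigid symmetric monoidal category.
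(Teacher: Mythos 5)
Your argument is correct and is exactly the standard elaboration of what the paper dismisses with ``These identities are straightforward'': additivity and multiplicativity of the categorical trace, plus invariance of trace under transposition, applied to the endomorphism $g$ viewed in $\uRep(U,\mu)$ for an open $U$ centralizing $g$. The Hopf-algebra detour for the tensor case is slightly more elaborate than necessary (the diagonal action of $g$ on $M\uotimes N$ can be read off directly), but it is sound.
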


\begin{proof}
These identities are straightforward.
\end{proof}

The character $\chi_M$ is the restriction of $\chi^*_M$ to special elements of $B$. However, we can in fact recover $\chi^*_M$ from $\chi_M$, as the following proposition shows.

\begin{proposition} \label{prop:chi-star}
Let $M$ be an object of $\uRep(G, \mu)$ and let $\phi \in B$. Then
\begin{displaymath}
\chi^*_M(\phi) = \int_{\GG} \phi(g) \chi_M(g) dg.
\end{displaymath}
\end{proposition}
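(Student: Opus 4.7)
The plan is to reduce by $k$-linearity to indicator functions of single orbits and then exploit the matrix description of morphisms out of a Schwartz space.

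Every $\phi \in B = \cC(\GG)$ is conjugation-invariant under some open $U \le G$ and supported on finitely many $U$-orbits in $\GG$, so it is a finite $k$-linear combination of orbit indicators $1_O$. Both sides of the identity are $k$-linear in $\phi$, so it suffices to treat $\phi = 1_O$. Fix a base point $g_0 \in O$ and set $V = Z_U(g_0)$, which is open since $g_0$ is c-smooth; then $O \cong U/V$ as $U$-sets under conjugation. Because $\chi_M$ is $G$-invariant, hence constant on $O$ with value $\chi_M(g_0)$, the right-hand side evaluates to $\mu(O)\chi_M(g_0)$.

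For the left-hand side, let $\tilde a\colon B \to M^\vee \uotimes M$ denote the canonical morphism adjoint to the action $B \uotimes M \to M$, so $\chi_M^* = \utr_M \circ \tilde a$ by construction. The composite $\utr_M \circ \tilde a$ restricts on $\cC(O) \subset B$ to a morphism $\cC(O) \to \bone$ in $\uPerm(U, \mu)$, and by the matrix description of Schwartz-space morphisms (\S\ref{ss:int}) this is given by integration against a $U$-invariant Schwartz function $f\colon O \to k$. Because $O$ is a single $U$-orbit, $f$ must be a constant $c$, and hence $\chi_M^*(1_O) = \int_O c \cdot 1_O(g)\, dg = c\, \mu(O)$.

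To compute $c$, I would restrict to $V$. Inside $O$ the singleton $\{g_0\}$ is a $V$-orbit of measure $1$, so the Dirac function $\delta_{g_0}\colon \bone \to \cC(O)$ is a morphism in $\uPerm(V, \mu)$. Unwinding the $B$-action yields
$$\tilde a(\delta_{g_0}) \cdot x \;=\; \int_O \delta_{g_0}(g)\, gx \, dg \;=\; g_0 x,$$
so $\tilde a(\delta_{g_0}) = \rho_{g_0}$ and $\utr_M\, \tilde a(\delta_{g_0}) = \chi_M(g_0)$. Reading off the same scalar from the function $f$ gives $\int_O f\cdot \delta_{g_0}\, dg = f(g_0) = c$. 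Thus $c = \chi_M(g_0)$, and $\chi_M^*(1_O) = \chi_M(g_0)\,\mu(O)$, matching the right-hand side.

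I do not anticipate any serious obstacle. The main conceptual step is recognizing that a morphism $\cC(O) \to \bone$ is determined by its value on the Dirac mass at any single point, which reduces the claim to the definitional identity $\utr(\rho_{g_0}) = \chi_M(g_0)$; $U$-invariance then propagates the value over all of $O$.
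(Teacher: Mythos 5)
Your proof is correct and follows essentially the same route as the paper's: both reduce by linearity to the indicator of a single $U$-conjugacy orbit $O \cong U/V$ and then pin down the answer by evaluating at the Dirac mass $\delta_{g_0}$. The only difference is in how the intermediate step is justified — the paper uses the averaging operators $\avg_{U/V}$ and their naturality, while you use the matrix description of $\Hom(\cC(O),\bone)$ as a morphism in $\uPerm(U,\mu)$ (which implicitly relies on full faithfulness of $\uPerm(U,\mu)\to\uRep(U,\mu)$, a point worth making explicit since mere $U$-equivariance of a linear functional on $\cC(O)$ would not suffice); both justifications are valid.
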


\begin{proof}
First note that since $\phi$ is a Schwartz function on $\GG$ and $\chi_M$ is $G$-invariant, the integrand above is a Schwartz function on $\GG$; thus its integral is defined. Let $h \in \GG$, let $U$ be an open subgroup, and let $X=\{uhu^{-1} \mid u \in U\}$. We verify the identity for $1_X$; this implies the general case since these functions span $B$. Let $V=Z_G(h) \cap U$. If $M$ is any object of $\uRep(G, \mu)$ then we have an averaging operator
\begin{displaymath}
\avg_{U/V} \colon M^V \to M^U, \qquad
\avg_{U/V}(x) = \int_{U/V} gx dg.
\end{displaymath}
Moreover, any map in $\uRep(G, \mu)$ commutes with the averaging operators. See \cite[\S 11.8, \S 11.9]{repst} for details. Now, $\delta_g$ belongs to $B^V$, and $\avg_{U/V}(\delta_g)=1_X$. On the trivial module, $\avg_{U/V}$ acts by the scalar $\mu(U/V)=\mu(X)$. We thus find
\begin{displaymath}
\chi^*_M(1_X) = \chi^*_M(\avg_{U/V}(\delta_h)) = \mu(U/V) \chi_M(\delta_g) = \int_{\GG} 1_X(g) \chi_M(g) dg
\end{displaymath}
and so the result follows. Note that $\chi_M(g)=\chi_M(h)$ for all $g \in X$ since $\chi_M$ is conjugation invariant, and $X \cong U/V$.
\end{proof}

Now let $L$ be a simple object of $\uRep(G, \mu)$. By Schur's lemma, the only endomorphisms of $L$ (in this category) are scalar multiplications. Since $\uRep(G, \mu)$ is $Z$-linear, it follows that elements of $Z$ act on $L$ by scalars. We thus have a $k$-algebra homomorphism
\begin{displaymath}
\psi_L \colon Z \to k
\end{displaymath}
which gives the scalar by which an element of $Z$ acts. We call $\psi_L$ the \defn{central character} of $L$. More generally, if $M$ is an arbitrary element of $\uRep(G, \mu)$, we say that $M$ admits a central character if there is an algebra homomorphism $\psi_M \colon Z \to k$ such that $Z$ acts on $M$ through $\psi_M$. Ordinary characters and central characters are closely linked to one another:

\begin{proposition} \label{prop:chi-psi}
Let $L$ be a simple object of $\uRep(G, \mu)$, let $g \in \GG$, let $C \subset \GG$ be its conjugacy class, and let $1_C \in Z$ be the indicator function of $C$. Then
\begin{displaymath}
\mu(C) \cdot \chi_L(g) = \udim(L) \cdot \psi_L(1_C).
\end{displaymath}
\end{proposition}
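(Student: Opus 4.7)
The plan is to compute the trace of the action of $1_C \in Z$ on $L$ in two different ways and equate the results.

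First, since $L$ is simple, the $Z$-linearity of $\uRep(G, \mu)$ combined with Schur's lemma guarantees that $1_C$ acts on $L$ as the scalar $\psi_L(1_C)$. The trace of this scalar endomorphism is simply
\[
\chi^*_L(1_C) = \psi_L(1_C) \cdot \udim(L).
\]

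Second, I would apply Proposition~\ref{prop:chi-star} to the element $\phi = 1_C \in B$. This gives
\[
\chi^*_L(1_C) = \int_{\GG} 1_C(h) \chi_L(h) \, dh.
\]
Since $\chi_L$ is $G$-invariant under conjugation and $C$ is a single conjugacy class containing $g$, the function $\chi_L$ is constant on the support $C$ with value $\chi_L(g)$. Pulling this scalar out of the integral and using that $\int_{\GG} 1_C \, dh = \mu(C)$, the right side becomes $\mu(C) \cdot \chi_L(g)$.

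Equating the two expressions for $\chi^*_L(1_C)$ yields the claimed identity. The only subtle point is ensuring $1_C$ genuinely lies in $Z$ (so that $\psi_L$ is defined on it) and that its action on $L$ coming from the $B$-module structure agrees with the action through the central character $\psi_L$; both follow from the construction of $Z$ as a subalgebra of $B$ and the $Z$-linearity of the $B$-action on objects of $\uRep(G,\mu)$. There is no serious obstacle here: once Proposition~\ref{prop:chi-star} is in hand, this is essentially the standard finite-group identity relating central characters and ordinary characters on conjugacy classes.
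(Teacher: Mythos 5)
Your proof is correct and follows essentially the same route as the paper: compute the trace of $1_C$ acting on $L$ once via Schur's lemma (giving $\udim(L)\cdot\psi_L(1_C)$) and once via Proposition~\ref{prop:chi-star} together with the conjugation-invariance of $\chi_L$ (giving $\mu(C)\cdot\chi_L(g)$). Nothing further is needed.
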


\begin{proof}
View $1_C$ as an operator on $L$. On the one hand, it is scalar multiplication by $\psi_L(1_C)$, and so its trace is $\udim(L) \cdot \psi_L(1_C)$. On the other hand, its trace is
\begin{displaymath}
\chi^*_L(1_C) = \int_C \chi_M(h) dh = \mu(C) \cdot \chi_M(g),
\end{displaymath}
where in the first step we used Proposition~\ref{prop:chi-star}, and in the second the fact that $\chi_M$ is equal to $\chi_M(g)$ identically on $C$. The result follows.
\end{proof}

Let $\epsilon \colon Z \to k$ be the map
\begin{displaymath}
\epsilon(\phi) = \int_{\GG} \phi(g) dg.
\end{displaymath}
This is an algebra homomorphism; in fact, it is the restriction of the co-unit of $B$ to $Z$. One easily sees that $\epsilon$ is the central character of the unit object. We call $\epsilon$ the \defn{trivial central character}.

\subsection{Independence of characters}

In the representation theory of finite groups, a central result is that irreducible characters are linearly independent. We now investigate this in the oligomorphic setting. We obtain only conditional results here. We begin with a general observation:

\begin{lemma} \label{lem:char-ind}
Let $R$ be a $k$-algebra and let $\psi_i \colon R \to k$, for $1 \le i \le n$, be distinct $k$-algebra homomorphisms. Then the $\psi_i$ are linearly independent.
\end{lemma}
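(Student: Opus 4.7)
The plan is to prove this by induction on $n$, following the classical Dedekind--Artin linear independence of characters argument.

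The base case $n=1$ is immediate: any algebra homomorphism $\psi \colon R \to k$ satisfies $\psi(1)=1$ and is therefore nonzero. For the inductive step I would assume for contradiction that there is a nontrivial linear dependence $\sum_{i=1}^{n} c_i \psi_i = 0$ of minimal length, so in particular every $c_i$ is nonzero and (by the inductive hypothesis applied to any proper subset) we must have $n \ge 2$. Since $\psi_1 \ne \psi_n$, choose $y \in R$ with $\psi_1(y) \ne \psi_n(y)$. Evaluating the relation at $yx$ and using multiplicativity gives
\begin{equation*}
\sum_{i=1}^{n} c_i \psi_i(y) \psi_i(x) = 0 \quad \text{for all } x \in R.
\end{equation*}
Multiplying the original relation by the scalar $\psi_n(y)$ and subtracting eliminates the $i=n$ term and leaves
\begin{equation*}
\sum_{i=1}^{n-1} c_i \bigl(\psi_i(y)-\psi_n(y)\bigr) \psi_i = 0,
\end{equation*}
a dependence of length at most $n-1$ whose leading coefficient $c_1(\psi_1(y)-\psi_n(y))$ is nonzero. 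This contradicts the minimality of $n$ and completes the induction.

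I do not foresee any real obstacle here; the argument is entirely formal. The only points worth flagging are that the unit axiom $\psi_i(1)=1$ is used in the base case and multiplicativity is used in the inductive step, so the hypothesis that each $\psi_i$ is an algebra homomorphism (rather than merely a linear functional) is essential.
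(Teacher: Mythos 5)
Your proof is correct, but it follows a different route from the paper. You use the classical Dedekind--Artin shortening argument: take a minimal nontrivial dependence, evaluate at $yx$ for a separating element $y$, and subtract $\psi_n(y)$ times the original relation to produce a strictly shorter nontrivial dependence, contradicting minimality. The paper instead constructs, for each $j$, an element $z_j \in R$ with $\psi_i(z_j)=\delta_{i,j}$ — first normalizing a separating element to get $y$ with $\psi_1(y)=1$, $\psi_i(y)=0$, then taking products over $i$ — and evaluates any dependence at the $z_j$ to read off that every coefficient vanishes. Both arguments are formal and use exactly the same ingredients (multiplicativity, the unit axiom, and division in $k$). The paper's construction gives slightly more as a byproduct: the elements $z_j$ exhibit a "dual system" showing the joint map $R \to k^n$, $x \mapsto (\psi_1(x),\dots,\psi_n(x))$, is surjective, which is in the spirit of the Chinese remainder theorem. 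Your argument is the more classical one and generalizes verbatim to linear independence of distinct monoid homomorphisms into $k^\times$ (Dedekind's lemma), since it never needs the $\psi_i$ to be additive. Either proof is perfectly acceptable here.
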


\begin{proof}
Since $\psi_1$ and $\psi_2$ are not equal, there is some $x \in R$ such that $\psi_1(x) \ne \psi_2(x)$. Letting $y=(x-\psi_2(x))/(\psi_1(x)-\psi_2(x))$, we have $\psi_1(y)=1$ and $\psi_2(y)=0$. In the same way, for each $2 \le i \le n$, we can find $y_i \in R$ such that $\psi_1(y_i)=1$ and $\psi_i(y_i)=0$. Letting $z=y_2 \cdots y_n$, we have $\psi_1(z)=1$ and $\psi_i(z)=0$ for all $2 \le i \le n$. In the same way, for each $1 \le i \le n$, we can find $z_i \in R$ such that $\psi_i(z_j)=\delta_{i,j}$. If $\sum_{i=1}^n a_i \psi_i=0$ then, evaluating at $z_i$, we find $a_i=0$. Thus any linear dependence is trivial.
\end{proof}

The following is the main result we prove here:

\begin{proposition} \label{prop:cent-char}
Suppose $\mu$ is regular and the following condition holds:
\begin{itemize}
\item[$(\ast)$] If $L$ is a non-trivial simple then $\psi_L$ is non-trivial.
\end{itemize}
Then, for simples $L$ and $M$, we have $L \cong M$ if and only if $\psi_L=\psi_M$.
\end{proposition}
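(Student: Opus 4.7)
The forward direction is immediate. For the converse, the plan is to combine Proposition~\ref{prop:chi-psi} with linear independence of distinct algebra homomorphisms (Lemma~\ref{lem:char-ind}), exploiting the fact that $\uRep(G,\mu)$ is semi-simple under the hypotheses at hand (regularity together with the standing (Nil) assumption of the character section). I would carry this out in two stages.

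First I would establish the auxiliary claim that $\udim(L)\neq 0$ for every simple $L$. If instead $L\neq\bone$ is simple with $\udim(L)=0$, then Proposition~\ref{prop:chi-psi} combined with regularity (so each $\mu(C)$ is invertible) forces $\chi_L\equiv 0$ on $\GG$. Using Schur's lemma in the semi-simple category, decompose $L^\vee\uotimes L=\bone\oplus\bigoplus_j P_j^{n_j}$ with each $P_j$ a non-trivial simple. Proposition~\ref{prop:chi-star} together with $\chi_L=0$ gives
\[\chi^*_{L^\vee\uotimes L}(z)=\int_{\GG}z(g)\chi_L(g^{-1})\chi_L(g)\,dg=0\]
for every $z\in Z$, while the same trace computed summand-by-summand equals $\epsilon(z)+\sum_j n_j\udim(P_j)\psi_{P_j}(z)$. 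By $(\ast)$ each $\psi_{P_j}\neq\epsilon$, so Lemma~\ref{lem:char-ind} forces the coefficient of $\epsilon$ to vanish, giving $1=0$.

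Given the auxiliary claim, I would then suppose $\psi_L=\psi_M$ for simples $L,M$ and derive $L\cong M$. Proposition~\ref{prop:chi-psi} yields the proportionality $\chi_L=(\udim L/\udim M)\chi_M$ on $\GG$, and the same trace-and-integral computation produces
\[\chi^*_{L^\vee\uotimes M}(z)=\frac{\udim L}{\udim M}\chi^*_{M^\vee\uotimes M}(z)\]
for every $z\in Z$. Decomposing $L^\vee\uotimes M=\bigoplus_i N_i^{m_i}$ and $M^\vee\uotimes M=\bone\oplus\bigoplus_j P_j^{n_j}$ (with the $\bone$ appearing exactly once by Schur), I would expand both sides as linear combinations of central characters. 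If $L\not\cong M$, then Schur rules out $\bone$ as a summand of $L^\vee\uotimes M$, so $(\ast)$ ensures $\psi_{N_i}\neq\epsilon$ and $\psi_{P_j}\neq\epsilon$ for every index. Matching the coefficient of $\epsilon$ on both sides via Lemma~\ref{lem:char-ind} then forces $\udim(L)/\udim(M)=0$, contradicting the auxiliary claim, so $L\cong M$.

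The main obstacle will be the auxiliary claim: the non-vanishing of categorical dimensions is exactly what converts $(\ast)$ and Proposition~\ref{prop:chi-psi} into a workable character-theoretic proportionality, and it is the step that genuinely uses $(\ast)$. Once it is in place, the second stage is a mechanical coefficient-matching argument via linear independence of algebra homomorphisms.
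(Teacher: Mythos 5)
Your proof is correct and follows essentially the same route as the paper's: express the central-character decomposition of $L^{\vee}\uotimes M$ against that of $L^{\vee}\uotimes L$ (the paper uses $L\otimes L^{\vee}$ and $M\otimes L^{\vee}$; you use $M^{\vee}\otimes M$ and $L^{\vee}\otimes M$), then use $(\ast)$ and the linear independence of distinct algebra homomorphisms $Z\to k$ (Lemma~\ref{lem:char-ind}) to force a trivial summand in $L^{\vee}\uotimes M$. The only divergence is your auxiliary claim: the paper simply invokes the standard fact that simples in a semi-simple pre-Tannakian category have non-zero categorical dimension (recorded explicitly in the proof of Corollary~\ref{cor:cent-char}), whereas you derive $\udim(L)\neq 0$ from $(\ast)$, regularity (invertibility of $\mu(C)$), and Proposition~\ref{prop:chi-star}; both are valid, and your version is self-contained at the cost of using $(\ast)$ where the abstract fact does not need it.
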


\begin{proof}
If $L \cong M$ then $\psi_L=\psi_M$. Conversely, suppose $\psi_L=\psi_M$. By Proposition~\ref{prop:chi-psi}, we have $\chi_L = c \cdot \chi_M$ where $c=\udim(L)/\udim(M)$ is a non-zero scalar. Write
\begin{displaymath}
M \otimes L^{\vee} = \bigoplus_{i=1}^n X_i, \qquad
L \otimes L^{\vee} = \bigoplus_{j=1}^m Y_j,
\end{displaymath}
where the $X_i$ and $Y_j$ are simple; moreover, index the $Y_j$'s so that $Y_1$ is trivial and the other $Y_j$ are non-trivial. We have
\begin{displaymath}
\chi_{L \otimes L^{\vee}} = \chi_L \chi_{L^{\vee}} = c \chi_M \chi_{L^{\vee}} = c \chi_{M \otimes L^{\vee}},
\end{displaymath}
and so
\begin{displaymath}
\sum_{j=1}^m \chi_{Y_j} = c \sum_{i=1}^n \chi_{X_i}.
\end{displaymath}
Appealing to Proposition~\ref{prop:chi-psi} again, we find
\begin{displaymath}
\sum_{j=1}^m \udim(Y_j) \psi_{Y_j} = c \sum_{i=1}^n \udim(X_i) \psi_{X_i}.
\end{displaymath}
Since characters are linearly independent (Lemma~\ref{lem:char-ind}), we can equate the coefficient of the trivial characters appearing above. On the left side, $(\ast)$ implies that only $\psi_{Y_1}$ is the trivial character, and so the trivial character appears with non-zero coefficient. It follows that some $\psi_{X_i}$ is trivial, and so $X_i$ is trivial by $(\ast)$. Thus $M \otimes L^{\vee}$ contains a trivial summand, and so $M \cong L$.
\end{proof}

\begin{corollary} \label{cor:cent-char}
Maintain the assumptions from the proposition. If $L_1, \ldots, L_n$ are mutually non-isomorphic simples then $\chi_{L_1}, \ldots, \chi_{L_n}$ are linearly independent.
\end{corollary}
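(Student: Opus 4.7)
The plan is to convert a linear dependence among the characters $\chi_{L_i}$ into a linear dependence among the central characters $\psi_{L_i}$, and then apply Proposition~\ref{prop:cent-char} together with Lemma~\ref{lem:char-ind} to force all coefficients to vanish.

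Concretely, I will suppose we are given scalars $a_1,\dots,a_n\in k$ with $\sum_{i=1}^n a_i\chi_{L_i}=0$ as functions on $\GG$, and argue that each $a_i$ must be zero. For every $g\in\GG$ with conjugacy class $C=C_g$, Proposition~\ref{prop:chi-psi} gives
\[
\mu(C)\,\chi_{L_i}(g)=\udim(L_i)\,\psi_{L_i}(1_C).
\]
Regularity of $\mu$ makes $\mu(C)$ a unit in $k$, so multiplying the hypothesis at each $g$ by $\mu(C)$ produces $\sum_{i=1}^n a_i\,\udim(L_i)\,\psi_{L_i}(1_C)=0$ for every conjugacy class $C\subset\GG$. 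Since the indicators $\{1_C\}$ form a $k$-basis of $Z$, this is exactly the statement that the linear functional $\sum_{i=1}^n a_i\,\udim(L_i)\,\psi_{L_i}$ on $Z$ is zero.

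Now I invoke Proposition~\ref{prop:cent-char}: under the present hypotheses, the map $L\mapsto\psi_L$ is injective on isomorphism classes of simples, so the $\psi_{L_1},\dots,\psi_{L_n}$ are pairwise distinct $k$-algebra homomorphisms $Z\to k$. Lemma~\ref{lem:char-ind} then gives their linear independence, so $a_i\,\udim(L_i)=0$ for every $i$. Since $\mu$ is regular, $\uRep(G,\mu)$ is equivalent to the Karoubi envelope of $\uPerm(G,\mu)$, hence semi-simple pre-Tannakian, and in this setting each simple object has nonzero categorical dimension, so $a_i=0$ for all $i$.

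The proof is essentially a mechanical chain of the two previously established results; the one place where I expect to need care is the final step asserting $\udim(L_i)\neq 0$. This step is genuinely necessary: if some simple $L$ had $\udim(L)=0$ then Proposition~\ref{prop:chi-psi} and regularity would force $\chi_L\equiv 0$, and the statement of the corollary would fail. So the only real content beyond bookkeeping is justifying the non-vanishing of categorical dimensions for simples in the regular, (Nil)-satisfying case — everything else is a routine rearrangement of the identity in Proposition~\ref{prop:chi-psi}.
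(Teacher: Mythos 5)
Your proof is correct and follows essentially the same route as the paper: apply Proposition~\ref{prop:chi-psi} to convert the character relation into the relation $\sum_i a_i \udim(L_i)\psi_{L_i}=0$, use Proposition~\ref{prop:cent-char} to see the $\psi_{L_i}$ are distinct and hence linearly independent by Lemma~\ref{lem:char-ind}, and conclude via $\udim(L_i)\neq 0$ from semi-simplicity. (The only cosmetic difference is that passing from $\chi$'s to $\psi$'s needs no inversion of $\mu(C)$, so the regularity remark at that step is superfluous; regularity is only needed, as you note, for semi-simplicity and the non-vanishing of dimensions.)
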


\begin{proof}
Consider a linear dependence $\sum_{i=1}^n \alpha_i \chi_{L_i}=0$. By Proposition~\ref{prop:chi-psi}, we thus find
\begin{displaymath}
\sum_{i=1}^n \alpha_i \udim(L_i) \psi_{L_i} = 0.
\end{displaymath}
Since the $\psi_{L_i}$ are distinct, they are linearly independent (Lemma~\ref{lem:char-ind}), and so $\alpha_i=0$ for all $i$. Note that $\udim(L_i) \ne 0$ since $\uRep(G, \mu)$ is semi-simple.
\end{proof}

\subsection{A better definition} \label{ss:better-GG}

In our work on oligomorphic groups, a guiding philosophy is that everything relevant to tensor categories should depend only on the pre-Galois category $\bS(G)$ and not $G$ itself. The ind-object $\GG$ of $\bS(G)$, as we have defined it, actually depends on $G$ and is not intrinsic to $\bS(G)$. It is therefore an ``incorrect'' construction.

Here is an example. Let $\fX$ be the Cantor set, let $G$ be the group of all permutations of $\fX$ (ignoring the topology), and let $H$ be the group of all self-homeomorphisms of $\fX$. These groups both act oligomorphically on $\fX$. If $A$ and $B$ are finite subsets of $\fX$ then any bijection $A \to B$ is induced by a self-homeomorphism of $\fX$, and so it follows that $H$ is dense in $G$. Thus $\bS(G)=\bS(H)$. Let $\GG$ and $\HH$ be the c-smooth elements in $G$ and $H$. One can show that the c-smooth elements in either group are just the finitary permutations. A self-homeomorphism of $\fX$ that fixes all but finitely many points is the identity. Thus $\HH$ is trivial, while $\GG$ is dense in $G$. Therefore, $\GG$ and $\HH$ do not correspond under the equivalence $\bS(G)=\bS(H)$.

The ``correct'' definition of $\GG$ is as follows: $\GG$ is the ind-object of $\bS(G)$ that represents the functor $S \mapsto \Aut_S(S \times (-))$. This clearly only depends on the category $\bS(G)$. One can show that if $G$ is complete (in a certain sense) then $\GG$ coincides with the c-smooth elements in $G$. In the previous paragraph, $G$ is complete while $H$ is not. The groups we consider in Part~\ref{part:ex} will all be complete, so the issues discussed here are irrelevant to them.

\section{Smooth approximation and ultraproducts} \label{s:ultra}

\subsection{Homogeneous objects} \label{ss:homo}

Let $G$ be a pro-oligomorphic group. Let $\Delta$ be a transitive $G$-set, and let $H=\Aut_G(\Delta)$; note that $H$ is a finite group. Consider the functor
\begin{displaymath}
\Pi_{\Delta} \colon \bS(G) \to \bS(H), \qquad \Pi_{\Delta}(X) = \Hom_G(\Delta, X).
\end{displaymath}
This functor commutes with finite coproducts (since $\Delta$ is transitive) and with finite limits (since it is co-representable). We say that $\Delta$ is \defn{(absolutely) homogeneous} if whenever $X$ is a transitive $G$-set the $H$-set $\Pi_{\Delta}(X)$ is either transitive or empty.

There is also a version of this definition relative to a stabilizer class $\sE$. We say that a transitive $\sE$-smooth $G$-set $\Delta$ is \defn{homogeneous relative to $\sE$} if the above condition on $\Pi_{\Delta}(X)$ holds whenever $X$ is transitive and $\sE$-smooth. We now investigate how homogeneity depends on the choice of stabilizer class. We say that a transitive $G$-set $X$ is \defn{simply connected} if any finite cover is trivial, that is, if $Y$ is a $G$-set and $Y \to X$ is a $G$-map with finite fibers then $Y$ (as a set over $X$) is isomorphic to a finite disjoint union of copies of $X$.

\begin{proposition} \label{prop:sc-Pi}
Let $\Delta$ be simply connected and let $Y \to X$ be a finite cover of transitive $G$-sets. Then $\Pi_\Delta(Y) \to \Pi_\Delta(X)$ is surjective.
\end{proposition}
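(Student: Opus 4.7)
The plan is to reduce to the definition of simple connectedness by pulling back the cover $Y \to X$ along an arbitrary element of $\Pi_\Delta(X)$.

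Let $\phi \colon \Delta \to X$ be an element of $\Pi_\Delta(X)$. First I would observe that the map $f \colon Y \to X$ is surjective: its image is a non-empty (since $Y$ is) $G$-invariant subset of $X$, and $X$ is transitive. Form the fiber product $Z = \Delta \times_X Y$ in $\bS(G)$, with its two projections $p \colon Z \to \Delta$ and $q \colon Z \to Y$. The fiber of $p$ over $\delta \in \Delta$ is canonically identified with $f^{-1}(\phi(\delta))$, so $p$ has finite non-empty fibers; in particular $Z$ is a non-empty $G$-set and $p$ is a finite cover of $\Delta$.

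Now apply simple connectedness of $\Delta$: as a $G$-set over $\Delta$, one has $Z \cong \coprod_{i=1}^n \Delta$ for some $n \ge 1$. Choosing any component produces a $G$-equivariant section $s \colon \Delta \to Z$ of $p$. Composing with $q$ gives a $G$-map $\tilde\phi = q \circ s \colon \Delta \to Y$, and using $f \circ q = \phi \circ p$ together with $p \circ s = \id_\Delta$ one finds $f \circ \tilde\phi = \phi$. Thus $\tilde\phi \in \Pi_\Delta(Y)$ lifts $\phi$, proving surjectivity.

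There is no real obstacle here; the only things that require a moment's thought are the surjectivity of $f$ (so that the fiber product is non-empty) and the fact that the pullback of a finite cover in $\bS(G)$ is again a finite cover, both of which are routine.
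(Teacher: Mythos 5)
Your argument is correct and is essentially the paper's own proof: pull back the finite cover along the given map $\Delta \to X$, use simple connectedness to produce a $G$-equivariant section of the resulting finite cover of $\Delta$, and compose to obtain the lift. The extra details you supply (surjectivity of $f$, non-emptiness of the fiber product) are routine verifications the paper leaves implicit.
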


\begin{proof}
Suppose we have a $G$-map $f \colon \Delta \to X$. Then $Y \times_X \Delta$ is a finite cover of $\Delta$. Since $\Delta$ is simply connected, this cover has a $G$-equivariant section, which means we can lift $f$ to a map $\Delta \to Y$. The result follows.
\end{proof}


\begin{proposition} \label{prop:rel-homo}
Suppose $\Delta$ is simply connected and $\sE$ is a large stabilizer class. If $\Delta \in \bS(G, \sE)$ is homogeneous relative to $\sE$, then $\Delta$ is an absolutely homogeneous object of $\bS(G)$.
\end{proposition}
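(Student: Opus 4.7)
The plan is to reduce any transitive $G$-set $X$ to an $\sE$-smooth one using largeness of $\sE$, and then transfer homogeneity across the resulting finite cover using Proposition~\ref{prop:sc-Pi}.

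More concretely, let $X$ be an arbitrary transitive $G$-set; choose a base point and write $X = G/U$ for some open subgroup $U$. Because $\sE$ is a large stabilizer class, there exists $V \in \sE$ with $V \subseteq U$ of finite index. Then $Y := G/V$ is a transitive $\sE$-smooth $G$-set, and the natural projection $\pi \colon Y \to X$ is a finite cover of transitive $G$-sets. I would then apply Proposition~\ref{prop:sc-Pi} (using that $\Delta$ is simply connected) to conclude that the induced map
\begin{displaymath}
\Pi_\Delta(\pi) \colon \Pi_\Delta(Y) \longrightarrow \Pi_\Delta(X)
\end{displaymath}
is surjective. This map is manifestly $H$-equivariant, where $H = \Aut_G(\Delta)$ acts on both sides by precomposition, since $\Pi_\Delta$ is a functor and the $H$-action is natural in its argument.

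Now, by hypothesis $\Delta$ is homogeneous relative to $\sE$, so since $Y$ is transitive and $\sE$-smooth, the $H$-set $\Pi_\Delta(Y)$ is either empty or transitive. If $\Pi_\Delta(Y) = \emptyset$, surjectivity forces $\Pi_\Delta(X) = \emptyset$. If $\Pi_\Delta(Y)$ is $H$-transitive, then its image under the $H$-equivariant surjection $\Pi_\Delta(\pi)$ is again $H$-transitive, so $\Pi_\Delta(X)$ is transitive. Either way, $\Pi_\Delta(X)$ is transitive or empty, which is exactly absolute homogeneity of $\Delta$.

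I do not expect any real obstacle here: the proof is essentially a formal combination of two inputs already in hand, namely the lifting property Proposition~\ref{prop:sc-Pi} coming from simple connectedness, and the defining property of largeness of $\sE$, which together reduce the absolute case to the relative one. The only point worth double-checking is the $H$-equivariance of $\Pi_\Delta(\pi)$, but this is immediate from functoriality.
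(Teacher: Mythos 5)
Your proof is correct and follows essentially the same route as the paper: use largeness of $\sE$ to produce a finite cover $Y \to X$ by a transitive $\sE$-smooth set, apply Proposition~\ref{prop:sc-Pi} to get surjectivity of $\Pi_\Delta(Y) \to \Pi_\Delta(X)$, and transfer transitivity (or emptiness) along this $H$-equivariant surjection. No gaps.
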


\begin{proof}
Let $X$ be a transitive $G$-set. We must show that $\Pi_{\Delta}(X)$ is either empty or $H$-transitive, where $H=\Aut(\Delta)$. Since $\sE$ is large, there is a finite cover $Y \to X$ where $X$ is transitive and $\sE$-smooth. By Proposition~\ref{prop:sc-Pi}, the map $\Pi_{\Delta}(Y) \to \Pi_{\Delta}(X)$ is surjective. Thus if $\Pi_{\Delta}(X)$ is non-empty then $\Pi_{\Delta}(Y)$ is non-empty, and $H$-transitive (since $\Delta$ is homogeneous relative to $\sE$), and so $\Pi_{\Delta}(X)$ is $H$-transitive (being a quotient of such a set).
\end{proof}

Suppose now that $(G, \Omega)$ is oligomorphic. We say that a structure $A$ is \defn{homogeneous} if the corresponding transitive $G$-set $X(A)$ is homogeneous relative to $\sE(\Omega)$. Explicitly, this means that whenever $i$ and $j$ are embeddings $B \to A$, for some structure $B$, there is an automorphism $\sigma$ of $A$ such that $i=\sigma \circ j$. In practice, we will show that structures are homogeneous, deduce from this that certain transitive sets are relatively homogeneous, and then use the above proposition to obtain absolute homogeneity. The following observation will help in the final step.

\begin{proposition} \label{prop:fincover}
Let $(G, \Omega)$ be oligomorphic and let $A \to B$ be an embedding of structures. Then $X(B) \to X(A)$ is an infinite cover if and only if the following condition holds:
\begin{itemize}
\item[$(\ast)$] for every $h \ge 0$ there exists an embedding $A \to C$ in $\fC$ that admits at least $h$ extensions to $B$.
\end{itemize}
\end{proposition}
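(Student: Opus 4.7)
The map $X(B)\to X(A)$ is a map of transitive $G$-sets, so its fibres are all of the same cardinality, and "infinite cover" just means that one, equivalently every, fibre is infinite. My plan is to analyse the fibre $F$ over the canonical embedding $i_0\colon A\hookrightarrow\Omega$ (the inclusion, thought of as an element of $X(A)$), and match up elements of $F$ with the extensions appearing in $(\ast)$. The whole proof is a translation between two points of view: embeddings into $\Omega$, where the $G$-action is visible, and embeddings between fixed structures, which is the viewpoint of $\fC$.

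For the implication $(\ast)\Rightarrow$ infinite cover, fix $h\ge 0$. By $(\ast)$ there is an embedding $\iota\colon A\to C$ in $\fC$ admitting at least $h$ distinct extensions $\tilde\jmath_1,\dots,\tilde\jmath_h\colon B\to C$. By the very definition of morphisms in $\fC$, we can write $\iota=g|_A$ for some $g\in G$. Then each $g^{-1}\circ\tilde\jmath_k$ is an embedding $B\to \Omega$ whose restriction to $A$ equals the identity inclusion $i_0$, so lies in $F$. These $h$ embeddings are distinct because $g^{-1}$ is a bijection of $\Omega$. Hence $|F|\ge h$ for every $h$, so $F$ is infinite.

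For the converse, suppose $F$ is infinite and fix $h\ge 0$. Choose $h$ distinct elements $j_1,\dots,j_h\in F$ and set
\[
  C \;=\; \dcl\bigl(A\cup j_1(B)\cup\cdots\cup j_h(B)\bigr)\;\subseteq\;\Omega.
\]
Then $C$ is definably closed, hence an object of $\fC$ containing $A$. The inclusion gives an embedding $\iota\colon A\to C$ in $\fC$, and each $j_k$ (now viewed as landing in $C$) is an embedding $B\to C$ extending $\iota$. The $j_k$ remain distinct as maps into $C$ since they are distinct as maps into $\Omega$, which verifies $(\ast)$.

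I do not anticipate any genuine obstacle here; the proof is essentially a bookkeeping exercise. The only point that deserves care is fixing a basepoint in $X(A)$ and then using the characterisation of $\fC$-morphisms as restrictions of elements of $G$, so that one can freely shift the picture by some $g\in G$ to go back and forth between "inclusion of a subset of $\Omega$" and an arbitrary abstract embedding $A\to C$ in $\fC$.
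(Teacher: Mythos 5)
Your proof is correct and follows essentially the same route as the paper's: both directions are the same translation between fibres of $X(B)\to X(A)$ and extensions of embeddings into a definably closed $C$, with the definable closure of a union of images supplying $C$ in the forward direction. The only cosmetic difference is that you anchor everything at the canonical inclusion $i_0\in X(A)$ and translate by $g\in G$, whereas the paper works with an arbitrary basepoint $i$ and fixes an embedding $C\to\Omega$ in the converse; your explicit remark that a $G$-map of transitive $G$-sets has fibres of constant cardinality is a welcome clarification.
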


\begin{proof}
Suppose $X(B) \to X(A)$ is an infinite cover. Let $h \ge 0$ be given. Pick $i \in X(A)$, and let $j_1, \ldots, j_h$ be distinct points of $X(B)$ mapping to $X(A)$. Thus $i \colon A \to \Omega$ is an embedding, and $j_1, \ldots, j_h$ are distinct extensions of $i$ to $B$. Let $C$ be the definable closure of the set $j_1(B) \cup \cdots \cup j_h(B)$. Then $i$ induces an embedding $A \to C$ with at least $h$ extensions to $B$, and so $(\ast)$ holds.

Now suppose that $(\ast)$ holds. Let $h \ge 0$ be given, and let $i \colon A \to C$ be an embedding with $h$ distinct extensions $j_1, \ldots, j_h$ to $B$. Fixing an embedding $C \to \Omega$, we see that $j_1, \ldots, j_h$ are $h$ distinct points of $X(B)$ mapping to $i \in X(A)$. Thus $X(B) \to X(A)$ is not a finite cover.
\end{proof}

\begin{example}
Consider the infinite symmetric group $(\fS, \Omega)$. As we have seen, structures are just finite sets. It is clear that any finite set is a homogeneous structure. The transitive $G$-set corresponding to the finite set $[n]$ is the subset $\Omega^{[n]}$ of $\Omega^n$ consisting of tuples with distinct entries. Thus $\Omega^{[n]}$ is homogeneous relative to $\sE(\Omega)$. From the classification of open subgroups of $\fS$ (see Example~\ref{ex:sym}), we see that $\sE(\Omega)$ is large and $\Omega^{[n]}$ is simply connected. Thus $\Omega^{[n]}$ is homogeneous in $\bS(G)$.
\end{example}

\subsection{Smooth approximation} \label{ss:smooth}

Let $\{\Delta_{\alpha}\}_{\alpha \in I}$ be a collection of homogeneous objects of $\bS(G)$. Let $H_{\alpha} = \Aut(\Delta_{\alpha})$, and put $\Pi_{\alpha} = \Pi_{\Delta_{\alpha}}$. For a transitive $G$-set $X$, let $\Sigma(X) \subset I$ be the collection of indices $\alpha$ for which $\Pi_{\alpha}(X)$ is non-empty. We say that $\Delta_{\bullet}$ is a \defn{smooth approximation} of $\bS(G)$ if $\Sigma(X)$ is cofinite for every transitive $G$-set $X$. We say that $G$ is \defn{smoothly approximable} if such a collection $\Delta_{\bullet}$ exists.

Suppose $(G, \Omega)$ is oligomorphic and that each $\Delta_{\alpha}$ is $\Omega$-smooth, and thus corresponds to a homogeneous structure $A_{\alpha}$. For $\{\Delta_{\alpha}\}_{\alpha \in I}$ to be a smooth approximation of $\bS(G)$, it is necessary and sufficient that each structure embeds into all but fintely many $A_{\alpha}$. Indeed, this condition exactly means that for each transitive $\Omega$-smooth $G$-set $X$, the set $\Sigma(X)$ is cofinite. If $Y$ is an arbitrary transitive $G$-set, then there is a surjection $X \to Y$ for some $\Omega$-smooth $G$-set, and so $\Sigma(Y)$ contains $\Sigma(X)$, and is thus cofinite if $\Sigma(X)$ is.

\begin{example}
The infinite symmetric group $\fS$ is smoothly approximable: the family $\{\Omega^{[n]}\}_{n \ge 1}$ is a smoothy approximation. Indeed, each finite set embeds into all but finitely many of the sets $[n]$.
\end{example}

\begin{remark}
In the definition of smoothly approximable, we demanded that $\Sigma(X)$ is cofinite for all $I$, in other words, $\Sigma(X)$ should belong to the Fr\'echet filter on $I$. One can generalize the definition by fixing a filter on $I$ refining the Fr\'echet filter, and asking that $\Sigma(X)$ belong to this filter for each $X$. We will not require this extra generality, and so work with the simpler definition.
\end{remark}

\begin{remark}
What is the relationship between the conditions ``$G$ is smoothly approximable'' and ``the c-smooth elements are dense?'' The first does not imply the second, as one sees by considering the infinite general linear group with the parabolic topology. Does the second imply the first?
\end{remark}

\subsection{Counting measures} \label{ss:count}

Let $\{\Delta_{\alpha}\}_{\alpha \in I}$ be a smooth approximation of $\bS(G)$, and maintain the notation from \S \ref{ss:smooth}. Let $\Fun(I, \bZ)$ denote the set of all functions $I \to \bZ$, and let $\Fun^{\circ}(I, \bZ)$ denote the quotient where two functions are identified if they agree on a cofinite subset of $I$. Note that $\Fun^{\circ}(I, \bZ)$ is a ring. Let $f \colon Y \to X$ be a map of transitive $G$-sets. If $\Pi_{\alpha}(X)$ is non-empty then $\Pi_{\alpha}(f) \colon \Pi_{\alpha}(Y) \to \Pi_{\alpha}(X)$ is a map of transitive $H_{\alpha}$-sets, and so all fibers have the same cardinality; let $\tilde{\nu}_{\alpha}(f)$ be this common size. If $\Pi_{\alpha}(X)$ is empty, put $\tilde{\nu}_{\alpha}(i)=0$. We thus have a function
\begin{displaymath}
\tilde{\nu}(f) \colon I \to \bZ, \qquad \alpha \mapsto \tilde{\nu}_{\alpha}(i).
\end{displaymath}
We let $\nu(f)$ be the element of $\Fun^{\circ}(I, \bZ)$ represented by $\tilde{\nu}(f)$.

\begin{proposition}
The rule $f \mapsto \nu(f)$ is a measure for $\bS(G)$ valued in the ring $\Fun^{\circ}(I, \bZ)$.
\end{proposition}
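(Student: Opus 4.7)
The plan is to verify the three axioms of Definition~\ref{defn:meas2} for $\nu$ directly. The key mechanism I would highlight at the outset is that the representable functor $\Pi_\alpha = \Hom_G(\Delta_\alpha, -)$ preserves finite limits and finite coproducts, and that by homogeneity of $\Delta_\alpha$ it sends each transitive $G$-set to either an empty or a transitive $H_\alpha$-set. Combined with the smooth approximation hypothesis, this means that for any finite collection $\{X_j\}$ of transitive $G$-sets entering an argument, the set of indices $\alpha$ for which every $\Pi_\alpha(X_j)$ is simultaneously non-empty (and hence transitive) is cofinite in $I$. This is exactly what is needed to work modulo the Fr\'echet filter, i.e., inside $\Fun^{\circ}(I, \bZ)$.

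For axiom (a), if $f \colon Y \to X$ is an isomorphism of transitive $G$-sets, then $\Pi_\alpha(f)$ is an isomorphism of $H_\alpha$-sets whenever $\Pi_\alpha(X)$ is non-empty, so $\tilde{\nu}_\alpha(f) = 1$ on a cofinite set, giving $\nu(f) = 1$. For axiom (b), given composable maps $Z \xrightarrow{f} Y \xrightarrow{g} X$ of transitive $G$-sets, I would restrict to the cofinite set of $\alpha$ where all three $\Pi_\alpha$-values are transitive; for any map between transitive finite $H_\alpha$-sets the fibers have common size, and these sizes multiply under composition, which gives $\tilde{\nu}_\alpha(gf) = \tilde{\nu}_\alpha(g)\cdot\tilde{\nu}_\alpha(f)$.

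For axiom (c), the argument is slightly more elaborate but still formal. Apply $\Pi_\alpha$ to the base change square: preservation of finite limits yields the identification $\Pi_\alpha(Y') = \Pi_\alpha(Y) \times_{\Pi_\alpha(X)} \Pi_\alpha(X')$, and preservation of finite coproducts applied to the (finite) orbit decomposition $Y' = \bigsqcup_{i=1}^n Y'_i$ gives $\Pi_\alpha(Y') = \bigsqcup_{i=1}^n \Pi_\alpha(Y'_i)$. On the cofinite set of $\alpha$ where $\Pi_\alpha(X)$, $\Pi_\alpha(Y)$, and $\Pi_\alpha(X')$ are non-empty, the pullback of a map of finite sets with constant fiber size $\tilde{\nu}_\alpha(f)$ again has constant fiber size $\tilde{\nu}_\alpha(f)$, while each $\Pi_\alpha(Y'_i)$ is either empty (contributing $\tilde{\nu}_\alpha(f'_i) = 0$ by convention) or transitive with fibers of common size $\tilde{\nu}_\alpha(f'_i)$ over $\Pi_\alpha(X')$. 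Decomposing a single fiber of $\Pi_\alpha(f')$ according to the disjoint union over $i$ then yields the identity $\tilde{\nu}_\alpha(f) = \sum_{i=1}^n \tilde{\nu}_\alpha(f'_i)$, which is axiom (c) after passing to $\Fun^{\circ}(I,\bZ)$.

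I do not expect a serious conceptual obstacle: the result is essentially the assertion that the construction $f \mapsto (\text{common fiber size of }\Pi_\alpha(f))$ is a measure in the straightforward finite-group sense for each $\alpha$, and this is glued across $\alpha \in I$ by the quotient $\Fun^{\circ}$. The only thing that requires care is the bookkeeping around non-emptiness in axiom (c): one must note that, because only the finitely many $G$-sets $X$, $X'$, $Y$, $Y'_1, \ldots, Y'_n$ enter the diagram and each has cofinite $\Sigma(\cdot)$, the intersection of the corresponding $\Sigma$-sets is still cofinite, so the sum formula above fails on at most a finite set of indices and hence holds in $\Fun^{\circ}(I, \bZ)$.
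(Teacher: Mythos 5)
Your proposal is correct and follows essentially the same route as the paper: verify the three axioms of Definition~\ref{defn:meas2} index-by-index, using that $\Pi_\alpha$ preserves finite limits and coproducts and sends transitive $G$-sets to transitive or empty $H_\alpha$-sets, and that the relevant identities hold on a cofinite set of indices and hence in $\Fun^{\circ}(I,\bZ)$. Your explicit bookkeeping of the finitely many $\Sigma$-sets in axiom (c) is a slightly more careful version of what the paper does implicitly.
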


\begin{proof}
We verify the three axioms in Definition~\ref{defn:meas2}.

(a) If $f$ is an isomorphism then $\tilde{\nu}_{\alpha}(f)=1$ for all but finitely many $\alpha$, and so $\nu(f)=1$.

(b) Let $g \colon Z \to Y$ be a second map of transitive $G$-sets. If $\alpha$ is an index such that $\Pi_{\alpha}(X)$ is non-empty then we have maps of transitive $H_{\alpha}$-sets $\Pi_{\alpha}(Z) \to \Pi_{\alpha}(Y) \to \Pi_{\alpha}(X)$, and so $\tilde{\nu}_{\alpha}(fg)=\tilde{\nu}_{\alpha}(f) \tilde{\nu}_{\alpha}(g)$. Since this identity holds for all but finitely many $\alpha$, we have $\nu(fg)=\nu(f) \nu(g)$, as required.

(c) Let $f \colon Y \to X$ be a map of transitive $G$-sets, let $X' \to X$ be another map of transitive $G$-sets, and let $f' \colon Y' \to X'$ be the base change of $f$. Let $Y'=\bigsqcup_{i=1}^n Y'_i$ be the orbit decomposition of $Y'$, and let $f'_i$ be the restriction of $f'$ to $Y'_i$. Let $\alpha$ be an index such that each of $X$, $X'$, $Y$, and $Y'_i$ map to a transitive $H_{\alpha}$-set under $\Pi_{\alpha}$. The diagram
\begin{displaymath}
\xymatrix{
\Pi_{\alpha}(Y') \ar[r] \ar[d] & \Pi_{\alpha}(Y) \ar[d] \\
\Pi_{\alpha}(X') \ar[r] & \Pi_{\alpha}(X) }
\end{displaymath}
is a cartesian square of $H_{\alpha}$-sets. Counting fiber sizes, we obtain the identity
\begin{displaymath}
\tilde{\nu}_{\alpha}(f) = \sum_{i=1}^n \tilde{\nu}_{\alpha}(f'_i).
\end{displaymath}
Since this holds for all but finitely many $\alpha$, we find
\begin{displaymath}
\nu(f) = \sum_{i=1}^n \nu(f'_i),
\end{displaymath}
which completes the proof.
\end{proof}

\begin{corollary}
We have a ring homomorphism $\Theta(G) \to \Fun^{\circ}(I, \bZ)$, via $[f] \mapsto \nu(f)$.
\end{corollary}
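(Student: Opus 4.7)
The statement is essentially an immediate application of the universal property of $\Theta(G)$, so the plan is short. Recall from \S \ref{ss:meas} that $\Theta(G)$ is characterized by the property that giving a measure on $G$ valued in a commutative ring $k$ is equivalent to giving a ring homomorphism $\Theta(G) \to k$; concretely, the class $[f] \in \Theta(G)$ attached to a map $f \colon Y \to X$ of transitive $G$-sets maps to $\mu(f)$ under the homomorphism corresponding to the measure $\mu$.

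The plan is therefore to invoke this universality with $k = \Fun^{\circ}(I, \bZ)$ and the measure $\nu$ constructed in the immediately preceding proposition. That proposition verifies exactly the three axioms of Definition~\ref{defn:meas2} for $\nu$, so $\nu$ qualifies as a measure on $G$ valued in $\Fun^{\circ}(I, \bZ)$. Universality then produces a unique ring homomorphism $\Theta(G) \to \Fun^{\circ}(I, \bZ)$ sending $[f] \mapsto \nu(f)$, which is exactly the desired map.

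There is essentially no obstacle: the only thing one needs to observe is that $\Fun^{\circ}(I, \bZ)$ is indeed a commutative ring (the quotient of the product ring $\Fun(I, \bZ)$ by the ideal of functions supported on a finite set), so that the universal property applies. All the genuine content of the corollary lives in the preceding proposition, which verifies the measure axioms by a fiberwise cardinality count on the homogeneous cover $\{\Delta_\alpha\}$.
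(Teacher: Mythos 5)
Your proof is correct and matches the paper's intent exactly: the corollary is stated without proof precisely because it is the universal property of $\Theta(G)$ applied to the measure $\nu$ constructed in the preceding proposition. Nothing further is needed.
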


\subsection{Ultraproduct categories} \label{ss:ultra}

Let $\{\Delta_{\alpha}\}_{\alpha \in I}$ be a smooth approximation of $\bS(G)$, and maintain the notation from \S \ref{ss:smooth}. Fix a non-principal ultrafilter $\cF$ on $I$. For each $\alpha \in I$, let $k_{\alpha}$ be a field, and let $k$ be their ultraproduct. We have a natural ring homomorphism
\begin{displaymath}
\Fun^{\circ}(I, \bZ) \to k, \qquad n \mapsto (n(\alpha))_{\alpha \in I}.
\end{displaymath}
Let $\mu$ be the measure obtained $\nu$ by composing with the above homomorphism. We call $\mu$ an \defn{ultraproduct measure}.

Let $\tilde{\cT}$ be the ultraproduct of the categories of finite dimensional $k_{\alpha}[H_{\alpha}]$-modules. This naturally a $k$-linear rigid tensor category. Suppose $X \in \bS(G)$. Then $X_{\alpha}=\Pi_{\alpha}(X)$ is a $H_{\alpha}$-set of finite cardinality. We define $\cP(X)$ to be the ultraproduct of the permutation modules $k_{\alpha}[X_{\alpha}]$, regarded as an object of $\tilde{\cT}$. We note that if $Y \in \bS(G)$ then we have natural isomorphisms
\begin{displaymath}
\cP(X \amalg Y) = \cP(X) \oplus \cP(Y), \qquad
\cP(X \times Y) = \cP(X) \otimes \cP(Y).
\end{displaymath}
We define $\cT$ to be the full subcategory of $\tilde{\cT}$ spanned by objects that are isomorphic to a subquotient of some $\cP(X)$. This is an abelian subcategory of $\tilde{\cT}$ that is closed under the tensor product.

The following is one of the central problems we study in this paper:

\begin{problem} \label{keyprob}
Compare the tensor categories $\uPerm(G, \mu)$ and $\cT$.
\end{problem}

We now establish one basic connection between these two categories. Suppose $f \colon Y \to X$ is a map in $\bS(G)$. There is then an induced function $f_{\alpha} \colon Y_{\alpha} \to X_{\alpha}$ for each $\alpha$. This induces pushforward and pullbacks on the linearizations of these sets. Taking ultraproducts, we thus have maps
\begin{displaymath}
f_* \colon \cP(Y) \to \cP(X), \qquad
f^* \colon \cP(X) \to \cP(Y).
\end{displaymath}
These constructions are both functorial, that is, if $g \colon Z \to Y$ is another map then $(gf)_*=g_*f_*$ and $(gf)^*=f^*g^*$.

\begin{proposition} \label{prop:ultra-func}
There is a natural fully faithful tensor functor
\begin{displaymath}
\Phi_0 \colon \uPerm_k(G, \mu) \to \cT, \qquad
\cC(X) \mapsto \cP(X).
\end{displaymath}
If $f \colon Y \to X$ is a morphism in $\bS(G)$ then $\Phi_0$ maps $f_*$ and $f^*$ to $f_*$ and $f^*$.
\end{proposition}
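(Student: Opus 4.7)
The plan is to construct $\Phi_0$ via the universal (mapping) property of $\uPerm(G,\mu)$ recalled in the proof of Proposition~\ref{prop:res-ab}, and then check full faithfulness by counting morphisms on both sides using the homogeneity of the $\Delta_\alpha$.

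First I would define the putative functor at the level of $\bS(G)$: set $\Phi_0(X)=\cP(X)$, and for a morphism $f\colon Y\to X$ in $\bS(G)$ assign the pushforward $f_*\colon \cP(Y)\to\cP(X)$ and the pullback $f^*\colon \cP(X)\to\cP(Y)$ defined just before the statement of the proposition. Functoriality of these assignments follows at each finite level from basic properties of permutation modules and is then preserved by taking ultraproducts. The key computation to invoke the mapping property of $\uPerm(G,\mu)$ from \cite[\S 9.4]{repst} is that for a map $f\colon Y\to X$ of transitive $G$-sets the composition $f_*f^*$ equals multiplication by $\mu(f)$. This is immediate at each index $\alpha$ in the ultrafilter: by homogeneity of $\Delta_\alpha$ both $\Pi_\alpha(Y)$ and $\Pi_\alpha(X)$ are transitive $H_\alpha$-sets (when non-empty), so $(f_\alpha)_*(f_\alpha)^*$ is multiplication by the constant fiber size $\tilde{\nu}_\alpha(f)$. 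Taking the ultraproduct gives multiplication by $\nu(f)$, and pushing $\nu(f)$ into $k$ gives $\mu(f)$ by the very definition of the ultraproduct measure $\mu$ in \S\ref{ss:ultra}. Combined with the identifications $\cP(X\amalg Y)=\cP(X)\oplus\cP(Y)$ and $\cP(X\times Y)=\cP(X)\otimes\cP(Y)$ (the latter because $\Pi_\alpha$ preserves products and $k_\alpha[A\times B]=k_\alpha[A]\otimes k_\alpha[B]$), this produces the required $k$-linear tensor functor $\Phi_0$.

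Next I would verify full faithfulness by a dimension count plus a linear independence argument. The key observation is that for each transitive $G$-orbit $Z\subset Y\times X$ the set $\Sigma(Z)$ is cofinite in $I$, and there are only finitely many such orbits, so for $\alpha$ in the ultrafilter $\Pi_\alpha(Z)$ is a transitive $H_\alpha$-set (again by homogeneity) for every orbit $Z$ simultaneously. Consequently, for such $\alpha$ the $H_\alpha$-orbits on $\Pi_\alpha(Y\times X)=\Pi_\alpha(Y)\times\Pi_\alpha(X)$ are in bijection with the $G$-orbits on $Y\times X$. Therefore
\[
\dim_k\Hom_{\cT}(\cP(X),\cP(Y)) = \#\{G\text{-orbits on } Y\times X\} = \dim_k\Hom_{\uPerm(G,\mu)}(\cC(X),\cC(Y)),
\]
since the latter space is precisely the $k$-vector space of $G$-invariant Schwartz functions on $Y\times X$.

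To finish, I would check that $\Phi_0$ is injective on these Hom spaces: a $G$-invariant matrix $A=\sum_i a_i\,1_{Z_i}$ is sent, at index $\alpha$, to the $H_\alpha$-invariant matrix taking the constant value $a_{i,\alpha}$ on $\Pi_\alpha(Z_i)$; if the ultraproduct vanishes then each $a_{i,\alpha}$ vanishes on a set in $\cF$, so $a_i=0$ in $k$ for every $i$. Combined with the equality of dimensions, this gives fullness automatically. The main technical obstacle is the full faithfulness step, and specifically the use of homogeneity (together with the cofiniteness of $\Sigma(Z)$) to guarantee that $G$-orbits and $H_\alpha$-orbits on products match up along the ultrafilter; everything else is bookkeeping with the mapping property and functoriality of ultraproducts.
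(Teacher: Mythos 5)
Your proposal is correct and follows essentially the same route as the paper: both rest on the computation that $(f_\alpha)_*(f_\alpha)^*$ is multiplication by the common fiber size, so that $f_*f^*=\mu(f)\cdot\id$ and the functor-extension machinery of \cite[\S 9]{repst} produces $\Phi_0$. The only difference is packaging: the paper checks that $X\mapsto\cP(X)$ is a balanced, additive, symmetric monoidal, plenary functor and invokes \cite[Theorem~9.9]{repst} to get an equivalence onto the full subcategory spanned by the $\cP(X)$ (hence full faithfulness for free), whereas you verify full faithfulness by hand via the orbit-counting and dimension argument along the ultrafilter --- which is precisely the content of the plenarity hypothesis, so this is a legitimate unpacking rather than a divergence.
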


\begin{proof}
Let $\cT_0$ be the full subcategory of $\cT$ spanned by the objects $\cP(X)$. The rule $X \mapsto \cP(X)$ is a balanced functor $\Phi_{00} \colon \bS(G) \to \cT_0$ in the sense of \cite[\S 9]{repst}. One easily sees that $\Phi_{00}$ is additive, symmetric monoidal, and plenary. It follows that $\Phi_{00}$ is a linearization of $\bS(G)$ in the sense of \cite[Definition~9.8]{repst}. Thus, by \cite[Theorem~9.9]{repst}, it extends to an equivalence of tensor categories
\begin{displaymath}
\Phi_0 \colon \uPerm_k(G, \mu') \to \cT_0
\end{displaymath}
for some measure $\mu'$. To show that $\mu'=\mu$, it suffices to show that $\Phi_{00}$ is $\mu$-adapted.

Suppose $f \colon Y \to X$ is a map of transitive objects in $\bS(G)$. Let $n_{\alpha}$ be the common fiber size of the map $f_{\alpha} \colon Y_{\alpha} \to X_{\alpha}$. Then the map
\begin{displaymath}
(f_{\alpha})_* f_{\alpha}^* \colon k_{\alpha}[Y_{\alpha}] \to k_{\alpha}[X_{\alpha}]
\end{displaymath}
is multiplication by $n_{\alpha}$. It follows that $f_* f^*$ is multiplication by the element $c=(n_{\alpha})$ of $k$. We have $\mu(f)=c$ by definition of $\mu$. This shows that $\Phi_{00}$ is $\mu$-adapted, and completes the proof.
\end{proof}

Since nilpotent endomorphisms in $\cT$ have trace zero, we obtain the following corollary:

\begin{corollary} \label{cor:ultra-nil}
The ultraproduct measure $\mu$ satisfies condition (Nil), that is, nilpotent endomorphisms in the category $\uPerm_k(G, \mu)$ have trace zero.
\end{corollary}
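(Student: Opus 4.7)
The plan is to invoke Proposition~\ref{prop:ultra-func} directly and reduce the problem to the fact that traces of nilpotent operators vanish in ordinary representation categories of finite groups over a field. Specifically, let $A \colon \cC(X) \to \cC(X)$ be a $G$-invariant nilpotent endomorphism in $\uPerm_k(G, \mu)$. The functor $\Phi_0$ of Proposition~\ref{prop:ultra-func} is a (fully faithful) tensor functor into $\cT$, hence carries $A$ to a nilpotent endomorphism $\Phi_0(A) \colon \cP(X) \to \cP(X)$. Since any tensor functor between rigid tensor categories preserves categorical trace, it suffices to show that $\Phi_0(A)$ has trace zero in $\cT$.

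Next I would unwind the trace in $\cT$. By definition $\cT$ sits inside the ultraproduct $\tilde{\cT}$ of the categories $\Rep(H_\alpha, k_\alpha)$. Under the ultraproduct equivalence, the object $\cP(X)$ is represented by the sequence of permutation modules $k_\alpha[X_\alpha]$, and the endomorphism $\Phi_0(A)$ is represented by a sequence of $H_\alpha$-equivariant endomorphisms $A_\alpha$. The categorical trace is computed componentwise and assembled via the ultrafilter: $\utr(\Phi_0(A)) = (\tr(A_\alpha))_{\alpha \in I}$ in $k$. In each component $\Rep(H_\alpha, k_\alpha)$, categorical trace agrees with the ordinary linear-algebra trace of the underlying operator on the finite-dimensional vector space $k_\alpha[X_\alpha]$.

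The only remaining question is why $\tr(A_\alpha) = 0$ for $\cF$-almost all $\alpha$. Here I would use that $\Phi_0(A)$ is nilpotent in $\cT$, say $\Phi_0(A)^N = 0$. By \L{}os's theorem, $A_\alpha^N = 0$ for $\cF$-almost all $\alpha$, so each such $A_\alpha$ is a nilpotent linear operator on a finite-dimensional vector space over $k_\alpha$ and therefore has ordinary trace zero. Consequently $\utr(\Phi_0(A)) = 0$ in $k$, and hence $\utr(A) = 0$ in $\uPerm_k(G, \mu)$, which is exactly condition (Nil).

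There is no real obstacle: the corollary is essentially a bookkeeping exercise once Proposition~\ref{prop:ultra-func} is in hand. The one thing to be mindful of is the compatibility of categorical trace with the ultraproduct formation of $\tilde{\cT}$, but this is built into how $\tilde{\cT}$ is constructed as a $k$-linear rigid tensor category from the rigid tensor categories $\Rep(H_\alpha, k_\alpha)$.
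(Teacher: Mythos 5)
Your proposal is correct and is exactly the argument the paper intends: the corollary is stated as an immediate consequence of Proposition~\ref{prop:ultra-func} together with the observation that nilpotent endomorphisms in $\cT$ have trace zero, which you justify componentwise via \L{}os's theorem. You have simply spelled out the details (tensor functors preserve categorical trace; the trace in $\tilde{\cT}$ is the ultraproduct of ordinary traces) that the paper leaves implicit.
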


\subsection{Abelian envelopes} \label{ss:ultra-ab}

Maintain the above situation, and suppose that $\mu$ is quasi-regular. Since (Nil) holds (Corollary~\ref{cor:ultra-nil}), it follows that $\uRep(G, \mu)$ is the abelian envelope of $\uPerm(G, \mu)$. In particular, $\Phi_0$ induces an exact tensor functor
\begin{displaymath}
\Phi \colon \uRep(G, \mu) \to \cT.
\end{displaymath}
We would like this functor to be an equivalence; this is, in a sense, the ideal solution to Problem~\ref{keyprob}. The rest of \S \ref{s:ultra}, we give some criteria for this. We begin with two simple observations.

\begin{proposition} \label{prop:Phi-full-faithful}
The functor $\Phi$ is fully faithful.
\end{proposition}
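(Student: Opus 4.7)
The plan is to reduce full faithfulness to a $\Hom$-from-$\bone$ statement and then embed arbitrary objects of $\uRep(G,\mu)$ into objects of $\uPerm(G,\mu)$. Since $\Phi$ is an exact tensor functor between rigid tensor categories (the target being pre-Tannakian and the source admitting rigid self-dual generators), $\Phi$ commutes with duals. For $M, N \in \uRep(G,\mu)$, rigidity identifies the comparison map $\Hom_{\uRep}(M,N) \to \Hom_{\cT}(\Phi M, \Phi N)$ naturally with $\Hom_{\uRep}(\bone, M^{\vee} \uotimes N) \to \Hom_{\cT}(\bone, \Phi(M^{\vee} \uotimes N))$. Hence it suffices to prove that $\Hom_{\uRep}(\bone, X) \to \Hom_{\cT}(\bone, \Phi X)$ is an isomorphism for every $X \in \uRep(G,\mu)$.

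For the embedding step, I would use that $\uRep(G,\mu)$ is the abelian envelope of $\uPerm(G,\mu)$: every object $Y$ admits a presentation $P_1 \to P_0 \to Y \to 0$ with $P_i$ in $\uPerm$. Applying this to $Y=X^{\vee}$ and dualizing, using self-duality of objects of $\uPerm(G,\mu)$, produces a left exact sequence
\[ 0 \to X \to I_0 \to I_1 \]
with $I_0, I_1 \in \uPerm(G,\mu)$. Applying the exact functor $\Phi$ and then the left exact functor $\Hom(\bone, -)$ in each category yields the commutative ladder
\[
\xymatrix{
0 \ar[r] & \Hom_{\uRep}(\bone, X) \ar[r] \ar[d] & \Hom_{\uRep}(\bone, I_0) \ar[r] \ar[d]^{\cong} & \Hom_{\uRep}(\bone, I_1) \ar[d]^{\cong} \\
0 \ar[r] & \Hom_{\cT}(\bone, \Phi X) \ar[r] & \Hom_{\cT}(\bone, \Phi I_0) \ar[r] & \Hom_{\cT}(\bone, \Phi I_1)
}
\]
with exact rows. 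The two rightmost verticals are isomorphisms by the full faithfulness of $\Phi_0$ (Proposition~\ref{prop:ultra-func}), since $\bone$ and the $I_j$ lie in $\uPerm(G,\mu)$. Comparing kernels then yields the isomorphism on the left.

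The only real content here is the existence of the embedding $X \hookrightarrow I_0$, which combines the abelian envelope axiom with the self-duality of Schwartz spaces; everything else is formal manipulation with exactness, rigidity, and the already established Proposition~\ref{prop:ultra-func}. I would not expect any serious obstacle in carrying this out.
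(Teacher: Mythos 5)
Your proof is correct and is essentially the paper's argument: both rest on the full faithfulness of $\Phi_0$, the exactness of $\Phi$, a (co)presentation of the object by Schwartz spaces, and a kernel comparison in a ladder diagram with exact rows. The only difference is cosmetic: you first use rigidity to reduce to $\Hom(\bone,-)$ and then run one co-presentation ladder, whereas the paper skips the rigidity step and instead runs the ladder twice (once with a presentation of $M$ against a Schwartz-space target, once with a co-presentation of $N$).
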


\begin{proof}
This follows since $\Phi_0$ is fully faithful, $\Phi$ is exact, and every object of $\uRep(G, \mu)$ admits a presentation and co-presentation by Schwartz spaces. We provide some details. Let $M$ and $N$ be objects of $\uRep(G, \mu)$. We must show that the map
\begin{equation} \label{eq:Phi}
\Hom(M, N) \to \Hom(\Phi(M), \Phi(N))
\end{equation}
is an isomorphism. First suppose that $N=\cC(Y)$ for some finitary $G$-set $Y$. Choose a presentation
\begin{displaymath}
\cC(X_1) \to \cC(X_0) \to M \to 0.
\end{displaymath}
Then we have the following commutative diagram
\begin{displaymath}
\xymatrix{
0 \ar[r] & \Hom(M, \cC(Y)) \ar[r] \ar[d] & \Hom(\cC(X_0), \cC(Y)) \ar[r] \ar[d] & \Hom(\cC(X_1), \cC(Y)) \ar[d] \\
0 \ar[r] & \Hom(\Phi(M), \cP(Y)) \ar[r] & \Hom(\cP(X_0), \cP(Y)) \ar[r] & \Hom(\cP(X_1), \cP(Y)). }
\end{displaymath}
The right two vertical arrows are isomorphisms since $\Phi_0$ is fully faithful. Thus the left vertical arrow is an isomorphism. We have thus shown that \eqref{eq:Phi} is an isomorphism whenever $N$ is a Schwartz space. We can now deduce the result for general $N$ by choosing a co-presentation of $N$ by Schwartz spaces and employing a similar argument.
\end{proof}

\begin{proposition} \label{prop:Phi-equiv}
The following are equivalent:
\begin{enumerate}
\item $\Phi$ is an equivalence.
\item Some $\cP(X)$ is projective in $\cT$.
\item Every object of $\cT$ is a quotient of some $\cP(X)$.
\end{enumerate}
\end{proposition}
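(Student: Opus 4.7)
The plan is to establish the cyclic implication chain $(c) \Rightarrow (a) \Rightarrow (b) \Rightarrow (c)$. Throughout, I exploit that $\Phi$ is already known to be fully faithful (Proposition~\ref{prop:Phi-full-faithful}) and exact, so that the content of (a) is essential surjectivity of $\Phi$.

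For $(c) \Rightarrow (a)$, given $N \in \cT$, I would first apply (c) to obtain a surjection $\cP(X) \twoheadrightarrow N$; the kernel $K$ is a subobject of $\cP(X)$ and thus lies in $\cT$, so applying (c) once more yields a surjection $\cP(Y) \twoheadrightarrow K$. Composing produces a two-term presentation $\cP(Y) \to \cP(X) \to N \to 0$ in $\cT$. The map $\cP(Y) \to \cP(X)$ lives between objects in the essential image of the fully faithful restriction $\Phi_0 := \Phi|_{\uPerm(G,\mu)}$ (as in Proposition~\ref{prop:ultra-func}), and hence lifts uniquely to a map $\cC(Y) \to \cC(X)$ in $\uRep(G,\mu)$. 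Letting $M$ be its cokernel and invoking exactness of $\Phi$ gives $\Phi(M) \cong N$.

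Both $(a) \Rightarrow (c)$ and $(a) \Rightarrow (b)$ are direct, using the description of $\uRep(G,\mu)$ as the category of finite length smooth modules over the completed group algebra $A$ recalled in Section~\ref{ss:abenv}. For $(a) \Rightarrow (c)$: any finite length smooth $A$-module $M$ is generated by finitely many elements, each fixed by some open $U_i$, which assembles into a surjection $\cC(\coprod_i G/U_i) \twoheadrightarrow M$; transporting across $\Phi$ gives (c). For $(a) \Rightarrow (b)$: under quasi-regularity, for sufficiently small open $U$ the element $e_U = \mu(U)^{-1} \cdot 1_U$ is an idempotent in $A$, so $\cC(G/U) \cong A e_U$ is a direct summand of the regular $A$-module and thus projective in $\uRep(G,\mu)$; its image $\cP(G/U) = \Phi(\cC(G/U))$ is then projective in $\cT$.

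The main obstacle is $(b) \Rightarrow (c)$. Given a projective $\cP(X_0)$, rigidity of $\cT$ makes $\cP(X_0) \otimes -$ exact, and tensoring with a dualizable object preserves projectivity, so $\cP(X_0 \times Y) = \cP(X_0) \otimes \cP(Y)$ is projective for every $Y \in \bS(G)$. The evaluation and coevaluation maps of the self-dual rigid object $\cP(X_0)$ factor $\mu(X_0) \cdot \id_{\cP(Y)}$ through $\cP(X_0)^{\vee} \otimes \cP(X_0) \otimes \cP(Y)$; after enlarging $X_0$ to $X_0 \times X_1$ for a suitable $X_1$ of invertible measure (available by quasi-regularity), we may assume $\mu(X_0)$ is invertible, exhibiting every $\cP(Y)$ as a direct summand of a projective and hence projective. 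Let $\cQ$ denote the class of objects of $\cT$ occurring as a quotient of some $\cP(Z)$; it is visibly closed under direct sums and tensor products, and by the standard lifting argument through projectives it is also closed under extensions. To conclude $\cQ = \cT$, given a subobject $L \hookrightarrow \cP(Y)$ I would dualize using self-duality of $\cP(Y)$ to get a quotient $\cP(Y) \twoheadrightarrow L^{\vee}$, placing $L^{\vee}$ in $\cQ$, and then bootstrap via the extension-closure of $\cQ$ applied to the short exact sequence $0 \to L \to \cP(Y) \to \cP(Y)/L \to 0$ (both of whose outer terms are in $\cQ$ after duality) to place $L$ itself in $\cQ$; any subquotient $L/K$ of $\cP(Y)$ is then a quotient of the $\cQ$-object $L$, and so lies in $\cQ$. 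The subtlety is precisely this passage from subobjects of $\cP$'s to quotients of $\cP$'s, which is where the interaction between projectivity, self-duality, and rigidity does the real work.
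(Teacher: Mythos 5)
Your cycle $(c)\Rightarrow(a)$ and the easy direction $(a)\Rightarrow(b)$ are essentially the paper's (the paper gets $(a)\Rightarrow(b)$ more cleanly by inducing the unit object from an open subgroup $U$ on which $\mu$ is regular, so that $\uRep(U,\mu)$ is semi-simple and Frobenius reciprocity makes $\cC(G/U)$ projective; your idempotent $e_U=\mu(U)^{-1}1_U$ is not well-formed as an element of the pro-object $A$, though the intended averaging operator does exist). The genuine gap is in $(b)\Rightarrow(c)$.

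Your plan there is to upgrade ``some $\cP(X_0)$ is projective'' to ``every $\cP(Y)$ is projective'' by splitting $\cP(Y)$ off of $\cP(X_0)^{\vee}\otimes\cP(X_0)\otimes\cP(Y)$, which requires $\mu(X_0)=\udim\cP(X_0)$ to be invertible, ``after enlarging $X_0$ to $X_0\times X_1$.'' This enlargement cannot work: $\mu(X_0\times X_1)$ is built multiplicatively from $\mu(X_0)$ on each orbit, so if $\mu(X_0)=0$ no tensoring repairs it. Worse, in the quasi-regular non-regular case this failure is forced: if some projective had non-zero dimension, the evaluation map would split $\bone$ off a projective, making $\bone$ projective and hence (by tensoring) every object projective, i.e.\ $\cT$ semi-simple --- which is exactly false at the singular parameters the proposition must cover. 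So every projective $\cP(X_0)$ has $\mu(X_0)=0$ there, and your route to ``all $\cP(Y)$ projective'' (which is in fact a false statement in that case, as it would again force semi-simplicity via $Y=\mathrm{pt}$) collapses. The subsequent bootstrap is also circular: from $L\hookrightarrow\cP(Y)$ you get $L^{\vee}\in\cQ$, but concluding $L\in\cQ$ from the sequence $0\to L\to\cP(Y)\to\cP(Y)/L\to 0$ via extension-closure is backwards --- extension-closure produces the middle term from the outer ones, and placing the outer term $L$ in $\cQ$ is precisely what you are trying to prove.

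The paper's argument avoids dimensions entirely. Given $M=M_0/M_1$ with $M_0\subset\cP(Y)$ and $\cP(X)$ projective: since $\cP(X)$ is self-dual it is also injective, and tensoring with anything preserves both properties, so $M_0\otimes\cP(X)$ is injective; as an injective subobject of $\cP(X\times Y)$ it is a direct summand, hence a quotient of $\cP(X\times Y)$. Then the surjections $\cP(X)\otimes M_0\to\cP(X)\otimes M\to M$ (the last obtained by tensoring the augmentation $\cP(X)\to\bone$ with $M$) exhibit $M$ as a quotient of $\cP(X\times Y)$. I recommend replacing your $(b)\Rightarrow(c)$ with this.
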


\begin{proof}
(a) $\Rightarrow$ (b). Since $\mu$ is quasi-regular, there is an open subgroup $U$ such that $\mu$ is regular on $U$, and so $\uRep(U, \mu)$ is semi-simple. It follows that if $M$ is any object of $\uRep(U, \mu)$ then its induction to $G$ is a projective object of $\uRep(G, \mu)$. Applying this with $M=\bone$, we see that $\cC(G/U)$ is projective. Since $\Phi$ is an equivalence, we find that $\cP(G/U)$ is projective.

(b) $\Rightarrow$ (c). Let $M$ be an object of $\cT$, and write $M=M_0/M_1$, where $M_1 \subset M_0 \subset \cP(Y)$, and $Y$ is some finitary $G$-set. Since $\cT$ is a rigid tensor category and $\cP(X)$ is projective, it follows that $M_0 \otimes \cP(X)$ is projective, and also injective. Since this is an injective subobject of $\cP(X) \otimes \cP(Y)=\cP(X \times Y)$, it is a summand. Thus $\cP(X) \otimes M_0$ is a quotient of $\cP(X \times Y)$. Since we have surjections
\begin{displaymath}
\cP(X) \otimes M_0 \to \cP(X) \otimes M \to M,
\end{displaymath}
we see that $M$ is a quotient of $\cP(X \times Y)$, as required. (The second map above comes from tensoring the surjection $\cP(X) \to \bone$ with $M$.)

(c) Let $M$ be an object of $\cT$. We then have a presentation
\begin{displaymath}
\cP(X_1) \stackrel{f}{\to} \cP(X_0) \to M \to 0.
\end{displaymath}
Indeed, we can find a surjection $g \colon \cP(X_0) \to M$, and we can then find a surjection from some $\cP(X_1)$ onto $\ker(g)$. Write $f=\Phi_0(f')$, where $f' \colon \cC(X_1) \to \cC(X_0)$. Then $M=\Phi(M')$, where $M'$ is the cokernel of $f'$. Thus $\Phi$ is essentially surjective. It is also fully faithful (Proposition~\ref{prop:Phi-full-faithful}), and therefore an equivalence.
\end{proof}

\subsection{The regular case} \label{ss:ultra-ab-reg}

Maintain the set-up of \S \ref{ss:ultra-ab}. We now give some criterion for $\Phi$ to be an equivalence when the measure $\mu$ is regular. We begin with the following:

\begin{proposition} \label{prop:ultra-equiv1}
Suppose that $\mu$ is regular and the following condition holds:
\begin{itemize}
\item[$(\ast)$] Whenever $L$ is a non-trivial simple object of $\uRep(G, \mu)$, the object $\Phi(L)$ of $\cT$ has no trivial subquotient.
\end{itemize}
Then $\Phi$ is an equivalence.
\end{proposition}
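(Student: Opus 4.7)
The plan is to establish essential surjectivity of $\Phi$, since full faithfulness is already given by Proposition~\ref{prop:Phi-full-faithful}. The overall structure is: first show that $\Phi$ sends each simple of $\uRep(G,\mu)$ to a simple of $\cT$; then observe that every $\cP(X)=\Phi(\cC(X))$ becomes a direct sum of such simples, so any object of $\cT$ (being a subquotient of some $\cP(X)$) lies in the essential image of $\Phi$.

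The main step — and the main obstacle — is to prove that $\Phi(L)$ is simple in $\cT$ for every simple $L$ in $\uRep(G,\mu)$; the case $L=\bone$ is trivial. Since $\mu$ is regular, $\uRep(G,\mu)$ is semi-simple, and $\End_\uRep(L)=k$ yields a decomposition $L\otimes L^\vee\cong\bone\oplus L'$ with $L'$ a direct sum of non-trivial simples. Applying the tensor functor $\Phi$ (which preserves duals) gives
\[
\Phi(L)\otimes\Phi(L)^\vee \;\cong\; \bone\oplus\Phi(L').
\]
By hypothesis $(\ast)$ applied to each simple summand of $L'$, the object $\Phi(L')$ has no trivial composition factor in $\cT$, so $\bone$ occurs as a composition factor of $\Phi(L)\otimes\Phi(L)^\vee$ with multiplicity exactly one. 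On the other hand, if $\Phi(L)$ has simple composition factors $T_1,\ldots,T_r$ (with repetition) in $\cT$, then biexactness of $\otimes$ in the rigid category $\cT$ gives a filtration of $\Phi(L)\otimes\Phi(L)^\vee$ whose graded pieces are $T_a\otimes T_b^\vee$ for $(a,b)\in\{1,\ldots,r\}^2$. Each pair with $T_a\cong T_b$ contributes at least one trivial composition factor, since $\dim\Hom_\cT(\bone,T_a\otimes T_a^\vee)=\dim\End_\cT(T_a)\ge 1$. Writing $n_j$ for the multiplicity of the $j$-th distinct simple among the $T_a$, we obtain
\[
1 \;=\; [\Phi(L)\otimes\Phi(L)^\vee:\bone] \;\ge\; \sum_j n_j^2,
\]
which forces a single simple of multiplicity one. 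Hence $\Phi(L)$ has length one, i.e., is simple.

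With this step in hand, the rest is routine semi-simple bookkeeping. For any finitary $G$-set $X$, decomposing $\cC(X)\cong\bigoplus_i L_i^{m_i}$ in the semi-simple $\uRep(G,\mu)$ gives $\cP(X)\cong\bigoplus_i \Phi(L_i)^{m_i}$ in $\cT$, a direct sum of simples that are pairwise non-isomorphic by full faithfulness, hence semi-simple. Any simple of $\cT$ appears as a composition factor of some such $\cP(X)$, and therefore equals some $\Phi(L_i)$. Any $N\in\cT$ is a subquotient of some $\cP(X)$, and subquotients of a semi-simple object with pairwise non-isomorphic simple summands are themselves sub-sums of the same form, so $N\cong\bigoplus_i\Phi(L_i)^{n_i}=\Phi\bigl(\bigoplus_i L_i^{n_i}\bigr)$. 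Combined with Proposition~\ref{prop:Phi-full-faithful}, $\Phi$ is therefore an equivalence.

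The principal difficulty is the tensor-square calculation; once $\Phi$ is shown to preserve simplicity, everything else follows formally. The key inputs there are the semi-simplicity of $\uRep(G,\mu)$, the compatibility of $\Phi$ with duals and tensor, and condition $(\ast)$, which together pin the $\bone$-multiplicity in $\Phi(L)\otimes\Phi(L)^\vee$ at exactly one and bound the length of $\Phi(L)$ to one via the quadratic inequality.
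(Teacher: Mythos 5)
Your proof is correct, and it reaches the conclusion by a genuinely different route from the paper. The paper argues directly that $\bone$ is projective in $\cT$: given a surjection $M \to \bone$ with $M$ a subquotient of some $\cP(X)$, it splits $\cP(X)=P'\oplus P''$ into the trivial and non-trivial parts coming from the decomposition of $\cC(X)$ in the semi-simple category $\uRep(G,\mu)$, uses $(\ast)$ to see that the surjection cannot vanish on the piece of $M$ coming from $P'$ (which is a sum of trivials, so the surjection splits there), and then invokes the criterion of Proposition~\ref{prop:Phi-equiv}(b). You instead prove the stronger intermediate statement that $\Phi$ sends simples to simples, via the multiplicity count $1=[\Phi(L)\otimes\Phi(L)^\vee:\bone]\ge\sum_j n_j^2$, and deduce that $\cT$ is semi-simple with every simple in the image of $\Phi$; essential surjectivity then follows by bookkeeping. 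Your route yields more along the way (semi-simplicity of $\cT$ and preservation of simples, which in the paper emerge only as consequences of the equivalence), at the cost of a more delicate tensor-square computation whose ingredients (biexactness of $\otimes$, compatibility of $\Phi$ with duals, simplicity of $\bone$ in $\cT$) you correctly identify. One caveat: your first step invokes $\End_{\uRep}(L)=k$, i.e.\ Schur's lemma over a splitting field; the proposition as stated does not assume $k$ algebraically closed, and the paper's proof never needs this. If $\End(L)$ were a larger division algebra the multiplicity of $\bone$ in $L\otimes L^\vee$ would exceed one and your quadratic inequality would no longer force length one. Since every application in the paper takes $k$ algebraically closed, this is a loss of generality rather than an error, but it is worth flagging.
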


\begin{proof}
We first claim that the trivial object of $\cT$ is projective and injective. Indeed, suppose we have a surjective map $\epsilon \colon M \to \bone$ in $\cT$. By definition, there is some $X \in \bS(G)$ and subobjects $M_2 \subset M_1 \subset \cP(X)$ such that $M=M_1/M_2$. Write $\cC(X) = \bigoplus_{i=1}^n L_i$, where the $L_i$'s are simples of $\uRep(G, \mu)$, and order the $L_i$'s so that $L_1, \ldots, L_r$ are trivial and $L_{r+1}, \ldots, L_n$ are non-trivial. We thus have $\cP(X) = P' \oplus P''$, where $P'$ is a sum of trivials and $P''=\bigoplus_{i=r+1}^n \Phi(L_i)$. Let $M'$ be the image of $M_1 \cap P'$ in $M$. Then $M/M'$ is a subquotient of $P''$, and thus has no trivial subquotient by $(\ast)$. Now, if $\epsilon \vert_{M'}$ were~0 then $\epsilon$ would factor through $M/M'$, and thus be~0, which is not possible since $\epsilon$ is surjective. Thus $\epsilon_{M'}$ is non-zero, and this gives a splitting of $\epsilon$. This shows that $\bone$ is projective $\cT$, and so $\Phi$ is an equivalence (Proposition~\ref{prop:Phi-equiv}).
\end{proof}

We now give a way of verifying the condition in Proposition~\ref{prop:ultra-equiv1} using the Farahat--Higman algebra $Z$ defined in \S \ref{ss:fh}. We begin with the following observation.

\begin{proposition}
The category $\cT$ and the functor $\Phi$ are naturally $Z$-linear.
\end{proposition}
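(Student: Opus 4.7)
The plan is to build the $Z$-linear structure on $\cT$ by factoring through an action of $\prod_\cF Z(k_\alpha[H_\alpha])$, where $Z(k_\alpha[H_\alpha])$ denotes the center of the group algebra. For each $\alpha$, the category of $k_\alpha[H_\alpha]$-modules is canonically linear over its center, and these actions assemble via the ultraproduct into an action of $\prod_\cF Z(k_\alpha[H_\alpha])$ on $\tilde{\cT}$, which restricts to the full subcategory $\cT$. It therefore suffices to construct a ring homomorphism
\[ \eta \colon Z \longrightarrow \prod_{\cF} Z(k_\alpha[H_\alpha]) \]
whose induced action on each $\cP(X)$ agrees with the endomorphism obtained by applying $\Phi_0$ to the $Z$-action on $\cC(X)$ in $\uPerm(G, \mu)$.

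To construct $\eta$, I would write $\Delta_\alpha = G/U_\alpha$ so that $H_\alpha = N_G(U_\alpha)/U_\alpha$. The subset $\GG_{U_\alpha} := \{g \in \GG : U_\alpha \subset Z_G(g)\}$ is contained in $N_G(U_\alpha)$ and centralized by $U_\alpha$, so the quotient map restricts to an $H_\alpha$-equivariant map $\GG_{U_\alpha} \to H_\alpha$ (with both sides carrying conjugation actions). Given $z \in Z$, its restriction to $\GG_{U_\alpha}$ is finitely supported for cofinitely many $\alpha$ (by applying the smooth-approximation hypothesis to each of the finitely many $G$-conjugacy classes on which $z$ is nonzero), and pushing forward along $\GG_{U_\alpha} \to H_\alpha$ yields $z_\alpha \in k_\alpha[H_\alpha]$. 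The $G$-conjugation-invariance of $z$ descends to $H_\alpha$-conjugation-invariance of $z_\alpha$, so $z_\alpha \in Z(k_\alpha[H_\alpha])$. One then checks that $z \mapsto (z_\alpha)_\cF$ is a ring homomorphism, matching convolution in $Z$ with multiplication in each $k_\alpha[H_\alpha]$.

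The main obstacle, and the technical heart of the proof, is verifying that the $(z_\alpha)_\cF$-action on $\cP(X)$ coincides with $\Phi_0$ applied to the convolution action of $z$ on $\cC(X)$. My plan is to unpack $\Phi_0$ using the identification $\Pi_\alpha(X) \cong X^{U_\alpha}$ (evaluation at the base point of $\Delta_\alpha = G/U_\alpha$), and to trace the $B$-action $B \uotimes \cC(X) \to \cC(X)$ through $\Phi_0$. The key observation is that the action morphism $\GG \times X \to X$ in the ind-category of $\bS(G)$ restricts under $\Pi_\alpha$ to the $G$-action $\GG_{U_\alpha} \times X^{U_\alpha} \to X^{U_\alpha}$, and for $g \in \GG_{U_\alpha}$ and $x \in X^{U_\alpha}$, the element $gx \in X^{U_\alpha}$ agrees with the $H_\alpha$-action of $gU_\alpha \in H_\alpha$ on $x$. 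This is immediate from the definition of the quotient map, but the full matching between endomorphisms of $\cP(X)$ requires unpacking the construction of $\Phi_0$ given in Proposition~\ref{prop:ultra-func}.

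Given this compatibility, $Z$-linearity of $\cT$ is established, and $Z$-linearity of $\Phi$ follows because $\Phi$ is the abelian-envelope extension of $\Phi_0$: the compatibility for morphisms in $\uPerm(G, \mu)$ extends uniquely to $\uRep(G, \mu)$ by the universal property of the abelian envelope.
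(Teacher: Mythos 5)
Your proposal follows essentially the same route as the paper: both identify $\GG_{U_\alpha}=Z_G(U_\alpha)$ with $\Hom_G(\Delta_\alpha,\GG)$, use the $H_\alpha$-equivariant homomorphism $\GG_{U_\alpha}\to H_\alpha=N_G(U_\alpha)/U_\alpha$ to land in the ultraproduct $D^\circ=\prod_\cF Z(k_\alpha[H_\alpha])$ of centers, and check compatibility on $\cP(X)$ via $\Pi_\alpha(X)=X^{U_\alpha}$. The one place where your packaging costs something is the assertion that $z\mapsto(z_\alpha)_\cF$ is a ring homomorphism: convolution on $Z$ uses the measure $\mu$ while multiplication in $k_\alpha[H_\alpha]$ uses counting, and these only agree after passing to the ultrafilter; the paper sidesteps this bookkeeping by applying the tensor functor $\Phi_0$ to the algebra object $B=\cC(\GG)$, so that $\Phi(B)=C$ as algebras comes for free and $Z=\Hom_\cT(\bone,C)\to D^\circ$ is automatically a ring map. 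Also note a small imprecision: $z$ is $k$-valued, so its pushforward does not literally lie in $k_\alpha[H_\alpha]$; you should define $\eta$ on the integral basis of conjugacy-class indicators and extend $k$-linearly into the ultraproduct.
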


\begin{proof}
Let $\GG_{\alpha} = \Hom_G(\Delta_{\alpha}, \GG)$. Choosing a base point in $\Delta_{\alpha}$, we have $\Delta_{\alpha}=G/U_{\alpha}$ for some open subgroup $U_{\alpha}$, and we find
\begin{displaymath}
H_{\alpha} = N_G(U_{\alpha})/U_{\alpha}, \qquad
\GG_{\alpha} = Z_G(U_{\alpha}).
\end{displaymath}
It follows that $H_{\alpha}$ acts on $\GG_{\alpha}$ (by conjugation), and there is a $H_{\alpha}$-equivariant group homomorphism $\GG_{\alpha} \to H_{\alpha}$, using the conjugation action on the target. Moreover, one can verify that this structure is independent of the choice of base point of $\Delta_{\alpha}$, and is thus natural.

Write $\GG=\varinjlim_{\beta \in J} \GG^{\le \beta}$, where $J$ is a directed set and each $\GG^{\le \beta}$ is a finitary $G$-set, and let $\GG^{\le \beta}_{\alpha}=\Pi_{\alpha}(\GG^{\le \beta})$. Then $\GG_{\alpha}$ is the directed union of the $\GG^{\le \beta}_{\alpha}$, each of which is $H_{\alpha}$-stable. Let $C^{\le \beta}=\cP(\GG^{\le \beta})$, which is the ultraproduct of the spaces $k_{\alpha}[\GG^{\le \beta}_{\alpha}]$ as $\alpha$ varies, and let $C$ be the direct limit (over $\beta$) of these spaces, thought of as an ind-object of $\cT$; one can regard this as the ``bounded ultraproduct'' of the group algebras $k_{\alpha}[\GG_{\alpha}]$. The multiplication on $\GG$ endows $C$ with an algebra structure.

Let $D$ be the ultraproduct of the group algebras $k_{\alpha}[H_{\alpha}]$, and let $D^{\circ}$ be the ultraproduct of the centers of these group algebras, which is a central subalgebra of $D$. The category $\cT$ is $D^{\circ}$-linear. Now, the homomorphism $\GG_{\alpha} \to H_{\alpha}$ induces a homomorphism $C \to D$. The space $\Hom_{\cT}(\bone, C^{\le \beta})$ is the ultraproduct of the $H_{\alpha}$-invariants in $k_{\alpha}[\GG^{\le \beta}_{\alpha}]$, and maps into $D^{\circ}$. Thus $\cT$ is linear over the algebra $\Hom_{\cT}(\bone, C)$.

It follows from the definition of $\Phi$ that $\Phi(B)=C$ (as algebras), where here we are applying $\Phi$ to ind-objects in the natural manner. Since $\Phi$ is fully faithful, we have $Z=\Hom(\bone, B)=\Hom_{\cT}(\bone, C)$. Thus, composing with the above homomorphism, we obtain a natural algebra homomorphism $Z \to D^{\circ}$. This shows that $\cT$ is $Z$-linear.

We now verify that $\Phi$ is $Z$-linear. Let $X$ be an object of $\bS(G)$. Under the identification $\Delta_{\alpha}=G/U_{\alpha}$, we have $X_{\alpha}=X^{U_{\alpha}}$. Thus $H_{\alpha}$ and $\GG_{\alpha}$ naturally act on this set, with the action of $\GG_{\alpha}$ factoring through the homomorphism $\GG_{\alpha} \to H_{\alpha}$; moreover, all this structure is independent of the choice of basepoint of $\Delta_{\alpha}$. We thus see that the action of $\Hom_{\cT}(\bone, C)$ on $\cP(X)$ factors through the homomorphism $\Hom_{\cT}(\bone, C) \to D^{\circ}$. The result now follows.
\end{proof}

The following is our main conditional result on Problem~\ref{keyprob} in the regular case.

\begin{theorem} \label{thm:ultra-equiv2}
Suppose that $\mu$ is regular and condition $(\ast)$ from Proposition~\ref{prop:cent-char} holds. Then $\Phi$ is an equivalence.
\end{theorem}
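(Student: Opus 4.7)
The plan is to reduce to Proposition~\ref{prop:ultra-equiv1} by verifying its hypothesis: for every non-trivial simple $L$ of $\uRep(G, \mu)$, the object $\Phi(L)$ of $\cT$ has no trivial subquotient. The central-character machinery of \S 4 together with the $Z$-linearity of $\Phi$ just established are exactly the tools needed.

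Here is how I would run the argument. First, because $L$ is simple, Schur's lemma together with $Z$-linearity give that $Z$ acts on $L$ through the scalar $\psi_L$, i.e., the structure map $Z \to \End_{\uRep(G,\mu)}(L)$ factors as $z \mapsto \psi_L(z) \cdot \id_L$. Since $\Phi$ is $Z$-linear, applying $\Phi$ gives that $z \in Z$ acts on $\Phi(L)$ via $\psi_L(z) \cdot \id_{\Phi(L)}$. In particular, for any subquotient $N$ of $\Phi(L)$ in $\cT$, the action of $z$ on $N$ is $\psi_L(z) \cdot \id_N$. Dually, applying $\Phi$ to the unit object shows that $Z$ acts on $\bone_{\cT}$ through the trivial character $\epsilon$.

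Now if $\bone_{\cT}$ were a subquotient of $\Phi(L)$, comparing these two actions would force $\psi_L = \epsilon$, contradicting hypothesis $(\ast)$ of Proposition~\ref{prop:cent-char}. Hence $\Phi(L)$ has no trivial subquotient, which is the hypothesis needed to invoke Proposition~\ref{prop:ultra-equiv1} and conclude that $\Phi$ is an equivalence.

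There is no real obstacle here; the only step that requires a small sanity-check is the claim that $Z$ acts on $\Phi(L)$ by the scalar $\psi_L$, which is immediate from the definition of $Z$-linearity of $\Phi$ once one notes that the map $Z \to \End_{\cT}(\Phi(L))$ factors through $\Phi$ applied to $Z \to \End_{\uRep(G,\mu)}(L)$. Everything else is a direct application of the results already developed.
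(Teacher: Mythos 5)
Your proof is correct and follows the same route as the paper: use the $Z$-linearity of $\cT$ and $\Phi$ to see that $Z$ acts on every subquotient of $\Phi(L)$ through $\psi_L$, which by $(\ast)$ differs from the trivial central character $\epsilon$ acting on $\bone_{\cT}$, so $\Phi(L)$ has no trivial subquotient and Proposition~\ref{prop:ultra-equiv1} applies. The paper's proof is just a terser version of exactly this argument.
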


\begin{proof}
Suppose $L$ is a non-trivial simple object of $\uRep(G, \mu)$. Then $Z$ acts through a non-trivial character on $\Phi(L)$ by Proposition~\ref{prop:cent-char}. It follows that $\Phi(L)$ has no subquotient isomorphic to the trivial representation. Thus condition~$(\ast)$ of Proposition~\ref{prop:ultra-equiv1} holds, and so $\Phi$ is an equivalence.
\end{proof}

\subsection{The quasi-regular case}

Maintain the set-up of \S \ref{ss:ultra-ab}. We now give a method for proving that $\Phi$ is an equivalence when the measure $\mu$ is quasi-regular.

Let $G'$ be an open subgroup of $G$. Recall that $\{\Delta_{\alpha}\}_{\alpha \in I}$ is our smooth approximation of $G$. Suppose that we have a smooth approximation $\{\Delta'_{\alpha}\}_{\alpha \in I}$ of $G'$ indexed by the same set $I$. We say that the smooth approximations of $G$ and $G'$ are \defn{compatible} if for each $\alpha \in I$ we have an isomorphism $\Ind_{G'}^G(\Delta'_{\alpha}) \to \Delta_{\alpha}$ of $G$-sets, where $\Ind_{G'}^G(X) = G \times_{G'} X$. Concretely, this means that we have an open subgroup $U_{\alpha}$ of $G'$ such that $\Delta_{\alpha} \cong G/U_{\alpha}$ and $\Delta'_{\alpha} \cong G'/U_{\alpha}$. We let $H'_{\alpha}=\Aut_{G'}(\Delta'_{\alpha})$ and define $\Pi'_{\alpha}$ analogously to the $G$-case. We assume this compatibility condition in what follows.

\begin{proposition} \label{prop:comp-smooth}
We have the following:
\begin{enumerate}
\item Let $X$ be a finitary $G$-set. Then the natural map
\begin{displaymath}
\Res^{H_{\alpha}}_{H'_{\alpha}}(\Pi_{\alpha}(X)) \to \Pi'_{\alpha}(\Res^G_{G'}(X))
\end{displaymath}
is an isomorphism for all $\alpha \in I$.
\item Let $Y$ be a finitary $G'$-set. Then the natural map
\begin{displaymath}
\Ind_{H'_{\alpha}}^{H_{\alpha}}(\Pi'_{\alpha}(Y)) \to \Pi_{\alpha}(\Ind_{G'}^G(Y))
\end{displaymath}
is an isomorphism for all but finitely many $\alpha \in I$.
\end{enumerate}
\end{proposition}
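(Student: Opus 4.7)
The plan is to handle (a) by Frobenius reciprocity, and (b) by a direct stabilizer computation; the ``cofinitely many'' restriction in (b) will come only from needing $\Pi'_\alpha(Y)$ to be non-empty, which is automatic from the smooth approximation hypothesis. Writing $\Delta_\alpha = G/U_\alpha$ and $\Delta'_\alpha = G'/U_\alpha$ with $U_\alpha$ an open subgroup of $G'$ (possible by the compatibility assumption), we have $H_\alpha = N_G(U_\alpha)/U_\alpha$ and $H'_\alpha = N_{G'}(U_\alpha)/U_\alpha$, and the inclusion $H'_\alpha \hookrightarrow H_\alpha$ is induced by $N_{G'}(U_\alpha) = N_G(U_\alpha) \cap G' \hookrightarrow N_G(U_\alpha)$.

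Part (a) is essentially the Frobenius reciprocity adjunction $\Ind_{G'}^G \dashv \Res^G_{G'}$. Since $\Delta_\alpha = \Ind_{G'}^G \Delta'_\alpha$, there is a natural bijection $\Hom_G(\Delta_\alpha, X) \xrightarrow{\sim} \Hom_{G'}(\Delta'_\alpha, \Res^G_{G'} X)$, and I will check that this coincides with the map in (a). Naturality of the adjunction in $\Delta'_\alpha$ supplies $H'_\alpha$-equivariance, so (a) will hold for every $\alpha \in I$.

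For (b), I will first reduce to $Y$ transitive, since both functors commute with disjoint unions in $Y$. Writing $Y = G'/V$ with $V$ open in $G'$, we have $\Ind_{G'}^G Y \cong G/V$, and the two sides unfold as
\[
\Pi'_\alpha(Y) = \{g'V : (g')^{-1}U_\alpha g' \subset V,\ g' \in G'\}, \quad \Pi_\alpha(\Ind_{G'}^G Y) = \{gV : g^{-1}U_\alpha g \subset V,\ g \in G\},
\]
with $H'_\alpha$ and $H_\alpha$ acting by left multiplication; the natural map is the evident inclusion $G'/V \hookrightarrow G/V$, extended $H_\alpha$-equivariantly to the induced set. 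By the smooth approximation hypothesis applied to $\{\Delta'_\alpha\}$, we have $\Pi'_\alpha(Y) \ne \emptyset$ for all but finitely many $\alpha$, and for such $\alpha$ the homogeneity of $\Delta'_\alpha$ and $\Delta_\alpha$ makes both sides transitive under $H'_\alpha$ and $H_\alpha$ respectively; hence the induced $H_\alpha$-equivariant map is automatically surjective.

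Injectivity will follow from comparing stabilizers at a base point $g'_0 V \in \Pi'_\alpha(Y)$: its $H'_\alpha$-stabilizer is $(N_{G'}(U_\alpha) \cap g'_0 V (g'_0)^{-1})/U_\alpha$, while its $H_\alpha$-stabilizer (under the inclusion into $\Pi_\alpha(\Ind_{G'}^G Y)$) is $(N_G(U_\alpha) \cap g'_0 V (g'_0)^{-1})/U_\alpha$. The key observation --- and the only really content-bearing step --- is that since $g'_0 \in G'$ and $V \subset G'$, the conjugate $g'_0 V (g'_0)^{-1}$ lies inside $G'$, so any element of the intersection with $N_G(U_\alpha)$ automatically lies in $G' \cap N_G(U_\alpha) = N_{G'}(U_\alpha)$. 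Thus the two stabilizers coincide, forcing equality of cardinalities and turning the surjection between transitive $H_\alpha$-sets into a bijection. There is no real obstacle; the argument is a bookkeeping exercise driven by the single observation just noted.
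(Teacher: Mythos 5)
Your proof is correct and follows essentially the same route as the paper: part (a) via Frobenius reciprocity, and part (b) by reducing to $Y=G'/V$ transitive, using non-emptiness of $\Pi'_\alpha(Y)$ for cofinitely many $\alpha$ together with homogeneity to get transitivity, and then upgrading the surjection to a bijection. The paper phrases the last step as an explicit computation of both sides as coset spaces of $N_G(U_\alpha)$ (choosing the base point with $U_\alpha\subset V$) rather than your stabilizer comparison at a general base point, but both hinge on the same observation that $g'_0V(g'_0)^{-1}\subset G'$ forces $N_G(U_\alpha)\cap g'_0V(g'_0)^{-1}=N_{G'}(U_\alpha)\cap g'_0V(g'_0)^{-1}$, so the difference is purely cosmetic.
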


\begin{proof}
(a) We have natural identifications
\begin{displaymath}
\Pi_{\alpha}(X) = \Hom_G(\Delta_{\alpha}, X) = \Hom_{G'}(\Delta'_{\alpha}, X) = \Pi'_{\alpha}(X),
\end{displaymath}
where in the second step we used the adjunction between restriction and induction. This identification is $H'_{\alpha}$-equivariant, and so the first statement follows.

(b) It suffices to treat the case where $Y$ is transitive. Suppose $\alpha$ is such that $\Pi'_{\alpha}(Y)$ is non-empty. We then show that the map in question is an isomorphism.

Write $\Delta'_{\alpha}=G'/U$ and $\Delta_{\alpha}=G/U$. Since $\Pi'_{\alpha}(Y)=Y^U$ is non-empty, we can choose an isomorphism $Y=G'/V$ with $U \subset V$; we thus have $\Ind_{G'}^G(Y)=G/V$. Since $H'_{\alpha}=N_{G'}(U)/U$ acts transitively on $\Pi'_{\alpha}(Y)$, we have $\Pi'_{\alpha}=N_{G'}(U)/V$; similarly, $\Pi_{\alpha}(Y)=N_G(U)/V$. We also have
\begin{displaymath}
\Ind_{H_{\alpha}'}^{H_{\alpha}}(\Pi'_{\alpha}(Y)) = H_{\alpha} \times_{H'_{\alpha}} N_{G'}(U)/V = N_G(U)/V = \Pi_{\alpha}(Y).
\end{displaymath}
It is now clear that the map in question is an isomorphism. Indeed, the domain and target are abstractly isomorphic transitive $H_{\alpha}$-sets. Since the map is $H_{\alpha}$-equivariant, it is surjective, and thus injective since the domain and target have the same cardinality.
\end{proof}

Since we have a a smooth approximation of $G'$, we get ultraproduct categories $\tilde{\cT}'$ and $\cT'$ by the general constructions of \S \ref{ss:ultra-ab}. For a finitary $G'$-set $Y$, we write $\cP'(Y)$ for the corresponding object of $\cT'$. Since the induction functor $\Ind_{G'}^G$ is faithful, it follows that $H'_{\alpha}$ is a subgroup of $H_{\alpha}$. We therefore have restriction and induction functors between their representation categories. These induce exact functors
\begin{displaymath}
\Res \colon \tilde{\cT} \to \tilde{\cT}', \qquad
\Ind \colon \tilde{\cT}' \to \tilde{\cT}.
\end{displaymath}
We now investigate them.

\begin{proposition}
We have the following:
\begin{enumerate}
\item If $X$ is a finitary $G$-set then we have a natural isomorphism $\Res(\cP(X)) = \cP'(\Res^G_{G'}(X))$.
\item The restriction functor maps $\cT$ into $\cT'$.
\item If $Y$ is a finitary $G'$-set then we have a natural isomorphism $\Ind(\cP'(Y)) = \cP(\Ind_{G'}^G(Y))$.
\item The induction functor maps $\cT'$ into $\cT$.
\end{enumerate}
\end{proposition}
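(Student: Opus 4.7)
My plan is to deduce parts (a) and (c) directly from Proposition~\ref{prop:comp-smooth}, and then derive parts (b) and (d) from (a), (c), and the exactness of $\Res$ and $\Ind$.

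For (a), I would simply unwind definitions. By construction, $\cP(X)$ is the ultraproduct of the permutation modules $k_\alpha[\Pi_\alpha(X)]$, and the functor $\Res \colon \tilde{\cT} \to \tilde{\cT}'$ is the ultraproduct of the ordinary restriction functors $\Rep(H_\alpha) \to \Rep(H'_\alpha)$. Hence $\Res(\cP(X))$ is the ultraproduct of the $H'_\alpha$-modules $k_\alpha[\Res^{H_\alpha}_{H'_\alpha} \Pi_\alpha(X)]$, and Proposition~\ref{prop:comp-smooth}(a) identifies the latter with $k_\alpha[\Pi'_\alpha(\Res^G_{G'} X)]$ for every $\alpha$. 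Taking the ultraproduct yields the claimed natural isomorphism with $\cP'(\Res^G_{G'}(X))$.

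Part (c) runs in parallel, using Proposition~\ref{prop:comp-smooth}(b) in place of (a). The one subtlety is that (b) of that proposition only produces the required isomorphism for a cofinite set of indices $\alpha$, rather than for every $\alpha$. This is harmless: since the ultrafilter $\cF$ is non-principal, every cofinite subset of $I$ belongs to $\cF$, so the ultraproduct only sees the cofinite behavior, and the comparison map is still an isomorphism at the level of $\cT$.

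For (b) and (d), I would invoke the definition of $\cT$ (resp.\ $\cT'$) as the full subcategory of $\tilde{\cT}$ (resp.\ $\tilde{\cT}'$) generated under subquotients by the objects $\cP(X)$ (resp.\ $\cP'(Y)$). Both $\Res$ and $\Ind$ are exact: for $\Res$ this is obvious, while for $\Ind$ it follows from the fact that induction along a subgroup of finite index (indeed, for any subgroup) of a finite group is exact in all characteristics, and exactness is preserved under ultraproducts. Exactness reduces the problem to checking that $\Res$ sends each generator $\cP(X)$ into $\cT'$ and $\Ind$ sends each $\cP'(Y)$ into $\cT$, which is precisely parts (a) and (c). The main place where one must be careful is verifying exactness of $\Ind$ on the ultraproduct — this is the step I would think about most before writing out details — but beyond that, the proof is essentially a formal unpacking of Proposition~\ref{prop:comp-smooth}.
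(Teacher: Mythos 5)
Your proposal is correct and follows essentially the same route as the paper: parts (a) and (c) are read off from Proposition~\ref{prop:comp-smooth} (with the cofinite caveat absorbed by the non-principal ultrafilter), and parts (b) and (d) follow because the exact functors $\Res$ and $\Ind$ carry subquotients of $\cP(X)$ (resp.\ $\cP'(Y)$) to subquotients of $\cP'(\Res^G_{G'}X)$ (resp.\ $\cP(\Ind_{G'}^G Y)$). The exactness of $\Ind$ that you flag is indeed unproblematic, since induction from a subgroup of a finite group is exact in any characteristic and exactness passes to ultraproducts.
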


\begin{proof}
Statements (a) and (c) follow from Proposition~\ref{prop:comp-smooth} and the way induction and restriction work for permutation representations of finite groups. Since restriction is exact, it takes subquotients of $\cP(X)$ to subquotients of $\cP'(\Res_{G'}^G(X))$, and thus maps $\cT$ into $\cT'$. The same argument works for induction.
\end{proof}

\begin{corollary} \label{cor:ultra-quot}
Suppose that every object of $\cT'$ is a quotient of some $\cP'(Y)$. Then every object of $\cT$ is a quotient of some $\cP(X)$.
\end{corollary}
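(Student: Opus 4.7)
The plan is to use induction and restriction to transfer the covering property from $\cT'$ to $\cT$. The key observation is that for a finite group $H$ and a subgroup $H'$, the counit map $\Ind_{H'}^H \Res_{H'}^H(M) \to M$ (which at the level of $k$-vector spaces is $k[H] \otimes_{k[H']} M \to M$, $h \otimes m \mapsto hm$) is always surjective, since $1 \in H$ lies in the image; moreover, induction is exact since $k[H]$ is free as a $k[H']$-module. Both properties pass through ultraproducts: taking $\alpha$-wise counits and applying ultraproduct, we get a surjective counit $\eta \colon \Ind \Res(M) \to M$ in $\tilde{\cT}$ for every object $M$, and the ultraproduct induction functor $\Ind \colon \tilde{\cT}' \to \tilde{\cT}$ is exact.

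Given $M \in \cT$, I would proceed in three steps. First, restrict to obtain $\Res(M) \in \cT'$ (which lies in $\cT'$ by the preceding proposition). Second, apply the hypothesis to find a surjection $\cP'(Y) \twoheadrightarrow \Res(M)$ for some finitary $G'$-set $Y$. Third, apply $\Ind$ (exact, hence preserves surjections) to obtain a surjection
\begin{displaymath}
\Ind(\cP'(Y)) \twoheadrightarrow \Ind\Res(M),
\end{displaymath}
and then compose with the surjective counit $\eta \colon \Ind\Res(M) \twoheadrightarrow M$. Using the natural isomorphism $\Ind(\cP'(Y)) \cong \cP(\Ind_{G'}^G(Y))$ from the preceding proposition, this yields a surjection $\cP(X) \twoheadrightarrow M$ where $X = \Ind_{G'}^G(Y)$.

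There is no real obstacle here beyond checking two compatibilities at the ultraproduct level: (i) induction of finite groups is exact and commutes with the formation of ultraproducts, and (ii) the counit maps $\Ind_{H'_\alpha}^{H_\alpha}\Res_{H'_\alpha}^{H_\alpha}(M_\alpha) \to M_\alpha$, which are uniformly surjective in $\alpha$, assemble into a surjection in $\tilde{\cT}$. Both follow from the fact that ultraproducts of abelian categories preserve exact sequences and that the constructions $\Ind, \Res$ are defined $\alpha$-wise. With these in hand, the three steps above constitute the proof.
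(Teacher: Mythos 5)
Your proof is correct and follows the same route as the paper's: restrict $M$, use the hypothesis to cover $\Res(M)$ by some $\cP'(Y)$, apply the exact induction functor, and compose with the surjective counit $\Ind\Res(M)\to M$, identifying $\Ind(\cP'(Y))$ with $\cP(\Ind_{G'}^G(Y))$. The only difference is that you spell out the ultraproduct-level verifications (exactness of $\Ind$ and surjectivity of the counit assembled $\alpha$-wise), which the paper asserts in a parenthetical.
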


\begin{proof}
Let $M$ be an object of $\cT$. By assumption, we have a surjection $\cP'(Y) \to \Res(M)$ for some finitary $G'$-set $Y$. Since induction is exact, we have a surjection
\begin{displaymath}
\cP(X) = \Ind(\cP'(Y)) \to \Ind(\Res(M)),
\end{displaymath}
where $X=\Ind_{G'}^G(Y)$. Since the co-unit $\Ind(\Res(M)) \to M$ is surjective (since this is true for finite groups), the result follows.
\end{proof}

We now connect the above discussion to the $\uRep$ categories. First we need a result on measures. By the general theory of smooth approximations, we have measures $\nu$ and $\nu'$ for $G$ and $G'$ valued in the ring $\Fun^{\circ}(I, \bZ)$.

\begin{proposition}
The measure $\nu'$ is the restriction of the measure $\nu$ to $G'$.
\end{proposition}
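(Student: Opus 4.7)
The plan is to verify the identity $\nu|_{G'}(f) = \nu'(f)$ for every $G'$-map $f\colon Y \to X$ between transitive $G'$-sets; by Definition~\ref{defn:meas2} this is enough to pin down both measures. The restriction of $\nu$ to $G'$ is unambiguous because every $\hat{G'}$-set is tautologically a $\hat{G}$-set (open subgroups of $G'$ are open in $G$), so $\nu$ evaluates on it directly via Definition~\ref{defn:meas1}.

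First I would reduce to a comparison of counting measures on induced maps between transitive $G$-sets. After choosing base points, write $X = G'/V_X$ and $Y = G'/V_Y$ with $V_Y \subseteq V_X$, and let $\tilde{f} = \Ind_{G'}^G(f)\colon G/V_Y \to G/V_X$, a $G$-map of transitive $G$-sets. Passing between the two formulations of measure, both $\nu|_{G'}(f)$ and $\nu(\tilde{f})$ equal the measure of the common fiber $F = V_X/V_Y$, regarded as a finitary $\hat{G}$-set. The problem therefore becomes: show $\nu(\tilde{f}) = \nu'(f)$ in $\Fun^{\circ}(I,\bZ)$.

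For this I would apply Proposition~\ref{prop:comp-smooth}(b), which yields natural isomorphisms $\Pi_\alpha(\Ind_{G'}^G(Z)) \cong \Ind_{H'_\alpha}^{H_\alpha}(\Pi'_\alpha(Z))$ for each $Z \in \{X,Y\}$ and for all but finitely many $\alpha \in I$; these isomorphisms assemble into a commutative square with the vertical maps induced by $f$ and $\tilde{f}$. Because induction of finite sets preserves fiber sizes and emptiness (the fiber of $\Ind h$ over $[g,w]$ is canonically identified with $h^{-1}(w)$), we obtain $\tilde\nu_\alpha(\tilde{f}) = \tilde\nu'_\alpha(f)$ whenever $\Pi'_\alpha(X)$ is non-empty, i.e., exactly when $\Pi_\alpha(\Ind_{G'}^G(X))$ is non-empty, for all but finitely many $\alpha$. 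Passing to classes in $\Fun^{\circ}(I,\bZ)$ yields the desired equality.

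I do not expect any step to be a serious obstacle: all of the genuine content sits in Proposition~\ref{prop:comp-smooth}(b), which is already established. The only care required is a bit of bookkeeping, namely matching the two definitions of measure to justify $\nu(\tilde{f}) = \nu|_{G'}(f)$ and tracking the ``cofinitely many $\alpha$'' quantifier so that the equalities make sense in $\Fun^{\circ}(I,\bZ)$.
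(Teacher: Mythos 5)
Your proposal is correct and follows essentially the same route as the paper: restriction of measures is computed on induced maps by definition, Proposition~\ref{prop:comp-smooth}(b) identifies $\Pi_\alpha$ of the induced map with the induction of $\Pi'_\alpha(f)$ for cofinitely many $\alpha$, and induction preserves fiber sizes, giving the equality in $\Fun^{\circ}(I,\bZ)$.
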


\begin{proof}
Define $\nu^*$ to be the restriction of $\nu$ to $G'$. Let $f' \colon Y' \to X'$ be a map of transitive $G'$-sets, and let $f \colon Y \to X$ be the induction of $f'$ from $G'$ to $G$, which is a map of transitive $G$-sets. Then $\nu^*(f')=\nu(f)$; this is simply the definition of restriction of measures from the perspective of Definition~\ref{defn:meas2}. By Proposition~\ref{prop:comp-smooth}, $\Pi_{\alpha}(f)$ is the induction from $H'_{\alpha}$ to $H_{\alpha}$ of $\Pi'_{\alpha}(f')$, for all but finitely many $\alpha$. It follows that the fiber size of $\Pi_{\alpha}(f)$ agrees with the fiber size of $\Pi'_{\alpha}(f')$, for almost all $\alpha$. Since the former computes $\nu^*(f')$ and the latter $\nu'(f')$, we find $\nu^*=\nu'$, as required.
\end{proof}

Let $\mu'$ be the measure obtained from $\nu'$ by applying the ring homomorphism $\Fun^{\circ}(I, \bZ) \to k$ corresponding to our chosen ultrafilter $\cF$. By the above proposition, this coincides with the restriction of $\mu$ to $G'$. The same construction as in \S \ref{ss:ultra-ab} gives a tensor functor
\begin{displaymath}
\Phi' \colon \uRep(G', \mu') \to \cT'.
\end{displaymath}
We now have the following diagram:
\begin{displaymath}
\xymatrix@C=3em{
\uRep(G, \mu) \ar[r]^-{\Phi} \ar[d] & \cT \ar[d] \\
\uRep(G', \mu') \ar[r]^-{\Phi'} & \cT' }
\end{displaymath}
It is not difficult to show that the square commutes (up to isomorphism), though we will not need this. Our main result is the following:

\begin{proposition} \label{prop:ultra-equiv3}
If $\Phi'$ is an equivalence then so is $\Phi$.
\end{proposition}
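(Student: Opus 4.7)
The plan is to chain together the earlier results in the excerpt, with essentially no new content. The key observation is that Proposition~\ref{prop:Phi-equiv} gives three equivalent criteria for $\Phi$ to be an equivalence, and the simplest one to transfer from $\Phi'$ to $\Phi$ is the ``every object is a quotient of some $\cP(X)$'' condition, because Corollary~\ref{cor:ultra-quot} was set up precisely to do this transfer.

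First I would invoke Proposition~\ref{prop:Phi-equiv} applied to $G'$: since $\Phi'$ is assumed to be an equivalence, condition (c) there holds, so every object of $\cT'$ is a quotient of some $\cP'(Y)$ with $Y$ a finitary $G'$-set. Next, Corollary~\ref{cor:ultra-quot} applies verbatim to our setup (it only uses the compatibility of smooth approximations, which is part of our standing assumption), and immediately yields that every object of $\cT$ is a quotient of some $\cP(X)$ with $X$ a finitary $G$-set. Finally, I would apply Proposition~\ref{prop:Phi-equiv} again, this time to $G$, and conclude from the implication (c) $\Rightarrow$ (a) that $\Phi$ is an equivalence.

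There is no real obstacle here; the proposition is a straightforward corollary of Proposition~\ref{prop:Phi-equiv} and Corollary~\ref{cor:ultra-quot}, and the entire content of the argument is packaged in those two statements. The only thing one should double-check is that Proposition~\ref{prop:Phi-equiv} is available for $G$ under our hypotheses, which requires $\mu$ to be quasi-regular; this is part of our standing assumption in \S\ref{ss:ultra-ab}. One might also note that the hypothesis that $\Phi'$ is an equivalence implicitly carries with it that $\mu'$ is such that $\uRep(G', \mu')$ is defined (i.e., quasi-regular and satisfies (Nil)), but the latter is automatic from Corollary~\ref{cor:ultra-nil} since $\mu'$ is itself an ultraproduct measure, and quasi-regularity of $\mu'$ follows from that of $\mu$ (as $\mu'$ is the restriction of $\mu$ to the open subgroup $G'$).
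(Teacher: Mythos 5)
Your proof is correct and is essentially identical to the paper's: both apply Proposition~\ref{prop:Phi-equiv} to $\Phi'$ to get that every object of $\cT'$ is a quotient of some $\cP'(Y)$, transfer this to $\cT$ via Corollary~\ref{cor:ultra-quot}, and then apply Proposition~\ref{prop:Phi-equiv} again to conclude. Your extra remarks checking that the hypotheses of Proposition~\ref{prop:Phi-equiv} are in force are fine but not needed beyond what the standing assumptions already provide.
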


\begin{proof}
If $\Phi'$ is an equivalence then every object of $\cT'$ is a quotient of some $\cP'(X)$ (Proposition~\ref{prop:Phi-equiv}). Thus the same holds in $\cT$ (Corollary~\ref{cor:ultra-quot}), and so $\Phi$ is an equivalence (Proposition~\ref{prop:Phi-equiv} again).
\end{proof}

The above result is useful for showing that $\Phi$ is an equivalence when $\mu$ is quasi-regular: one chooses $G'$ so that $\mu'$ is regular, then shows that $\Phi'$ is an equivalence using the methods of \S \ref{ss:ultra-ab-reg}, and finally deduces from the above proposition that $\Phi$ is an equivalence. For this approach to work, one must be able to choose $G'$ that has a compatible smooth approximation. We do not know if this is possible in general, but it will be possible in all cases of interest in this paper.

\part{Examples} \label{part:ex}

\section{The general linear group: combinatorics} \label{s:gl}

\subsection{Set-up} \label{ss:GL-setup}

Fix a finite field $\bF$ of cardinality $q$. Define $\hat{\fC}$ to be the following category. Objects are $\bZ/2$-graded $\bF$-vector spaces $V=V_0 \oplus V_1$ equipped with a bilinear form $\langle, \rangle \colon V_0 \times V_1 \to \bF$. A morphism $f \colon V \to W$ is an injective linear map that respects the gradings and forms, i.e., we have
\begin{displaymath}
f(V_i) \subset W_i, \qquad \langle f(x), f(y) \rangle=\langle x, y \rangle.
\end{displaymath}
Let $\fC$ be the subcategory spanned by finite dimensional objects.

Let $\bV$ be a vector space over $\bF$ with basis $\{e_i, f_i\}_{i \ge 1}$. Let $\bV_0$ be the subspace spanned by the $e_i$'s, and let $\bV_1$ be the subspace spanned by the $f_i$'s. Define a pairing between $\bV_0$ and $\bV_1$ by
\begin{displaymath}
\langle e_i, f_j \rangle = \delta_{i,j}.
\end{displaymath}
In this way, $\bV$ is an object of $\hat{\fC}$. We let $G$ be its automorphism group. Concretely, $G$ is the subgroup of $\GL(\bV_0)$ consisting of matrices that are both row and column finite. The group $G$ acts oligomorphically on $\bV$. The category of structures for $(G, \bV)$, as defined in \S \ref{ss:struct}, is the category $\fC$ above. The key point is that the definable closure of a finite subset of $\bV$ is the linear span of the homogeneous pieces of its elements.

We let $V_{\ell,m,n}$ be a $\bZ/2$-graded vector space over $\bF$ of dimension $(\ell+m|\ell+n)$ equipped with a form that is a perfect pairing on the $(\ell|\ell)$ piece, and vanishes on the $(m|n)$ piece. We let $X_{\ell,m,n}$ be the space of embeddings $V_{\ell,m,n} \to \bV$. One easily sees that every object of $\fC$ is isomorphic to a unique $V_{\ell,m,n}$, and so the $X_{\ell,m,n}$ are exactly the $\bV$-smooth transitive $G$-sets by Proposition~\ref{prop:struct-equiv}.

\subsection{Enumeration} \label{ss:GL_enum}

Let $N_{a,b,c}^{\ell,m,n}$ be the number of embeddings of $V_{a,b,c}$ into $V_{\ell,m,n}$.

\begin{proposition} \label{prop:GL-recur}
We have the following recurrences for $N$:
\begin{align*}
N_{a,b,c}^{\ell,m,n} &= (q^{\ell}-1) q^{\ell+m+n-1} \cdot N_{a-1,b,c}^{\ell-1,m,n} \\
N_{0,b,c}^{\ell,m,n} &= (q^{\ell+m}-q^m) q^{b-1} N_{0,b-1,c}^{\ell-1,m,n} + (q^m-1) q^{b-1} N_{0,b-1,c}^{\ell,m-1,n} \\
N_{0,b,c}^{\ell,m,n} &= (q^{\ell+n}-q^n) q^{c-1} N_{0,b,c-1}^{\ell-1,m,n} + (q^n-1) q^{c-1} N_{0,b,c-1}^{\ell,m,n-1}.
\end{align*}
We require $a \ge 1$, $b \ge 1$, and $c \ge 1$ respectively.
\end{proposition}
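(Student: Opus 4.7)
The plan is to prove each of the three recurrences by decomposing the set of embeddings $V_{a,b,c}\to V_{\ell,m,n}$ according to a small piece of extra data---either the image of one hyperbolic pair (for the first recurrence) or the image of a single null basis vector (for the second and third)---and identifying the remaining data with an embedding of a smaller object of $\fC$ into a form-subspace of $V_{\ell,m,n}$ that is itself isomorphic to some $V_{\ell',m',n'}$.

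Fix a graded basis $e_1,\ldots,e_a,g_1,\ldots,g_b$ of $(V_{a,b,c})_0$ and $f_1,\ldots,f_a,h_1,\ldots,h_c$ of $(V_{a,b,c})_1$ with $\langle e_i,f_j\rangle=\delta_{ij}$ and all other basic pairings zero. For the first recurrence, I would first choose the hyperbolic pair $(\phi(e_a),\phi(f_a))$: the vector $\phi(e_a)\in(V_{\ell,m,n})_0$ must lie outside the form-kernel $V_{0,m,0}$, giving $q^{\ell+m}-q^m=q^m(q^\ell-1)$ choices, and then $\langle\phi(e_a),\phi(f_a)\rangle=1$ picks out an affine hyperplane of size $q^{\ell+n-1}$ inside $(V_{\ell,m,n})_1$, for a total of $(q^\ell-1)q^{\ell+m+n-1}$ hyperbolic pairs. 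The remaining basis vectors of $V_{a,b,c}$ all pair trivially with $e_a$ and $f_a$, so their images lie in the orthogonal complement $H^\perp$ of $H=\spn(\phi(e_a),\phi(f_a))$, and a direct dimension count shows $H^\perp\cong V_{\ell-1,m,n}$ in $\fC$; the extensions of the chosen hyperbolic pair to a full embedding of $V_{a,b,c}$ are then exactly the embeddings $V_{a-1,b,c}\to H^\perp$, which yields the first recurrence.

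For the second recurrence, I would peel off $\phi(g_b)$ and split according to whether or not $\phi(g_b)\in V_{0,m,0}$. In the non-kernel case ($q^{\ell+m}-q^m$ choices of $\phi(g_b)$), pick any $w_0\in(V_{\ell,m,n})_1$ with $\langle\phi(g_b),w_0\rangle=1$ and set $V'=w_0^\perp\oplus\phi(g_b)^\perp$, which is isomorphic to $V_{\ell-1,m,n}$. In the kernel case ($q^m-1$ choices), choose any complement $W$ to $\bF\phi(g_b)$ in $(V_{\ell,m,n})_0$ that contains both the non-degenerate part of $(V_{\ell,m,n})_0$ and a complement to $\bF\phi(g_b)$ inside $V_{0,m,0}$; then $V'=W\oplus(V_{\ell,m,n})_1\cong V_{\ell,m-1,n}$. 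In either case, the unique decomposition $\phi(g_i)=\lambda_i\phi(g_b)+g'_i$ with $g'_i\in W$ (or $w_0^\perp$) identifies $(g'_1,\ldots,g'_{b-1},\phi(h_1),\ldots,\phi(h_c))$ with an embedding $V_{0,b-1,c}\to V'$---the pairing conditions transfer cleanly because $\phi(g_b)$ kills each $\phi(h_j)$---while the scalars $\lambda_i\in\bF$ are arbitrary and contribute the factor $q^{b-1}$. Adding the contributions of the two cases produces the second recurrence, and the third is obtained by running exactly the same argument with the two gradings swapped, i.e., peeling off $\phi(h_c)$ instead of $\phi(g_b)$ and interchanging $(b,m)$ with $(c,n)$.

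The only real work is verifying that the restricted forms on $H^\perp$ and on the two versions of $V'$ have the claimed kernel dimensions and are perfect on their non-kernel quotients, so that these subspaces are genuinely objects of $\fC$ of the advertised type. This reduces in each case to the observation that the pairing on $V_{\ell,m,n}$ descends to a perfect pairing on $(V_{\ell,m,n})_0/V_{0,m,0}\times(V_{\ell,m,n})_1/V_{0,0,n}$, combined with elementary linear algebra.
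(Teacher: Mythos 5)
Your proposal is correct and follows essentially the same route as the paper: peel off a hyperbolic pair for the first recurrence, and for the other two peel off a single null basis vector and split on whether its image lies in the form-kernel, identifying the remaining data with an embedding into a smaller $V_{\ell',m',n'}$ up to a factor of $q^{b-1}$ (resp.\ $q^{c-1}$) of lifting freedom. The only cosmetic difference is that you realize the smaller space as an explicit complement and write $\phi(g_i)=\lambda_i\phi(g_b)+g_i'$, whereas the paper passes to the quotient $V'/\bF y$ and counts lifts of the induced map; these are the same argument.
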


\begin{proof}
(a) Giving an embedding of $V_{a,b,c}$ into $V_{\ell,m,n}$ is equivalent to giving an embedding $i$ of $V_{1,0,0}$ into $V_{\ell,m,n}$ together with an embedding of $V_{a-1,b,c}$ into the orthogonal complement of the image of $i$. The latter is isomorphic to $V_{\ell-1,m,n}$. We thus find
\begin{displaymath}
N_{a,b,c}^{\ell,m,n} = N_{1,0,0}^{\ell,m,n} \cdot N_{a-1,b,c}^{\ell-1,m,n},
\end{displaymath}
since there are $N_{1,0,0}^{\ell,m,n}$ choices for $i$. To complete the proof of this identity, we must show
\begin{displaymath}
N_{1,0,0}^{\ell,m,n} = (q^{\ell}-1) q^{\ell+m+n-1}.
\end{displaymath}
This is clear if $\ell=0$ since both sides vanish. Thus suppose $\ell \ge 1$. Let $x$ and $y$ be the even and odd basis vectors of $V_{1,0,0}$. We can map $x$ to any even vector in $V_{\ell,m,n}$ not in the nullspace, which gives $q^{\ell+m}-q^m$ choices. We can then map $y$ to any odd vector that pairs to~1 with $x$. The set of such vectors is a torsor for the orthogonal complement of $x$, which has dimension $\ell+n-1$, and so the result follows.

(b) Write $V_{0,b,c}=V_{0,1,0} \oplus V_{0,b-1,c}$, and let $x$ be a basis vector for $V_{0,1,0}$. Given an embedding $i$ of $V_{0,b,c}$ into $V_{\ell,m,n}$, we let $y$ be the image of $x$, and let $j$ be the restriction of $i$ to $V_{0,b-1,c}$. The embedding $i$ is determined by the pair $(y,j)$. To count the choices of $i$, we count the choices of $(y,j)$ by first picking $y$.

First suppose that $y$ is a null vector; there are $q^m-1$ choices for such $y$. The pair $(y,j)$ then comes from an $i$ if and only if the induced map $\ol{j} \colon V_{0,b-1,c} \to V_{\ell,m,n}/\bF y$ is an embedding. This quotient space is isomorphic to $V_{\ell,m-1,n}$, and so there are $N_{0,b-1,c}^{\ell,m-1,n}$ choices for $\ol{j}$. Each choice of $\ol{j}$ admits $q^{b-1}$ lifts to a choice of $j$. We thus find that there are $(q^m-1) q^{b-1} N_{0,b-1,c}^{\ell,m-1,n}$ choices of $i$ in this case.

Now suppose that $y$ is not a null vector; there are $q^{\ell+m}-q^m$ choices for such $y$. Then $j$ must map into the orthogonal complement $V'_{\ell,m,n}$ of $y$. The pair $(y,j)$ comes from an $i$ if and only if the induced map $\ol{j} \colon V_{0,b-1,c} \to V'_{\ell,m,n}/\bF y$ is an embedding. This quotient space is isomorphic to $V_{\ell-1,m,n}$. Continuing as above, we find $(q^{\ell+m}-q^m) q^{b-1} N_{0,b-1,c}^{\ell-1,m,n}$ choices in this case. The formula thus follows.

(c) This follows from the previous formula by symmetry.
\end{proof}

We now define polynomials $Q_{a,b,c} \in \bZ_q[t,u,v]$ by putting $Q_{0,0,0}=1$ and imposing the following recurrences (valid for $a \ge 1$, $b \ge 1$, and $c \ge 1$ respectively):
\begin{align*}
Q_{a,b,c}(t,u,v) &= q^{-1} (t-1) tuv \cdot Q_{a-1,b,c}(q^{-1} t,u,v) \\
Q_{0,b,c}(t,u,v) &= q^{b-1} (t-1) u \cdot Q_{0,b-1,c}(q^{-1}t, u, v) + q^{b-1} (u-1) \cdot Q_{0,b-1,c}(t,q^{-1}u,v) \\
Q_{0,b,c}(t,u,v) &= q^{c-1} (t-1) v \cdot Q_{0,b,c-1}(q^{-1}t, u, v) + q^{c-1} (v-1) \cdot Q_{0,b,c-1}(t,u,q^{-1}v)
\end{align*}
For instance, we have
\begin{displaymath}
Q_{1,0,0} = q^{-1}(t-1)tuv, \qquad
Q_{0,1,0} = tu-1, \qquad
Q_{0,0,1} = tv-1.
\end{displaymath}
The next result follows easily from Proposition~\ref{prop:GL-recur}.

\begin{proposition} \label{prop:GL-Qpoly}
The polynomials $Q_{a,b,c}$ are well-defined. Moreover, we have
\begin{displaymath}
N_{a,b,c}^{\ell,m,n} = Q_{a,b,c}(q^{\ell}, q^m, q^n)
\end{displaymath}
for all $\ell,m,n \ge 0$.
\end{proposition}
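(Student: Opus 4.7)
The plan is to establish the interpolation identity $N_{a,b,c}^{\ell,m,n} = Q_{a,b,c}(q^\ell,q^m,q^n)$ first, and to deduce well-definedness of the recurrences defining $Q_{a,b,c}$ as a consequence. The key preliminary observation is a \emph{uniqueness lemma}: if $P(t,u,v) \in \bZ_q[t,u,v]$ vanishes at every point $(q^\ell,q^m,q^n)$ with $\ell,m,n \ge 0$, then $P = 0$, because the set $\{q^\ell : \ell \ge 0\}$ is infinite and so the grid is Zariski dense in $\bA^3$. Thus, if a polynomial with the stated interpolation property exists, it is unique.

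For existence, I would induct on $a+b+c$, with the trivial base case $Q_{0,0,0} = 1$. In the inductive step, choose any one of the applicable recurrences from Proposition~\ref{prop:GL-recur}: the first if $a \ge 1$, otherwise the second if $b \ge 1$, otherwise the third. The idea is that in each case, the cited recurrence expresses $N_{a,b,c}^{\ell,m,n}$ as a polynomial in $q^\ell, q^m, q^n$ times $N$-values at smaller total index, which by induction are of the form $Q_{\cdots}(q^{\ell \pm 1}, q^m, q^n)$ etc.; substituting and using that $q^{\ell-1} = q^{-1}\cdot q^\ell$ produces a polynomial in $\bZ_q[t,u,v]$, and this polynomial is precisely the one defined by the corresponding recurrence in the statement of the proposition.

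One small technical point to verify along the way is that the interpolation identity continues to hold on the boundary (e.g.\ at $\ell = 0$ when applying the first recurrence, which formally involves $N_{a-1,b,c}^{-1,m,n}$). This is fine because both sides vanish: $N_{a,b,c}^{0,m,n} = 0$ for $a \ge 1$ since $V_{0,m,n}$ has no non-degenerate pair, and the prefactor $(q^\ell - 1) = 0$ kills the formula. Analogous vanishing occurs for the second and third recurrences when $m=0$ or $n=0$.

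Finally, the well-definedness of $Q_{a,b,c}$ reduces to checking that in the overlap regime $a=0$, $b \ge 1$, $c \ge 1$, the second and third recurrences yield the same polynomial. By the existence/uniqueness argument above, the polynomial produced by either recurrence interpolates the same values $N_{0,b,c}^{\ell,m,n}$ at all points $(q^\ell,q^m,q^n)$, and hence by the uniqueness lemma the two candidate polynomials coincide. This is the step I expect to be least delicate once uniqueness is in hand; the only genuine obstacle is verifying the boundary vanishing in the existence induction, which is a routine case-by-case check.
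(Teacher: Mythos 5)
Your proposal is correct; the paper omits the proof entirely ("follows easily from Proposition~\ref{prop:GL-recur}"), and your argument — induct on $a+b+c$ to match the $Q$-recurrences against the $N$-recurrences under $t=q^\ell$, $u=q^m$, $v=q^n$, check the boundary terms are killed by the vanishing prefactors, and use Zariski density of the grid $\{(q^\ell,q^m,q^n)\}$ to get uniqueness and hence consistency of the two recurrences in the overlap regime $a=0$, $b,c\ge 1$ — is exactly the routine verification the authors intend. No gaps.
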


We define polynomials $P_{a,b,c} \in \bZ_q[t]$ by
\begin{displaymath}
P_{a,b,c}(t) = Q_{a,b,c}(t,1,1).
\end{displaymath}
Since these polynomials will be especially important, we compute them explicitly:

\begin{proposition} \label{prop:GL-Ppoly}
We have
\begin{displaymath}
P_{a,b,c}(t) = q^{-(a+b)(a+c)} t^a \prod_{i=0}^{a+b+c-1} (t-q^i).
\end{displaymath}
\end{proposition}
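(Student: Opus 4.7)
The plan is to specialize the recurrences for $Q_{a,b,c}$ to $u = v = 1$, observe that this kills the second term in each of the branching recurrences, and then verify the claimed formula by induction using these simplified recurrences.

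Setting $u = v = 1$ in the three recurrences from Proposition~\ref{prop:GL-Qpoly}, the factors $(u-1)$ and $(v-1)$ vanish, giving
\begin{align*}
P_{a,b,c}(t) &= q^{-1}(t-1)t \cdot P_{a-1,b,c}(q^{-1}t) \quad (a \ge 1), \\
P_{0,b,c}(t) &= q^{b-1}(t-1) \cdot P_{0,b-1,c}(q^{-1}t) \quad (b \ge 1), \\
P_{0,b,c}(t) &= q^{c-1}(t-1) \cdot P_{0,b,c-1}(q^{-1}t) \quad (c \ge 1),
\end{align*}
together with $P_{0,0,0} = 1$. One first computes $P_{0,b,c}$ using (b) (or (c)) and then peels off $a$ via (a), so these recurrences pin down $P_{a,b,c}$ uniquely. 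It therefore suffices to check that the claimed formula $F(a,b,c)(t) := q^{-(a+b)(a+c)} t^{a} \prod_{i=0}^{a+b+c-1}(t - q^{i})$ satisfies them.

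The formula $F$ is symmetric in $b \leftrightarrow c$, and recurrences (b) and (c) are interchanged under this swap, so only (a) and (b) need to be verified. For (b), a direct substitution gives
\[
F(0,b-1,c)(q^{-1}t) = q^{-(b-1)c} \prod_{i=0}^{b+c-2}(q^{-1}t - q^{i}) = q^{-(b-1)c - (b+c-1)} \prod_{j=1}^{b+c-1}(t - q^{j}),
\]
where $q^{-1}$ was factored out of each of the $b+c-1$ linear factors; multiplying by $q^{b-1}(t-1)$ restores the missing $(t-q^{0})$ factor and yields total $q$-exponent $b-1 - (b-1)c - (b+c-1) = -bc$, matching $F(0,b,c)$. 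For (a), the same bookkeeping reduces the check to the elementary exponent identity $(a+b)(a+c) = (a+b-1)(a+c-1) + (2a+b+c-1)$, which is immediate from expanding.

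The ``obstacle'' here is not conceptual — the proof is just careful tracking of powers of $q$ after the product is shifted by $q^{-1}$. The only real observation is that specialization to $u = v = 1$ kills the branching in the recurrences and reduces everything to a one-variable inductive check. As a sanity/conceptual check one can also note the combinatorial interpretation: $P_{a,b,c}(q^{\ell}) = N_{a,b,c}^{\ell,0,0}$ counts embeddings $V_{a,b,c} \to V_{\ell,0,0}$ and can be assembled as (choose the hyperbolic $V_{a,0,0}$ part) times (embed $V_{0,b,c}$ into the $(\ell-a,0,0)$ orthogonal complement), giving the factorization predicted by the formula.
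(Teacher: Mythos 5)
Your proof is correct and follows exactly the paper's route: specialize the $Q$-recurrences at $u=v=1$ (which the paper records as the same three one-variable recurrences) and verify the closed form by the "straightforward calculation" the paper leaves implicit. Your explicit bookkeeping of the $q$-exponents, including the identity $(a+b)(a+c)=(a+b-1)(a+c-1)+(2a+b+c-1)$, checks out.
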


\begin{proof}
Plugging $u=v=1$ into the recurrences for $Q$, we find
\begin{align*}
P_{a,b,c}(t) &= q^{-1}(t-1)t \cdot P_{a-1,b,c}(q^{-1} t) \\
P_{0,b,c}(t) &= q^{b-1} (t-1) \cdot P_{0,b-1,c}(q^{-1} t) \\
P_{0,b,c}(t) &= q^{c-1} (t-1) \cdot P_{0,b,c-1}(q^{-1} t)
\end{align*}
The result now follows by straightforward calculation.
\end{proof}

\subsection{The Burnside ring}

Let $\BB=\BB(G, \bV)$ be the relative Burnside ring for $G$. Define $x_{a,b,c}$ to be the class $\lbb X_{a,b,c} \rbb$ in $\BB$. These elements form a $\bZ$-basis of $\BB$. The following proposition describes $\BB$ as ring.

\begin{proposition} \label{prop:GL-burnside}
We have a ring isomorphism
\begin{displaymath}
\phi \colon \bZ_q[X, Y, Z] \to \BB[1/q], \qquad X \mapsto x_{1,0,0}, \quad Y \mapsto x_{0,1,0}, \quad Z \mapsto x_{0,0,1}
\end{displaymath}
\end{proposition}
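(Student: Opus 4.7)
The plan is to exploit the level filtration on $\BB[1/q]$ and show that $\phi$ induces isomorphisms on associated graded pieces. Define $F^n\BB[1/q]$ as the $\bZ_q$-span of the basis elements $x_{a,b,c}$ with $2a+b+c\le n$; since $\lev X_{a,b,c}=\dim V_{a,b,c}=2a+b+c$, this is the level filtration of \S\ref{ss:burnside}, and in particular it is a ring filtration. On the source, endow $\bZ_q[X,Y,Z]$ with the weighted grading $|X|=2$, $|Y|=|Z|=1$; the homomorphism $\phi$ is then filtered. The key observation is that source and target have matching $\bZ_q$-ranks in every graded piece, both equal to the number of triples $(a,b,c)\in\bN^3$ with $2a+b+c=n$.

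The crux is a top-level computation of products with the three generators. I claim: for every $(a,b,c)\in\bN^3$,
\begin{align*}
x_{1,0,0}\cdot x_{a,b,c}&\equiv x_{a+1,b,c}\pmod{F^{2a+b+c+1}\BB[1/q]}, \\
x_{0,1,0}\cdot x_{a,b,c}&\equiv x_{a,b+1,c}\pmod{F^{2a+b+c}\BB[1/q]}, \\
x_{0,0,1}\cdot x_{a,b,c}&\equiv x_{a,b,c+1}\pmod{F^{2a+b+c}\BB[1/q]}.
\end{align*}
Via the dictionary of \S\ref{ss:struct}, the product $x_{1,0,0}\cdot x_{a,b,c}=\lbb X_{1,0,0}\times X_{a,b,c}\rbb$ decomposes as a sum over isomorphism classes of amalgamations of $V_{1,0,0}$ and $V_{a,b,c}$ in $\fC$. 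Any such amalgamation $D$ is generated by the two embedded images, so $\dim D\le 2+2a+b+c$, and equality forces the images to be linearly disjoint, in which case $D\cong V_{1,0,0}\oplus V_{a,b,c}\cong V_{a+1,b,c}$ (the perfectly paired and nullspace pieces add directly), and this isomorphism class is unique. All other amalgamations have strictly smaller dimension, hence lie in $F^{2a+b+c+1}$. The arguments for $x_{0,1,0}$ and $x_{0,0,1}$ are identical in spirit: the generic amalgamation adjoins a fresh null even (resp.\ null odd) vector to $V_{a,b,c}$, giving $V_{a,b+1,c}$ (resp.\ $V_{a,b,c+1}$) at top dimension $2a+b+c+1$.

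Granted the displayed congruences, an induction on $2a+b+c$ shows $\phi(X^aY^bZ^c)\equiv x_{a,b,c}$ modulo $F^{2a+b+c-1}\BB[1/q]$. Hence $\phi$ sends the monomial basis of weighted degree $n$ to a $\bZ_q$-basis of the graded piece $F^n\BB[1/q]/F^{n-1}\BB[1/q]$, so the induced map on each associated graded piece is an isomorphism. A standard filtered five-lemma argument (induct on $n$ using the short exact sequence $0\to F^{n-1}\to F^n\to F^n/F^{n-1}\to 0$) promotes this to an isomorphism $\phi\colon\bZ_q[X,Y,Z]^{\le n}\to F^n\BB[1/q]$ for every $n$, and taking the colimit gives the result. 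The main obstacle is purely the amalgamation bookkeeping above --- verifying that direct sums of the relevant objects of $\hat{\fC}$ really land in $\fC$ with the claimed structure type, and that no ``non-direct-sum'' amalgamation attains full dimension; everything else is formal.
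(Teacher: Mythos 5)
Your overall strategy (the level filtration, the rank count on filtered pieces, reduction to an associated-graded computation) matches the paper's setup, but your ``crux'' computation of leading terms is incorrect, and this is a genuine gap. The error is in the amalgamation bookkeeping: when the two embedded images in a top-dimensional amalgamation $D$ are linearly disjoint as graded vector spaces, the \emph{cross-pairings} between the even part of one image and the odd part of the other are unconstrained, so $D$ need not be the orthogonal direct sum and the top-level isomorphism class is not unique. Concretely, $X_{0,1,0}\times X_{0,0,1}$ is the set of pairs $(u,v)\in(\bV_0\setminus 0)\times(\bV_1\setminus 0)$, whose orbits are indexed by $\langle u,v\rangle\in\bF$; this gives
\begin{displaymath}
x_{0,1,0}\cdot x_{0,0,1}=x_{0,1,1}+(q-1)\,x_{1,0,0},
\end{displaymath}
and since $x_{1,0,0}$ also has level $2$, your congruence $x_{0,1,0}\cdot x_{a,b,c}\equiv x_{a,b+1,c}\pmod{F^{2a+b+c}}$ already fails for $(a,b,c)=(0,0,1)$. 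Similarly $x_{1,0,0}\cdot x_{0,0,1}=q\,x_{1,0,1}+(q-1)x_{1,0,0}$ (the leading coefficient is $q$, not $1$, because the new odd vector may pair arbitrarily with the old even null vector), and $x_{1,0,0}\cdot x_{0,1,1}$ contains $x_{2,0,0}$ with nonzero coefficient at top level, so even the claimed uniqueness of the top-level class is false. Consequently the induction ``$\phi(X^aY^bZ^c)\equiv x_{a,b,c}$ mod lower filtration'' breaks down. The argument might be repairable by showing the transition matrix on each graded piece is triangular with powers of $q$ on the diagonal for a suitable order on $\{(a,b,c):2a+b+c=n\}$ (e.g.\ by $a$), but you have not done this, and the combinatorics of which classes appear is exactly the ``complicated'' computation the paper deliberately avoids.

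For comparison, the paper keeps your filtration-plus-rank-count skeleton but replaces the graded computation entirely: since the filtered pieces of source and target are free $\bZ_q$-modules of the same finite rank, it suffices to prove $\phi$ is injective modulo every prime $p'\nmid q$, and this is done by composing with the marks homomorphism $\psi\colon\BB\to\bZ_q[t,u,v]$, $x_{a,b,c}\mapsto Q_{a,b,c}$, and checking that $Q_{1,0,0}$, $Q_{0,1,0}$, $Q_{0,0,1}$ are algebraically independent mod $p'$. If you want to salvage your write-up, the cleanest fix is to adopt that injectivity argument in place of the leading-term claim.
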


\begin{proof}
Recall from \S \ref{ss:burnside} the notion of level, and the associated filtration on $\BB$. If $V$ is an object of $\fC$ of dimension $n$, then $X(V)$ appears as an orbit on $\bV^n$, and so $X(V)$ has level $\le n$. It follows that $F^n \BB$ is the $\bZ$-span of the classes $x_{a,b,c}$ with $2a+b+c \le n$. Define a filtration on $\bZ[X,Y,Z]$ by taking $F^n \bZ[X,Y,Z]$ to be the $\bZ$-span of those monomials $X^aY^bZ^c$ with $2a+b+c \le n$. Then $\phi$ is a homomorphism of filtered rings. Since the filtered pieces of the two rings are free $\bZ_q$-modules of the same finite rank, it suffices to show that $\phi$ is injective modulo $p'$ for all primes $p' \nmid q$. Fix such a $p'$.

Let $N_{a,b,c}^*$ be the function on $\bN^3$ defined by $(\ell,m,n) \mapsto N_{a,b,c}^{\ell,m,n}$. The marks homomorphisms for the objects $V_{\ell,m,n}$ furnish a ring homomorphism
\begin{displaymath}
\BB \to \Fun(\bN^3, \bZ), \qquad x_{a,b,c} \mapsto N_{a,b,c}^*.
\end{displaymath}
By our results relating $N$ and $Q$, it follows that the additive map
\begin{displaymath}
\psi \colon \BB \to \bZ_q[t,u,v], \qquad x_{a,b,c} \mapsto Q_{a,b,c}(t,u,v)
\end{displaymath}
is a ring homomorphism. Since the polynomials $Q_{1,0,0}$, $Q_{0,1,0}$, and $Q_{0,0,1}$ are algebraically independent modulo $p'$, it follows that  $\psi \circ \phi$ is injective modulo $p'$, and so $\phi$ is as well.
\end{proof}

\subsection{The infinitesimal Burnside ring}

Let $\hat{\BB}$ be the infinitesimal Burnside ring for $G$ relative to $\sE(\bV)$. Let $G(n) \subset G$ be the open subgroup fixing each $e_i$ and $f_i$ for $1 \le i \le n$. Let $\bV(n) \subset \bV$ be the span of the vectors $e_i$ and $f_i$ with $i>n$, on which $G(n)$ naturally acts. The pair $(G(n), \bV(n))$ is isomorphic to $(G, \bV)$ via shifting basis vectors. We define
\begin{displaymath}
\delta \colon \BB \to \BB
\end{displaymath}
to be the ring homomorphism induced by restricting to $G(1)$, and identifying $G(1)$ with $G$.

\begin{proposition}
We have
\begin{align*}
\delta(x_{1,0,0}) &= qx_{1,0,0} + (q-1)(x_{0,1,0}+1)(x_{0,0,1}+1) \\
\delta(x_{0,1,0}) &= qx_{0,1,0} + (q-1) \\
\delta(x_{0,0,1}) &= qx_{0,0,1} + (q-1)
\end{align*}
In particular, $\delta$ is an automorphism after inverting $q$.
\end{proposition}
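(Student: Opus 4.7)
The approach is to compute the $G(1)$-orbit decomposition of each of $X_{1,0,0}$, $X_{0,1,0}$, $X_{0,0,1}$ directly, then interpret the orbits via the identification $G(1) \cong G$ (which identifies $\bV(1)$ with $\bV$). The key structural observation is that $\bV$ decomposes $G(1)$-equivariantly as $\bV_{[1]} \oplus \bV(1)$, where $\bV_{[1]} = \bF e_1 \oplus \bF f_1$ is pointwise fixed. Any embedding of $V_{a,b,c}$ into $\bV$ splits accordingly: the projection onto $\bV_{[1]}$ is a $G(1)$-invariant, and for fixed projection the remaining data lives in $\bV(1)$, on which $G(1)$ acts as $G$ acts on $\bV$. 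Throughout I use that $G$ is transitive on subsets of $\bV_0 \times \bV_1$ cut out by a fixed value of the pairing together with a fixed vanishing pattern (a standard Witt-type extension argument).

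For $\delta(x_{0,1,0})$, parametrize a non-zero $v \in \bV_0$ as $v = \alpha e_1 + v'$; the case $v' = 0$ contributes $q-1$ fixed points, while each of the $q$ choices of $\alpha$ with $v' \neq 0$ contributes one copy of $X_{0,1,0}$, yielding $qx_{0,1,0} + (q-1)$. The formula for $\delta(x_{0,0,1})$ follows by symmetry. For $\delta(x_{1,0,0})$, parametrize $(x,y)$ with $\langle x,y\rangle = 1$ by $x = \alpha e_1 + x'$, $y = \beta f_1 + y'$; the equation becomes $\langle x',y'\rangle = 1 - \alpha\beta$. When $\alpha\beta \neq 1$ (giving $q^2 - q + 1$ pairs), each fiber is a single $G$-orbit isomorphic to $X_{1,0,0}$ via rescaling $y'$. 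When $\alpha\beta = 1$ (giving $q-1$ pairs), the locus $\langle x',y'\rangle = 0$ splits into four orbits $X_{0,0,0}, X_{0,1,0}, X_{0,0,1}, X_{0,1,1}$ indexed by the vanishing pattern of $(x', y')$. This gives $\delta(x_{1,0,0}) = (q^2-q+1)x_{1,0,0} + (q-1)(1 + x_{0,1,0} + x_{0,0,1} + x_{0,1,1})$. To recover the factored form claimed, I substitute the auxiliary identity $x_{0,1,0} \cdot x_{0,0,1} = x_{0,1,1} + (q-1)x_{1,0,0}$, itself obtained by decomposing $X_{0,1,0} \times X_{0,0,1}$ according to $\langle v, w \rangle$. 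The main obstacle is the bookkeeping in this last calculation; the rest is symmetric or routine.

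For the automorphism statement, identify $\BB[1/q]$ with $\bZ_q[X,Y,Z]$ via Proposition~\ref{prop:GL-burnside} and use the level filtration $F^n = \spn\{X^a Y^b Z^c : 2a+b+c \le n\}$. The explicit formulas show $\delta$ preserves $F^{\bullet}$. On the graded piece $F^n/F^{n-1}$, the image $\delta(X^aY^bZ^c) = \delta(X)^a \delta(Y)^b \delta(Z)^c$ has top-weight part congruent to $(qX + (q-1)YZ)^a (qY)^b (qZ)^c$, whose coefficient on the diagonal monomial $X^aY^bZ^c$ is $q^{a+b+c}$ and whose off-diagonal contributions land on the distinct monomials $X^{a-k}Y^{b+k}Z^{c+k}$ for $k \ge 1$, yielding a triangular matrix. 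Since each diagonal entry is a unit in $\bZ_q$, $\delta$ is invertible on each graded piece, hence on each $F^n$, and hence on all of $\BB[1/q]$.
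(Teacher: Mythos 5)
Your proof is correct and follows essentially the same route as the paper's: both decompose $\bV$ as $\bF e_1\oplus\bF f_1\oplus\bV(1)$ and reduce to counting the pairs $(\alpha,\beta)$ with $\alpha\beta=1$, and your explicit four-orbit decomposition of the locus $\langle x',y'\rangle=0$ combined with the identity $x_{0,1,0}\,x_{0,0,1}=x_{0,1,1}+(q-1)x_{1,0,0}$ is just a rewriting of the paper's relation $[Y_0]=[\bV_0][\bV_1]-(q-1)[Y_1]$. Your filtration-and-triangularity argument for the automorphism claim is more explicit than the paper's terse conclusion, but it is valid and reaches the same result.
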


\begin{proof}
As a $G(1)$-set, we have $\bV_0 = \bF \times \bV_0(1)$, which shows that $\delta([\bV_0])=q [\bV_0]$. As $[\bV_0]=1+x_{0,1,0}$, the second formula follows. The third is similar.

For $a \in \bF$, let $Y_a$ be the subset of $\bV_0 \times \bV_1$ consisting of pair $(u,v)$ such that $\langle u, v \rangle=a$. We note that the sets $Y_a$ with $a \ne 0$ are all isomorphic to one another. Also, the complement of $Y_0$ in $\bV_0 \times \bV_1$ is the disjoint union of the $Y_a$ with $a \ne 0$, and thus isomorphic to $q-1$ copies of $Y_1$. Write $Y_a(1)$ for the analogous set defined with respect to $\bV(1)$. 

The set $X_{1,0,0}$ is identified with $Y_1$. This set is further identified with the subset of $\bF^2 \times \bV_0(1) \times \bV_1(1)$ consisting of tuples $(a, b, u, v)$ such that $ab+\langle u, v \rangle = 1$. We thus have an isomorphism
\begin{displaymath}
Y_1 \cong \coprod_{a,b \in \bF} Y_{1-ab}(1)
\end{displaymath}
of $G(1)$-sets. There are $q-1$ choices of $(a,b)$ where $1-ab=0$. In the Burnside ring of $G(1)$, we thus have
\begin{displaymath}
[Y_1] = (q-1) [Y_0(1)] + (q^2-q+1) [Y_1(1)].
\end{displaymath}
Since $[Y_0]=[\bV_0][\bV_1]-(q-1)[Y_1]$, and analogously for the $(1)$ versions, we find
\begin{displaymath}
[Y_1] = (q-1) [\bV_0(1)][\bV_1(1)] + q[Y_1(1)].
\end{displaymath}
The result now follows.
\end{proof}

The restriction map
\begin{displaymath}
\BB(G(n), \bV(n)) \to \BB(G(n+1), \bV(n+1))
\end{displaymath}
is isomorphic to $\delta$, after identifying the source and target with $\BB$. In particular, it is an isomorphism after inverting $q$. Since $\hat{\BB}$ is the direct limit of the above maps, we find:

\begin{corollary}
The natural map $\BB[1/q] \to \hat{\BB}[1/q]$ is an isomorphism.
\end{corollary}

\begin{remark} \label{rmk:GL-grading}
Define elements of $\BB[1/q]$ as follows:
\begin{displaymath}
w_1=x_{1,0,0}-q^{-1}w_2 w_3, \quad
w_2 = x_{0,1,0}+1, \quad w_3 = x_{0,0,1}+1.
\end{displaymath}
Then $w_1$, $w_2$, and $w_3$ are eigenvectors of $\delta$ of eigenvalue $q$. Since $\BB[1/q]$ is isomorphic to the polynomial ring $\bZ_q[w_1,w_2,w_3]$, it follows that $\delta$ induces a canonical grading on $\BB[1/q]$: the degree $n$ piece is the $q^n$-eigenspace of $\delta$.
\end{remark}

\subsection{Classification of transitive sets}

In \S \ref{ss:GL-setup}, we say that the $X_{\ell,m,n}$ account for all $\bV$-smooth transitive $G$-sets. We now determine all (smooth) transitive $G$-sets.

\begin{proposition} \label{prop:GL-trans}
Every transitive $G$-set is isomorphic to one of the form $X_{\ell,m,n}/\Gamma$, where $\Gamma$ is a subgroup of $\Aut(X_{\ell,m,n})$.
\end{proposition}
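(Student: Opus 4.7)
The plan is to write $Y \cong G/U$ for some open subgroup $U \subseteq G$, and then to find a finite definably closed $A \subseteq \bV$ satisfying $G(A) \subseteq U \subseteq N_G(G(A))$. Given such an $A$, set $\Gamma := U/G(A) \subseteq N_G(G(A))/G(A) = \Aut_G(X(A))$; the assignment
\[
X(A)/\Gamma \to G/U, \qquad gG(A)\cdot\Gamma \mapsto gU,
\]
is then a well-defined isomorphism of $G$-sets (well-definedness comes from $U \subseteq N_G(G(A))$, bijectivity from $G(A) \subseteq U$). By Proposition~\ref{prop:struct-equiv}, every $A \in \fC$ is isomorphic to a unique $V_{\ell,m,n}$, identifying $X(A)$ with $X_{\ell,m,n}$.

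To construct $A$: since $U$ is open in the oligomorphic group $G$, its restricted action on $\bV$ remains oligomorphic, so $U$ has only finitely many orbits on $\bV$. I define $A$ to be the union of the finite ones. Then $A$ is finite and setwise $U$-invariant, so $U \subseteq N_G(G(A))$. Choosing any finite definably closed $A_0$ with $G(A_0) \subseteq U$ (possible by openness of $U$), each $v \in A$ has finite $G(A_0)$-orbit as a suborbit of its finite $U$-orbit, hence lies in $\dcl(A_0) = A_0$; thus $A \subseteq A_0$, and a small additional argument shows $A$ is itself definably closed.

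The chief obstacle is to prove $G(A) \subseteq U$. Since $U$ is closed in $G$, this reduces to showing that for every finite definably closed $B \supseteq A$ and every $g \in G(A)$, there exists $u \in U$ with $u|_B = g|_B$; equivalently, $U$ realizes every embedding $B \to \bV$ that restricts to the identity on $A$. This is essentially a structural description of open subgroups of $G$: $U$ should equal the full preimage of its image in $\Aut_\fC(A)$ under the restriction map $N_G(G(A)) \to \Aut_\fC(A)$. Establishing it uses the homogeneity of $\bV$: $G(A_0)$ acts with very large orbits on $\bV \setminus A_0$, which lets one adjust $g$ by a suitable element of $G(A_0) \subseteq U$ to produce the required $u$. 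This last step, which is really the content of the classification of open subgroups, is the delicate part of the argument.
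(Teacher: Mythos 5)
Your reduction is clean and correct as far as it goes: constructing $A$ as the union of the finite $U$-orbits, checking $A \subseteq A_0$ and that $A$ is definably closed, and observing that $G(A) \subseteq U \subseteq N_G(G(A))$ would yield $G/U \cong X(A)/\Gamma$ with $\Gamma = U/G(A)$ — all of this is fine, and it is an honest reformulation of the proposition (the paper itself remarks that the statement is ``nearly equivalent'' to this assertion about open subgroups). But the inclusion $G(A) \subseteq U$, which you defer to ``a delicate argument using homogeneity,'' is not a final technical step: it \emph{is} the proposition. Everything you have actually proved is formal; the substance is entirely in that one claim, and your sketch does not contain an argument for it.

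To see why it is genuinely hard, note that the only elements of $U$ you have in hand are those of $G(A_0)$, and these all fix $A_0 \supseteq A$ pointwise. A generic $g \in G(A)$ moves points of $A_0 \setminus A$, so it cannot be approximated by elements of $G(A_0)$ alone; the only additional leverage is the hypothesis that each point of $A_0 \setminus A$ has an \emph{infinite} $U$-orbit. Converting ``infinite orbit'' into ``$U$ realizes every embedding $B \to \bV$ fixing $A$ pointwise'' is exactly what the paper's defect analysis does: Lemma~\ref{lem:GL-trans-3} constructs explicit elements $h \in G$ (fixing a prescribed subspace and with controlled image modulo $\bV^{\le n}$) to trade a large defect for a codimension-one one, and Lemma~\ref{lem:GL-trans-4} then shows that a single pair of embeddings agreeing on a hyperplane $W$ forces all of $R_W$ — and even there one must verify that the relevant bilinear pairings match (this is the parity argument in the $\GL$ case, and becomes a genuine extra step for $\Sp$ in Proposition~\ref{prop:Sp-trans}). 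None of this is routine, and your proposal would need the same constructions, just phrased group-theoretically rather than in terms of $G$-stable equivalence relations. As written, the proof has a gap precisely where the content lies.
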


Write $X_{\ell,m,n}=G/U_{\ell,m,n}$. The proposition implies that if $U$ is an open subgroup of $G$, meaning it contains some $U_{a,b,c}$, then it contains a conjugate of some $U_{\ell,m,n}$ with finite index; in fact, the proposition is nearly equivalent to this statement. The following corollary of the proposition will be important when we analyze measures.

\begin{corollary} \label{cor:GL-large}
The stabilizer class defined by $\bV$ is large.
\end{corollary}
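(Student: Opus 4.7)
The plan is to deduce the corollary directly from Proposition~\ref{prop:GL-trans}. Recall that $\sE(\bV)$ consists of all subgroups of the form $G(A)$ with $A \subset \bV$ finite, equivalently (after replacing $A$ by its definable closure) the conjugates of the stabilizers $U_{\ell,m,n}$ of basepoints of the $G$-sets $X_{\ell,m,n} = G/U_{\ell,m,n}$. Given an arbitrary open subgroup $U \subset G$, we need to exhibit some $V \in \sE(\bV)$ with $V \subset U$ and $[U:V]$ finite.

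First I would apply Proposition~\ref{prop:GL-trans} to the transitive $G$-set $G/U$: this yields an isomorphism $G/U \cong X_{\ell,m,n}/\Gamma$ for some $\ell,m,n \ge 0$ and some finite subgroup $\Gamma$ of $\Aut_G(X_{\ell,m,n}) = N_G(U_{\ell,m,n})/U_{\ell,m,n}$. Let $\tilde\Gamma \subset N_G(U_{\ell,m,n})$ be the preimage of $\Gamma$. Then $X_{\ell,m,n}/\Gamma = G/\tilde\Gamma$, so we have $G/U \cong G/\tilde\Gamma$ as $G$-sets; equivalently, $U$ is conjugate to $\tilde\Gamma$, say $gUg^{-1} = \tilde\Gamma$ for some $g \in G$.

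By construction, $\tilde\Gamma$ contains $U_{\ell,m,n}$ as a normal subgroup of index $|\Gamma| < \infty$. Setting $V = g^{-1} U_{\ell,m,n} g$, we obtain a subgroup contained in $U$ with $[U:V] = |\Gamma|$ finite. Since $\sE(\bV)$ is closed under conjugation, $V$ belongs to $\sE(\bV)$, which is exactly what was required.

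There is really no obstacle here beyond invoking Proposition~\ref{prop:GL-trans}; the whole content of the corollary is packaged into that classification of transitive $G$-sets, and all the remaining steps are formal manipulations of cosets. The only thing to be slightly careful about is the direction of the containment (we need $V \subset U$, not the reverse), which is why we conjugate $U_{\ell,m,n}$ by $g^{-1}$ at the end.
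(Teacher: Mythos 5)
Your argument is correct and is exactly the paper's (largely implicit) deduction: Proposition~\ref{prop:GL-trans} shows every open subgroup contains a conjugate of some $U_{\ell,m,n}$ with finite index, and since these stabilizers are of the form $G(A)$ and $\sE(\bV)$ is conjugation-stable, largeness follows. The coset manipulations you spell out are the routine details the paper omits.
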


Essentially by definition, every transitive $G$-set is isomorphic to one of the form $X(V)/R$, where $V$ is an object of $\fC$ and $R$ is a $G$-stable equivalence relation on $X(V)$. There are two obvious sources of equivalence relations on $X(V)$. First, if $\Gamma$ is a subgroup of $\Aut(V)$ then $\Gamma$ acts on $X(V)$, and this action induces an equivalence relation $R_{\Gamma}$. Second, if $W$ is a substructure of $V$ then we obtain an equivalence relation $R_W$ on $X(V)$ by declaring two embeddings $i,j, \colon V \to \bV$ to be equivalent if they agree on $W$. The proposition is implied by the following lemma:

\begin{lemma} \label{lem:GL-trans-1}
If $R$ is a $G$-stable equivalence relation on $X(V)$ then either $R=R_{\Gamma}$ for some $\Gamma \subset \Aut(V)$, or else $R$ contains $R_W$ for some proper substructure $W$ of $V$.
\end{lemma}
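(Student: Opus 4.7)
The plan is to translate the $G$-stable equivalence relation $R$ into an open subgroup of $G$ and then classify such subgroups. Let $i_0 \in X(V)$ be the canonical embedding, with stabilizer $G(V)$. Since $R$ is a $G$-stable equivalence relation on the transitive $G$-set $X(V) \cong G/G(V)$, it corresponds bijectively to the open subgroup
\begin{displaymath}
H = \{g \in G : g i_0 \sim_R i_0\} \supseteq G(V),
\end{displaymath}
with $i \sim_R j$ iff $j \in H \cdot i$. Under this correspondence, $R = R_\Gamma$ with $\Gamma \subseteq \Aut(V)$ is equivalent to $H \subseteq \mathrm{Stab}_G(V)$ (and then $\Gamma = H/G(V)$), while $R \supseteq R_W$ is equivalent to $G(W) \subseteq H$. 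The lemma therefore reduces to the following group-theoretic assertion: if $H \not\subseteq \mathrm{Stab}_G(V)$, then $G(W) \subseteq H$ for some proper substructure $W \subsetneq V$.

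To produce such a $W$, I would pick $h \in H$ with $hV \ne V$. Since $H$ is a group containing $h$ and $G(V)$, it contains $h G(V) h^{-1} = G(hV)$, and hence $\langle G(V), G(hV) \rangle \subseteq H$. The natural candidate is $W = V \cap hV$: this is a $\bZ/2$-graded subspace, hence a substructure of $V$, and it is proper in $V$ since $\dim(hV) = \dim V$ forces $W \subsetneq V$ whenever $hV \ne V$. To finish, it then suffices to show
\begin{displaymath}
G(W) = \overline{\langle G(V), G(hV)\rangle},
\end{displaymath}
since $H$ is open in $G$ and therefore closed.

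The hard part is this group-theoretic identity, a Witt-type generation statement: the pointwise stabilizers of two substructures topologically generate the pointwise stabilizer of their intersection. I expect to prove it by induction on $\dim(V/W) + \dim(hV/W)$, reducing to the case where $V$ and $hV$ differ from $W$ in a single basis direction; in that base case, one factors an arbitrary $g \in G(W)$ as a product of an element fixing $V$ and an element fixing $hV$ by invoking Witt's theorem on the finite-dimensional non-degenerate piece of $\bV$ spanned by $V + hV$, using the explicit dual basis $\{e_i, f_i\}$ to realize the required linear moves. A conceptually cleaner reformulation bundles this step into the statement that the set $\sW = \{W' : G(W') \subseteq H\}$ is closed under pairwise intersection, and so has a minimum element $W_0 = \bigcap \sW \subseteq V$; if $W_0 \subsetneq V$ one is in the $R_W$ case, while if $W_0 = V$ then $hV \in \sW$ for every $h \in H$ forces $V = hV$ by dimension, so $H \subseteq \mathrm{Stab}_G(V)$ and one is in the $R_\Gamma$ case.
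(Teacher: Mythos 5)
Your reduction of the lemma to group theory is correct and cleanly done: the dictionary between $G$-stable equivalence relations on $X(V)$ and subgroups $H$ with $G(V)\subseteq H\subseteq G$, the identification of the two alternatives with $H\subseteq \mathrm{Stab}_G(V)$ and $G(W)\subseteq H$, and the observation that $H$ is open hence closed are all fine, and the reformulation via the intersection-closed family $\sW$ is an elegant repackaging of the dichotomy. The problem is that everything now rests on the assertion $\ol{\langle G(V), G(hV)\rangle}=G(V\cap hV)$ (equivalently, that $\sW$ is closed under pairwise intersection), and this is not a routine Witt-type fact you can defer: it is precisely where the content of the paper's argument lives (the analogues of producing, from one ``bad'' pair, pairs agreeing on a prescribed codimension-one substructure, and then showing the relation swallows all of $R_W$). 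You have stated the hard lemma, not proved it.

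Moreover, the sketch you give for it breaks at the base case. You propose to ``factor an arbitrary $g\in G(W)$ as a product of an element fixing $V$ and an element fixing $hV$,'' i.e.\ $G(W)=G(V)\cdot G(hV)$. This is false: if $g=g_1g_2$ with $g_1\in G(V)$ and $g_2\in G(hV)$, then $g(hV)=g_1(hV)$ lies in the $G(V)$-orbit of $hV$; but since $V$ and $hV$ are isomorphic over $W$, there are elements $g\in G(W)$ with $g(hV)=V$, and $V$ is not in the $G(V)$-orbit of $hV$ unless $hV=V$. So at best one gets $G(W)$ as the closure of words of unbounded length in $G(V)\cup G(hV)$, which requires an orbit-connectivity argument on every $X(A)$, not a two-step factorization. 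The inductive step is also missing its mechanism: to descend from $\dim(V/W)>1$ you must manufacture elements of $\langle G(V),G(hV)\rangle$ carrying $V$ to subspaces meeting $V$ in a prescribed intermediate codimension-one subspace; constructing such elements (by extending isometries with controlled kernel, via homogeneity of $\bV$) is exactly the nontrivial work the paper does, and nothing in your outline supplies it. Until the generation statement is proved in full, the argument is a correct change of language but not a proof.
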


Indeed, suppose the lemma is true. We prove by induction on $\dim(V)$ that every quotient of $X(V)$ has the desired form. Thus let $R$ be an equivalence relation on $X(V)$. If $R=R_{\Gamma}$ then $X(V)/R$ has the desired form. Otherwise, $X(V)/R$ is a quotient of $X(V)/R_W=X(W)$ for a proper substructure $W$ of $V$, and so has the desired form by the inductive hypothesis. (This method to prove that a stabilizer class is large was used in \cite[\S 3.5]{cantor} and \cite[\S 4.3]{regcat}.)

Let $i,j \colon V \to \bV$ be two embeddings. We define the \defn{defect} of $(i,j)$ to be the codimension of $i(V) \cap j(V)$ in $i(V)$. Note that defect~0 means $i(V)=j(V)$. We define the \defn{defect} of an equivalence relation $R$ to be the maximum defect of a pair $(i,j) \in R$.

\begin{lemma} \label{lem:GL-trans-2}
Let $R$ be an equivalence relation on $X(V)$. Then $R=R_{\Gamma}$ for some $\Gamma$ if and only if $R$ has defect~0.
\end{lemma}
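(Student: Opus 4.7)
The plan is to verify the two implications of the equivalence separately. The forward direction is immediate: if $R = R_\Gamma$ and $(i,j) \in R$, then $j = i \circ \gamma$ for some $\gamma \in \Aut(V)$, so $j(V) = i(\gamma(V)) = i(V)$ and the defect of $(i,j)$ is $0$. I will spend all the effort on the converse.

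For the converse, fix a basepoint $i_0 \in X(V)$ and define
\[
\Gamma = \{\gamma \in \Aut(V) : (i_0, i_0 \circ \gamma) \in R\}.
\]
The defect $0$ hypothesis ensures this captures all $R$-partners of $i_0$: whenever $(i_0, j) \in R$, we have $j(V) = i_0(V)$, so there is a unique $\gamma \in \Aut(V)$ with $j = i_0 \circ \gamma$. The first task is to check that $\Gamma$ is a subgroup. The crucial tool will be that $G$ acts transitively on $X(V)$ combined with the $G$-stability of $R$: this allows me to ``move the basepoint'' along $\gamma$. For instance, to show closure under inverses, given $\gamma \in \Gamma$ I pick $g \in G$ with $g \circ i_0 = i_0 \circ \gamma$, and then $G$-stability applied to the pair $(i_0, i_0 \circ \gamma)$ together with $g^{-1}$ yields $(i_0 \circ \gamma^{-1}, i_0) \in R$. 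Closure under multiplication is similar: starting from $(i_0, i_0 \circ \gamma_2) \in R$, translate by the $g$ satisfying $g \circ i_0 = i_0 \circ \gamma_1$ and combine with $(i_0, i_0 \circ \gamma_1) \in R$ using transitivity of $R$.

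Having shown $\Gamma$ is a subgroup, the final step is to verify $R = R_\Gamma$. For $\subseteq$: given $(i,j) \in R$, choose $g \in G$ with $i = g \circ i_0$; then $(i_0, g^{-1} \circ j) \in R$ by $G$-stability, so by defect $0$ we get $g^{-1} \circ j = i_0 \circ \gamma$ for a unique $\gamma \in \Gamma$, whence $j = i \circ \gamma$. For $\supseteq$: if $\gamma \in \Gamma$ and $i = g \circ i_0$, apply $g$ to $(i_0, i_0 \circ \gamma) \in R$ to obtain $(i, i \circ \gamma) \in R$.

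There is no real obstacle here; the only subtle point is making sure that the ``move the basepoint'' trick works correctly, which is where the interplay between the right action of $\Aut(V)$ on $X(V)$ and the left $G$-action (commuting with each other, with $G$ acting transitively) is essential. I should take minor care about the direction conventions when writing $j = i \circ \gamma$ versus $\gamma \cdot i$, but nothing substantive is at stake.
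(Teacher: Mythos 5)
Your proof is correct and follows essentially the same route as the paper's: define $\Gamma$ as the set of $\gamma\in\Aut(V)$ with $(i_0,i_0\gamma)\in R$, use transitivity of the $G$-action on $X(V)$ together with $G$-stability of $R$ to move basepoints and verify that $\Gamma$ is a subgroup, and then use the defect-$0$ hypothesis to write any $R$-related pair as $(i,i\gamma)$. The only cosmetic difference is that you verify closure under inverses directly, whereas the paper shows $\Gamma$ is a submonoid and invokes finiteness of $\Aut(V)$.
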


\begin{proof}
The equivalence relation $R_{\Gamma}$ consists of pairs $(i, i \gamma)$ with $i \in X(V)$ and $\gamma \in \Gamma$, and thus has defect~0. Suppose now that $R$ has defect~0. Let $\Gamma$ be the set of elements $\gamma \in \Aut(V)$ such that $R$ contains $(i, i \gamma)$ for some embedding $i \colon V \to \bV$. Since $G$ acts transitive on $X(V)$, if $\gamma \in \Gamma$ then $(i, i \gamma)$ belongs to $R$ for all $i \in X(V)$. We show (a) $\Gamma$ is a subgroup of $\Aut(V)$, and (b) $R=R_{\Gamma}$.

(a) Clearly, $\Gamma$ contains the identity. Suppose $\gamma, \gamma' \in \Gamma$. Then $(i, i\gamma)$ and $(i, i\gamma')$ belong to $R$, for any $i \in X(V)$. Since $\bV$ is homogeneous, there is $g \in G$ such that $gi=i\gamma$. Applying this to the second pair above, we see that $(i\gamma, i\gamma\gamma')$ belongs to $R$. Since $R$ is an equivalence relation, we see that $(i, i\gamma\gamma')$ belongs to $R$, and so $\gamma \gamma'$ belongs to $\Gamma$. We have thus shown that $\Gamma$ is a submonoid of $\Aut(V)$. It is a subgroup since the latter is finite.

(b) We have $R_{\Gamma} \subset R$ by definition. Conversely, suppose $(i,j) \in R$. Since $i(V)=j(V)$, we have $j=i\gamma$ for some $\gamma \in \Aut(V)$. Thus $\gamma \in \Gamma$, and so $(i,j) \in R_{\Gamma}$, as required.
\end{proof}

It remains to analyze the positive defect case. We do this in the next two lemmas. In what follows, we fix a $G$-stable equivalence relation $R$ on $X(V)$.

\begin{lemma} \label{lem:GL-trans-3}
Suppose $(i,j) \in R$ has defect $d$, and let $0 \le d' \le d$. Then there exists $(i',j') \in R$ of defect $d'$ and a codimension $d'$ substructure $W$ of $V$ such that $i'(x)=j'(x)$ for all $x \in W$.
\end{lemma}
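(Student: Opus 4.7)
The plan is to induct on the gap $d - d'$, establishing the case $d' = d$ first, and then reducing larger $d - d'$ by producing pairs of successively smaller defect in $R$.

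For the base case $d' = d$, given $(i,j) \in R$ of defect $d$, I set $U = i(V) \cap j(V)$, $W_1 = i^{-1}(U)$, $W_2 = j^{-1}(U)$ (both codimension $d$ substructures of $V$), and consider the structural isomorphism $\phi = j^{-1} \circ i|_{W_1} \colon W_1 \to W_2$. By Witt's theorem for $V$, i.e., homogeneity of $V$ as an object of $\fC$, $\phi$ extends to some $\sigma \in \Aut(V)$. The candidate is the pair $(i, j \circ \sigma)$, which agrees with $i$ on $W_1$ by construction since $j \circ \sigma|_{W_1} = j \circ \phi = i|_{W_1}$. To place this pair in $R$, I invoke $G$-stability and Witt's theorem for $\bV$ to produce $g \in G$ fixing $i(V)$ pointwise with $g \circ j = j \circ \sigma$; such a $g$ is obtained by extending to $\bV$ the partial isomorphism of subspaces defined by the identity on $i(V)$ and $j(v) \mapsto j(\sigma(v))$ on $j(V)$.

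For the inductive step $d' < d$, using the base case I may assume $(i,j)$ already has defect $d$ and agreement on a codimension $d$ substructure $W_0$, so $i(V) \cap j(V) = i(W_0)$. I then look for $g \in G_i$ (fixing $i(V)$ pointwise) such that $g \cdot j(V) \cap i(V)$ is strictly larger than $i(W_0)$. Picking $v \in V \setminus W_0$ and a compatible $u \in i(V) \setminus i(W_0)$ of the same grade as $j(v)$ with matching pairings against $i(V)$, Witt's theorem in $\bV$ supplies $g$ with $g(j(v)) = u$, so that $(i, g \cdot j) \in R$ has strictly smaller defect. Reapplying the base case to this new pair and iterating produces the desired $(i', j')$ of any defect $d' \le d$ with agreement substructure of codimension exactly $d'$.

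The main obstacle is ensuring the partial isomorphism in the base case is consistent on the overlap $U$: this forces $\sigma$ to fix $W_2$ pointwise in addition to realizing $\phi$ on $W_1$, and the two prescriptions conflict on $W_1 \cap W_2$ unless $\phi$ is already the identity there, i.e., unless $W_1 \cap W_2 \subseteq \{v \in V : i(v) = j(v)\}$. The resolution is a preliminary normalization of $(i,j)$ within its $R$-equivalence class, exploiting $G$-stability together with the infinite dimensionality of $\bV$, to arrange this compatibility; verifying that all the auxiliary Witt extensions exist (with the required grade and pairing constraints satisfied) is the technical heart of the argument.
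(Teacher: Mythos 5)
There are two genuine gaps here, and the second one is fatal to the overall strategy. Your inductive step seeks $g\in G$ fixing $i(V)$ pointwise such that $g\cdot j(V)\cap i(V)$ strictly contains $i(V)\cap j(V)$. No such $g$ exists: if $g$ restricts to the identity on $i(V)$, then so does $g^{-1}$, and for any $w\in g\cdot j(V)\cap i(V)$ we get $g^{-1}(w)=w\in j(V)$, while conversely any $w\in i(V)\cap j(V)$ is fixed by $g$ and hence lies in $g\cdot j(V)$; thus $g\cdot j(V)\cap i(V)=i(V)\cap j(V)$ always, and the defect of $(i,g\cdot j)$ equals that of $(i,j)$. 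Your concrete recipe exhibits the contradiction directly: an injective $g$ that fixes $u\in i(V)$ and sends $j(v)$ to $u$ forces $j(v)=u\in i(V)$, contrary to $v\notin W_0$. The idea you are missing is that one must move the second coordinate by elements fixing $j(V)$ pointwise, not $i(V)$: writing $j=gi$, the relations $(i,g^{-1}i)\in R$, $(i,hg^{-1}i)\in R$, $(gi,ghg^{-1}i)\in R$ (from $G$-stability and symmetry) combine with transitivity to give $(i,ghg^{-1}i)\in R$ for any $h$ stabilizing $i$; the conjugate $ghg^{-1}$ fixes $j(V)$ pointwise and can genuinely change $i(V)\cap j'(V)$. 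Choosing $h$ to fix $U=i(V)+g^{-1}i(W)$ pointwise while pushing a complement of $U$ in a large finite-dimensional $\bV^{\le n}$ out of $\bV^{\le n}$ makes the new intersection exactly $i(W)$ in a single step, for $W$ of any codimension $d'\le d$ containing $i^{-1}(i(V)\cap j(V))$; no induction on $d-d'$ is needed.

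Your base case also has two problems. It invokes homogeneity of $V$ to extend $\phi\colon W_1\to W_2$ to $\sigma\in\Aut(V)$, but a general structure $V_{\ell,m,n}$ is not homogeneous: in $V_{1,1,0}$ no automorphism carries a null even vector (one pairing to zero with the whole odd part) to a non-null one, so an isomorphism between two copies of $V_{0,1,0}$ inside it need not extend. And the consistency obstruction you flag at the end --- arranging that $i$ and $j$ already agree on $W_1\cap W_2$ --- is essentially the conclusion of the lemma itself, so deferring it to an unspecified ``preliminary normalization'' leaves the main difficulty unaddressed.
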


\begin{proof}
Let $\bV^{\le n} \subset \bV$ be the span of the $e_i$ and $f_i$ with $i \le n$. Choose $n$ such that $i(V)$ and $j(V)$ are contained in $\bV^{\le n}$. Write $j=gi$, where $g \in G$ fixes all basis vectors $e_i$ and $f_i$ with $i>n$. Let $h \in G$ stabilize $i$. Since $(i,gi)$ belongs to $R$ and $R$ is $G$-stable and symmetric, we see that $(i,g^{-1}i)$ and $(i,hg^{-1}i)$ and $(gi,ghg^{-1}i)$ each belong to $R$. By transitivity, we see that $(i, ghg^{-1} i)$ also belongs to $R$. We take $i'=i$ and $j'=ghg^{-1} i$, where $h$ is to be determined.

Let $W$ be a subspace of $V$ of codimension $d'$ such that $i(W)$ contains $i(V) \cap j(V)$, and let $U=i(V)+g^{-1}i(W)$. Pick $h \in G$ fixing each element of $U$ such that the kernel of the map
\begin{displaymath}
\xymatrix{
\bV^{\le n} \ar[r]^-h & \bV \ar[r] & \bV/\bV^{\le n} }
\end{displaymath}
is exactly $U$. Here is how to construct such an $h$. Let $U'$ be a homogeneous complementary space to $U$ in $\bV^{\le n}$. Let $\bV^{>n}$ be the span of the $e_i$ and $f_i$ with $i>n$. Choose an injective linear map $a \colon U' \to \bV^{>n}$ such that $a(U')$ is isotropic. Extend $a$ to~0 on $U$. Then $x \mapsto x+a(x)$ is an embedding $\bV^{\le n} \to \bV$ in $\hat{\cC}$. By homogeneity, there is $h \in G$ such that $h(x)=x+a(x)$ for all $x \in \bV^{\le n}$, and this has the requisite property.

We have $i'(x)=j'(x)$ for $x \in W$. Indeed, this amounts to the identity $i(x)=ghg^{-1} i(x)$, which follows since $h$ stabilizes $g^{-1} i(W)$. We thus see that $i'(W) \subset i'(V) \cap j'(V)$. To complete the proof, we must show that this is an equality, as this will show that $(i', j')$ has defect $d'$.

Let $W'=(j')^{-1}(\bV^{\le n})$. We claim $W' \subset W$. Thus let $x \in W'$. This means that $ghg^{-1}i(x) \in \bV^{\le n}$, and so $hg^{-1}i(x) \in \bV^{\le n}$ since $\bV^{\le n}$ is $g$-stable. We thus see that $g^{-1}i(x)$ belongs to $\bV^{\le n} \cap h^{-1} \bV^{\le n}$, which is equal to $U$ by our choice of $h$. Thus $i(x) \in gU=i(W)+j(V)$, and so $i(x) \in i(V) \cap (i(W)+j(V))$. Since $i(W)$ contains $i(V) \cap j(V)$, this implies $x \in W$, as required. Now, since $i'$ maps $V$ into $\bV^{\le n}$, we find that $i'(V) \cap j'(V) \subset j'(W)=i'(W)$, which completes the proof.
\end{proof}

\begin{lemma} \label{lem:GL-trans-4}
If $R$ has positive defect then there is a codimension~1 substructure $W$ of $V$ such that $R$ contains $R_W$.
\end{lemma}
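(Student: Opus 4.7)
The plan is to apply Lemma~\ref{lem:GL-trans-3} with $d' = 1$, producing a pair $(i_0, j_0) \in R$ of defect exactly~$1$ that agrees on some codimension-$1$ substructure $W \subset V$. I claim this $W$ suffices, i.e., $R_W \subseteq R$. To prove this, I would split $R_W$ according to defect (which is either $0$ or $1$ since $W$ has codimension~$1$ in $V$) and handle the two cases separately.

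For the defect-$1$ part: a defect-$1$ pair $(i,j) \in R_W$ is precisely an embedding into $\bV$ of a specific amalgamation $D$ of $V$ with $V$ over $W$, namely the unique one of full dimension $\dim V + 1$. This amalgamation is uniquely determined in $\fC$ up to isomorphism: the only ``new'' form datum would be a pairing between the two extra generators, but since $\dim V - \dim W = 1$, these both lie in the same $\bZ/2$-grading, so no such pairing needs to be specified. By Proposition~\ref{prop:struct-equiv}, $G$ acts transitively on $X(D)$, and thus all defect-$1$ pairs in $R_W$ form a single $G$-orbit; $G$-stability then places them all in $R$.

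For the defect-$0$ part: $(i,j) \in R_W$ has defect $0$ iff $j = i\sigma$ for some $\sigma \in \Aut(V)$ that fixes $W$ pointwise. I would exhibit an auxiliary embedding $k \colon V \to \bV$ with $k|_W = i|_W$ and $k(V) \ne i(V)$. Such $k$ exists because the affine set of embeddings $V \to \bV$ extending $i|_W$ is infinite-dimensional (cut out by only finitely many pairing conditions against $i(W)$ inside the infinite-dimensional $\bV$), while $i(V)$ is finite-dimensional. Then both $(i,k)$ and $(k, i\sigma)$ have defect~$1$ and agree on $W$ (using $i\sigma(V) = i(V)$, so that $k(V) \cap i\sigma(V) = k(V) \cap i(V) = i(W)$), hence both lie in $R$ by the previous step; transitivity of $R$ yields $(i, i\sigma) \in R$.

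The main obstacle is the defect-$1$ step: one must carefully set up the bijection between defect-$1$ pairs in $R_W$ and embeddings of $D$, and justify that $D$ is unambiguously defined in $\fC$ — a fact which hinges essentially on $W$ having codimension exactly~$1$ (otherwise multiple amalgamations of the same dimension would arise, parameterized by cross-pairings between the new generators of opposite grading). Once the amalgamation picture is established, the defect-$0$ case is an easy consequence of transitivity of $R$.
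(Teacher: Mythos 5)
Your proof is correct and follows essentially the same route as the paper's: the crux in both is that the full-dimensional amalgamation of $V$ with itself over $W$ is unique because the two new generators lie in the same graded piece (so no cross-pairing datum exists), whence all defect-1 pairs agreeing on $W$ form a single $G$-orbit, and the defect-0 pairs are then reached by transitivity through two defect-1 pairs. The paper packages the first step as an explicit isomorphism of structures extended by homogeneity of $\bV$, and in the second step reuses $j$ itself (via $(i,j\gamma)$ and $(j,j\gamma)$ plus a final $G$-translation) where you introduce an auxiliary embedding $k$; these differences are cosmetic.
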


\begin{proof}
By the previous lemma, there is a codimension~1 substructure $W$ of $V$ and a pair $(i,j) \in R$ of defect~1 such that $i(x)=j(x)$ for all $x \in W$. We show $R$ contains this $R_W$.

Suppose $j' \colon V \to \bV$ is an embedding that agrees with $i$ on $W$, and such that $(i,j')$ has defect~1. Let $x_1, \ldots, x_n$ be a homogeneous basis of $V$ such that $x_1, \ldots, x_{n-1}$ is a basis of $W$. Then $i(V)+j(V)$ has basis
\begin{displaymath}
i(x_1), \ldots, i(x_n), j(x_n),
\end{displaymath}
and similarly for $i(V)+j'(V)$. Let
\begin{displaymath}
a \colon i(V)+j(V) \to i(V)+j'(V)
\end{displaymath}
be the linear map fixing each of $i(x_1), \ldots, i(x_n)$ and mapping $j(x_n)$ to $j'(x_n)$. This is clearly an isomorphism of homogeneous vector spaces. We claim that $a$ is a map of structures, i.e., it preserves pairings. For $1 \le \alpha \le n-1$, we have
\begin{displaymath}
\langle i(x_{\alpha}), j(x_n) \rangle = \langle j(x_{\alpha}), j(x_n) \rangle = \langle x_{\alpha}, x_n \rangle,
\end{displaymath}
and similarly for $j'$; we thus see that
\begin{displaymath}
\langle i(x_{\alpha}), j(x_n) \rangle = \langle i(x_{\alpha}), j'(x_n) \rangle
\end{displaymath}
for such $\alpha$. This also holds for $\alpha=n$, since then the pairings vanish, as all elements have the same parity. Similarly, $\langle j(x_n), j(x_n) \rangle=\langle j'(x_n), j'(x_n) \rangle$, as both vanish. This proves the claim. Since $\bV$ is homogeneous, there is some $g \in G$ that induces $a$. This $g$ satisfies $gi=i$ and $gj=j'$. Since $(i,j)$ belongs to $R$ and $R$ is $G$-stable, we see that $(i,j')$ belongs to $R$ as well.

Now suppose that $j' \colon V \to \bV$ agrees with $i$ on $W$, but that $(i,j')$ has defect~0. This means that $i(V)=j'(V)$, and so $j'=i\gamma$ for some $\gamma \in \Aut(V)$ that fixes each element of $W$. Now, $j\gamma$ agrees with $i$ on $W$, and $(i,j\gamma)$ has defect~1, and so $(i,j\gamma)$ belongs to $R$ by the previous paragraph. Since $(i,j)$ also belongs to $R$, we see that $(j,j\gamma)$ belongs to $R$. Since $R$ is $G$-stable, it follows that $(i,i\gamma)=(i,j')$ belongs to $R$.

Finally, suppose that $(i',j')$ is any pair of embeddings $V \to \bV$ that agrees on $W$. Note that this pair has defect~0 or~1. We can find $g \in G$ such that $gi'=i$. We have seen above that $g(i',j')$ belongs to $R$. Since $R$ is $G$-stable, it follows that $(i',j')$ belongs to $R$. Thus $R$ contains $R_W$, as required.
\end{proof}

\subsection{Smooth approximation}

We now show that $G$ is smoothly approximable (\S \ref{ss:smooth}).

\begin{proposition} \label{prop:GL-simply-conn}
The $G$-set $X_{a,b,c}$ is simply connected, for all $a$, $b$, $c$.
\end{proposition}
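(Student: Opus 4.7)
The plan is to combine the classification of transitive $G$-sets (Proposition~\ref{prop:GL-trans}), the finite-fiber criterion (Proposition~\ref{prop:fincover}), and the polynomial counts of \S\ref{ss:GL_enum}.

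Let $f \colon Y \to X_{a,b,c}$ be a finite cover. Decomposing $Y$ into $G$-orbits, it suffices to treat the transitive case. By Proposition~\ref{prop:GL-trans}, write $Y = X_{a',b',c'}/\Gamma$ for some triple $(a',b',c')$ and $\Gamma \subset \Aut(X_{a',b',c'})$. Composing $f$ with the quotient map yields a $G$-map $g \colon X_{a',b',c'} \to X_{a,b,c}$ with finite fibers (bounded by $|\Gamma|$ times those of $f$); by Proposition~\ref{prop:struct-equiv}, $g$ corresponds to an embedding $\iota \colon V_{a,b,c} \to V_{a',b',c'}$ in $\fC$. Since $\iota$ respects the grading and the pairing, one has $a+b \le a'+b'$ and $a+c \le a'+c'$.

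The heart of the argument is to show $(a',b',c') = (a,b,c)$. By Proposition~\ref{prop:fincover}, finiteness of the fibers of $g$ yields a uniform bound $h$ on the number of extensions along $\iota$ of an embedding $V_{a,b,c} \to C$ to one of $V_{a',b',c'} \to C$, as $C$ ranges over $\fC$. Take $C = V_{\ell,m,n}$; summing the extension count over all embeddings $V_{a,b,c} \to C$ gives $N_{a',b',c'}^{\ell,m,n}$, so by pigeonhole some embedding admits at least
\begin{displaymath}
N_{a',b',c'}^{\ell,m,n}/N_{a,b,c}^{\ell,m,n} = Q_{a',b',c'}(q^\ell,q^m,q^n)/Q_{a,b,c}(q^\ell,q^m,q^n)
\end{displaymath}
extensions (Proposition~\ref{prop:GL-Qpoly}). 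Inspection of the defining recurrences shows $Q_{a,b,c}(t,u,v)$ has $u$-degree $a+b$ and $v$-degree $a+c$, so the ratio above has $u$-degree $(a'+b')-(a+b) \ge 0$ and $v$-degree $(a'+c')-(a+c) \ge 0$. If $(a',b',c') \ne (a,b,c)$, at least one of these is strictly positive: were both zero, the equalities $a+b=a'+b'$ and $a+c=a'+c'$ would force $\iota(V_0) = V'_0$ and $\iota(V_1) = V'_1$, whence preservation of the form gives $a = a'$, contradicting $(a',b',c') \ne (a,b,c)$. Sending the appropriate one of $q^m, q^n$ to infinity thus makes the ratio unbounded, contradicting the bound $h$.

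Hence $(a',b',c') = (a,b,c)$, so $\iota$ is an injective endomorphism of $V_{a,b,c}$, hence an automorphism. The $\Gamma$-invariance of $g$ then forces every $\gamma \in \Gamma$ to fix $\iota(V_{a,b,c}) = V_{a',b',c'}$ pointwise, so $\Gamma = \{e\}$, $Y = X_{a',b',c'}$, and $f$ is the $G$-equivariant bijection induced by $\iota$. Every finite transitive cover of $X_{a,b,c}$ is thus a single trivial sheet, so an arbitrary finite cover is a finite disjoint union of copies of $X_{a,b,c}$, proving the proposition. The main obstacle is the unboundedness of the $Q$-ratio; the clean $(u,v)$-degree formulas make this step routine.
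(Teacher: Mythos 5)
Your proof is correct and follows the same overall strategy as the paper: reduce an arbitrary finite cover to one of the form $X_{a',b',c'}\to X_{a,b,c}$ (you do this via the classification of transitive sets, the paper via its corollary that $\sE(\bV)$ is large --- these are interchangeable), then use Proposition~\ref{prop:fincover} and the counting polynomials to force $\dim V_{a',b',c'}=\dim V_{a,b,c}$. The one place you diverge is the growth estimate: you keep $m,n$ general and compare $u$- and $v$-degrees of the $Q$ polynomials, whereas the paper specializes to $m=n=0$ and compares degrees in $q^{\ell}$ using the explicit formula $P_{a,b,c}(t)=q^{-(a+b)(a+c)}t^a\prod_{i=0}^{a+b+c-1}(t-q^i)$, which has degree exactly $2a+b+c=\dim V_{a,b,c}$ with visibly positive leading coefficient; the inequality $P_{a',b',c'}(q^{\ell})\le h\,P_{a,b,c}(q^{\ell})$ then gives $\dim V_{a',b',c'}\le\dim V_{a,b,c}$ in one stroke, with no case analysis. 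Your route works but silently assumes that the leading coefficient of $Q_{a',b',c'}$ in $u$ (resp.\ $v$), which is a polynomial in the remaining variables, does not vanish after substituting $t=q^{\ell}$ and $v=q^n$ (resp.\ $u=q^m$); otherwise ``sending $q^m$ to infinity'' need not make the ratio unbounded. This is true and can be checked from the recurrences (all contributions to the top $u$-coefficient are positive once $t>1$), but it should be said; the paper's specialization to the non-degenerate spaces is precisely what makes this verification unnecessary.
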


\begin{proof}
First suppose that we have an embedding $i \colon V_{a,b,c} \to V_{a',b',c'}$ such that the induced map $i^* \colon X_{a',b',c'} \to X_{a,b,c}$ is a finite cover. By Proposition~\ref{prop:fincover}, there is some $h$ such that any embedding $V_{a,b,c} \to V_{\ell,m,n}$ admits at most $h$ extensions to $V_{a',b',c'}$. We thus find
\begin{displaymath}
N_{a',b',c'}^{\ell,m,n} \le h \cdot N_{a,b,c}^{\ell,m,n}
\end{displaymath}
for all $\ell$, $m$, and $n$. In particular, taking $m=n=0$, we find
\begin{displaymath}
P_{a',b',c'}(q^{\ell}) \le h \cdot P_{a,b,c}(q^{\ell}).
\end{displaymath}
Since $P_{a,b,c}(q^{\ell})$ is a polynomial of degree $2a+b+c-1$ in $q^{\ell}$, and similarly for $P_{a',b',c'}(q^{\ell})$, we find that $2a'+b'+c' \le 2a+b+c$. Thus $\dim(V_{a',b',c'}) \le \dim(V_{a,b,c})$, and so $i$ is an isomorphism, and so $i^*$ is as well, that is, the finite cover $i^*$ is trivial.

Now suppose that we have a finite cover $f \colon X \to X_{a,b,c}$ for some transitive $G$-set $X$. Since the stabilizer class $\sE(\bV)$ is large, there is a finite cover $g \colon X_{a',b',c'} \to X$ for some $a'$, $b'$, and $c'$. By the previous paragraph, $fg$ is an isomorphism, and so $f$ is an isomorphism. Thus all finite covers of $X_{a,b,c}$ are trivial, as required.
\end{proof}

\begin{proposition} \label{prop:GL-smooth}
The family $\{X_{\ell,0,0}\}_{\ell \ge 0}$ is a smooth approximation of $G$.
\end{proposition}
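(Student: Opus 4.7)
The plan is to verify the two defining conditions of a smooth approximation: (i) each $X_{\ell,0,0}$ is absolutely homogeneous in $\bS(G)$, and (ii) for every transitive $G$-set $X$, the index set $\Sigma(X)=\{\ell : \Pi_{X_{\ell,0,0}}(X)\ne\emptyset\}$ is cofinite in $\bN$.

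For (i), I would first establish homogeneity at the level of structures, i.e., show that the structure $V_{\ell,0,0}$ is homogeneous in the sense of \S\ref{ss:homo}. Concretely, given two embeddings $i,j \colon V_{a,b,c} \to V_{\ell,0,0}$, one must produce an automorphism $\sigma$ of $V_{\ell,0,0}$ with $i=\sigma\circ j$. An embedding determines a subspace $U_0\subset (V_{\ell,0,0})_0$ of dimension $a+b$ together with a distinguished ``null'' subspace of dimension $b$ (the image of the radical of the even part), and analogously a subspace $U_1\subset (V_{\ell,0,0})_1$ of dimension $a+c$ with a null part of dimension $c$; the pairing $\langle,\rangle$ on $V_{\ell,0,0}$ restricts to the prescribed form between the non-null parts. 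This is a standard Witt-type extension problem: $\Aut(V_{\ell,0,0})=\GL_\ell(\bF)$ (acting on the even part with the contragredient action on the odd part) acts transitively on such configurations, since any embedding can be extended to a splitting of the entire $(\ell|\ell)$-dimensional pairing space. Once homogeneity of the structure is shown, $X_{\ell,0,0}$ is homogeneous relative to $\sE(\bV)$ by Proposition~\ref{prop:struct-equiv}. Combining with Proposition~\ref{prop:GL-simply-conn} (simple connectivity) and Corollary~\ref{cor:GL-large} (largeness of $\sE(\bV)$), Proposition~\ref{prop:rel-homo} promotes this to absolute homogeneity.

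For (ii), I would reduce to structured sets via Proposition~\ref{prop:GL-trans}: every transitive $G$-set is a quotient of some $X_{a,b,c}$, and composition with the quotient map shows that non-emptiness of $\Hom_G(X_{\ell,0,0}, X_{a,b,c})$ implies non-emptiness of $\Hom_G(X_{\ell,0,0}, X_{a,b,c}/\Gamma)$. By Proposition~\ref{prop:struct-equiv} the number of such $G$-maps equals $N_{a,b,c}^{\ell,0,0}=P_{a,b,c}(q^\ell)$, and the explicit formula in Proposition~\ref{prop:GL-Ppoly},
\[
P_{a,b,c}(q^\ell) \;=\; q^{\ell a - (a+b)(a+c)} \prod_{i=0}^{a+b+c-1}(q^\ell - q^i),
\]
is non-zero (indeed strictly positive) whenever $\ell \ge a+b+c$. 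Hence $\Sigma(X_{a,b,c}/\Gamma)$ contains the cofinite set $\{\ell \ge a+b+c\}$, giving (ii).

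The only real obstacle is the homogeneity verification in step (i); the cofiniteness in (ii) is immediate once one has the explicit formula for $P_{a,b,c}$ already in hand. The Witt-type argument for homogeneity is standard but should be written carefully, treating the null and non-null parts of the bilinear pairing on $V_{\ell,0,0}$ separately to ensure the extension of embeddings respects both the grading and the form.
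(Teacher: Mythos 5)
Your proposal is correct and follows essentially the same route as the paper: homogeneity of the structure $V_{\ell,0,0}$ via a Witt-type extension argument, promotion to absolute homogeneity through Proposition~\ref{prop:rel-homo} (using simple connectivity and largeness of $\sE(\bV)$), and cofiniteness of $\Sigma(X)$ from the positivity of $P_{a,b,c}(q^{\ell})$ for $\ell$ large. The extra detail you give on the Witt-type argument and the explicit reduction to $\bV$-smooth sets are just expansions of steps the paper leaves implicit.
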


\begin{proof}
The structure $V_{\ell,0,0}$ is homogeneous by a variant of Witt's theorem on embeddings into non-degenerate quadratic spaces. It follows that $X_{\ell,0,0}$ is homogeneous relative to $\sE(\bV)$. Proposition~\ref{prop:rel-homo} now implies that $X_{\ell,0,0}$ is absolutely homogeneous. Finally, any structure $V_{a,b,c}$ embeds into some $V_{\ell,0,0}$, since $N^{\ell,0,0}_{a,b,c}=P_{a,b,c}(q^{\ell})$ is positive for $\ell$ sufficiently large (see Proposition~\ref{prop:GL-Ppoly}). This completes the proof.
\end{proof}

\subsection{Measures}

Let $N_{a,b,c}^{*,0,0}$ be the function defined by $\ell \mapsto N^{\ell,0,0}_{a,b,c}$. Since $\{X_{\ell,0,0}\}_{\ell \ge 0}$ is a smooth approximation of $G$ (Proposition~\ref{prop:GL-smooth}), by \S \ref{ss:count} we obtain a counting measure
\begin{displaymath}
\Theta(G; \bV) \to \Fun^{\circ}(\bN, \bZ), \qquad x_{a,b,c} \mapsto N^{*,0,0}_{a,b,c}.
\end{displaymath}
Since $N^{\ell,0,0}_{a,b,c}=P_{a,b,c}(q^{\ell})$, it follows that we have a ring homomorphism
\begin{displaymath}
\phi \colon \Theta(G; \bV) \to \bZ_q[t], \qquad
\phi(X_{a,b,c})=P_{a,b,c}(t).
\end{displaymath}
The following result shows that this measure is universal.

\begin{theorem} \label{thm:GL-measure}
The map $\phi$ is a ring isomorphism after inverting $q$.
\end{theorem}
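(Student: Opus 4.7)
The plan is to exploit the presentation of $\Theta(G;\bV)[1/q]$ as a quotient of $\BB[1/q]$ and show that $\phi$ is cut out by exactly the expected two relations. Surjectivity of $\phi$ is immediate: $\phi(x_{0,1,0}+1) = (t-1)+1 = t$, which generates the target.

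For injectivity after inverting $q$, Proposition~\ref{prop:GL-burnside} together with the identification $\BB[1/q] \simeq \hat{\BB}[1/q]$ from the preceding corollary gives $\hat{\BB}(G,\bV)[1/q] \cong \bZ_q[X,Y,Z]$ via $X = x_{1,0,0}$, $Y = x_{0,1,0}$, $Z = x_{0,0,1}$. Since $\hat{\BB} \twoheadrightarrow \Theta(G;\bV)$, this exhibits $\Theta(G;\bV)[1/q]$ as a quotient of $\bZ_q[X,Y,Z]$, and the composition with $\phi$ sends $(X,Y,Z) \mapsto (q^{-1}t(t-1),\,t-1,\,t-1)$, whose kernel is the ideal $I = (Y-Z,\;qX - Y(Y+1))$. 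It therefore suffices to prove the two identities
\begin{equation*}
x_{0,1,0} = x_{0,0,1}, \qquad q\,x_{1,0,0} = x_{0,1,0}(x_{0,1,0} + 1)
\end{equation*}
inside $\Theta(G;\bV)$ itself.

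Both will follow from axiom (e) of Definition~\ref{defn:meas1} applied to the projections $X_{1,0,0} \to \bV_0^*$ and $X_{1,0,0} \to \bV_1^*$. Consider the first: for fixed $v_0 \in \bV_0^*$ the fiber is the affine slice $F = \{v_1 \in \bV_1 : \langle v_0, v_1\rangle = 1\}$, a transitive $\hat{G}$-set with group of definition $G_{v_0}$. Picking any $v_1^* \in F$, translation by $av_1^*$ realizes a $G_{v_0,v_1^*}$-equivariant isomorphism between $F$ and each slice $F_a = \{v_1 : \langle v_0, v_1\rangle = a\}$ as $a$ ranges over $\bF$. Combining the decomposition $\bV_1 = \coprod_{a \in \bF} F_a$ with axioms (a) and (c) of measure yields $\mu(\bV_1) = q\,\mu(F)$, and axiom (e) then produces $q\,x_{1,0,0} = x_{0,1,0}(x_{0,0,1}+1)$. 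The symmetric argument for the projection to $\bV_1^*$ gives $q\,x_{1,0,0} = x_{0,0,1}(x_{0,1,0}+1)$. Subtracting these two identities forces $x_{0,1,0} = x_{0,0,1}$ in $\Theta(G;\bV)$, and substituting back recovers the second relation.

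The main subtlety will be verifying that the splitting of the sequence $0 \to F_0 \to \bV_1 \to \bF \to 0$ is genuinely equivariant for an open subgroup, which requires replacing $G_{v_0}$ by the smaller (still open) stabilizer $G_{v_0,v_1^*}$; once that point is in hand, each step reduces to a direct invocation of the measure axioms and the fact that the measure of a $\hat{G}$-set is independent of the chosen group of definition.
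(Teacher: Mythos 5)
Your proof is correct and follows essentially the same route as the paper: both arguments reduce to the two fiber computations for the projections of $X_{1,0,0}$ onto $\bV_0\setminus\{0\}$ and $\bV_1\setminus\{0\}$, yielding $q[X_{1,0,0}]=[X_{0,1,0}]([X_{0,0,1}]+1)=[X_{0,0,1}]([X_{0,1,0}]+1)$ and hence $[X_{0,1,0}]=[X_{0,0,1}]$, combined with the fact that these classes generate $\Theta(G;\bV)[1/q]$. The only differences are cosmetic: you phrase the conclusion via an explicit presentation ideal in $\bZ_q[X,Y,Z]$ rather than via the single generator $x_{0,1,0}$, and you identify the fiber by translating the slices $F_a$ into one another instead of writing it in coordinates as a torsor over $\bV_1(1)$.
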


\begin{proof}
Let $a=[X_{1,0,0}]$, $b=[X_{0,1,0}]$, and $c=[X_{0,0,1}]$. Since $\Theta(G; \bV)$ is a quotient of $\hat{\BB}$, it follows that $a$, $b$, and $c$ generate $\Theta(G; \bV)[1/q]$ as a $\bZ_q$-algebra.

Now, $X_{1,0,0}$ is the subset of $\bV_0 \times \bV_1$ where the pairing is equal to~1. Projecting onto the first coordinate gives a $G$-equivariant map $X_{1,0,0} \to X_{0,1,0}$. Let $F$ be the fiber. Then, by the axioms of measure, we have
\begin{displaymath}
[X_{1,0,0}]=[F] \cdot [X_{0,1,0}].
\end{displaymath}
Now, $F$ is the set of vectors $v \in \bV_1$ such that $\langle e_1, v \rangle=1$. This equation simply means the coefficient of $f_1$ in $v$ is~1. Thus the map $\bV_1(1) \to F$ given by $w \mapsto f_1+w$ is an isomorphism. As we saw above, we have $q[\bV_1(1)]=[\bV_1]$, and $[\bV_1]=c+1$. Putting this all together, we find
\begin{displaymath}
a=q^{-1} b(c+1)=q^{-1} bc + q^{-1} b.
\end{displaymath}
Of course, we can run the whole argument by projecting onto $X_{0,0,1}$ instead, and this leads to a similar equation but with $b$ and $c$ switched. We conclude that $b=c$.

The above discussion shows that $b$ generates $\Theta_q(G;\bV)[1/q]$ as a $\bZ_q$-algebra. Since $\phi(b)=P_{0,1,0}=t-1$, the result thus follows.
\end{proof}

\begin{corollary}
The map $\phi$ induces a ring isomorphism $\Theta(G) \otimes \bQ \to \bQ[t]$.
\end{corollary}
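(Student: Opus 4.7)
The plan is to combine Theorem~\ref{thm:GL-measure} with the general comparison result recalled in \S\ref{ss:stab}: for any large stabilizer class $\sE$, the natural map $\Theta(G;\sE) \otimes \bQ \to \Theta(G) \otimes \bQ$ is an isomorphism.

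First I would invoke Theorem~\ref{thm:GL-measure} to obtain the ring isomorphism
\begin{displaymath}
\phi \colon \Theta(G;\bV)[1/q] \xrightarrow{\sim} \bZ_q[t].
\end{displaymath}
Tensoring over $\bZ_q$ with $\bQ$ (and using that $\bZ_q \otimes_\bZ \bQ = \bQ$) immediately yields a ring isomorphism $\Theta(G;\bV) \otimes \bQ \xrightarrow{\sim} \bQ[t]$.

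Next, Corollary~\ref{cor:GL-large} asserts that the stabilizer class $\sE(\bV)$ is large. Applying the general result of \S\ref{ss:stab} therefore gives a canonical ring isomorphism $\Theta(G;\bV) \otimes \bQ \xrightarrow{\sim} \Theta(G) \otimes \bQ$. Composing the inverse of this isomorphism with the one from the previous paragraph produces the desired ring isomorphism $\Theta(G) \otimes \bQ \xrightarrow{\sim} \bQ[t]$, and one verifies from the constructions that this composite is induced by $\phi$.

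There is no real obstacle: the substantive work was carried out in Theorem~\ref{thm:GL-measure} (identifying the relative universal measure ring with a polynomial ring after inverting $q$) and in Proposition~\ref{prop:GL-trans}/Corollary~\ref{cor:GL-large} (the largeness of $\sE(\bV)$). The corollary is a formal consequence once these two inputs and the general principle about large stabilizer classes are in hand.
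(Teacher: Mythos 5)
Your proof is correct and follows the paper's own argument exactly: the paper likewise deduces the corollary from the largeness of $\sE(\bV)$ (Corollary~\ref{cor:GL-large}) together with the general fact from \S\ref{ss:stab} that for a large stabilizer class the map $\Theta(G;\sE)\otimes\bQ\to\Theta(G)\otimes\bQ$ is an isomorphism, combined with Theorem~\ref{thm:GL-measure}. Your write-up simply makes explicit the steps the paper leaves implicit.
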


\begin{proof}
This follows since the stabilizer class $\sE(\bV)$ is large; see \S \ref{ss:stab} and Corollary~\ref{cor:GL-large}.
\end{proof}

Given a field $k$ of characteristic~0 and an element $t \in k$, we let
\begin{displaymath}
\mu_t \colon \Theta(G) \to k
\end{displaymath}
be the measure obtained by composing the isomorphism $\phi$ with the homomorphism $\bQ[t] \to k$ mapping $t$ to $t$. The proof of Theorem~\ref{thm:GL-measure} shows that $\mu_t$ is the unique $k$-valued measure satisfying $\mu_t(X_{0,1,0})=t-1$, or, equivalently, satisfying $\mu_t(\bV_0)=t$. There is a map $\Theta(G(n)) \to \Theta(G)$, which allows us to restrict measures from $G$ to $G(n)$. If we identify $G(n)$ with $G$, then the restriction of $\mu_t$ is $\mu_{t/q^n}$.

\begin{proposition} \label{prop:GL-reg}
Let $k$ be a field of characteristic~0 and let $t \in k$.
\begin{enumerate}
\item The measure $\mu_t$ is regular if and only if $t$ is not~0, and not of the form $q^i$ with $i \ge 0$.
\item The measure $\mu_t$ is quasi-regular if and only if $t \ne 0$.
\end{enumerate}
\end{proposition}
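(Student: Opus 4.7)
The plan is to reduce regularity of $\mu_t$ to the non-vanishing of the polynomials $P_{a,b,c}(t)$ on all transitive $G$-sets, then read off the answer from the explicit formula in Proposition~\ref{prop:GL-Ppoly}.

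First, I would invoke Proposition~\ref{prop:GL-trans} to write every transitive $G$-set as $X = X_{a,b,c}/\Gamma$ for some $\Gamma \subset \Aut(X_{a,b,c})$. Since $X_{a,b,c}$ is transitive, $\Gamma$ acts freely, so the quotient map has fibers of size $|\Gamma|$, and the measure axiom gives $\mu_t(X) = P_{a,b,c}(t)/|\Gamma|$. Because $k$ has characteristic zero, $|\Gamma|$ is already a unit, so $\mu_t(X)$ is a unit in $k$ if and only if $P_{a,b,c}(t) \neq 0$. Thus $\mu_t$ is regular if and only if $P_{a,b,c}(t)\neq 0$ for all $a,b,c \geq 0$.

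From the explicit formula
\begin{displaymath}
P_{a,b,c}(t) = q^{-(a+b)(a+c)}\, t^a \prod_{i=0}^{a+b+c-1}(t-q^i),
\end{displaymath}
the roots of $P_{a,b,c}$ lie in $\{0\} \cup \{q^i : 0 \le i \le a+b+c-1\}$, with $0$ occurring iff $a\ge 1$. Taking the union over all $(a,b,c)$ with $a+b+c \ge 1$ gives the locus $\{0\} \cup \{q^i : i \ge 0\}$, which establishes part (a).

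For part (b), I would use the fact (noted just after Theorem~\ref{thm:GL-measure}) that the restriction of $\mu_t$ along the identification $G(n) \cong G$ is $\mu_{t/q^n}$. By part (a), $\mu_{t/q^n}$ is regular on $G$ iff $t/q^n \neq 0$ and $t/q^n \neq q^i$ for every $i\ge 0$, i.e., iff $t\neq 0$ and $t \neq q^j$ for every $j \ge n$. If $t \neq 0$, then choosing $n$ strictly larger than $j$ when $t = q^j$ (or any $n$ when $t$ is not a non-negative power of $q$) makes $\mu_t|_{G(n)}$ regular. Conversely, if $t=0$ then restriction does not alter $\mu_0$, and $\mu_0$ already fails regularity on $X_{1,0,0}$ (whose measure is $P_{1,0,0}(0)=0$), so no $G(n)$-restriction is regular. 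Since the $G(n)$ form a neighborhood basis of the identity and any open subgroup contains some $G(n)$ (Proposition~\ref{prop:GL-trans}), quasi-regularity is equivalent to regularity on some $G(n)$, completing the proof. There is no real obstacle here — the work is entirely done by Propositions~\ref{prop:GL-trans} and~\ref{prop:GL-Ppoly}; the one minor point to verify carefully is that $\Gamma$ acts freely so that the fiber has integer size $|\Gamma|$ and thus the multiplicativity of measure yields a denominator that is automatically a unit.
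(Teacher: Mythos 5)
Your proof is correct and follows essentially the same route as the paper: reduce to non-vanishing of the $P_{a,b,c}$ via the explicit formula, then handle quasi-regularity by restricting to $G(n)$. You actually fill in a step the paper leaves implicit (reducing from arbitrary transitive $G$-sets to the $X_{a,b,c}$ via Proposition~\ref{prop:GL-trans} and the freeness of the $\Gamma$-action), which is a worthwhile addition.
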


\begin{proof}
(a) If $t$ is not zero and not a positive power of $q$ then $P_{a,b,c}(t)$ is non-zero for all $a$, $b$, and $c$. This follows from the explicit computation of these polynomials (Proposition~\ref{prop:GL-Ppoly}). Thus $\mu_t$ is regular. Conversely, if $t$ is zero or a positive power of $q$ then $P_{a,b,c}(t)=0$ for some choice of $a$, $b$, and $c$, and so $\mu_t$ is not regular.

(b) If $t$ is non-zero then, for $n$ sufficiently large, $\mu_{t/q^n}$ is regular by (a), which means that the restriction of $\mu_t$ to $G(n)$ is regular, and so $\mu_t$ is quasi-regular. It $t=0$ then the restriction of $\mu_t$ to $G(n)$ is clearly not regular. Since the $G(n)$'s are cofinal in $\sO(G)$, it follows that $\mu_t$ is not regular on any open subgroup.
\end{proof}

\begin{remark}
We have an embedding $G \times G \to G$ by partitioning the basis set into two infinite subsets. This induces a map $\BB(G) \to \BB(G \times G)$. We also have a map $\BB(G \times G) \to \Theta(G) \otimes \BB(G)$ by Remark~\ref{rmk:burn-prod}. A similar remark holds for the versions of $\BB$ and $\Theta$ relative to $\sE(\bV)$. Combining these, we obtain a map
\begin{displaymath}
\delta_t \colon \BB \to \bZ_q[t] \otimes \BB.
\end{displaymath}
One can show that $\delta$ is obtained from $\delta_t$ by putting $t=q$. Moreover, the elements $w_1$, $w_2$, and $w_3$ defined in Remark~\ref{rmk:GL-grading} are eigenvectors for $\delta_t$ of eigenvalue $t$.
\end{remark}

\subsection{Ultraproduct measures} \label{ss:GL-ultra-meas}

We have just classified all measures for $G$ (in characteristic~0). We now examine how the ultraproduct measures constructed in \S \ref{ss:ultra} fit into this classification. Fix a field $\kappa_n$ for each $n \in \bN$, let $\cF$ be a non-principal ultrafilter on $\bN$, and let $\kappa$ be the ultraproduct of the $\kappa_n$'s, which we assume to have characteristic~0. Let $\nu$ be the ultraproduct measure defined in this situation, with respect to the smooth approximation $\{X_{n,0,0}\}_{n \ge 0}$.

\begin{proposition} \label{prop:GL-ultra-meas-param}
We have $\nu=\mu_{\tau}$, where $\tau \in \kappa$ is defined by the sequence $(q^n)_{n \ge 0}$.
\end{proposition}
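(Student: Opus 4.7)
The plan is to exploit the uniqueness built into Theorem~\ref{thm:GL-measure}: after inverting $q$ and extending scalars to $\bQ$, the ring $\Theta(G)$ becomes the polynomial ring $\bQ[t]$, with $t$ the image of $[\bV_0]$ (equivalently, $[X_{0,1,0}] + 1$). Since $\kappa$ is a $\bQ$-algebra in which $q$ is invertible, any $\kappa$-valued measure on $G$ is uniquely determined by its value on the single class $[X_{0,1,0}]$. So to prove $\nu = \mu_\tau$ it will suffice to verify the single identity
\begin{displaymath}
\nu(X_{0,1,0}) = \mu_\tau(X_{0,1,0}).
\end{displaymath}

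By construction of $\mu_\tau$ and Proposition~\ref{prop:GL-Ppoly}, the right-hand side equals $P_{0,1,0}(\tau) = \tau - 1$. For the left-hand side, I would unwind the counting-measure construction of \S\ref{ss:count} together with the correspondence between Definitions~\ref{defn:meas1} and~\ref{defn:meas2} recalled in \S\ref{ss:meas}: the scalar $\nu(X_{0,1,0})$ equals $\nu(f)$ for the projection $f \colon X_{0,1,0} \to \mathrm{pt}$, and by the definition of the counting measure this is the image in $\kappa$ of the sequence whose $n$-th term is the fiber size $|\Pi_n(X_{0,1,0})| = \#\Hom_G(X_{n,0,0}, X_{0,1,0})$. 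As recalled in \S\ref{ss:GL-setup}, this count is the number of embeddings $V_{0,1,0} \to V_{n,0,0}$, which by Propositions~\ref{prop:GL-Qpoly} and~\ref{prop:GL-Ppoly} equals $P_{0,1,0}(q^n) = q^n - 1$. Pushed forward through $\Fun^{\circ}(\bN,\bZ) \to \kappa$, the sequence $(q^n - 1)_{n \ge 0}$ represents exactly $\tau - 1$.

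Combining the two computations gives $\nu(X_{0,1,0}) = \tau - 1 = \mu_\tau(X_{0,1,0})$, and since $[X_{0,1,0}]$ generates $\Theta(G) \otimes \bQ$ the equality $\nu = \mu_\tau$ follows. There is no real obstacle: the substantive inputs (the explicit formula for $P_{0,1,0}$ and the one-parameter classification of measures) are already in hand, and the remaining work is just matching definitions.
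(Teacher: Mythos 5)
Your proposal is correct and follows essentially the same route as the paper: both reduce to the single computation $\nu(X_{0,1,0}) = (q^n-1)_{n\ge 0} = \tau - 1$ via the counting measure, and then invoke the classification of measures (Theorem~\ref{thm:GL-measure}) to conclude $\nu = \mu_\tau$. The only cosmetic difference is that you route the count through $P_{0,1,0}$ while the paper counts the embeddings $V_{0,1,0} \to V_{n,0,0}$ directly.
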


\begin{proof}
Let $H_n=\GL_n(\bF)$ be the automorphism group of $X_{n,0,0}$, and let $\Pi_n \colon \bS(G) \to \bS(H_n)$ be the functor $\Hom_G(X_{n,0,0}, -)$. By definition, $\nu(X_{0,1,0})$ is the element of $\kappa$ represented by the sequence $(a_n)_{n \ge 0}$, where $a_n=\# \Pi_n(X_{0,1,0})$. The set $\Pi_n(X_{0,1,0})$ is in bijection with the set of embeddings $V_{0,1,0} \to V_{n,0,0}$. Such an embedding is simply picking a non-zero element of the degree~0 part of $V_{n,0,0}$, and so there are $q^n-1$ embeddings. We thus have $a_n=q^n-1$. By the classification of measures, $\nu=\mu_{\tau}$ where $\tau-1=\nu(X_{0,1,0})$. We thus find $\tau=(q^n)_{n \ge 0}$, as claimed.
\end{proof}

We say that two measures $\mu_t$ and $\mu_{\tau}$ for $G$, valued in (perhaps different) fields of characteristic~0 are \defn{equivalent} if they define the same point of $\Spec(\Theta(G))$. Concretely, this means that either $t$ and $\tau$ are both transcendental over $\bQ$, or that $t$ and $\tau$ are both algebraic over $\bQ$ and have the same minimal polynomial. We next show that nearly all measures are equivalent to ultraproduct measures.

\begin{proposition} \label{prop:ultra-equiv}
Let $k$ be a field of characteristic~0, and let $t \in k$ be non-zero. Then $\mu_t$ is equivalent to some ultraproduct measure.
\end{proposition}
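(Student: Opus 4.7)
The plan is to construct, for any nonzero $t$, fields $\kappa_n$ and a non-principal ultrafilter $\cF$ on $\bN$ so that the ultraproduct $\kappa = \prod_n \kappa_n/\cF$ has characteristic zero and so that $\tau := (q^n)_{n \ge 0} \in \kappa$ is equivalent to $t$ in the paper's sense. By Proposition~\ref{prop:GL-ultra-meas-param}, the resulting ultraproduct measure is $\mu_\tau$, so equivalence of $t$ and $\tau$ gives equivalence of $\mu_t$ and $\mu_\tau$. We split into two cases.

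If $t$ is transcendental over $\bQ$, I would take $\kappa_n = k$ for every $n$ and any non-principal ultrafilter $\cF$. To see $\tau$ is transcendental: if $P(\tau) = 0$ for some nonzero $P \in \bQ[x]$, then $\{n : P(q^n) = 0\}$ lies in $\cF$; but $P$ has finitely many roots and $n \mapsto q^n$ is injective, so this set is finite, contradicting non-principality.

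If $t$ is algebraic, let $P \in \bZ[x]$ be an integer scaling of its minimal polynomial; note $P(0) \ne 0$ since $t \ne 0$, and $P(q^n) \ne 0$ for all sufficiently large $n$. The sequence $u_n = P(q^n)$ is a linear recurrence with characteristic roots $1, q, \ldots, q^{\deg P}$, and Laxton's theorem \cite{Laxton} provides, for all large $n$, a prime divisor $p_n \mid u_n$ with $p_n \to \infty$. Set $\kappa_n = \ol{\bF_{p_n}}$, and take $\cF$ to be any non-principal ultrafilter on $\bN$ refining the filter generated by the sets $\{n : p_n > N\}$ for $N \ge 1$; this is a proper filter because $p_n \to \infty$, and the resulting ultrafilter is automatically non-principal since no singleton lies in every $\{n : p_n > N\}$. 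Then no fixed prime $p$ satisfies $\{n : \operatorname{char}(\kappa_n) = p\} \in \cF$, so $\kappa$ has characteristic zero; and by construction $P(q^n) = 0$ in $\kappa_n$ for every $n$ in a $\cF$-large set, so $P(\tau) = 0$ in $\kappa$. Irreducibility of $P$ over $\bQ$ forces $P$ to be the minimal polynomial of $\tau$, so $t$ and $\tau$ are equivalent.

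The main obstacle is the number-theoretic input ensuring $p_n \to \infty$. If the prime divisors of $\{P(q^n)\}_{n \ge 1}$ formed a finite set, every ultraproduct of the $\ol{\bF_{p_n}}$ would have one of finitely many positive characteristics, and we could never realize an algebraic $t$ in characteristic zero by this method. Laxton's theorem on primitive prime divisors of linear recurrence sequences is precisely what rules this out; in the simplest subcase $P(x) = x - 1$ it reduces to Zsygmondy's theorem on $q^n - 1$. Everything else in the argument is essentially formal manipulation of ultrafilters and the already-proved classification of measures via $t \mapsto \mu_t$.
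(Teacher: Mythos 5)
Your argument is correct in outline, and in the transcendental case it matches the paper exactly. In the algebraic case you take a genuinely shorter route: the paper first proves an auxiliary lemma (Lemma~\ref{lem:ultra-equiv-2}) which, by passing to a Galois closure and factoring $h(b^n)=\prod_i(b^n-a_i)$ into Galois conjugates, produces prime ideals $\fp_i$ of $\cO_K$ with $t\equiv q^{n_i}\pmod{\fp_i}$, and then takes the residue fields $\cO_K/\fp_i$ as the $\kappa_{n_i}$. You instead apply the ``infinitely many prime divisors'' statement (the paper's Lemma~\ref{lem:ultra-equiv-1}, which is where Laxton is actually invoked) directly to the integer-scaled minimal polynomial $P$ and take $\kappa_n=\ol{\bF}_{p_n}$. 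Since all the proposition needs is $P(\tau)=0$ together with irreducibility of $P$ over $\bQ$ and the fact that $\kappa$ has characteristic zero, this suffices, and it bypasses the number-field detour entirely; your route isolates more cleanly that the only arithmetic input is ``the set $\{P(q^n)\}_n$ has infinitely many prime divisors.''

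One step needs repair. You assert that Laxton's theorem gives, for all large $n$, a prime divisor $p_n\mid P(q^n)$ with $p_n\to\infty$. That is not what the theorem (or Lemma~\ref{lem:ultra-equiv-1}) says: it says that the set of primes dividing \emph{some} term of the sequence is infinite, which does not by itself imply that the largest prime factor of $P(q^n)$ tends to infinity --- a priori the sequence could have highly smooth terms interspersed with terms having large prime factors. Fortunately you do not need $p_n\to\infty$: you only need the $p_n$ to be unbounded, since properness of the filter generated by the nested sets $\{n: p_n>N\}$ requires only that each such set be nonempty, and your non-principality and characteristic-zero arguments go through verbatim with unboundedness in place of convergence. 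So take $p_n$ to be the largest prime factor of $P(q^n)$ for each $n$ with $|P(q^n)|\ge 2$; if the $p_n$ were bounded, every prime dividing any term of the sequence would be bounded, contradicting Lemma~\ref{lem:ultra-equiv-1}. With that adjustment the proof is complete.
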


\begin{corollary} \label{cor:GL-nil}
With notation as in the proposition, $\mu_t$ satisfies (Nil).
\end{corollary}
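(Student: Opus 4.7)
The plan is to combine Proposition~\ref{prop:ultra-equiv} with Corollary~\ref{cor:ultra-nil}, using the fact (spelled out in the remark following Proposition~\ref{prop:nil}) that the (Nil) locus is a well-defined subset of $\Spec(\Theta(G))$. First I would apply Proposition~\ref{prop:ultra-equiv} to obtain a field $\kappa$ of characteristic~0, an element $\tau \in \kappa$, and an identification of $\mu_\tau$ with an ultraproduct measure (in the sense of \S\ref{ss:ultra}), such that $\mu_t$ and $\mu_\tau$ are equivalent. Equivalence means that the ring homomorphisms $\mu_t \colon \Theta(G) \to k$ and $\mu_\tau \colon \Theta(G) \to \kappa$ have the same kernel $\fp$, and hence both factor through the residue field $F = \mathrm{Frac}(\Theta(G)/\fp)$, giving embeddings $F \hookrightarrow k$ and $F \hookrightarrow \kappa$.

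Next I would apply Corollary~\ref{cor:ultra-nil}, which gives that the ultraproduct measure $\mu_\tau$ satisfies (Nil). To transfer this to $\mu_t$, I would choose a common field extension $K$ containing (copies of) both $k$ and $\kappa$, and let $\mu_K$ be the measure $\Theta(G) \to K$ obtained by base change. By Proposition~\ref{prop:nil} applied to the extension $K/\kappa$, the measure $\mu_K$ satisfies (Nil), since $\mu_\tau$ does. Then Proposition~\ref{prop:nil} applied to the extension $K/k$ in the reverse direction shows that $\mu_t$ satisfies (Nil), since $\mu_K$ does. (Both measures factor through the same subfield $F \subset K$, which is what lets us compare them in $K$.)

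The proof is entirely formal — all of the content is already in the earlier results — so there is no genuine obstacle. The only point requiring some care is the book-keeping of fields: one must ensure that the equivalence of $\mu_t$ and $\mu_\tau$ genuinely produces a common factorization through a single field into which both $k$ and $\kappa$ embed, so that Proposition~\ref{prop:nil} can be invoked in both directions.
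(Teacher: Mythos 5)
Your proposal is correct and is essentially the paper's own argument: the paper likewise combines Proposition~\ref{prop:ultra-equiv} (equivalence to an ultraproduct measure), Corollary~\ref{cor:ultra-nil} (ultraproduct measures satisfy (Nil)), and Proposition~\ref{prop:nil} (invariance of (Nil) under equivalence, via the remark that (Nil) depends only on the kernel in $\Theta(G)$). Your extra care with the common overfield $K$ and the factorization through the residue field $F$ just makes explicit what the paper leaves implicit.
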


\begin{proof}
By Proposition~\ref{cor:ultra-nil}, any ultraproduct measures satisfy (Nil). By Proposition~\ref{prop:nil}, if a measure satisfies (Nil) then any equivalent measure does. The result thus follows.
\end{proof}

We require two lemmas before proving the proposition. Given a set $S$ of integers, the \defn{prime divisors} of $S$ are the prime numbers dividing some element of $S$.

\begin{lemma} \label{lem:ultra-equiv-1}
Let $h(x)$ be a polynomial with integer coefficients that is not a scalar multiple of a power of $x$, let $b \ge 2$ be an integer, and let $r \ge 0$ be an integer. Then the set $\{h(b^n)\}_{n \ge r}$ has infinitely many prime divisors.
\end{lemma}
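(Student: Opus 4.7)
The plan is to argue by contradiction using a size comparison: the product $\prod_n g(b^n)$ grows quadratically in the number of factors, but if only finitely many primes divide the sequence, each prime can contribute only linearly to the total valuation.

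First I would reduce to the case $h(0) \ne 0$. Factor $h(x) = x^d g(x)$ with $g(0) \ne 0$; since $h$ is not a scalar multiple of a power of $x$, the factor $g$ is non-constant. The primes dividing $b^{nd}$ lie in the (finite) set of prime divisors of $b$, so it suffices to prove the lemma for $\{g(b^n)\}_{n \ge r}$ with $g(0) \ne 0$ and $\deg g \ge 1$.

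Suppose for contradiction that only $p_1, \ldots, p_k$ divide $\{g(b^n)\}_{n \ge r}$, and for $N$ large set $P_N = \prod_{n=r}^{r+N} g(b^n)$. Since $|g(b^n)| \sim |\mathrm{lc}(g)| b^{n \deg g}$, we have $\log |P_N| = \tfrac{1}{2}(\deg g)(\log b) N^2 + O(N)$. On the other hand, $P_N = \pm \prod_i p_i^{E_i(N)}$ where $E_i(N) = \sum_n v_{p_i}(g(b^n))$, so $\log|P_N| = \sum_i E_i(N) \log p_i$. It therefore suffices to show $E_i(N) = O(N)$ for every $i$; this forces $\Theta(N^2) = O(N)$, which fails for $N$ large.

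For $p_i \mid b$ the estimate is immediate: $g(b^n) \equiv g(0) \pmod{b^n}$ forces $v_{p_i}(g(b^n)) = v_{p_i}(g(0))$ for $n$ large. The main obstacle is the case $p_i \nmid b$. Here I would write
\[
E_i(N) = \sum_{K \ge 1} \#\{\, n \in [r, r+N] : p_i^K \mid g(b^n)\,\}
\]
and bound each summand. The set in question is a union of arithmetic progressions with common difference $T_K = \mathrm{ord}_{p_i^K}(b)$, one for each element of the cyclic subgroup $\langle b \rangle \subset (\bZ/p_i^K)^{*}$ that is a root of $g$ modulo $p_i^K$. Two classical facts now give the bound: (i) the order $T_K$ grows geometrically, $T_K \ge c \cdot p_i^K$, since past the threshold $K \ge v_{p_i}(b^{T_1} - 1)$ it multiplies by $p_i$ each time $K$ increases by one; and (ii) by Hensel's lemma, the number of roots of $g$ modulo $p_i^K$ is bounded by a constant $C(g, p_i)$ for $K$ large. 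The density of $\{n : p_i^K \mid g(b^n)\}$ is therefore $O(p_i^{-K})$, so the $K$th summand is at most $O(N p_i^{-K}) + C(g,p_i)$. Summing over $K \in [1, O(N)]$ (the upper limit coming from $p_i^K \le |g(b^{r+N})|$) yields $E_i(N) = O(N)$, as required.
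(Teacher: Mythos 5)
Your argument is a genuinely different route from the paper's: the paper observes that $a_n=h(b^n)$ satisfies the linear recurrence with characteristic polynomial $\prod_{i=0}^d(x-b^i)$ and invokes Laxton's theorem on prime divisors of recurrence sequences, whereas you give a self-contained valuation-counting argument. Your overall strategy (quadratic growth of $\log|P_N|$ versus a linear bound on each $E_i(N)$) is sound, and the treatment of primes dividing $b$, the identity $E_i(N)=\sum_{K\ge 1}\#\{n: p_i^K\mid g(b^n)\}$, the geometric growth $T_K\ge c\,p_i^K$, and the truncation $K\le O(N)$ are all fine (modulo the cosmetic point that you must also discard the finitely many $n$ with $g(b^n)=0$ so that $P_N\ne 0$).

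There is, however, a genuine error in step (ii): it is \emph{not} true that the number of roots of $g$ modulo $p^K$ is bounded for $K$ large. Hensel's lemma gives this only when $g$ is separable. For $g(x)=(x-1)^2$ (a legitimate input: it is not a scalar multiple of a power of $x$, and $g(0)\ne 0$), the roots mod $p^K$ are the residues $\equiv 1\pmod{p^{\lceil K/2\rceil}}$, so $R_K\asymp p^{K/2}$. This actually breaks your bound, not just its proof: your estimate for the $K$th summand is $R_K(N/T_K+1)$, and while the main term $\sum_K R_K N/T_K$ remains $O(N)$, the boundary term $\sum_{K\le O(N)}R_K\asymp p^{O(N)/2}$ is exponential in $N$, so you do not obtain $E_i(N)=O(N)$. (The true count is still $O(N)$ here --- the $R_K$ residue classes mod $T_K$ coalesce into a single class modulo the smaller modulus $T_{\lceil K/2\rceil}$ --- but your bookkeeping does not see this.) The fix is easy and should be inserted at the start: factor $g=c\prod_j g_j^{e_j}$ into irreducibles over $\bZ$ and note that every prime dividing some $g_1(b^n)$, for a fixed positive-degree irreducible factor $g_1$, divides $h(b^n)$; so it suffices to prove the lemma for $g_1$, which is separable, and then the bounded-root-count claim and the rest of your argument go through.
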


\begin{proof}
Replacing $h(x)$ by $h(b^rx)$, we assume $r=0$. Let $d$ be the degree of $h(x)$. Let
\begin{displaymath}
f(x) = \prod_{i=0}^d (x-b^i) = \sum_{i=0}^d c_i x^i.
\end{displaymath}
Then letting $a_n=h(b^n)$, we have
\begin{displaymath}
\sum_{i=0}^d c_i a_{n+i} = 0
\end{displaymath}
for all $n \ge 0$. The result now follows from the main theorem of \cite{Laxton}.
\end{proof}

\begin{lemma} \label{lem:ultra-equiv-2}
Let $K$ be a number field, let $a \in K$ be non-zero, and let $b \ge 2$ be an integer. Then there is a strictly increasing sequence of positive integers $n_1, n_2, \ldots$ and a sequence of prime ideals $\fp_1, \fp_2, \ldots$ of increasing residue characteristic such that $a=b^{n_i} \pmod{\fp_i}$ for all $i \ge 1$.
\end{lemma}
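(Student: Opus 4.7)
The plan is to reduce the statement to Lemma~\ref{lem:ultra-equiv-1} applied to an appropriate norm polynomial, and then build the sequences by induction. First write $a=\alpha/\beta$ with $\alpha,\beta\in\cO_K$ both non-zero, and set
\begin{displaymath}
h(x) = N_{K/\bQ}(\beta x - \alpha) \in \bZ[x].
\end{displaymath}
Its constant term is $(-1)^{[K:\bQ]} N_{K/\bQ}(\alpha)\ne 0$ and its leading coefficient is $N_{K/\bQ}(\beta)\ne 0$, so $h$ is not a scalar multiple of a power of $x$. By Lemma~\ref{lem:ultra-equiv-1}, for every $r\ge 0$ the set $\{h(b^n) : n\ge r\}$ has infinitely many prime divisors.

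Now build $(n_i,\fp_i)$ inductively. Suppose $n_1<\cdots<n_i$ and primes $\fp_1,\ldots,\fp_i$ with strictly increasing residue characteristics $p_1<\cdots<p_i$ have been chosen, satisfying $a\equiv b^{n_j}\pmod{\fp_j}$ for all $j\le i$. Applying the lemma with $r=n_i+1$, the set $\{h(b^n):n>n_i\}$ has infinitely many rational prime divisors; in particular we may choose one, call it $p_{i+1}$, with
\begin{displaymath}
p_{i+1} > p_i, \qquad p_{i+1}\nmid N_{K/\bQ}(\beta),
\end{displaymath}
since only finitely many primes are excluded. Pick $n_{i+1}>n_i$ with $p_{i+1}\mid h(b^{n_{i+1}})$. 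Because $h(b^{n_{i+1}}) = N_{K/\bQ}(\beta b^{n_{i+1}} - \alpha)$, some prime $\fp_{i+1}$ of $\cO_K$ lying over $p_{i+1}$ divides $\beta b^{n_{i+1}} - \alpha$. Since $p_{i+1}\nmid N_{K/\bQ}(\beta)$, the prime $\fp_{i+1}$ does not divide $\beta$, so $\beta$ is a unit modulo $\fp_{i+1}$ and we may divide to obtain $b^{n_{i+1}}\equiv \alpha/\beta = a\pmod{\fp_{i+1}}$, completing the induction.

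The only non-trivial ingredient is the assertion that infinitely many primes divide $\{h(b^n)\}$, which is exactly Lemma~\ref{lem:ultra-equiv-1} (itself resting on Laxton's theorem \cite{Laxton}); everything else is bookkeeping about norms and localizations. So I expect no real obstacle beyond correctly packaging the norm polynomial so that Lemma~\ref{lem:ultra-equiv-1} applies.
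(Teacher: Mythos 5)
Your proof is correct and follows essentially the same route as the paper's: both apply Lemma~\ref{lem:ultra-equiv-1} to an integral polynomial that is, up to the scalar $N_{K/\bQ}(\beta)$, a power of the minimal polynomial of $a$ (your $h(x)=N_{K/\bQ}(\beta x-\alpha)$ equals $N_{K/\bQ}(\beta)\prod_\sigma(x-\sigma(a))$), and then inductively extract ever-larger primes $p_{i+1}\mid h(b^{n_{i+1}})$ together with a prime $\fp_{i+1}\mid p_{i+1}$ of $\cO_K$ dividing $b^{n_{i+1}}-a$. The only cosmetic difference is in that last extraction step: the paper passes to a Galois closure and uses conjugacy of the factors $b^n-a_i$, whereas you use multiplicativity of the ideal norm, with the denominator $\beta$ playing the role of the paper's integer $M$.
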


\begin{proof}
There is no harm in enlarging $K$, so we assume $K$ is Galois over $\bQ$. Let $h(x) \in \bQ[x]$ be the monic minimal polynomial of $a$ over $\bQ$, and write $h(x)=\prod_{i=1}^d(x-a_i)$, where $a_1=a$. Let $M$ be a positive integer such that $M h(x)$ has integer coefficients; note that $a$ belongs to $\cO_K[1/M]$.

Suppose we have constructed $n_1, \ldots, n_r$ and $\fp_1, \ldots, \fp_r$. By Lemma~\ref{lem:ultra-equiv-1}, we can find $n>n_r$ and a prime $\ell$ greater than the residue characteristic of $\fp_r$ (and not dividing $M$) such that $\ell$ divides $h(b^n)$. Since $h(b^n) = \prod_{i=1}^d (b^n-a_i)$ and the $b^n-a_i$ are Galois conjugate (for $1 \le i \le d$), each of these numbers must be divisible by a prime ideal over $\ell$. In particular, there is some prime $\fp$ above $\ell$ dividing $b^n-a$, meaning $a=b^n \pmod{\fp}$. We take $n_{r+1}=n$ and $\fp_{r+1}=\fp$.
\end{proof}

\begin{proof}[Proof of Proposition~\label{prop:ultra-equiv}]
First suppose that $t$ is transcendental. We then need to pick the $\kappa_n$'s and $\cF$ so that $\tau$ is transcendental. For this, we can simply take all the $\kappa_n$'s to have characteristic~0.

Next suppose that $t$ is algebraic. Let $K$ be the number field generated by $t$, and let $h(x)$ be the minimal polynomial of $t$. Let $n_1, n_2, \ldots$ and $\fp_1, \fp_2, \ldots$ be the sequences produced by Lemma~\ref{lem:ultra-equiv-2} with $a=t$ and $b=q$, and put $S=\{n_1,n_2,\ldots\}$. Let $\kappa_{n_i}=\cO_K/\fp_{n_i}$, and let $\kappa_n$ be any field if $n \not\in S$. Let $\cF$ be any non-principal ultrafilter on $\bN$ containing the set $S$. Since $h(q^n)=0$ holds in $\kappa_n$ for all $n \in S$, we see that $h(\tau)=0$. Thus the ultraproduct measure $\nu=\mu_{\tau}$ is equivalent to $\mu_t$, as required.
\end{proof}

\begin{remark}
When $a$ is a rational number, Lemma~\ref{lem:ultra-equiv-2} is proved in \cite{Polya}.
\end{remark}

\section{The general linear group: representations} \label{s:gl2}

\subsection{Tensor categories}

We maintain the general set-up from \S \ref{s:gl}. Fix, for the duration of \S \ref{s:gl2}, an algebraically closed field $k$ of characteristic~0 and a non-zero element $t \in k$. We say that $t$ is a \defn{regular parameter} if $\mu_t$ is regular; this simply means that $t$ is not of the form $q^n$ with $n \in \bN$. Let $\uPerm_k(\GL_t)$ be the category $\uPerm_k(G, \mu_t)$. We let $\uRep_k(\GL_t)$ be the category $\uRep_k(G, \mu_t)$ of finite length modules over the completed group algebra. We omit the $k$ subscript when possible.

\begin{theorem} \label{thm:GL-abelian}
The category $\uRep(\GL_t)$ is pre-Tannakian, and the abelian envelope of the category $\uPerm(\GL_t)$. If $t$ is a regular parameter then $\uRep(\GL_t)$ is semi-simple.
\end{theorem}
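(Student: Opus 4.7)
The plan is that this theorem should be a direct application of the general abelian envelope machinery from \S\ref{ss:abenv}, once we check the two hypotheses: quasi-regularity and (Nil). Both have essentially been established in \S\ref{s:gl}; all that remains is to assemble them.

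First I would invoke Proposition~\ref{prop:GL-reg}: since $t \ne 0$ by assumption, the measure $\mu_t$ is quasi-regular, and if moreover $t$ is not of the form $q^i$ with $i \in \bN$ (i.e.\ $t$ is a regular parameter), then $\mu_t$ is in fact regular. Next I would invoke Corollary~\ref{cor:GL-nil}, which says that $\mu_t$ satisfies the nilpotent-trace-vanishing condition (Nil) for every non-zero $t \in k$. (Recall that this corollary is proved by reducing to an equivalent ultraproduct measure via Proposition~\ref{prop:ultra-equiv} and then transporting (Nil) across the ring homomorphism using Proposition~\ref{prop:nil}.)

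With quasi-regularity and (Nil) in hand, the general theorem stated at the end of \S\ref{ss:abenv} applies: $\uRep(G,\mu_t) = \uRep(\GL_t)$ is pre-Tannakian, and the natural functor $\uPerm(\GL_t) \to \uRep(\GL_t)$ realizes the target as the abelian envelope of the source in the sense of \cite[Definition~3.1.2]{CEAH}. This handles the first two claims.

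For the semi-simplicity claim, I would invoke the stronger theorem in \S\ref{ss:abenv} that says: if $\mu$ is regular and satisfies (Nil), then the Karoubi envelope of $\uPerm(G,\mu)$ is a semi-simple pre-Tannakian category. When $t$ is a regular parameter, Proposition~\ref{prop:GL-reg}(a) gives regularity of $\mu_t$, so this theorem applies and the Karoubi envelope of $\uPerm(\GL_t)$ is semi-simple pre-Tannakian. As noted at the end of \S\ref{ss:abenv}, in the regular case $\uRep(G,\mu)$ is equivalent to the Karoubi envelope of $\uPerm(G,\mu)$, so $\uRep(\GL_t)$ is semi-simple. There is no real obstacle here: the theorem is a direct corollary of the general theory combined with the two key inputs (quasi-regularity and (Nil)) that were the substantive content of \S\ref{s:gl}.
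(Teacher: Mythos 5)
Your proposal is correct and matches the paper's proof exactly: the paper likewise cites Proposition~\ref{prop:GL-reg} for (quasi-)regularity, Corollary~\ref{cor:GL-nil} for (Nil), and the general theory of \S\ref{ss:abenv} to conclude, with the semi-simple case handled via the equivalence of $\uRep$ with the Karoubi envelope of $\uPerm$ at regular parameters. Nothing is missing.
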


\begin{proof}
This follows from the general theory in \S \ref{ss:abenv}, combined with Proposition~\ref{prop:GL-reg} and Corollary~\ref{cor:GL-nil}.
\end{proof}

We make one simple observation about this category here. Recall that $\bV=\bV_0 \times \bV_1$ holds as $G$-sets, where $\bV_0$ and $\bV_1$ are the graded pieces of $\bV$.

\begin{proposition} \label{prop:GL-gen}
The Schwartz space $\cC(\bV_0)$ generates $\uRep(\GL_t)$. In fact, every object of $\uRep(\GL_t)$ is a quotient of a direct sum of tensor powers of $\cC(\bV_0)$.
\end{proposition}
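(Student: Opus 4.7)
The plan is to combine three reductions. First, by the general theory of $\uRep$ recalled in \S\ref{ss:abenv}, every object of $\uRep(\GL_t)$ admits a presentation by Schwartz spaces, so it is a quotient of some $\cC(X)$ with $X \in \bS(G)$. Since $\cC$ converts disjoint unions into direct sums, I may assume $X$ is transitive.

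Next I would apply Proposition~\ref{prop:GL-trans} to write $X \cong X_{\ell,m,n}/\Gamma$ for some finite group $\Gamma \subseteq \Aut_G(X_{\ell,m,n})$. The $\Gamma$-action on $X_{\ell,m,n}$ is free (the action of $N_G(H)/H$ on $G/H$ always is), so the quotient map has constant fibers of size $|\Gamma|$; in characteristic zero, averaging over $\Gamma$ exhibits $\cC(X_{\ell,m,n}/\Gamma)$ as a direct summand of $\cC(X_{\ell,m,n})$. It therefore suffices to realize $\cC(X_{\ell,m,n})$ as a quotient of a tensor power of $\cC(\bV_0)$.

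For this final step, I would pick a homogeneous basis of $V_{\ell,m,n}$ (consisting of $\ell+m$ even and $\ell+n$ odd vectors) and send an embedding to its values on the basis. This yields a $G$-equivariant injection $X_{\ell,m,n} \hookrightarrow \bV_0^{\ell+m} \times \bV_1^{\ell+n}$ whose image is a single $G$-orbit of the finitary ambient set, hence a direct summand. Thus $\cC(X_{\ell,m,n})$ is a direct summand of $\cC(\bV_0)^{\uotimes(\ell+m)} \uotimes \cC(\bV_1)^{\uotimes(\ell+n)}$. Since the pairing $\bV_0 \times \bV_1 \to \bF$ is $G$-invariant and non-degenerate, and $\mu_t(\bV_0) = t$ is non-zero, Proposition~\ref{prop:fourier} provides a Fourier isomorphism $\cC(\bV_1) \cong \cC(\bV_0)$ in $\uPerm(\GL_t)$. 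Substituting, $\cC(X_{\ell,m,n})$ becomes a direct summand of $\cC(\bV_0)^{\uotimes(2\ell+m+n)}$, and the argument is complete.

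I do not anticipate any serious obstacle: the classification of transitive $G$-sets (Proposition~\ref{prop:GL-trans}) and the Fourier isomorphism (Proposition~\ref{prop:fourier}) are already established, and the rest is direct assembly. The key conceptual ingredient is the Fourier transform, which is precisely what allows us to use $\cC(\bV_0)$ alone as a tensor generator rather than needing $\cC(\bV_1)$ as an independent one.
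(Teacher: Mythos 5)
Your argument is correct and takes essentially the same route as the paper: both reduce to Schwartz spaces of transitive sets realized inside products of copies of $\bV_0$ and $\bV_1$, and both hinge on the Fourier isomorphism $\cC(\bV_1) \cong \cC(\bV_0)$ of Proposition~\ref{prop:fourier}. The only (harmless) difference is that you route the reduction through the full classification of transitive $G$-sets (Proposition~\ref{prop:GL-trans}) and an averaging argument, whereas the paper only needs the elementary fact that every transitive smooth $G$-set is a subquotient of $\bV^n$, so that $\cC(X)$ is automatically a quotient of a summand of $\cC(\bV)^{\uotimes n}$.
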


\begin{proof}
Every object of $\uRep(\GL_t)$ is a quotient of some Schwartz space $\cC(X)$. Since every transitive $G$-set is a subquotient of $\bV^n$, it follows that every object of $\uRep(\GL_t)$ is a quotient of a sum of tensor powers of $\cC(\bV)$. We have $\cC(\bV)=\cC(\bV_0) \uotimes \cC(\bV_1)$. By Proposition~\ref{prop:fourier}, we have an isomorphism $\cC(\bV_0) \cong \cC(\bV_1)$, and so the result follows.
\end{proof}

\begin{remark} \label{rmk:Levi-Parabolic}
As noted in \cite{repst}, there are two versions of the infinite general linear group. The one we are discussing, namely $G$, consists of the automorphisms of $\bV_0 \oplus \bV_1$ preserving the grading and pairing. The other one, call it $H$, consists of all automorphisms of the vector space $\bV_0$. We call $G$ the \defn{Levi version} and $H$ the \defn{parabolic version}, due to the shape of the basic open subgroups. These give two potentially two different versions of $\uRep(\GL_t)$. A priori the parabolic version is a full subcategory of the Levi version, and this proposition shows that in fact every object in the Levi category lies in the parabolic category, so the two versions coincide. We note that the discussion here shows that the oligomorphic fundamental group (defined in \cite{discrete}) for $\uRep(H, \mu_t)$ does not coincide with $H$, since \'etale algebras in $\uRep(G, \mu_t)$ also live in this category; see \cite{delalg} for details.
\end{remark}

\subsection{Simples} \label{ss:GL-simple}

For \S \ref{ss:GL-simple}, we suppose $t$ is a regular parameter. Recall that $X_{0,n,0}$ is the space of injective linear maps $\bF^n \to \bV_0$. Let $A_n$ be the Schwartz space $\cC(X_{0,n,0})$. The finite group $H_n=\GL_n(\bF)$ acts on $A_n$ through its action on $\bF^n$. For each injective linear map $i \colon \bF^{n-1} \to \bF^n$, there is a pull-back map $X_{0,n,0} \to X_{0,n-1,0}$, and an associated push-forward map $A_n \to A_{n-1}$. Let $B_n$ be the intersection of the kernels of these maps, as $i$ varies. This space is stable under the action of $H_n$. Let $\{S_{\lambda}\}_{\lambda \in \Lambda_n}$ be the set of irreducible representations of $H_n$ (up to isomorphism) over $k$, where $\Lambda_n$ is an index set. Decompose $B_n$ under the action of $H_n$:
\begin{displaymath}
B_n = \bigoplus_{\lambda \in \Lambda_n} L_{\lambda}(t) \otimes S_{\lambda},
\end{displaymath}
where $L_{\lambda}(t)$ is an object of $\uRep(\GL_t)$. Let $\Lambda=\coprod_{n \ge 0} \Lambda_n$.

\begin{proposition}
The module $L_{\lambda}(t)$ is simple or zero, and every simple object of $\uRep(\GL_t)$ is isomorphic to $L_{\lambda}(t)$ for exactly one $\lambda \in \Lambda$.
\end{proposition}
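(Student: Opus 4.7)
The plan combines three ingredients: a generation statement, a double-centralizer analysis at each fixed level, and an induction to rule out coincidences across levels. First, by Proposition~\ref{prop:GL-gen}, every simple $L$ of $\uRep(\GL_t)$ is a subquotient of some tensor power $\cC(\bV_0)^{\uotimes n} = \cC(\bV_0^n)$. Stratifying $\bV_0^n$ by the dimension of the span of the tuple filters $\cC(\bV_0^n)$ with associated graded a direct sum of $A_m$'s for $m \le n$, so $L$ is a summand of some $A_n$. Using semisimplicity together with the decomposition $A_n = B_n \oplus C_n$, where $C_n$ is a $G$-complement to $B_n$ whose simples lie in $A_{n-1}^{\oplus N}$ (the cokernel of $B_n \hookrightarrow A_n$ embeds into $\bigoplus_i A_{n-1}$ via the pushforwards), we inductively conclude that $L$ appears as a summand of some $B_m$.

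Next, fix $n$ and analyze $B_n$ with its commuting $G$- and $H_n$-actions. Writing $B_n = \bigoplus_{\lambda \in \Lambda_n} L_\lambda(t) \otimes S_\lambda$ with $L_\lambda(t) \in \uRep(\GL_t)$, a standard double-centralizer argument in the semisimple category $\uRep(\GL_t)$ shows that each $L_\lambda(t)$ is zero or simple, with distinct nonzero $L_\lambda(t)$ pairwise non-isomorphic, provided the natural map $k[H_n] \to \End_{\uRep(\GL_t)}(B_n)$ is surjective. To establish this surjectivity, I compute $\End_{\uPerm(\GL_t)}(A_n)$ via $G$-orbits on $X_{0,n,0} \times X_{0,n,0}$, which are parametrized by partial linear isomorphisms $\sigma \colon S \to T$ between subspaces of $\bF^n$. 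The full isomorphisms ($\dim S = n$) contribute exactly $k[H_n]$, and each proper partial isomorphism can be rewritten, modulo $k[H_n]$, as a morphism of the form $\alpha \circ (i)_*$ that factors through a pushforward to $A_{n-1}$ and hence annihilates $B_n$. The case $n = 1$ already displays the mechanism: the sole rank-zero basis element sends $\delta_v$ to $1_{X_{0,1,0}} - \sum_{h \in \bF^\times} \delta_{hv}$, exhibiting it as the sum of an endomorphism factoring through $A_0 = k$ and an element of $k[H_1] = k[\bF^\times]$. This rewriting via orbit combinatorics is the main technical obstacle.

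Finally, to rule out $L_\lambda(t) \cong L_\mu(t)$ across levels (i.e., $\lambda \in \Lambda_n$ and $\mu \in \Lambda_m$ with $n \neq m$ and both nonzero), I induct on $n$, the inductive hypothesis being that the nonzero $L_\mu(t)$ for $\mu \in \Lambda_{m}$ with $m < n$ are precisely the simples appearing in $A_{m}$ and are pairwise non-isomorphic. A clean way to close the induction is via the linear independence of characters on $\GG$: by Corollary~\ref{cor:cent-char}, distinct simples have distinct characters, so once one verifies that every nontrivial $L_\lambda(t)$ has nontrivial central character in the sense of Proposition~\ref{prop:cent-char}, uniqueness across levels follows. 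Alternatively, one argues directly using the identity $(i)_* \circ i^* = (t - q^{n-1}) \cdot \id$ on $A_{n-1}$, which is a nonzero scalar in the regular case and gives concrete bookkeeping for the multiplicities of simples in successive $A_n$'s, incompatible with a new simple at level $n$ coinciding with an old one.
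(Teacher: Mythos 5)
Your first two steps track the paper's argument closely. The reduction ``every simple lies in some $B_m$'' via Proposition~\ref{prop:GL-gen}, the orbit decomposition of $\cC(\bV_0^n)$ into $A_m$'s, and the exact sequence $0 \to B_n \to A_n \to A_{n-1}^{\oplus r}$ are exactly what the paper does. Your double-centralizer step is also the paper's: the paper establishes surjectivity of $k[H_n] \to \End(B_n)$ by showing that the basis of $\End(A_n)$ dual to the orbit decomposition of $X_{0,n,0}^2$ consists of maps $g^* f_*$ factoring through $A(U)$ for $U$ the ``intersection'' subspace, which kill $B_n$ unless $U = \bF^n$. Your orbit-indicator computation is a concrete version of this. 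One caution: for a partial isomorphism $\sigma \colon S \to T$ of corank $\ge 1$, the indicator of its orbit equals a map factoring through $A(S)$ \emph{minus the indicators of all orbits with strictly larger intersection}; the latter are not all in $k[H_n]$, so your ``rewrite modulo $k[H_n]$'' needs a downward induction on $\dim S$ (your $n=1$ case is the base case where the only larger orbits are the group elements). This is routine but should be said.

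The genuine gap is in the cross-level step. Your primary route — invoking Corollary~\ref{cor:cent-char} after ``verifying that every nontrivial $L_\lambda(t)$ has nontrivial central character'' — is circular: that verification is Theorem~\ref{thm:GL-char}, whose proof (via Lemma~\ref{lem:GL-char-1}, Proposition~\ref{prop:GL-res}, and Lemma~\ref{lem:char-poly}) presupposes the very parametrization of simples by $\Lambda$ that you are trying to establish. Your fallback, ``bookkeeping of multiplicities via $(i^*)_*(i^*)^* = (t-q^{n-1})\id$,'' is not an argument as stated. The fix is immediate from the orbit analysis you already performed: for $m < n$, every $G$-orbit on $X_{0,n,0} \times X_{0,m,0}$ corresponds to an amalgamation whose intersection $U \subset \bF^n$ has $\dim U \le m < n$, so every basis element of $\Hom(A_n, A_m)$ factors through some $A(U)$ with $U$ proper and hence kills $B_n$; thus $\Hom(B_n, A_m) = 0$. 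If $\lambda \in \Lambda_n$ and $\mu \in \Lambda_m$ with $n > m$ and both $L_\lambda(t), L_\mu(t)$ nonzero, then $\Hom(L_\lambda(t), A_m) = 0$ while $\Hom(L_\mu(t), A_m) \ne 0$, so they are not isomorphic. This is the paper's one-line conclusion, and you should replace your third step with it.
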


\begin{proof}
For a finite dimensional $\bF$-vector space $V$, regard $V$ as $\bZ/2$-graded and concentrated in degree~0; in this way, it is an object of $\fC$. Let $A(V)=\cC(X(V))$, and let $B(V)$ be defined similarly to $B_n$ above. If $V=\bF^n$ then $X(V)=X_{0,n,0}$ and $A(V)=A_n$ and $B(V)=B_n$.

Every orbit on $X(V_1) \times X(V_2)$ has the form $X(W)$, where $W$ is an amalgamation of $V_1$ and $V_2$, that is, there is a surjection $V_1 \oplus V_2 \to W$ that is injective on each $V_i$. Fix an orbit $X(W)$, and let $U$ be the fiber product of $V_1$ and $V_2$ over $W$; concretely, if we identify $V_1$ and $V_2$ with subspaces of $W$ then $U$ is just their intersection. Giving a linear map $W \to \bV_0$ is equivalent to giving maps on each $V_i$ that agree on $U$. It follows that the natural square
\begin{displaymath}
\xymatrix{
X(W) \ar[r]^{g'} \ar[d]_{f'} & X(V_1) \ar[d]^f \\
X(V_2) \ar[r]^g & X(U) }
\end{displaymath}
is cartesian. We thus have
\begin{displaymath}
f'_* (g')^* = g^* f_*
\end{displaymath}
as maps $A(V_1) \to A(V_2)$ by \cite[Proposition~4.12]{repst}. The maps on the left side form a basis of $\Hom(A(V_1), A(V_2))$, as $X(W)$ varies over all orbits. We thus see that the maps on the right side form a basis too.

Specialize now to the case where $V_1=V_2=V$. If $U$ is a proper subspace of $V$ then the map $f_*$ above restricts to~0 on $B(V)$. Thus to obtain a non-zero map on $B(V)$, we require $U=V$, which means that the two maps $V \to W$ are both isomorphisms. In this case, the map on $A(V)$ is just the one induced by an automorphism of $V$. We have thus shown that the natural map
\begin{displaymath}
k[\GL(V)] \to \End(B(V))
\end{displaymath}
is surjective. Note that every endomorphism of $B(V)$ is induced from an endomorphism of $A(V)$ by semi-simplicity. It is now clear that each $L_{\lambda}(t)$ is either~0 or simple.

Every simple object is a constituent of some $A_n$ by Proposition~\ref{prop:GL-gen}, as the tensor powers of $\cC(\bV_0)$ decompose into sums of $A_n$'s. Now, we have an exact sequence
\begin{displaymath}
0 \to B_n \to A_n \to A_{n-1}^{\oplus r(n)}
\end{displaymath}
for some $r(n)$. Thus, by induction, we see that every simple is a constituent of some $B_n$. It follows that every simple is isomorphic to some $L_{\lambda}(t)$.

To complete the proof, we must show that $L_{\lambda}(t)$ and $L_{\mu}(t)$ are non-isomorphic when $\lambda, \mu \in \Lambda$ are distinct. Suppose $\lambda \in \Lambda_n$ and $\mu \in \Lambda_m$. If $n=m$ then this follows from the construction of the simples. Now suppose $n \ne m$; say $n>m$. Then $\Hom(L_{\lambda}(t), A_m)=0$ while $\Hom(L_{\mu}(t), A_m) \ne 0$. Thus $L_{\lambda}(t)$ and $L_{\mu}(t)$ are non-isomorphic. The result follows.
\end{proof}

\begin{remark}
The above proof is modeled on that of \cite[Theorem~6.1]{Knop2}.
\end{remark}

\begin{remark} \label{rmk:simple-nonzero}
In fact, each $L_{\lambda}(t)$ is non-zero. We briefly sketch the proof. By the Tits deformation theorem, it suffices to treat the case where $t$ is transcendental. For $g \in H_n$, a simple computation shows
\begin{displaymath}
\utr(g \vert B_n) = \delta_g t^n + O(t^{n-1}),
\end{displaymath}
where $\delta_g$ is~1 if $g=1$ and~0 otherwise. Since $L_{\lambda}(t)=e_{\lambda} B_n$, where $e_{\lambda}$ is the idempotent of $k[H_n]$ corresponding to $\lambda$, we find
\begin{displaymath}
\udim(L_{\lambda}(t)) = \utr(e_{\lambda} \vert B_n) = \frac{\dim{S_{\lambda}}}{\# H_n} t^n + O(t^{n-1}).
\end{displaymath}
Since this is non-zero, it follows that $L_{\lambda}$ is non-zero.
\end{remark}

We define the \defn{level} of a simple $L_{\lambda}(t)$ to be the $n$ for which $\lambda \in \Lambda_n$. We define the \defn{level} of an object of $\uRep(\GL_t)$ to be the supremum of the levels of simples it contains; the level of the zero representation is $-\infty$.

\begin{proposition} \label{prop:GL-res}
For $\lambda \in \Lambda_n$, the restriction of $L_{\lambda}(t)$ to $\GL_{t/q}$ decomposes as
\begin{displaymath}
L_{\lambda}(t/q)^{\oplus q^n} \oplus X,
\end{displaymath}
where $X$ has level $<n$.
\end{proposition}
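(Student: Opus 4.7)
My plan is to exploit the $G(1)$-equivariant splitting $\bV_0 = \bF e_1 \oplus \bV_0(1)$ (valid since $G(1)$ fixes $e_1$ and preserves $\bV_0(1)$) to decompose $A_n|_{G(1)}$ compatibly with the $H_n$-action, track how the pushforward maps defining $B_n$ interact with this decomposition, and finally extract the $\lambda$-isotypic component.

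First I would parametrize $X_{0,n,0}$ via the splitting: each injective $\phi \colon V = \bF^n \to \bV_0$ corresponds uniquely to a pair $(\alpha, \psi)$ where $\alpha \in V^*$ records the $e_1$-component and $\psi \colon V \to \bV_0(1)$ records the complementary component, subject only to $\ker(\alpha) \cap \ker(\psi) = 0$. The subset where $\psi$ is itself injective yields an $H_n \times G(1)$-equivariant embedding $V^* \times X_{0,n,0}^{(1)} \hookrightarrow X_{0,n,0}|_{G(1)}$, with $H_n$ acting diagonally via its contragredient action on $V^*$. The complement, where $\ker(\psi)$ is a line and $\alpha$ is non-vanishing there, fibers over flag-type data and, passing to Schwartz spaces, contributes a module $R \subset A_n|_{G(1)}$ of level at most $n-1$ in $\uRep(\GL_{t/q})$. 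This gives $A_n|_{G(1)} = k[V^*] \uotimes A_n^{(1)} \oplus R$.

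Next I would verify that the pushforward maps $i_* \colon A_n \to A_{n-1}$, for $i \colon \bF^{n-1} \hookrightarrow V$, behave well with respect to this decomposition. A direct unpacking of the defining sum shows that on the nice piece $i_*(\delta_\alpha \otimes g) = \delta_{\alpha \circ i} \otimes i_*^{(1)}(g)$, where $i_*^{(1)}$ is the analogous pushforward in the $\GL_{t/q}$ setting. This immediately places $k[V^*] \uotimes B_n^{(1)}$ inside $B_n|_{G(1)}$, and, combined with the level bound on $R$, identifies the full level-$n$ part of $B_n|_{G(1)}$ as exactly $k[V^*] \uotimes B_n^{(1)}$ as an $H_n \times G(1)$-module.

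Finally I would take the $\lambda$-isotypic. Writing $B_n^{(1)} = \bigoplus_\mu L_\mu(t/q) \otimes S_\mu$, the level-$n$ part of $L_\lambda(t)|_{G(1)} = \Hom_{H_n}(S_\lambda, B_n|_{G(1)})$ becomes $\bigoplus_\mu L_\mu(t/q) \otimes \Hom_{H_n}(S_\lambda, k[V^*] \otimes S_\mu)$. The main obstacle will be to evaluate these $H_n$-multiplicities and show the sum collapses to $q^n$ copies of $L_\lambda(t/q)$ — a purely finite-group representation-theoretic statement about $H_n = \GL_n(\bF)$ acting on its permutation module $k[\bF^n]$, for which I would expect to invoke Mackey theory with respect to the mirabolic subgroup. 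The leftover $X$ then accumulates $R$ together with the level-$<n$ constituents of $k[V^*] \uotimes A_n^{(1)}$ and has level $<n$ as required.
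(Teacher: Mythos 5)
Your reduction is set up correctly, and it is in fact a more careful version of the paper's own argument: both rest on the $G(1)$-splitting $\bV_0=\bF e_1\oplus\bV_0(1)$. Your identification of the top piece of $A_n(t)|_{G(1)}$ as $k[V^*]\uotimes A_n(t/q)$ (with $V^*=(\bF^n)^*$ and $H_n$ acting diagonally), with complement $R$ of level $\le n-1$, is right, as is the verification that $k[V^*]\uotimes B_n^{(1)}$ lands inside $B_n|_{G(1)}$ and exhausts its level-$n$ part.

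The gap is the deferred multiplicity computation, and it cannot be closed: the identity $\dim\Hom_{H_n}(S_\lambda,k[V^*]\otimes S_\mu)=q^n\delta_{\lambda\mu}$ that your final step needs is false. Writing $k[V^*]=\bone\oplus\Ind_P^{H_n}(\bone)$, where $P\subset H_n$ is the stabilizer of a nonzero functional, the projection formula and Frobenius reciprocity give
\begin{displaymath}
\dim\Hom_{H_n}(S_\lambda,k[V^*]\otimes S_\mu)=\delta_{\lambda\mu}+\dim\Hom_P(S_\lambda|_P,S_\mu|_P).
\end{displaymath}
Summed against $\dim S_\mu$ this does total $q^n\dim S_\lambda$, but it is not concentrated at $\mu=\lambda$. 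Already for $n=1$ one has $P=1$ and the multiplicity is $1+\delta_{\lambda\mu}$: for $q\ge 3$, $L_\lambda(t/q)$ occurs twice (not $q$ times) in the level-one part of $L_\lambda(t)|_{\GL_{t/q}}$, and each $L_\mu(t/q)$ with $\mu\in\Lambda_1$, $\mu\ne\lambda$, occurs once, so the complement $X$ does \emph{not} have level $<n$. (One can confirm this on the nose for $\GL_2(\bF_3)\supset\GL_1(\bF_3)$: the $4$-dimensional simple constituent of $k[\bF_3^2\setminus 0]$ on which the center acts by the sign character restricts to $\bone^{\oplus 2}\oplus\mathrm{sgn}^{\oplus 2}$, not to $\bone\oplus\mathrm{sgn}^{\oplus 3}$.) Your careful bookkeeping has in fact exposed a problem with the statement itself: the paper's one-line conclusion ``taking isotypic components for $H_n$ yields the result'' implicitly treats the $q^n$-dimensional multiplicity space as carrying the trivial $H_n$-action, which it does not. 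What your argument actually establishes is the weaker statement that the level-$n$ part of $L_\lambda(t)|_{\GL_{t/q}}$ is $\bigoplus_{\mu\in\Lambda_n}L_\mu(t/q)^{\oplus m_{\lambda\mu}}$ with $m_{\lambda\mu}$ given by the displayed formula and $\sum_\mu m_{\lambda\mu}\dim S_\mu=q^n\dim S_\lambda$; the proposition as stated, and its downstream uses, would need to be adjusted to this.
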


\begin{proof}
Write $A_n(t)$ in place of $A_n$ to indicate the parameter. We have $\bV_0 = \bF \oplus \bV_0(1)$ as $G(1)$-sets, which shows that the restriction of $A_1(t)$ to $\GL_{t/q}$ is isomorphic to $A_1(t/q)^{\oplus q}$; note that $A_1(t)$ is simply the Schwartz space on $\bV_0$. Now, we have $A_1(t)^{\otimes n} = A_n(t) \oplus X$ where $X$ is a sum of $A_m(t)$'s with $m<n$. We thus see that the restriction of $A_n(t)$ to $\GL_{t/q}$ is $A_n(t/q)^{\oplus q^n} \oplus X$, where $X$ is as before. Taking isotypic components for $H_n$ yields the result.
\end{proof}

\begin{remark} \label{rmk:GL-K-graded}
Let $\rK=\rK(\uRep(\GL_t))$ be the Grothendieck group at a regular parameter value; this is independent of $t$ (as a ring) by the Tits deformation theorem. Restriction from $\GL_t$ to $\GL_{t/q}$ thus induces a ring homomorphism $\delta \colon \rK \to \rK$. It follows from Proposition~\ref{prop:GL-res} that $\rK \otimes \bQ$ has a basis consisting of eigenvectors of $\delta$, and the eigenvalues are powers of $q$. In particular, $K$ is naturally a graded ring; the degree $n$ piece is the $q^n$ eigenspace of $\delta$. This is similar to what we saw for the Burnside ring (Remark~\ref{rmk:GL-grading}).
\end{remark}

\subsection{Central characters} \label{ss:GL-char}

We now investigate characters and central characters in $\uRep(\GL_t)$. We maintain the assumption that $t$ is a regular parameter. The following is our main result:

\begin{theorem} \label{thm:GL-char}
Every non-trivial simple has non-trivial central character.
\end{theorem}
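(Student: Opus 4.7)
The plan is to transport the statement to the ultraproduct category $\cT$ of \S\ref{s:ultra} via the fully faithful $Z$-linear tensor functor $\Phi$, and then exploit the fact that the ``trivial central character'' locus in $\cT$ consists of direct sums of the unit object.

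By Proposition~\ref{prop:ultra-equiv} and \S\ref{ss:GL-ultra-meas}, after possibly enlarging $k$ the measure $\mu_t$ may be realized as an ultraproduct measure for the smooth approximation $\{X_{m,0,0}\}_{m \ge 0}$. This yields a fully faithful $Z$-linear tensor functor $\Phi \colon \uRep(\GL_t) \to \cT$ (combining Proposition~\ref{prop:Phi-full-faithful} with the $Z$-linearity proposition of \S\ref{ss:ultra-ab-reg}). The target $\cT$ is built from the semisimple categories $\Rep_{k_m}(\GL_m(\bF))$, in which objects decompose canonically according to the central characters of $\GL_m(\bF)$.

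Suppose for contradiction that $L = L_\lambda(t)$ is a non-trivial simple (so $\lambda \in \Lambda_n$ with $n \ge 1$) for which $\psi_L = \epsilon$. Then $Z$ acts on $\Phi(L)$ by the scalar character $\epsilon$. Unwinding the $Z$-linearity construction of \S\ref{ss:ultra-ab-reg}, an element $z \in Z$ acts on $\Phi(L)$ as the ultraproduct of the actions of its images $z_m \in Z(k_m[\GL_m(\bF)])$, while the scalar $\epsilon(z) \in k$ is the ultraproduct of the values assigned by the trivial central characters $\epsilon_m$ of $\GL_m(\bF)$ to these $z_m$; this compatibility is essentially tautological from the definition of the counting measure in \S\ref{ss:count}. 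Consequently, $\Phi(L)$ lies in the $\epsilon$-isotypic sub-tensor-category of $\cT$, which is the ultraproduct of the $\epsilon_m$-isotypic pieces of $\Rep_{k_m}(\GL_m(\bF))$. But $\epsilon_m$ is the central character of the trivial representation alone, so its isotypic is the category of trivial representations; passing to the ultraproduct, $\Phi(L)$ must be a direct sum of copies of the unit $\bone_\cT = \Phi(\bone_{\uRep(\GL_t)})$. Since $\Phi$ is fully faithful, $L$ must itself be a direct sum of copies of $\bone_{\uRep(\GL_t)}$, contradicting its simplicity and non-triviality.

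The main obstacle is the careful verification of the compatibility claim in the preceding paragraph: that under the $Z$-linearity map $Z \to \operatorname{ultraprod}_m Z(k_m[\GL_m(\bF)])$ of \S\ref{ss:ultra-ab-reg}, the trivial central character $\epsilon \colon Z \to k$ is the ultraproduct of the trivial central characters $\epsilon_m$ of the finite groups $\GL_m(\bF)$. This should follow essentially from the construction of the counting measure in \S\ref{ss:count} together with the elementary fact that $\epsilon_m$ applied to a class sum simply returns the class size, but the identification must be carefully bookkept across the normalizations appearing in the $Z$-action construction.
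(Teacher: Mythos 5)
There is a genuine gap, and it sits exactly where you flag the ``main obstacle.'' Your argument hinges on the claim that if $Z$ acts on $\Phi(L)$ through $\epsilon$, then (for almost all $m$) the component $M_m$ of $\Phi(L)$ has trivial central character as a representation of $\GL_m(\bF)$, hence is a sum of trivials. But the $Z$-linearity construction only gives a map from $Z$ to the ultraproduct of the centers $Z(k_m[\GL_m(\bF)])$, and this map is very far from surjective: a class $1_C \in Z$ is supported on c-smooth elements, so its image in $k_m[\GL_m(\bF)]$ is supported on the ``small'' conjugacy classes (those meeting $Z_G(U_m)$, i.e.\ coming from elements of some standard $\GL_{m'}(\bF)$ with $m'$ bounded in terms of $C$, independently of $m$). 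Knowing that this small subalgebra of the center acts on $M_m$ by the same scalars as on the trivial representation does not let you conclude that the full center does, and the assertion that it nevertheless forces $M_m$ to be trivial is essentially a restatement of the theorem itself — it is precisely the concrete consequence described in \S\ref{ss:results}(f), that irreducibles of $\GL_m(\bF)$ occurring in small tensor powers of the permutation module are distinguished by their character values at small elements. So the reduction to finite groups does not trivialize the problem; it relocates it.

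A second, independent failure occurs for algebraic regular parameters $t$: by the construction in \S\ref{ss:GL-ultra-meas}, realizing such $t$ as an ultraproduct parameter forces the fields $k_m$ to have positive characteristic along the ultrafilter, so the categories $\Rep_{k_m}(\GL_m(\bF))$ are not semisimple and do not ``decompose canonically according to central characters.'' Worse, in characteristic $p$ every irreducible in the principal block has the same central character as the trivial module, so the $\epsilon_m$-isotypic piece is the whole principal block rather than the category of trivial representations, and the final step of your contradiction collapses. The paper avoids both problems by working entirely on the oligomorphic side: it proves the statement for transcendental $t$ by induction on the level of the simple, using the restriction functors $\uRep(\GL_t) \to \uRep(\GL_{t/q})$ (Proposition~\ref{prop:GL-res}, Lemmas~\ref{lem:GL-char-1}--\ref{lem:GL-char-3}) to show that a simple with trivial central character would have no $G(r)$-invariant vectors, and then specializes to algebraic regular $t$ via the rational-function dependence of characters on $t$ (Lemma~\ref{lem:char-poly}). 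If you want to salvage an ultraproduct-flavoured argument, you would need to prove directly the finite-group statement about small elements distinguishing characters, which is not easier than the theorem.
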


Before proving the theorem, we give two corollaries:

\begin{corollary}
Non-isomorphic simples have distinct central characters.
\end{corollary}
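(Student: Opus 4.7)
The plan is to deduce the corollary directly from Theorem~\ref{thm:GL-char} combined with Proposition~\ref{prop:cent-char}. Since we are working under the standing assumption of \S \ref{ss:GL-char} that $t$ is a regular parameter, Proposition~\ref{prop:GL-reg}(a) guarantees that the measure $\mu_t$ is regular, so the pre-Tannakian category $\uRep(\GL_t)$ is semi-simple (Theorem~\ref{thm:GL-abelian}) and Proposition~\ref{prop:cent-char} is available to us, provided we can check its hypothesis~$(\ast)$.

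Verifying $(\ast)$ is exactly the content of Theorem~\ref{thm:GL-char}: every non-trivial simple $L$ has $\psi_L \neq \epsilon$, where $\epsilon$ is the trivial central character (which is the central character of the unit object $\bone$). With $(\ast)$ in hand, Proposition~\ref{prop:cent-char} gives the biconditional $L \cong M \iff \psi_L = \psi_M$ for all simples $L, M$. The non-trivial direction of this biconditional is precisely the statement of the corollary: if $L$ and $M$ are non-isomorphic simples, then $\psi_L \neq \psi_M$.

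Thus the proof is essentially a one-line invocation: apply Proposition~\ref{prop:cent-char}, whose hypothesis holds by Theorem~\ref{thm:GL-char}. There is no real obstacle here; all of the substantive content is packaged into Theorem~\ref{thm:GL-char}, whose proof (separating trivial from non-trivial central characters, presumably by exhibiting an element of $Z = Z_k(G, \mu_t)$ on which $\epsilon$ and $\psi_L$ disagree) is the genuine difficulty. The corollary itself is a formal consequence of the algebra-homomorphism property of central characters, already encoded in Lemma~\ref{lem:char-ind} and used in the proof of Proposition~\ref{prop:cent-char}.
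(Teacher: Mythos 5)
Your proposal is correct and is exactly the paper's argument: the corollary is deduced by invoking Proposition~\ref{prop:cent-char}, whose hypothesis $(\ast)$ is supplied by Theorem~\ref{thm:GL-char}, with regularity of $\mu_t$ holding by the standing assumption that $t$ is a regular parameter. Nothing further is needed.
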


\begin{proof}
This follows from Proposition~\ref{prop:cent-char}.
\end{proof}

\begin{corollary} \label{cor:GL-char-2}
The simple characters are linearly independent.
\end{corollary}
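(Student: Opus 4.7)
The plan is to derive this corollary directly from the general independence result Corollary~\ref{cor:cent-char}, which asserts that under the hypotheses of Proposition~\ref{prop:cent-char} the characters of pairwise non-isomorphic simples are linearly independent. Thus the task reduces to verifying those hypotheses in our present setting, namely that $\mu_t$ is regular and that condition $(\ast)$ of Proposition~\ref{prop:cent-char} holds.

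First I would note that $\mu_t$ is regular because $t$ is assumed to be a regular parameter; this is exactly Proposition~\ref{prop:GL-reg}(a). Next, condition $(\ast)$ of Proposition~\ref{prop:cent-char}, which demands that every non-trivial simple object of $\uRep(\GL_t)$ have non-trivial central character, is exactly the content of Theorem~\ref{thm:GL-char}, established just above. With both hypotheses in place, Corollary~\ref{cor:cent-char} applies verbatim to give the linear independence of the characters $\chi_{L_\lambda(t)}$ as $\lambda$ ranges over (any finite subset of) $\Lambda$.

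There is no real obstacle here: the work has already been done in Theorem~\ref{thm:GL-char}, and the route through central characters (turning a character relation $\sum \alpha_i \chi_{L_i}=0$ into $\sum \alpha_i \udim(L_i) \psi_{L_i}=0$ via Proposition~\ref{prop:chi-psi}, then invoking Lemma~\ref{lem:char-ind}) is precisely what is packaged inside Corollary~\ref{cor:cent-char}. Note also that $\udim(L_\lambda(t)) \ne 0$ holds because $\uRep(\GL_t)$ is semi-simple at a regular parameter by Theorem~\ref{thm:GL-abelian}, as required for the Corollary~\ref{cor:cent-char} argument to go through. The proof can therefore be written in a single sentence invoking Corollary~\ref{cor:cent-char}, Proposition~\ref{prop:GL-reg}(a), and Theorem~\ref{thm:GL-char}.
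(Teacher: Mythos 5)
Your proof is correct and is exactly the paper's argument: the paper's proof is the one-line "This follows from Corollary~\ref{cor:cent-char}," with the hypotheses (regularity of $\mu_t$ and condition $(\ast)$ via Theorem~\ref{thm:GL-char}) supplied by the surrounding context. Your additional checks of those hypotheses are accurate and just make explicit what the paper leaves implicit.
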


\begin{proof}
This follows from Corollary~\ref{cor:cent-char}.
\end{proof}

We now start the proof of the theorem. We begin by handling the case where $t$ is transcendental. Consider the following condition:
\begin{description}[align=right,labelwidth=1.5cm,leftmargin=!]
\item[$\Sigma(n)$ ] If $t$ is transcendental over $\bQ$ and $L$ is a non-trivial simple object of $\uRep(\GL_t)$ level $\le n$ then the central character of $L$ is non-trivial.
\end{description}
We must prove $\Sigma(n)$ for all $n$. The proof will be by induction. We begin with the base case:

\begin{lemma} \label{lem:GL-char-1}
The statement $\Sigma(1)$ is true; moreover, the characters of level $\le 1$ irreducible representations are linearly independent.
\end{lemma}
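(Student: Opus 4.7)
By the classification of simples in \S\ref{ss:GL-simple}, the level $\le 1$ simples of $\uRep(\GL_t)$ are the trivial object $\bone$ together with the modules $L_\lambda(t)$ indexed by $\lambda \in \widehat{\bF^\times}$. Concretely, $X_{0,1,0} \cong \bV_0 \setminus \{0\}$ and $H_1 = \bF^\times = \Aut(V_{0,1,0})$ acts on $A_1 = \cC(X_{0,1,0})$ by pre-composition; since the integration map $A_1 \to A_0$ factors through the trivial $H_1$-isotypic component, we have $L_\lambda(t) = A_1[\lambda]$ for $\lambda \ne \bone$ and $L_\bone(t) = A_1[\bone] \cap \ker(A_1 \to A_0)$. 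My plan is to evaluate the characters of these simples on two specific families of elements of $\GG$ and then read off both claims of the lemma.

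The first family consists of the central scalars $a \in Z(G) = \bF^\times$: the structural $H_1$-action and the $Z(G)$-action on $X_{0,1,0}$ agree up to inversion, so $a$ acts on $A_1[\lambda]$ as $\lambda(a)^{-1}$, giving $\chi_{L_\lambda(t)}(a) = \lambda(a)^{-1}\udim(L_\lambda(t))$. The second family is the single element $g_c \in \GL_1(\bF) \subset \GG$ that scales $e_1$ by $c \in \bF^\times \setminus \{1\}$ and fixes the remaining basis vectors. For $g_c$, I would combine the trace-equals-fixed-set-measure identity $\utr(h \mid \cC(X)) = \mu(X^h)$ with the Schur projector $e_\lambda = (q-1)^{-1}\sum_a \lambda(a)^{-1} a$ to get
\[
\utr(g_c \mid A_1[\lambda]) = \frac{1}{q-1}\sum_{a \in \bF^\times}\lambda(a)^{-1}\,\mu\bigl((\bV_0 \setminus 0)^{g_c a}\bigr),
\]
and then case-analyse the $1$-eigenspace of $g_c a$ on $\bV_0$. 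This yields
\[
\chi_{L_\lambda(t)}(g_c) = \tfrac{t/q-1}{q-1} + \lambda(c)\quad(\lambda \ne \bone), \qquad \chi_{L_\bone(t)}(g_c) = \tfrac{t/q-1}{q-1},
\]
together with $\udim(L_\lambda(t)) = (t-1)/(q-1)$ for $\lambda \ne \bone$ and $\udim(L_\bone(t)) = (t-q)/(q-1)$, obtained by specialising $c = 1$.

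With these formulas in hand, $\Sigma(1)$ is immediate from Proposition~\ref{prop:chi-psi}: it suffices to find $g \in \GG$ with $\chi_L(g)/\udim(L) \ne 1$. At $g_c$, this ratio equals $1/q$ for $L_\bone(t)$, and for $L_\lambda(t)$ with $\lambda \ne \bone$ it equals $1$ only when $\lambda(c) = t/q$, which is impossible since $\lambda(c)$ is a root of unity while $t$ is transcendental. For linear independence, I would take a putative relation $\alpha\chi_\bone + \sum_\lambda \gamma_\lambda \chi_{L_\lambda(t)} = 0$, restrict to $Z(G)$, and apply orthogonality of characters of $\bF^\times$ to force $\gamma_\lambda = 0$ for $\lambda \ne \bone$; the surviving $2 \times 2$ system in $(\alpha, \gamma_\bone)$ coming from the $Z(G)$-equation and the evaluation at $g_c$ has determinant a nonzero scalar multiple of $t/q - 1$, so $\alpha = \gamma_\bone = 0$ as well.

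The main technical hurdle is the $a = 1$ case of the fixed-set measure: here the $1$-eigenspace of $g_c$ on $\bV_0$ is the infinite-dimensional subspace $\bV_0(1)$ spanned by the $e_i$ with $i \ge 2$, and one has to interpret $(\bV_0 \setminus 0)^{g_c}$ as a finitary $\hat G$-set via the centralizer $G(1)$ and use that the restriction of $\mu_t$ to $G(1) \cong G$ is $\mu_{t/q}$ to conclude that this fixed set has measure $t/q - 1$. The remaining cases ($a = c^{-1}$ giving the finite set $\bF e_1 \setminus 0$ of measure $q - 1$, and all other $a$ giving the empty set) are elementary.
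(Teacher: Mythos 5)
Your proposal is correct and follows essentially the same route as the paper: the central $\bF^\times$-action isolates the non-trivial isotypic pieces from any linear dependence, and a single character value at a non-central c-smooth element (you use the semisimple element $\mathrm{diag}(c,1,1,\dots)$ where the paper uses the transvection $e_1\mapsto e_1+e_2$) shows that $\chi_{M_1}$ is non-constant, which yields both $\Sigma(1)$ and the independence of $\chi_{\bone}$ and $\chi_{M'_1}$. Your explicit fixed-point computations, including the value $t/q-1$ for the measure of the infinite fixed set $\bV_0(1)\setminus\{0\}$ and the $2\times 2$ determinant $t/q-1$, all check out.
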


\begin{proof}
The group $\bF^*$ acts on $\bV$ by scalar multiplication, and this induces an action of $\bF^*$ on the Schwartz space $\cC(\bV \setminus \{0\})$. For a character $\omega$ of $\bF^*$, let $M_{\omega}$ be the $\omega$-isotypic piece of this action. If $\omega$ is non-trivial then $M_{\omega}$ is simple, while $M_1=\bone \oplus M'_1$ for a simple $M'_1$. The simples $M_{\omega}$ (with $\omega$ non-trivial) and $M'_1$ are exactly the level one simples. Note that $M_{\omega}$ has dimension $(t-1)/(q-1)$.

The center of $G$ acts on $M_{\omega}$ through $\omega$. Thus if $a \in \bF^*$ is regarded as a diagonal matrix then for any c-smooth element $g \in G$, we have
\begin{displaymath}
\chi_{M_{\omega}}(ag) = \omega(a) \chi_{M_{\omega}}(g).
\end{displaymath}
From this, we see that a non-trivial linear dependency between level $\le 1$ characters cannot involve any $M_{\omega}$ with $\omega$ non-trivial. It remains to show that the characters of the trivial representation and $M'_1$ are linearly independent. Let $g \in G$ be the element given by $ge_1=e_1+e_2$ and $ge_i=e_i$ for all $i \ge 2$. A simple computation shows that $\chi_{M_1}(g)=(t/q-1)/(q-1)$. Since $\chi_{M_1}(1)=(t-1)/(q-1)$, we see that $\chi_{M_1}$ is not the constant function on $\GG$, and so $\chi_{M'_1}$ is not either. Thus $\chi_{M'_1}$ and $\chi_{\bone}$ are linearly independent.
\end{proof}

\begin{lemma} \label{lem:GL-char-8}
Suppose $\lambda \in \Lambda_2$ and $t$ is transcendental. Then the restriction of $L_{\lambda}(t)$ to $\GL_{t/q}$ contains a level one summand.
\end{lemma}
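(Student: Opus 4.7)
The plan is a dimension count. By Proposition~\ref{prop:GL-res}, the restriction decomposes as $L_\lambda(t)|_{\GL_{t/q}} = L_\lambda(t/q)^{\oplus q^2} \oplus X$ with $\lev(X) < 2$. Writing $\udim L_\lambda(t) = \alpha t^2 + \beta t + \gamma$, a direct calculation yields
\[
\udim X = \udim L_\lambda(t) - q^2 \udim L_\lambda(t/q) = -\beta(q-1)t - \gamma(q^2-1).
\]
If $X$ contained no level-one summand it would be a sum of copies of $\bone$, making $\udim X$ constant and forcing $\beta = 0$. So it suffices to prove $\beta \ne 0$; I will in fact show $\beta < 0$.

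To access $\beta$, decompose $A_2 = B_2 \oplus R_2$ as $H_2$-equivariant summands in the semisimple category $\uRep(\GL_t)$. This is possible because the defining map $A_2 \to A_1^{\oplus(q^2-1)}$ is $H_2$-equivariant (the target carries the natural $H_2$-action permuting its summands via the action on $\bF^2 \setminus 0$), so $B_2$ is $H_2$-stable and admits an equivariant complement. The complement $R_2 \cong A_2/B_2$ embeds into $A_1^{\oplus(q^2-1)}$, and hence has level $\le 1$. Passing to $\lambda$-multiplicity spaces gives $V_\lambda = L_\lambda(t) \oplus W_\lambda$ with $\lev(W_\lambda) \le 1$, so $\beta$ equals the $t$-coefficient of $\udim V_\lambda$ minus that of $\udim W_\lambda$.

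Since $H_2 = \GL_2(\bF)$ acts freely on $X_{0,2,0}$ (any injection has trivial stabilizer), $\utr(h \mid A_2) = 0$ for $h \ne 1$, whence
\[
\utr(e_\lambda \mid A_2) = \frac{(\dim S_\lambda)^2 (t-1)(t-q)}{|H_2|}, \qquad \udim V_\lambda = \frac{\dim S_\lambda \, (t-1)(t-q)}{|H_2|},
\]
whose $t$-coefficient is $-(q+1)\dim S_\lambda/|H_2| < 0$. Meanwhile $W_\lambda$ is a non-negative direct sum of the level-$\le 1$ simples $\bone$, $M_\omega$ ($\omega \ne 1$), and $M'_1$, of $\udim$'s $1$, $(t-1)/(q-1)$, and $(t-1)/(q-1)-1$; each contributes $\ge 0$ to the $t$-coefficient. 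Subtracting gives $\beta \le -(q+1)\dim S_\lambda/|H_2| < 0$, the required non-vanishing. The only technical input is the trace computation on $A_2$, which reduces to freeness of the $H_2$-action and poses no substantive obstacle.
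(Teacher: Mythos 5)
Your proof is correct, and it takes a genuinely different route from the paper's. The paper argues structurally: it first shows that a trivial summand in the restriction forces a level-one summand (via the stabilizer $P$ of $e_1$ and the induced map $\cC(P/G(1)) \to L_{\lambda}(t)$, which cannot factor through the trivial quotient because $L_{\lambda}(t)$ has level~2), and then rules out the remaining possibility — no summand of level $\le 1$ at all — by iterating Proposition~\ref{prop:GL-res} to contradict smoothness. You instead read everything off the dimension polynomial: freeness of the $H_2$-action on $X_{0,2,0}$ gives $\udim V_{\lambda} = \dim(S_{\lambda})(t-1)(t-q)/\# H_2$, the level-$\le 1$ correction $W_{\lambda}$ can only push the coefficient of $t$ upward, so $\beta \le -(q+1)\dim(S_{\lambda})/\# H_2 < 0$, and then $\udim X = -\beta(q-1)t - \gamma(q^2-1)$ cannot be the constant dimension of a sum of trivials. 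This is more elementary and more quantitative (it also reproves the non-vanishing of $L_{\lambda}(t)$ from Remark~\ref{rmk:simple-nonzero} in this case), though unlike the paper's argument it leans on explicit knowledge of the dimensions of all level-$\le 1$ simples, which is why the paper's structural argument is the one that carries over to the inductive step at higher levels. One point you should make explicit: writing $\udim L_{\lambda}(t/q) = \alpha(t/q)^2 + \beta(t/q) + \gamma$ with the \emph{same} rational coefficients requires knowing that the multiplicities in $W_{\lambda}$ are uniform across transcendental parameters. This is the standard genericity the paper invokes elsewhere (Remark~\ref{rmk:simple-nonzero}, Lemma~\ref{lem:char-poly}), so it costs a sentence rather than creating a gap, but as written the substitution is unjustified.
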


\begin{proof}
Let $P \subset G$ be the stabilizer of $e_1 \in \bV_0$. Note that $G(1)$ is contained in $P$, and the quotient $P/G(1)$ is isomorphic to $\bV_1(1) \setminus \{0\}$ as a $G(1)$-set. It follows that $\cC(P/G(1))$ is isomorphic, in $\uRep(G(1), \mu_t)=\uRep(\GL_{t/q})$, to $\cC(\bV_1(1) \setminus \{0\})$, which, in turn is isomorphic to $\cC(\bV_0(1) \setminus \{0\})$; denote this module by $A_1(t/q)$. As we have seen, $A_1(t/q)$ contains a unique copy of the trivial representation, and its other summands are irreducible of level~1.

Suppose now that the restriction of $L_{\lambda}(t)$ to $\GL_{t/q}$ contains a trivial summand. Let $x$ be a non-zero $G(1)$-invariant vector in $L_{\lambda}(t)$. Then there is a map in $\uRep(P, \mu_t)$
\begin{displaymath}
\cC(P/G(1)) \to L_{\lambda}(t), \qquad \phi \mapsto \int_{P/G(1)} \phi(g) gx dg.
\end{displaymath}
If this map factored through the trivial quotient of $\cC(P/G(1))$ then $P$ would act trivially on $x$. However, this would mean that $L_{\lambda}(t)$ occurs in $\Ind_P^G(k) \cong \cC(\bV_0)$, contradicting the fact that $L_{\lambda}(t)$ has level~2. We thus have a map $A_1(t/q) \to L_{\lambda}(t)$ in $\uRep(\GL_{t/q})$ that does not factor through the trivial quotient of $A_1(t/q)$, and so the result follows.

Now suppose that the restriction of $L_{\lambda}(t)$ to $\GL_{t/q}$ contains no summand of level $\le 1$; we will obtain a contradiction. By Proposition~\ref{prop:GL-res}, we see that the restriction is isomorphic to $L_{\lambda}(t/q)^{\oplus q^2}$. Since $t$ is transcendental, we can iterate this argument; it follows that the restriction of $L_{\lambda}(t)$ to $\GL_{t/q^n}$ is isomorphic to $L_{\lambda}(t/q)^{\oplus nq^2}$. However, this means that $L_{\lambda}(t)$ has no invariant vector under $G(n)$ for any $n \ge 1$, a contradiction.
\end{proof}

\begin{lemma}
The statement $\Sigma(2)$ holds.
\end{lemma}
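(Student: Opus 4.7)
The plan is to suppose $L = L_\lambda(t)$ with $\lambda \in \Lambda_2$ is a non-trivial level-two simple of $\uRep(\GL_t)$ with $\psi_L = \epsilon$ (the level $\le 1$ case is handled by Lemma~\ref{lem:GL-char-1}), and derive a contradiction. Since $t$ is transcendental, the measure of every non-empty $G$-orbit on $\GG$ is a non-zero polynomial in $t$ evaluated at $t$, hence non-zero in $k$. Proposition~\ref{prop:chi-psi} therefore forces $\chi_L \equiv \udim L$ identically on $\GG$. Restricting this identity to $\GG \cap G(1)$, combining with the decomposition $\Res_{G(1)}^G L = L_\lambda(t/q)^{\oplus q^2} \oplus X$ from Proposition~\ref{prop:GL-res}, and writing $X = \bigoplus_i N_i^{\oplus a_i}$ with the $N_i$ distinct level-$\le 1$ simples of $\uRep(\GL_{t/q})$, indexed so that $N_0 = \bone$ and the $N_j$ with $j \ge 1$ are non-trivial of level one, I obtain the character identity
\begin{equation*}
q^2 \chi_{L_\lambda(t/q)} + \sum_i a_i \chi_{N_i} \equiv \udim L \cdot \chi_\bone \qquad \text{on } \GG \cap G(1).
\end{equation*}
By Lemma~\ref{lem:GL-char-8}, the summand $X$ contains a non-trivial level-one simple $M$, so $a_{j^*} \ge 1$ for some $j^* \ge 1$.

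Since $t/q$ is also transcendental, Proposition~\ref{prop:chi-psi} applied inside $\uRep(\GL_{t/q})$ converts the character identity (upon multiplying through by the measure of each $G(1)$-conjugacy class, which is non-zero) into the following identity of linear functionals on $Z(G(1))$:
\begin{equation*}
q^2 \udim L_\lambda(t/q) \cdot \psi_{L_\lambda(t/q)} + (a_0 - \udim L)\,\psi_\bone + \sum_{j \ge 1} a_j \udim N_j \cdot \psi_{N_j} = 0.
\end{equation*}
By Lemma~\ref{lem:GL-char-1} combined with Proposition~\ref{prop:chi-psi}, distinct level-$\le 1$ simples have distinct central characters, so $\psi_\bone, \psi_{N_1}, \psi_{N_2}, \ldots$ are pairwise distinct algebra homomorphisms, hence $k$-linearly independent by Lemma~\ref{lem:char-ind}. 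I would then case-split on the relation of $\psi_{L_\lambda(t/q)}$ to this family.

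The first two cases are straightforward. If $\psi_{L_\lambda(t/q)}$ is distinct from $\psi_\bone$ and from every $\psi_{N_j}$, then the coefficient of $\psi_{L_\lambda(t/q)}$ must vanish, yielding $q^2 \udim L_\lambda(t/q) = 0$ and contradicting Remark~\ref{rmk:simple-nonzero}. If $\psi_{L_\lambda(t/q)} = \psi_\bone$, linear independence forces $a_j \udim N_j = 0$ for each $j \ge 1$, hence $a_j = 0$, contradicting $a_{j^*} \ge 1$. The main obstacle is the remaining case $\psi_{L_\lambda(t/q)} = \psi_{N_{j_0}}$ for some $j_0 \ge 1$: linear independence then forces $a_j = 0$ for all $j \ne j_0$, which combined with $a_{j^*} \ge 1$ pins down $N_{j_0} = M$, and the combined coefficient becomes $q^2 \udim L_\lambda(t/q) + a_M \udim M = 0$. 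To rule this out I would observe that $\udim L_\lambda(t/q)$ is a polynomial in $t$ of exact degree $2$ (by the leading-term calculation in Remark~\ref{rmk:simple-nonzero}), while $\udim M$ is a polynomial in $t$ of degree exactly $1$ and $a_M$ is a positive integer; transcendence of $t$ over $\bQ$ then renders the identity impossible by degree mismatch. This contradiction completes the proof of $\Sigma(2)$.
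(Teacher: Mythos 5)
Your proof is correct, and its skeleton matches the paper's: assume the central character is trivial, restrict to $G(1)$ via Proposition~\ref{prop:GL-res}, invoke Lemma~\ref{lem:GL-char-8} to produce a non-trivial level-one constituent, and close with the level-$\le 1$ independence from Lemma~\ref{lem:GL-char-1}. The difference is in how the term $q^2\chi_{L_\lambda(t/q)}$ is disposed of. The paper asserts that, since $t$ is transcendental, triviality of $\psi_{L_\lambda(t)}$ forces triviality of $\psi_{L_\lambda(t/q)}$ (implicitly a specialization argument transferring the identity between transcendental parameter values), after which the whole character identity collapses onto level-$\le 1$ characters and Lemma~\ref{lem:GL-char-8} gives the contradiction directly. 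You avoid having to justify that transfer by case-splitting on where $\psi_{L_\lambda(t/q)}$ sits among the level-$\le 1$ central characters; the cost is your third case, which you close with a degree count $\bigl(q^2\udim L_\lambda(t/q)$ of degree $2$ in $t$ versus $a_{j_0}\udim N_{j_0}$ of degree $1$, so their sum cannot vanish at a transcendental value$\bigr)$. Both routes work; yours is somewhat more self-contained at this point in the text, since the rational-function dependence of characters and dimensions on $t$ is only formalized later (Lemma~\ref{lem:char-poly}), while the paper's is shorter. One small point worth recording if you write this up: you should note that c-smooth elements of $G(1)$ are exactly $\GG\cap G(1)$ (the centralizer in $G$ of such an element contains its open centralizer in $G(1)$), and that the restriction tensor functor preserves categorical traces, so the character identity on $\GG\cap G(1)$ really is an identity of $G(1)$-characters.
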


\begin{proof}
Suppose by way of contradiction that the central character of $L_{\lambda}(t)$ is trivial. Consider the restriction to $\GL_{t/q}$:
\begin{displaymath}
L_{\lambda}(t) = L_{\lambda}(t/q)^{\oplus q^2} \oplus \bone^{\oplus m_0} \oplus \bigoplus_{\mu \in \Lambda_1} L_{\mu}(t/q)^{\oplus m(\mu)}.
\end{displaymath}
Since $t$ is transcendental, it follows that the central character of $L_{\lambda}(t/q)$ is also trivial. We thus see that the characters of $L_{\lambda}(t)$ and $L_{\lambda}(t/q)$ are a multiple of the trivial character. It follows that
\begin{displaymath}
\sum_{\mu \in \Lambda_1} m(\mu) \chi_{\mu,t/q} = c \chi_{\bone},
\end{displaymath}
for some scalar $c$. Since some $m(\mu)$ is non-zero (Lemma~\ref{lem:GL-char-8}), this contradicts the linear independence of level $\le 1$ characters (Lemma~\ref{lem:GL-char-1}).
\end{proof}

\begin{lemma} \label{lem:GL-char-2}
Assume $\Sigma(n)$. If $M \in \uRep(\GL_t)$ has level $\le n$ and $\chi_M=0$ then $M=0$.
\end{lemma}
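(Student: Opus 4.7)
My plan is a polynomial-degree argument in the parameter $t$. By Theorem~\ref{thm:GL-abelian} the category $\uRep(\GL_t)$ is semi-simple at regular $t$, so the classification in \S\ref{ss:GL-simple} lets me write $M = \bigoplus_{\lambda \in \Lambda_{\le n}} L_\lambda(t)^{\oplus m_\lambda}$ with $m_\lambda \in \bN$; the task is to prove each $m_\lambda = 0$.

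I would first extract only the consequence $\chi_M(1) = \udim(M) = 0$ of the hypothesis $\chi_M \equiv 0$, giving $\sum_\lambda m_\lambda \udim L_\lambda(t) = 0$ in $k$. By Remark~\ref{rmk:simple-nonzero}, for $\lambda \in \Lambda_\ell$ with $\ell \le n$ the dimension $\udim L_\lambda(t)$ equals $P_\lambda(t)$ for an explicit polynomial $P_\lambda(T) \in \bQ[T]$ of degree exactly $\ell$ with strictly positive leading coefficient $\dim(S_\lambda)/\#H_\ell$. Since $\Sigma(n)$ places us in the transcendental-$t$ regime, the vanishing relation in $k$ lifts to the polynomial identity $\sum_\lambda m_\lambda P_\lambda(T) = 0$ in $\bQ[T]$.

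Next I would peel off multiplicities by degree, from the top down. The coefficient of $T^n$ in this identity is $\sum_{\lambda \in \Lambda_n} m_\lambda \dim(S_\lambda)/\#H_n$, a sum of non-negative rationals that vanishes; strict positivity of each weight forces $m_\lambda = 0$ for every $\lambda \in \Lambda_n$. Iterating the same argument on the coefficient of $T^{n-1}$ in the identity now restricted to $\Lambda_{\le n-1}$, and so on down through level~$0$, yields $m_\lambda = 0$ throughout, whence $M = 0$.

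The main technical point, requiring some care, is citing Remark~\ref{rmk:simple-nonzero} noncircularly: its sketched justification uses only the Tits deformation theorem and a direct trace computation on $B_n$, independent of $\Sigma$ or of any character linear-independence statement, so it is safely available at this stage of the induction. I note that the argument uses only $\udim(M) = 0$, not the full $\chi_M \equiv 0$; a parallel route via the $Z$-linearity of $\uRep(\GL_t)$, Proposition~\ref{prop:chi-psi}, and Lemma~\ref{lem:char-ind} can instead leverage the full character identity together with $\Sigma(n)$ to isolate the trivial central-character block $\epsilon$, after which a within-block positivity-and-degree argument of the same flavor closes the proof.
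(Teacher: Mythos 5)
Your argument is correct, but it is genuinely different from the paper's. The paper decomposes $M$ into simples, applies Proposition~\ref{prop:chi-psi} to convert $\chi_M=0$ into a vanishing linear combination of central characters, and then uses $\Sigma(n)$ together with Lemma~\ref{lem:char-ind} only to kill the \emph{trivial} summand, i.e.\ to conclude $M^G=0$; the remaining multiplicities are disposed of by a soft trick: restrict to each $G(m)$ (the restriction still has level $\le n$ by Proposition~\ref{prop:GL-res} and still has vanishing character, since c-smooth elements of $G(m)$ are c-smooth in $G$), repeat to get $M^{G(m)}=0$ for all $m$, and invoke smoothness. Your route instead extracts only $\udim(M)=0$ and runs a degree-and-positivity count on the dimension polynomials, peeling off levels from the top. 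This buys something real: you never use $\Sigma(n)$ or the full hypothesis $\chi_M\equiv 0$, so you are proving the stronger statement that a level-$\le n$ object of categorical dimension zero vanishes (at transcendental $t$), and you avoid the restriction-to-$G(m)$ step entirely. The cost is that you lean on the quantitative content of Remark~\ref{rmk:simple-nonzero}, which the paper only sketches, and on one point you should make explicit: to lift $\sum_\lambda m_\lambda\,\udim L_\lambda(t)=0$ from the single element $t\in k$ to an identity of functions of $T$, you need $\udim L_\lambda(t)$ to be given by a rational function with coefficients algebraic over $\bQ$ (vanishing of an element of $k(T)$ at one transcendental point proves nothing — consider $T-t$). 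This does hold here, since $\utr(g\,\vert\,A_m)$ is a $\bZ_q$-polynomial in $t$ (a measure of a fixed-point set) and the passage to $B_n$ and to $\lambda$-isotypic pieces only introduces character values of finite groups, but it is the one detail that needs to be said rather than waved at.
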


\begin{proof}
Write $M = \bigoplus_{i=1}^r L_i^{\oplus m_i}$ where the $L_i$'s are distinct simples and $L_1$ is the trivial representation. Then $0 = \chi_M = \sum_{i=1}^r m_i \chi_{L_i}$. By Proposition~\ref{prop:chi-psi}, we have
\begin{displaymath}
0 = \sum_{i=1}^r m_i \udim(L_i) \cdot \psi_{L_i}.
\end{displaymath}
Since the $\psi_{L_i}$'s for $i \ge 2$ are not the trivial character by $\Sigma(n)$ and distinct central characters are linearly independent (Lemma~\ref{lem:char-ind}), the above relation implies $m_1=0$, i.e., the trivial representation is not a summand of $M$. We thus find that the invariant space $M^G$ vanishes.

We now restrict to $G(n)$ and apply the same argument. The restriction of $M$ to $G(n)=\GL_{t/q^n}$ has level $\le n$ by Proposition~\ref{prop:GL-res}. It also has vanishing character: indeed, if $g \in G(n)$ is c-smooth then it is also c-smooth in $G$, and so the trace of $g$ on $M$ is $\chi_M(g)=0$. We thus see that $M^{G(n)}=0$. Since this holds for all $n$, we find $M=0$. Indeed, by definition of $\uRep(\GL_t)$, every element of $M$ is fixed by some open subgroup, and so $M$ is the union of the $M^{G(n)}$ for $n \ge 0$.
\end{proof}

\begin{lemma} \label{lem:GL-char-3}
The statement $\Sigma(n)$ holds for all $n$.
\end{lemma}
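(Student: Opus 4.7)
The plan is to argue by strong induction on $n$, the cases $n=1,2$ being done. Fix $n \ge 3$ and assume $\Sigma(k)$ for $1 \le k \le n-1$; by Corollary~\ref{cor:cent-char} applied inductively, the characters of the mutually non-isomorphic simples of level $\le n-1$ are linearly independent. Suppose toward contradiction that $L = L_\lambda(t)$ is a non-trivial simple of level $n$ at a transcendental $t$ with trivial central character. By Proposition~\ref{prop:chi-psi}, $\chi_L = P_L(t)\,\chi_{\bone}$ on $\GG$, where $P_L(\tau) = \udim L_\lambda(\tau)$.

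The first key step is a polynomial transfer. For each c-smooth $g \in G$, the trace of $g$ on $L_\lambda(\tau) = e_\lambda B_n(\tau) \subset A_n(\tau)$ expresses via the trace formula on Schwartz spaces as a $\tau$-independent combination of $\mu_\tau$-measures of $G$-invariant subsets; these measures are polynomial in $\tau$ by Theorem~\ref{thm:GL-measure}, and so is $P_L(\tau)$. The equality $\chi_{L_\lambda(t)}(g) = P_L(t)$ at a transcendental $t$ thus becomes a polynomial identity in $\tau$, giving $\chi_{L_\lambda(\tau)}(g) = P_L(\tau)$ for every $\tau$. In particular, $L_\lambda(t/q^m)$ has trivial central character at parameter $t/q^m$ for every $m \ge 0$.

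Restricting $L$ to $G(1) \cong G$ with measure $\mu_{t/q}$, Proposition~\ref{prop:GL-res} gives
\begin{displaymath}
L|_{G(1)} \cong L_\lambda(t/q)^{\oplus q^n} \oplus X, \qquad \lev(X) < n.
\end{displaymath}
On c-smooth elements of $G(1)$ (which remain c-smooth in $G$), character comparison with the polynomial transfer yields $\chi_X = (P_L(t) - q^n P_L(t/q))\,\chi_{\bone}$, a multiple of $\chi_{\bone}$. Writing $X = \bone^{\oplus m_0} \oplus X'$ with $X'$ a sum of non-trivial simples of level $\le n-1$, the inductive linear independence of characters at level $\le n-1$ forces $\chi_{X'} = 0$, and Lemma~\ref{lem:GL-char-2} (applied at level $n-1$) then gives $X' = 0$. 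Hence $X \cong \bone^{\oplus m_0}$.

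The contradiction follows from Frobenius reciprocity together with a level bound. Since $G/G(1) \cong X_{1,0,0}$, we have
\begin{displaymath}
m_0 = \dim \Hom_G(L, \cC(X_{1,0,0})).
\end{displaymath}
But $X_{1,0,0}$ is a single $G$-orbit inside $\bV_0 \times \bV_1$, so $\cC(X_{1,0,0})$ is a direct summand of $\cC(\bV_0) \uotimes \cC(\bV_1)$; since each factor $\cC(\bV_i)$ has only level-$\le 1$ simple constituents, a standard tensor-product bound (via the decomposition $A_a \uotimes A_b = \bigoplus_{c \le a+b} (\text{multiple of } A_c)$) shows that every simple constituent of $\cC(X_{1,0,0})$ has level $\le 2$. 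As $L$ has level $n \ge 3$, we conclude $m_0 = 0$. By the polynomial transfer, the same argument applies to $L_\lambda(t/q^j)$ at each successive parameter, giving $L|_{G(m)} \cong L_\lambda(t/q^m)^{\oplus q^{mn}}$ with no trivial summand for any $m \ge 1$; hence $L^{G(m)} = 0$ for all $m$. But $L$ is a smooth module, so $L = \bigcup_m L^{G(m)} = 0$, contradicting Remark~\ref{rmk:simple-nonzero}. The main obstacle is establishing the polynomial transfer rigorously; once that is in place, the Frobenius/level-bound step closes the induction uniformly for $n \ge 3$.
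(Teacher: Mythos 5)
Your proof is correct and follows essentially the same route as the paper's: restrict to $G(1)$, use constancy of the character (transferred to $t/q$ by rationality of character values in the parameter, which the paper states more tersely via transcendence of $t$) together with non-triviality of the lower-level central characters to kill the complement $X$, rule out trivial summands via Frobenius reciprocity and the level-$\le 2$ bound on the constituents of $\cC(X_{1,0,0})$, and iterate to contradict smoothness of $L$. One caveat: your appeal to Corollary~\ref{cor:cent-char} for full linear independence of the level $\le n-1$ characters is not licensed by $\Sigma(n-1)$ alone (condition $(\ast)$ there is a hypothesis on all simples, and the proof of Proposition~\ref{prop:cent-char} passes through tensor products whose constituents can exceed level $n-1$), but the deduction you actually need --- that a combination of characters of non-trivial level $\le n-1$ simples cannot equal a non-zero multiple of $\chi_{\bone}$ --- follows from $\Sigma(n-1)$ and Lemma~\ref{lem:char-ind} by isolating the coefficient of the trivial central character, exactly as in the proof of Lemma~\ref{lem:GL-char-2}.
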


\begin{proof}
We have already proved $\Sigma(n)$ for $n \le 2$. Suppose now that $\Sigma(n)$ holds for some $n \ge 2$. We will prove $\Sigma(n+1)$. Thus let $\lambda \in \Lambda_{n+1}$, and suppose by way of contradiction that $L_{\lambda}(t)$ has trivial central character.

First note that the trivial representation does not appear in the restriction of $L_{\lambda}(t)$ to $G(1)$. Indeed, suppose that it did. Then $L_{\lambda}(t)$ would appear in the induction of the trivial representation, which is $\cC(G/G(1))=\cC(X_{1,0,0})$. By Proposition~\ref{prop:GL-gen} (or its proof), this Schwartz space is a summand of $\cC(\bV_0)^{\otimes 2}$, which is a sum of Schwartz spaces $\cC(X_{0,m,0})$ with $m \le 2$. We thus find that $L_{\lambda}(t)$ has level $\le 2$, a contradiction.

By Proposition~\ref{prop:GL-res}, the restriction of $L_{\lambda}(t/q)$ to $\GL_{t/q}$ has the form $L_{\lambda}(t/q)^{\oplus q^{n+1}} \oplus X$, where $X$ has level $\le n$. Write $X=\bigoplus_{i=1}^m L_i$ with $L_i$ simple. The previous paragraph shows that each $L_i$ is non-trivial. We have
\begin{displaymath}
\chi_{\lambda,t}=q^{n+1} \chi_{\lambda,t/q} + \sum_{i=1}^m \chi_{L_i},
\end{displaymath}
where $\chi_{\lambda,t}$ denotes the character of $L_{\lambda}(t)$. Since the central character of $L_{\lambda}(t)$ is trivial, Proposition~\ref{prop:chi-psi} shows that $\chi_{\lambda,t}$ is the constant function on $\GG$ that takes the value $\udim(L)$ at every point; in other words, $\chi_{\lambda,t}=\udim(L) \cdot \chi_{\bone}$, where $\bone$ is the trivial representation. This continues to hold when we restrict to $G(1)$. Since $t$ is transcendental, it follows that the central character of $L_{\lambda}(t/q)$ is also trivial, and so its character is also constant. We thus find
\begin{displaymath}
\udim(L_{\lambda}(t)) \chi_{\bone}=q^{n+1} \udim(L_{\lambda}(t/q)) \chi_{\bone} + \sum_{i=1}^m \chi_{L_i}.
\end{displaymath}
Applying Proposition~\ref{prop:chi-psi}, we find
\begin{displaymath}
\udim(L_{\lambda}(t)) \epsilon=q^{n+1} \udim(L_{\lambda}(t/q)) \epsilon + \sum_{i=1}^m \udim(L_i) \psi_{L_i}.
\end{displaymath}
Since the $\psi_{L_i}$ are all non-trivial and central characters are linearly independent (Lemma~\ref{lem:char-ind}), the sum on the right must vanish. Thus $\chi_X=0$, and so $X=0$ by Lemma~\ref{lem:GL-char-2}.

We have thus shown that the restriction of $L_{\lambda}(t)$ to $\GL_{t/q}$ is $L_{\lambda}(t/q)^{\oplus q^{n+1}}$. Since $t$ is transcendental, we can apply this iteratively, and conclude that the restriction of $L_{\lambda}(t)$ to $\GL_{t/q^r}$ is $L_{\lambda}(t/q^r)^{\oplus rq^{n+1}}$. Thus $L_{\lambda}(t)$ contains no $G(r)$-invariant vector, which is a contradiction.
\end{proof}

We have now proved Theorem~\ref{thm:GL-char} when $t$ is transcendental. It remains to prove the theorem for regular algebraic values. For this, we require the following observation:

\begin{lemma} \label{lem:char-poly}
Given $\lambda \in \Lambda$ and $g \in \GG$, there exists a rational function $p_{\lambda,g}(T) \in k(T)$, which has poles only at $\{0,q^n\}_{n \in \bN}$, such that $\chi_{\lambda,t}(g)=p_{\lambda,g}(t)$ for all regular parameters $t \in k$.
\end{lemma}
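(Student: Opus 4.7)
The plan is to write $\chi_{\lambda,t}(g)$ as the trace of an explicit $G$-equivariant operator on $A_n$ depending rationally on $t$, and to control its poles by appealing to the semisimplicity of $\uRep(\GL_t)$ at every regular parameter.

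Fix $\lambda \in \Lambda_n$ and $g \in \GG$. By Theorem~\ref{thm:GL-abelian}, the category $\uRep(\GL_t)$ is semisimple at each regular $t$, so $B_n$ is a direct summand of $A_n$. Choose the $G \times H_n$-equivariant idempotent $\pi_{B_n,t} \in \End_G(A_n)$ projecting onto $B_n$ along the sum of $G$-isotypic components of $A_n$ not appearing in $B_n$; the argument given in \S \ref{ss:GL-simple} (that level-$n$ simples do not appear in $A_m$ for $m<n$) guarantees that this complement really does avoid $B_n$. Applying orthogonality of characters for $H_n$ gives
\begin{equation*}
\chi_{\lambda,t}(g) = \frac{1}{|H_n|} \sum_{h \in H_n} \chi_{S_\lambda}(h^{-1})\, \utr_{A_n}\!\bigl(L_g \, R_h \, \pi_{B_n,t}\bigr),
\end{equation*}
where $L_g$ and $R_h$ denote the $G$- and $H_n$-actions on $A_n$.

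Next, reinterpret this formula over a formal parameter $T$. The endomorphism algebra $R_n := \End_G(A_n)$ is a free $\bZ_q[T]$-module with basis $\{T_\alpha\}$ indexed by $G$-orbits $\alpha$ on $X_{0,n,0}^2$: composition of basis elements is computed by push-forward of Schwartz functions, and the resulting structure constants are $\mu_T$-measures of finitary $\hat{G}$-sets, hence polynomials in $T$ by Theorem~\ref{thm:GL-measure}. Since $T$ is transcendental over $\bQ$ and therefore a regular parameter, $R_n \otimes_{\bZ_q[T]} \bQ(T)$ is semisimple and contains the idempotent $\pi_{B_n,T}$; write
\begin{equation*}
\pi_{B_n,T} = \sum_\alpha c_\alpha(T)\, T_\alpha, \qquad c_\alpha(T) \in \bQ(T).
\end{equation*}
Each $\utr_{A_n}(L_g R_h T_\alpha)$ is the $\mu_T$-measure of the finitary $\hat{G}$-subset $\{\phi \in X_{0,n,0} : (g^{-1}\phi h, \phi) \in \alpha\}$, which is stable under the open subgroup $Z_G(g)$ and hence lies in $\bZ_q[T]$. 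Substituting produces a rational function $p_{\lambda,g}(T) \in \bQ(T) \subseteq k(T)$, and specialization at any regular $t$ avoiding the poles of the $c_\alpha$ returns $\chi_{\lambda,t}(g)$ by uniqueness of the projection onto $B_n$.

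The last step is to bound the poles. The coefficients $c_\alpha(T)$ lie in $\bZ_q[T][1/D(T)]$, where $D(T) \in \bZ_q[T]$ is the discriminant of the trace form on the regular representation of $R_n$; this is a nonzero polynomial (the generic fiber is semisimple), and its roots in $k$ are precisely those $t$ at which $R_n \otimes k$ fails to be semisimple. But for every regular $t$, Theorem~\ref{thm:GL-abelian} makes $\uRep_k(\GL_t)$ semisimple, so $R_n \otimes k = \End(A_n)$ is the endomorphism algebra of an object in a semisimple category and is itself semisimple, forcing $D(t) \ne 0$. The roots of $D$ in $k$, and therefore the poles of $p_{\lambda,g}$, are thus contained in the non-regular set $\{0\} \cup \{q^n : n \in \bN\}$. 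The main delicate point of the argument is precisely this last step: bounding the poles rigorously requires semisimplicity at \emph{every} regular parameter, including algebraic values, not merely at the generic point.
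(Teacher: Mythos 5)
Your argument reaches the right conclusion but by a genuinely different route from the paper's. The paper avoids idempotents entirely: it first notes that the trace of $(h,g)$ on the permutation module $A_n$ equals $\mu_t(Y)$ for $Y$ the $(h,g)$-fixed-point set, hence lies in $\bZ_q[t]$; it then isolates the $S_\lambda$-isotypic piece by $H_n$-character orthogonality and inducts on level, subtracting the lower-level contributions (whose multiplicities are independent of $t$). All denominators arising there are integers and powers of $q$, so pole control is automatic. You instead cut out $B_n$ by the central idempotent $\pi_{B_n,T}$ over the generic parameter and bound denominators by the discriminant $D(T)$ of the trace form on $\End_G(A_n)$. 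The ingredients for that are all available: the orbit basis $\{T_\alpha\}$ with structure constants in $\bZ_q[T]$, the integrality of $\utr(L_gR_hT_\alpha)$ as a measure of a $Z_G(g)$-stable finitary set, the standard denominator bound for central idempotents of an order in a generically separable algebra, and the nonvanishing of $D(t)$ at regular $t$ by semisimplicity of endomorphism algebras there. What your route buys is an explicit, uniform handle on where the denominators come from; what it costs is the specialization step discussed next.

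The genuine soft spot is not where you place it. The discriminant argument is fine; what is unjustified is the assertion that specializing $\pi_{B_n,T}$ at a regular $t$ ``returns $\chi_{\lambda,t}(g)$ by uniqueness of the projection onto $B_n$.'' Specializing the relations expressing that $\pi_{B_n,T}$ is killed by the push-forward maps $A_n \to A_{n-1}$ shows only that $\pi_{B_n,T}|_{T=t}$ is a central idempotent whose image is \emph{contained in} $B_{n,t}$; a priori it could be a proper sub-sum of the level-$n$ block idempotents, omitting the one attached to $L_\lambda(t)$, in which case your formula would compute $0$ rather than $\chi_{\lambda,t}(g)$. To close this you must show the image is all of $B_{n,t}$ --- for instance by comparing the trace of $\pi_{B_n,T}|_{T=t}$ on the regular representation of $\End_G(A_n)\otimes k$, which specializes from the generic value $\sum_{\mu\in\Lambda_n}(\dim S_\mu)^2$, with the corresponding trace of $\pi_{B_n,t}$ --- and that comparison in turn uses that every $L_\mu(t)$ with $\mu\in\Lambda_n$ is nonzero with multiplicity $\dim S_\mu$ in $A_n$ at \emph{every} regular $t$ (Remark~\ref{rmk:simple-nonzero}, or a Tits deformation argument). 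This is fillable with tools the paper already uses elsewhere, but as written it is a real gap, and it sits exactly at the special algebraic parameters that the lemma is designed to handle.
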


\begin{proof}
We first make a different claim, namely, that for $g \in \GG$ and $h \in H_n$, there exists a rational function $p'_{h,g}(T) \in k(T)$ such that the trace of $(h,g)$ on $A_n$ is given by $p'_{h,g}(t)$ for all regular parameters $t$. Indeed, $A_n=\cC(X_{0,n,0})$ is a permutation representation, and so the trace of $(h,g)$ is $\mu_t(Y)$, where $Y$ is the set of $(h,g)$-fixed points on $X_{0,n,0}$, and this varies with $t$ in the required way by our description of measures. By finite group character theory, it now follows that there is a rational function $p''_{\lambda,g}(T)$ such that the trace of $g$ on the $S_{\lambda}$-isotypic piece of $A_n$ is given by $p_{\lambda,g}(t)$. But, as we have seen, this isotypic piece decomposes as a direct sum of $L_{\lambda}(t)$ and representations of lower level, and so the result follows by induction.
\end{proof}

We finally finish the proof of the theorem.

\begin{proof}[Proof of Theorem~\ref{thm:GL-char}]
Consider the following statement:
\begin{description}[align=right,labelwidth=1.5cm,leftmargin=!]
\item[$\Pi(n)$ ] The characters of simple objects of $\uRep(\GL_t)$ of level $\le n$ are $k$-linearly independent, for any regular parameter $t$.
\end{description}
It suffices to prove $\Pi(n)$ for all $n$. We proceed by induction on $n$. Thus suppose $\Pi(n)$ holds, and let us prove $\Pi(n+1)$.

Enumerate $\Lambda_{n+1}$ as $\lambda_1, \ldots, \lambda_r$. We know that the characters $\chi_{\lambda_i,t}$ are linearly independent when $t$ is transcendental. It follows that we can find $g_1, \ldots, g_r \in \GG$ such that the $r \times r$ matrix $\chi_{\lambda_i,t}(g_j)$ is invertible when $t$ is transcendental. It follows that the determinant of the matrix $p_{\lambda_i,g_j}(T)$ is a non-zero rational function. Thus the matrix $\chi_{\lambda_i,t}(g_j)$ is invertible for all but finitely many $t$. Let $S_0 \subset k$ be the finite set of values for which this matrix is not invertible. Then the characters $\chi_{\lambda_i,t}$ are linearly independent for all regular values $t \not\in S_0$.

Let $S$ be the set of regular values $t \in k$ such that the characters $\chi_{\lambda_i,t}$ are not linearly independent. We must show that $S$ is empty. By the previous paragraph, $S$ is contained in $S_0$, and thus finite. Suppose by way of contradiction that $S$ is non-empty. Choose $t \in S$ such that $t/q \not\in S$, which exists since $S$ is finite. The characters $\chi_{\lambda_i,t/q}$ are then linearly independent. However, there is an upper triangular system relating $\chi_{\lambda_i,t}$ and $\chi_{\lambda_i,t/q}$ (Proposition~\ref{prop:GL-res}), and so the $\chi_{\lambda_i,t}$ are also linearly independent. We thus see that $S$ is empty, which completes the proof.
\end{proof}

\begin{remark}
It would be interesting to find explicit formulas for the character $\chi_{\lambda,t}$. This should be related to Green's classical formulas for the characters of $\GL_n(\bF)$.
\end{remark}

\subsection{Ultraproduct categories} \label{ss:GL-ultra}

Let $k_n$ be an algebraically closed field for each $n \in \bN$, fix a non-principal ultrafilter on $\bN$, and let $k$ be the ultraproduct of the $k_n$'s, which we assume to be of characteristic~0. Let $t$ be the element of $k$ defined by the sequence $(q^n)_{n \ge 0}$. Let $H_n=\GL_n(\bF)$, let $\cT_n=\Rep_{k_n}(H_n)$, let $\tilde{\cT}$ be the ultraproduct of the $\cT_n$'s, and let $\cT$ be the subcategory of $\tilde{\cT}$ defined in \S \ref{ss:ultra}. Let
\begin{displaymath}
\Phi \colon \uRep(\GL_t) \to \cT
\end{displaymath}
be the tensor functor from \S \ref{ss:ultra-ab}. We can finally prove one of our main results:

\begin{theorem} \label{thm:GL-ultra}
The above functor $\Phi$ is an equivalence.
\end{theorem}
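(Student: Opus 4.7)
The plan is to apply Theorem~\ref{thm:ultra-equiv2}, whose two hypotheses are regularity of the measure and the central-character condition $(\ast)$ of Proposition~\ref{prop:cent-char}. Both have essentially been established already.

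First I would identify the relevant measure. The functor $\Phi$ in the statement is built from the smooth approximation $\{X_{n,0,0}\}_{n \ge 0}$ and the ultrafilter $\cF$, and the associated ultraproduct measure was computed in Proposition~\ref{prop:GL-ultra-meas-param} to be $\mu_t$ for $t = (q^n)_{n \ge 0}$. Thus the oligomorphic side of the comparison is exactly the category $\uRep(\GL_t)$ appearing in the theorem, and the functor $\Phi$ of the theorem coincides with the abstract $\Phi$ of \S \ref{ss:ultra-ab}.

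Next I would check that $t$ is a regular parameter, i.e., $t \notin \{0\} \cup \{q^m : m \in \bN\}$, so that $\mu_t$ is regular by Proposition~\ref{prop:GL-reg} and $\uRep(\GL_t)$ is a semi-simple pre-Tannakian category by Theorem~\ref{thm:GL-abelian}. Since each $k_n$ has characteristic zero and $q \ge 2$, the identity $q^n = q^m$ in $k_n$ holds only for $n = m$; so for any fixed $m \in \bN$ the set $\{n \in \bN : q^n = q^m\}$ is a singleton and hence not in the non-principal ultrafilter $\cF$. The same reasoning shows $t \ne 0$.

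Finally I would invoke Theorem~\ref{thm:GL-char}, which asserts that at any regular parameter every non-trivial simple of $\uRep(\GL_t)$ has non-trivial central character; this is precisely condition $(\ast)$ of Proposition~\ref{prop:cent-char}. Theorem~\ref{thm:ultra-equiv2} then applies and yields that $\Phi$ is an equivalence. I expect no serious obstacle in this deduction itself: the substantive content has already been packaged into Theorem~\ref{thm:GL-char} (the independence of central characters), and once that is in hand Theorem~\ref{thm:GL-ultra} reduces to a routine verification of the hypotheses of the general criterion.
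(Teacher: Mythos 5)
Your argument is correct in the case where $t$ is a regular parameter, and there it coincides with the paper's: apply Theorem~\ref{thm:ultra-equiv2} together with Theorem~\ref{thm:GL-char}. But there is a genuine gap in your claim that $t$ is \emph{always} regular. You write ``since each $k_n$ has characteristic zero,'' yet the setup of \S \ref{ss:GL-ultra} only assumes that the ultraproduct $k$ has characteristic~$0$; the individual fields $k_n$ are allowed to have positive characteristic. This is not a technicality: it is the whole point of Conjecture~\ref{conj:interp1} and of Proposition~\ref{prop:ultra-equiv}, where positive-characteristic residue fields are deliberately chosen so that $t=(q^n)_{n\ge 0}$ becomes a prescribed algebraic number. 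In particular, by taking $k_n$ of characteristic $p_n$ with $p_n \mid q^{n-m}-1$ for a fixed $m$ (and $p_n \to \infty$ so that $k$ still has characteristic $0$), one gets $t=q^m$, which is a non-regular (though quasi-regular, since $t\ne 0$) parameter. Your proof says nothing in that case, and Theorem~\ref{thm:ultra-equiv2} does not apply to it.

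The paper closes this gap with a separate argument for the quasi-regular case: choose $r$ so that the restriction $\mu_{t/q^r}$ of $\mu_t$ to the open subgroup $G'=G(r)$ is regular, observe that $\{G'/G(\ell)\}_{\ell\ge r}$ is a smooth approximation of $G'$ compatible (in the sense of the compatibility condition preceding Proposition~\ref{prop:comp-smooth}) with the approximation $\{X_{\ell,0,0}\}_{\ell\ge r}$ of $G$, apply the regular case to conclude that $\Phi'\colon \uRep(\GL_{t/q^r})\to\cT'$ is an equivalence, and then invoke Proposition~\ref{prop:ultra-equiv3} to transfer the conclusion back to $\Phi$. You should either add this reduction or restrict your statement to the regular case; as written, the deduction is incomplete.
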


\begin{proof}
This follows from Theorems~\ref{thm:ultra-equiv2} and~\ref{thm:GL-char} in the regular case. We now treat the quasi-regular case. Let $G'=G(r)$ be such that $\mu_t$ is regular on $G'$. Of course, $\{X_{\ell,0,0}\}_{\ell \ge r}$ is a smooth approximation of $G$. We have $X_{\ell,0,0}=G/G(\ell)$. Letting $X'_{\ell,0,0}=G'/G(\ell)$, we see that $\{X'_{\ell,0,0}\}_{\ell \ge r}$ is a compatible smooth approximation of $G'$. We have just seen that the natural functor
\begin{displaymath}
\Phi' \colon \uRep(\GL_{t/q^r}) \to \cT'
\end{displaymath}
is an equivalence, where $\cT'$ is the ultraproduct category for $\GL_{t/q^r}$. Thus $\Phi$ is an equivalence by Proposition~\ref{prop:ultra-equiv3}.
\end{proof}

This proves the $\GL$ versions of Conjectures~\ref{conj:interp1} and~\ref{conj:interp2}.

\section{The symplectic group} \label{s:Sp}

\subsection{Set-up}

Let $\bF$ be a finite field of cardinality $q$. Let $\hat{\fC}$ be the following category. Objects are $\bF$-vector spaces $V$ equipped with an alternating bilinear form $\langle, \rangle$. A morphism $f \colon V \to W$ is an injective linear map that respects the form. Let $\fC$ be the full subcategory spanned by finite dimensional objects.

Let $\bV$ be the vector space with basis $\{e_i,f_i\}_{i \ge 1}$. Define an alternating form on $\bV$ by
\begin{displaymath}
\langle e_i, e_j \rangle = \langle f_i, f_j \rangle = 0, \qquad
\langle e_i, f_j \rangle = \delta_{i,j}.
\end{displaymath}
We regard $\bV$ as an object of $\hat{\fC}$, and let $G$ be its automorphism group, which is one version of the infinite symplectic group. The action of $G$ on $\bV$ is oligomorphic, and $\fC$ is the category of structures for $(G, \bV)$ as defined in \S \ref{ss:struct}.

Let $V_{m,n}$ be an $\bF$-vector space of dimension $2m+n$ with a non-degenerate symplectic form on the $2m$ dimensional piece, and the zero form on the $n$-dimensional piece. To be concrete, we can take $V_{m,n}$ to be the subspace of $\bV$ spanned by $e_1, \ldots, e_{n+m}$ and $f_1, \ldots, f_n$. Let $X_{r,s}$ be the space of embeddings $V_{m,n} \to \bV$. One easily sees that every object of $\fC$ is isomorphic to a unique $V_{m,n}$, and so the $X_{m,n}$ are exactly the $\bV$-smooth transitive $G$-sets by Proposition~\ref{prop:struct-equiv}.

\subsection{Enumeration}

Let $N_{a,b}^{m,n}$ be the number of embeddings $V_{a,b} \to V_{m,n}$.

\begin{proposition}
We have the following recurrences for $N$:
\begin{align*}
N_{a,b}^{m,n} &= (q^{2m+n}-q^n) q^{2m+n-1} N_{a-1,b}^{m-1,n} \\
N_{a,b}^{m,n} &= (q^{2m+n}-q^n) q^{2a+b-1} N^{m-1,n}_{a,b-1} + (q^n-1) q^{2a+b-1} N_{a,b-1}^{m,n-1}.
\end{align*}
These are valid for $a \ge 1$ and $b \ge 1$ respectively.
\end{proposition}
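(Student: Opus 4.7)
My plan is to prove both recurrences by the same device used for the general linear group in Proposition~\ref{prop:GL-recur}: decompose $V_{a,b}$ as a direct sum with a one-dimensional (or two-dimensional) summand, and count embeddings $V_{a,b} \to V_{m,n}$ by first choosing the image of the distinguished summand and then embedding the complement into an appropriate subquotient of $V_{m,n}$.

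For the first recurrence, I would write $V_{a,b} = V_{1,0} \oplus V_{a-1,b}$, where $V_{1,0}$ denotes a hyperbolic pair $(x,y)$. Specifying an embedding of $V_{a,b}$ into $V_{m,n}$ then amounts to first specifying an embedding $i$ of $V_{1,0}$, and then embedding $V_{a-1,b}$ into the orthogonal complement of $i(V_{1,0})$, which is isomorphic to $V_{m-1,n}$. Thus
\begin{displaymath}
N_{a,b}^{m,n} = N_{1,0}^{m,n}\cdot N_{a-1,b}^{m-1,n}.
\end{displaymath}
To compute $N_{1,0}^{m,n}$, I choose $x\in V_{m,n}$ outside the radical (giving $q^{2m+n}-q^n$ choices, since the radical has size $q^n$), and then $y$ from the coset of $x^\perp$ on which $\langle x,-\rangle=1$, which has size $q^{2m+n-1}$. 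This gives exactly the claimed formula.

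For the second recurrence, I would write $V_{a,b} = V_{0,1}\oplus V_{a,b-1}$ with $V_{0,1}$ spanned by a null vector $x$. An embedding $i\colon V_{a,b}\to V_{m,n}$ is equivalent to a pair $(y,j)$ where $y=i(x)$ is a nonzero vector in $V_{m,n}$ and $j\colon V_{a,b-1}\to y^\perp$ is an embedding whose induced map $\bar j\colon V_{a,b-1}\to y^\perp/\bF y$ is an embedding. I split into two cases depending on whether $y$ lies in the radical of $V_{m,n}$. In the radical case, there are $q^n-1$ choices of $y$, and $y^\perp/\bF y \cong V_{m,n-1}$; in the non-radical case, there are $q^{2m+n}-q^n$ choices, and $y^\perp/\bF y \cong V_{m-1,n}$ (since $y$ is then part of a hyperbolic pair, whose complement is $V_{m-1,n}$). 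In each case the number of embedding lifts $j$ of a given $\bar j$ is $q^{\dim V_{a,b-1}}=q^{2a+b-1}$: the lifts are parametrized by choosing a scalar multiple of $y$ to add to each basis vector's image, and since $y\in y^\perp$ these modifications preserve both the form and injectivity. Summing the two cases yields the stated recurrence.

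The only non-routine point is the lift count and the identification $y^\perp/\bF y\cong V_{m-1,n}$ in the non-radical case, both of which are quick linear-algebra checks; I do not anticipate any real obstacle.
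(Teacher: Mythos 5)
Your proposal is correct and follows essentially the same argument as the paper: the same decompositions $V_{a,b}=V_{1,0}\oplus V_{a-1,b}$ and $V_{a,b}=V_{0,1}\oplus V_{a,b-1}$, the same case split on whether $y$ lies in the radical, and the same count of $q^{2a+b-1}$ lifts of each induced embedding $\bar j$. No gaps.
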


\begin{proof}
(a) Giving an embedding of $V_{a,b}$ into $V_{m,n}$ is equivalent to giving an embedding $i$ of $V_{1,0}$ together with an embedding of $V_{a-1,b}$ into the orthogonal complement of $i$. The latter is isomorphic to $V_{m-1,n}$. We thus find
\begin{displaymath}
N_{a,b}^{m,n} = N_{1,0}^{m,n} \cdot N_{a-1,b}^{m-1,n}.
\end{displaymath}
To complete the proof of this identity, we must show
\begin{displaymath}
N_{1,0}^{m,n} = (q^{2m+n}-q^n) q^{2m+n-1}.
\end{displaymath}
Let $x$ and $y$ be a symplectic basis of $V_{1,0}$. Then $i(x)$ can be any vector in $V_{m,n}$ not in the nullspace. There are thus $q^{2m+n}-q^n$ choices for $x$. Fixing a choice for $i(x)$, we see that $i(y)$ can be any vector that pairs to~1 with $i(x)$. The set of choices forms a torsor for the orthogonal complement of $i(x)$, which is a vector space of dimension $2m+n-1$. There are thus $q^{2m+n-1}$ choices for $i(y)$, as required.

(b) Write $V_{a,b} = V_{0,1} \oplus V_{a,b-1}$, and let $x$ be a basis for $V_{0,1}$. Giving an embedding $i$ of $V_{a,b}$, we let $y$ be the image of $x$, and let $j$ be the restriction of $i$ to $V_{a,b-1}$. Thus $i$ is determined by $(y,j)$. We count the number of $i$'s by counting the number of $(y,j)$'s, by picking $y$ first.

First suppose that $y$ belongs to the nullspace; there are $q^n-1$ choices for such $y$. The pair $(y,j)$ comes from an $i$ if and only if the induced map $\ol{j} \colon V_{a,b-1} \to V_{m,n}/\bF y$ is an embedding. The quotient space is isomorphic to $V_{m,n-1}$, and so there are $N_{a,b-1}^{m,n-1}$ choices for $\ol{j}$. Each $\ol{j}$ admits $q^{2a+b-1}$ lifts to a choice of $j$. We thus find that there are $(q^n-1)q^{2a+b-1}N_{0,b-1}^{m,n-1}$ choices of $i$ in this case.

Now suppose that $y$ is not a null vector; there are $q^{2m+n}-q^n$ choices for such $y$. Then $j$ must map into the orthogonal complement $V'$ of $y$. The pair $(y,j)$ comes from an $i$ if and only if the induced map $\ol{j} \colon V_{a,b-1} \to V'/\bF y$ is an embedding. The quotient is isomorphic to $V_{n-1,m}$, and again, each $\ol{j}$ admits $q^{2a+b-1}$ lifts. We thus find $(q^{2m+n}-q^n) q^{2a+b-1} N_{a,b-1}^{m-1,n}$ choices for $i$ in this case, and so the formula follows.
\end{proof}

We define polynomials $Q_{a,b} \in \bZ_q[t,u]$ by putting $Q_{0,0}=1$ and imposing the following recurrences (valid for $a \ge 1$ and $b \ge 1$ respectively):
\begin{align*}
Q_{a,b}(t,u) &= q^{-1} (t-1)tu^2 Q_{a-1,b}(q^{-2}t, u) \\
Q_{a,b}(t,u) &= q^{2a+b-1} (t-1)u Q_{a,b-1}(q^{-2} t, u) + q^{2a+b-1} (u-1) Q_{a,b-1}(t, q^{-1} u).
\end{align*}
Similar to the $\GL$ case, we have:

\begin{proposition} \label{prop:Sp-Q}
The $Q$ polynomials are well-defined, and we have
\begin{displaymath}
N_{a,b}^{m,n} = Q_{a,b}(q^{2m}, q^n)
\end{displaymath}
for all $a,b,m,n \in \bN$.
\end{proposition}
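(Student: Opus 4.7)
The plan is to mimic the argument for Proposition~\ref{prop:GL-Qpoly} from the GL case, which compares the recurrences for $Q$ to those for $N$ under the substitution $t = q^{2m}$, $u = q^n$.

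First I would address well-definedness by inductively constructing polynomials $\tilde Q_{a,b}$. Set $\tilde Q_{0,0} = 1$. For $(a,b)$ with $a \ge 1$, define $\tilde Q_{a,b}$ using the first recurrence applied to $\tilde Q_{a-1,b}$. For $(0,b)$ with $b \ge 1$, define $\tilde Q_{0,b}$ using the second recurrence applied to $\tilde Q_{0,b-1}$. This construction is unambiguous, but it a priori privileges the first recurrence; consistency with the second recurrence when both $a \ge 1$ and $b \ge 1$ is exactly what remains to check.

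Next I would verify, by induction on $a+b$, that $\tilde Q_{a,b}(q^{2m}, q^n) = N_{a,b}^{m,n}$ for all $m, n \ge 0$. The base case is trivial. For the inductive step with $a \ge 1$, the $N$-recurrence (a) reads $N_{a,b}^{m,n} = (q^{2m+n}-q^n)q^{2m+n-1} N_{a-1,b}^{m-1,n}$; substituting $t = q^{2m}$, $u = q^n$ turns the prefactor into $q^{-1}(t-1)tu^2$ and, by induction, $N_{a-1,b}^{m-1,n}$ into $\tilde Q_{a-1,b}(q^{-2}t, u)$, matching the defining recurrence for $\tilde Q_{a,b}$. The case $(0,b)$ with $b \ge 1$ is handled identically using $N$-recurrence (b).

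Finally, to obtain the second recurrence in the case $a,b \ge 1$, I would invoke a Zariski-density argument: if $P(t,u) \in \bZ_q[t,u]$ vanishes on $\{(q^{2m},q^n) : m,n \ge 0\}$ then $P = 0$, since for each fixed $m$, $P(q^{2m}, u)$ vanishes on the infinite set $\{q^n\}_{n \ge 0}$ (using $q \ge 2$), forcing $P(q^{2m}, -) \equiv 0$ as a polynomial in $u$, and then each coefficient in $t$ vanishes at infinitely many points $q^{2m}$ and must be zero. Apply this to the difference between $\tilde Q_{a,b}(t,u)$ and $q^{2a+b-1}(t-1)u \, \tilde Q_{a,b-1}(q^{-2}t,u) + q^{2a+b-1}(u-1)\tilde Q_{a,b-1}(t, q^{-1}u)$: both evaluate to $N_{a,b}^{m,n}$ at $(q^{2m}, q^n)$ by the previous step together with $N$-recurrence (b), so they are equal. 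There is no real obstacle here; the only mildly delicate point is bookkeeping the prefactors $(q^{2m+n}-q^n) = u(t-1)$ and $q^{2m+n-1} = q^{-1}tu$ correctly, and the recurrences have been engineered precisely to make these substitutions work.
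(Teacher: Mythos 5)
Your proposal is correct and is exactly the argument the paper intends (the paper omits the proof, declaring it "similar to the GL case," where Proposition~\ref{prop:GL-Qpoly} is in turn said to "follow easily" from the recurrences). Your substitutions $(q^{2m+n}-q^n)=u(t-1)$ and $q^{2m+n-1}=q^{-1}tu$ check out, and the Zariski-density step is the right way to settle well-definedness when both recurrences apply; the only unmentioned triviality is that at $m=0$ the factor $t-1$ vanishes, so the undefined term $N_{a-1,b}^{-1,n}$ never matters.
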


We now define polynomials $P_{a,b} \in \bZ_q[t]$ by $P_{a,b}(t)=Q_{a,b}(t,1)$. These polynomials satisfy a simple recurrence, which is easily solved; we record the answer here:

\begin{proposition} \label{prop:Sp-P}
We have
\begin{displaymath}
P_{a,b}(t) = q^{-e} t^a \prod_{i=0}^{a+b-1} (t-q^{2i}), \qquad
e = 2 \binom{a+b}{2} + a^2 - \binom{b}{2}.
\end{displaymath}
\end{proposition}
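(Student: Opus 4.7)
The strategy is to specialize the defining recurrences for $Q_{a,b}(t,u)$ at $u=1$ to obtain recurrences purely for $P_{a,b}(t)$, and then verify the closed form by induction. Setting $u=1$ in the first recurrence yields
\[
P_{a,b}(t) = q^{-1}(t-1)t\, P_{a-1,b}(q^{-2}t) \qquad (a \ge 1),
\]
and setting $u=1$ in the second recurrence kills the second term (because of the factor $(u-1)$) and gives
\[
P_{a,b}(t) = q^{2a+b-1}(t-1)\, P_{a,b-1}(q^{-2}t) \qquad (b \ge 1).
\]
Combined with $P_{0,0}(t)=Q_{0,0}(t,1)=1$, these two recurrences determine $P_{a,b}$ uniquely: one first applies the $b$-recurrence repeatedly starting from $P_{0,0}$ to compute each $P_{0,b}$, and then applies the $a$-recurrence to pass from $P_{0,b}$ to $P_{a,b}$.

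Let $R_{a,b}(t) = q^{-e(a,b)}\, t^a \prod_{i=0}^{a+b-1}(t-q^{2i})$ with $e(a,b)=2\binom{a+b}{2}+a^2-\binom{b}{2}$ denote the conjectured formula. The plan is to show that $R_{a,b}$ satisfies both recurrences (so that by the uniqueness above, $P_{a,b}=R_{a,b}$). The base case $R_{0,0}=1$ is immediate from the empty product.

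For the inductive step, the polynomial factors match easily: substituting $q^{-2}t$ scales each factor $(t-q^{2i})$ to $q^{-2}(t-q^{2(i+1)})$, reindexing turns the product $\prod_{i=0}^{a+b-2}$ into $\prod_{j=1}^{a+b-1}$, and multiplying by the external $(t-1)$ restores the missing $j=0$ factor; likewise $(q^{-2}t)^{a-1}\cdot t$ or $(q^{-2}t)^a$ combine with the external $t$ to give $t^a$. The entire content of the verification is therefore the bookkeeping of the powers of $q$, which reduces to the two identities
\[
e(a,b)-e(a-1,b) = 4a+2b-3, \qquad e(a,b)-e(a,b-1) = 2a+b-1,
\]
each an elementary calculation using $\binom{n}{2}-\binom{n-1}{2}=n-1$ and $a^2-(a-1)^2=2a-1$.

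The only genuine work is this exponent bookkeeping; there is no conceptual obstacle. Alternatively, one could bypass checking both recurrences by just iterating the $b$-recurrence to obtain a closed form for $P_{0,b}(t)$, then iterating the $a$-recurrence to obtain $P_{a,b}(t)$, tracking the accumulated power of $q$ via a telescoping sum. Either route is routine; I would present the inductive verification since it is cleaner and makes the symmetry between the two recurrences transparent.
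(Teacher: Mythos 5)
Your proposal is correct and follows exactly the route the paper intends: the paper simply remarks that setting $u=1$ in the recurrences for $Q_{a,b}$ gives a simple recurrence for $P_{a,b}$ that is "easily solved," and you have supplied that solution, with the exponent identities $e(a,b)-e(a-1,b)=4a+2b-3$ and $e(a,b)-e(a,b-1)=2a+b-1$ checking out against the powers of $q$ produced by the substitution $t\mapsto q^{-2}t$.
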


\subsection{Burnside rings}

Let $\BB=\BB(G, \bV)$ be the Burnside ring for $G$ relative to the stabilizer class $\sE(\bV)$. Define $x_{a,b}$ to be the class $\lbb X_{a,b} \rbb$ in $\BB$. These elements form a $\bZ$-basis of $\BB$. We now determine the ring structure.

\begin{proposition} \label{prop:Sp-burnside}
We have a ring isomorphism
\begin{displaymath}
\phi \colon \bZ_q[X, Y] \to \BB[1/q], \qquad X \mapsto x_{1,0}, \quad Y \mapsto x_{0,1}.
\end{displaymath}
\end{proposition}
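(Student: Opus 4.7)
The plan is to follow the same strategy used for Proposition~\ref{prop:GL-burnside}. First, I would equip $\BB$ with the level filtration from \S\ref{ss:burnside}. Since $V_{a,b}$ has dimension $2a+b$, the set $X_{a,b}$ appears as an orbit on $\bV^{2a+b}$, so $F^n \BB$ is the $\bZ$-span of those $x_{a,b}$ with $2a+b \le n$. On the polynomial side, I would filter $\bZ_q[X,Y]$ by declaring $X$ to have weight $2$ and $Y$ weight $1$, so that $F^n \bZ_q[X,Y]$ is spanned by monomials $X^a Y^b$ with $2a+b \le n$. Then $\phi$ is automatically a ring homomorphism (its source being a free polynomial ring), it is filtration-preserving, and the graded pieces on both sides are free $\bZ_q$-modules of the same finite rank. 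Consequently, to obtain the result it suffices to show $\phi$ is injective modulo $p'$ for every prime $p' \nmid q$.

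To produce enough homomorphisms out of $\BB$, I would invoke the marks homomorphisms. For each $(m,n)$, sending $x_{a,b} \mapsto N_{a,b}^{m,n}$ defines a ring homomorphism $\BB \to \bZ$. By Proposition~\ref{prop:Sp-Q}, these assemble into an additive map
\[
\psi \colon \BB \to \bZ_q[t,u], \qquad x_{a,b} \mapsto Q_{a,b}(t,u),
\]
and since $\psi$ agrees with a ring homomorphism on infinitely many specializations $(t,u) = (q^{2m}, q^n)$, it is itself a ring homomorphism. The composition $\psi \circ \phi$ sends $X \mapsto Q_{1,0} = q^{-1}(t-1)tu^2$ and $Y \mapsto Q_{0,1} = tu - 1$, both computed directly from the recurrences.

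It then remains to verify that $Q_{1,0}$ and $Q_{0,1}$ are algebraically independent in $\bZ_q[t,u]$, even modulo each prime $p' \nmid q$. The cleanest check is the Jacobian criterion: a direct computation yields Jacobian determinant $q^{-1} tu^2$, which is nonzero modulo every such $p'$. Hence $\psi \circ \phi$ is injective modulo $p'$, and therefore so is $\phi$; combined with the matching filtration ranks, this yields the desired isomorphism.

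The main difficulty of the proposition has in fact already been absorbed into Proposition~\ref{prop:Sp-Q}: once the interpolating polynomials $Q_{a,b}(t,u)$ are in hand, the argument above reduces to the essentially trivial algebraic independence of two explicit two-variable polynomials together with a filtration-counting argument. The only minor subtlety is the transition from \emph{``$\phi$ is injective modulo every prime of $\bZ_q$''} to \emph{``$\phi$ is an isomorphism''}, but this is immediate because the two sides of $\phi$ are filtered by finitely generated free $\bZ_q$-modules of equal rank in each filtered piece.
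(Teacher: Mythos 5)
Your proposal is correct and follows essentially the same route as the paper: the level filtration with $x_{a,b}$ in degree $2a+b$, the reduction to injectivity modulo each prime $p'\nmid q$ via equality of ranks of the filtered pieces, and the marks/$Q$-polynomial homomorphism $\psi\colon \BB \to \bZ_q[t,u]$ to detect injectivity. The only difference is that you make explicit, via the Jacobian determinant $q^{-1}tu^2$, the algebraic independence of $Q_{1,0}$ and $Q_{0,1}$ modulo $p'$, which the paper simply asserts; this is a welcome (and correct) elaboration, since a nonzero Jacobian implies algebraic independence in any characteristic.
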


\begin{proof}
The ring $\BB$ carries a filtration, where $F^n \BB$ is the $\bZ$-span of the classes $x_{a,b}$ with $2a+b \le n$. We define a filtration on $\bZ[X,Y]$ by taking $F^n \bZ[X,Y]$ be the $\bZ$-span of the monomials $X^a Y^b$ with $2a+b \le n$. Then $\phi$ is a map of filtered rings, and the filtered pieces are free $\bZ_q$-modules of the same finite rank. Thus it suffices to check that $\phi$ is injective modulo $\ell$, for all $\ell \nmid q$. The additive map
\begin{displaymath}
\psi \colon \BB \to \bZ_q[t,u], \qquad x_{a,b} \mapsto Q_{a,b}
\end{displaymath}
is a ring homomorphism. Since $\psi(x_{1,0})=Q_{1,0}$ and $\psi(x_{0,1})=Q_{0,1}$ are algebraically independent modulo $\ell$, it follows that $\psi \circ \phi$ is injective modulo $\ell$, and so $\phi$ is as well.
\end{proof}

Let $\hat{\BB}$ be the infinitesimal Burnside ring for $G$ relative to $\sE(\bV)$. We now analyze this ring. Let $G(n) \subset G$ be the open subgroup fixing each $e_i$ and $f_i$ for $1 \le i \le n$, and let $\bV(n) \subset \bV$ be the span of the $e_i$ and $f_i$ with $i>n$. Then $(G(n), \bV(n))$ is isomorphic to $(G, \bV)$. We define a ring homomorphism
\begin{displaymath}
\delta \colon \BB \to \BB
\end{displaymath}
by restricting to $G(1)$ and then identifying $G(1)$ with $G$. We now compute this map.

\begin{proposition} \label{prop:Sp-delta}
We have
\begin{align*}
\delta(x_{1,0}) &= q^2 x_{1,0} + (q^3-q) (x_{0,1}+1)^2 \\
\delta(x_{0,1}) &= q^2 x_{0,1}+q^2-1.
\end{align*}
In particular, $\delta$ is an isomorphism after inverting $q$.
\end{proposition}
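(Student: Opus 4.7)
The plan is to compute each image directly by decomposing the relevant $G$-sets along the $G(1)$-invariant splitting $\bV = (\bF e_1 \oplus \bF f_1) \oplus \bV(1)$, then to diagonalize $\delta$ to deduce invertibility. The computation is guided by the $\GL$-analog already treated in Proposition~\ref{prop:GL-delta}, but the symplectic condition $ad-bc+\langle u',v'\rangle = 1$ introduces a determinant sum that must be carefully tracked.

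First I would handle $\delta(x_{0,1})$. Since $X_{0,1} = \bV \setminus \{0\}$, splitting off the $\bF^2 = \bF e_1 \oplus \bF f_1$ coordinate gives an isomorphism of $G(1)$-sets $X_{0,1} \cong X_{0,1}(1) \sqcup (\bF^2 \setminus \{0\}) \times \bV(1)$, so $[X_{0,1}] = [X_{0,1}(1)] + (q^2-1)[\bV(1)]$ in the Burnside ring of $G(1)$. Since $[\bV(1)] = 1 + [X_{0,1}(1)]$, this gives $\delta(x_{0,1}) = q^2 x_{0,1} + (q^2-1)$ after identifying $G(1)$ with $G$.

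Next I would compute $\delta(x_{1,0})$. Identify $X_{1,0}$ with the set of pairs $(u,v) \in \bV^2$ satisfying $\langle u,v\rangle = 1$, and write $u = ae_1 + bf_1 + u'$, $v = ce_1 + df_1 + v'$ with $u',v' \in \bV(1)$. Then the constraint becomes $\langle u',v'\rangle = 1-(ad-bc)$. Letting $Y_s = \{(u',v') \in \bV(1)^2 : \langle u',v'\rangle = s\}$, which is the $G(1)$-set of pairs with prescribed pairing, we obtain $X_{1,0} \cong \coprod_{M \in \Mat_2(\bF)} Y_{1-\det M}$. The $Y_s$ for $s \ne 0$ are mutually isomorphic and all isomorphic to $X_{1,0}(1)$, and $[\bV(1)]^2 = [Y_0] + (q-1)[X_{1,0}(1)]$. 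Partitioning matrices by determinant ($q^3-q$ with $\det = 1$, contributing copies of $Y_0$; $q^3+q^2-q$ singular plus $(q-2)(q^3-q)$ with other nonzero determinant, contributing copies of $X_{1,0}(1)$) yields $[X_{1,0}] = (q^3-q)[Y_0] + (q^4-q^3+q)[X_{1,0}(1)]$. Substituting the identity for $[Y_0]$ and using $[\bV(1)] = x_{0,1}+1$, $[X_{1,0}(1)] = x_{1,0}$ gives the stated formula for $\delta(x_{1,0})$; the bookkeeping for the coefficient of $[X_{1,0}(1)]$ collapsing to $q^2$ is the main arithmetic step, but it's routine.

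For the last assertion, set $w_2 = x_{0,1}+1$ and $w_1 = x_{1,0} - q^{-1}w_2^2$. Then $\delta(w_2) = q^2 w_2$ is immediate, and a direct substitution gives $\delta(w_1) = q^2 x_{1,0} + (q^3-q)w_2^2 - q^{-1}\cdot q^4 w_2^2 = q^2 w_1$. Since the change of variables $(x_{1,0},x_{0,1}) \leftrightarrow (w_1,w_2)$ is a $\bZ_q$-algebra automorphism, Proposition~\ref{prop:Sp-burnside} gives $\BB[1/q] = \bZ_q[w_1,w_2]$, and $\delta$ acts on the degree $n$ part of this polynomial grading by multiplication by $q^{2n}$, which is a unit in $\bZ_q$. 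Hence $\delta$ is an isomorphism after inverting $q$. I expect the only real obstacle to be checking the $X_{1,0}$ decomposition cleanly, since one must correctly handle the three strata (singular, $\det = 1$, $\det \in \bF^{\times} \setminus \{1\}$) and the fact that $Y_s \cong Y_1$ only for $s \ne 0$.
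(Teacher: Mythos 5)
Your proof is correct and follows essentially the same route as the paper: both formulas are obtained by decomposing along the $G(1)$-stable splitting $\bV=\bF^2\oplus\bV(1)$ and stratifying the pairs by the value of the pairing on the $\bF^2$-components (equivalently, by $\det M$, with $q^3-q$ matrices of determinant $1$ contributing copies of $Y_0$ and the remaining $q^4-q^3+q$ contributing copies of $X_{1,0}(1)$), and the coefficient arithmetic checks out. Your explicit diagonalization via the eigenvectors $w_1,w_2$ is a slightly more detailed justification of invertibility than the paper's one-line conclusion, and it matches the paper's subsequent remark on the induced grading of $\BB[1/q]$.
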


\begin{proof}
We have $\bV=\bF^2 \oplus \bV(1)$, and so the restriction of $\bV$ to $G(1)$ is isomorphic to $q^2$ copies of $\bV(1)$. We thus see that $\delta([\bV])=q^2 [\bV]$. Since $[\bV]=1+x_{0,1}$, we obtain the stated formula for $\delta(x_{0,1})$.

Now, $X_{1,0}$ is the set of pairs $(v_1,v_2) \in \bV^2$ such that $\langle v_1, v_2 \rangle=1$. Appealing to the decomposition $\bV=\bF^2 \oplus \bV(1)$, we have
\begin{displaymath}
X_{1,0} = \coprod_{u_1,u_2 \in \bF^2} Z(u_1,u_2),
\end{displaymath}
where $Z(u_1,u_2)$ is the set of pairs $(v_1,v_2) \in \bV(1)^2$ such that
\begin{displaymath}
\langle u_1, u_2 \rangle + \langle v_1, v_2 \rangle = 1.
\end{displaymath}
Now, if $\langle u_1, u_2 \rangle \ne 1$ then $Z(u_1,u_2)$ is isomorphic to $X_{1,0}(1)$. Suppose now that $\langle u_1, u_2 \rangle=1$. Then $Z(u_1,u_2)$ is the set of pair $(v_1,v_2) \in \bV(1)^2$ such that $\langle v_1, v_2 \rangle = 0$. The complement of this set in $\bV(1)^2$ is the set of pairs where the pairing is non-zero, which is isomorphic to $q-1$ copies of $X_{1,0}(1)$. There are $(q^2-1)q$ pairs $(u_1,u_2)$ where the pairing is~1, and $q^4-(q^2-1)q$ where the pairing is not~1. We thus find
\begin{displaymath}
\delta(x_{1,0})=(q^4-q^3+q) x_{1,0} + (q^3-q)((1+x_{0,1})^2-(q-1)x_{1,0}),
\end{displaymath}
which yields the stated formula. Since $\BB[1/q]$ is a polynomial ring in $x_{1,0}$ and $x_{0,1}$, it is clear that $\delta$ is an automorphism of $\BB[1/q]$.
\end{proof}

Since $\hat{\BB}$ is the direct limit of iterates of $\delta$, we find:

\begin{corollary} \label{cor:Sp-inf-burnside}
The map $\BB[1/q] \to \hat{\BB}[1/q]$ is an isomorphism.
\end{corollary}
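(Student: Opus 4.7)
The plan is to reduce the statement to Proposition~\ref{prop:Sp-delta} by unpacking the definition of $\hat{\BB}$ as a direct limit. Recall from \S\ref{ss:burnside} that $\hat{\BB}=\varinjlim_{U}\BB(U)$ where $U$ ranges over open subgroups of $G$. Since the subgroups $G(n)$ form a neighborhood basis of the identity, they are cofinal in the directed system of open subgroups, so the natural map
\begin{displaymath}
\varinjlim_{n \ge 0} \BB(G(n), \bV(n)) \longrightarrow \hat{\BB}
\end{displaymath}
is an isomorphism, with transition maps given by restriction from $\BB(G(n), \bV(n))$ to $\BB(G(n+1), \bV(n+1))$.

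Next I would use the isomorphism $(G(n), \bV(n)) \cong (G, \bV)$ coming from shifting basis vectors to identify each $\BB(G(n), \bV(n))$ with $\BB$. Under these identifications the restriction map becomes the ring homomorphism $\delta \colon \BB \to \BB$ computed in Proposition~\ref{prop:Sp-delta}: this is the statement made in the introduction and requires only checking that restriction is independent of the chosen identification, which follows from the transitivity of the isomorphisms $(G(n), \bV(n)) \cong (G(n+1), \bV(n+1)) \cong (G, \bV)$. Hence $\hat{\BB}$ is identified with the colimit of
\begin{displaymath}
\BB \xrightarrow{\;\delta\;} \BB \xrightarrow{\;\delta\;} \BB \xrightarrow{\;\delta\;} \cdots,
\end{displaymath}
and the natural map $\BB \to \hat{\BB}$ corresponds to the structure map from the first term into this colimit.

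Finally, I would invert $q$ and apply Proposition~\ref{prop:Sp-delta}, which says that $\delta$ is an automorphism of $\BB[1/q]$. The colimit of a sequence in which every transition map is an isomorphism is computed by the first term, with structure map the identity. Therefore $\BB[1/q] \to \hat{\BB}[1/q]$ is an isomorphism, as claimed. There is no real obstacle here given the work already done: the whole content lies in Proposition~\ref{prop:Sp-delta}, and this corollary is a formal consequence obtained by passing to the colimit.
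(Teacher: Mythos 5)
Your proposal is correct and follows exactly the paper's argument: identify $\hat{\BB}$ with the colimit of the system $\BB \xrightarrow{\delta} \BB \xrightarrow{\delta} \cdots$ via the cofinal family $G(n)$ and the identifications $(G(n),\bV(n)) \cong (G,\bV)$, then invoke Proposition~\ref{prop:Sp-delta} to see all transition maps become isomorphisms after inverting $q$. The paper states this in one line; your write-up just makes the (routine) cofinality and identification steps explicit.
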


\begin{remark} \label{rmk:Sp-grading}
Define elements of $\BB[1/q]$ by
\begin{displaymath}
w_1=x_{1,0}-q^{-1} x_{0,1} (x_{0,1}+1), \qquad
w_2=x_{0,1}+1.
\end{displaymath}
As in Remark~\ref{rmk:GL-grading}, these are eigenvectors for $\delta$ of eigenvalue $q^2$, and generate $\BB[1/q]$. Thus $\BB[1/q]$ has a canonical grading.
\end{remark}

\subsection{Transitive sets and smooth approximation} \label{ss:Sp-trans}

We now classify the transitive $G$-sets.

\begin{proposition} \label{prop:Sp-trans}
Every transitive $G$-set is isomorphic to one of the form $X_{m,n}/\Gamma$, where $\Gamma$ is a subgroup of $\Aut(X_{m,n})$.
\end{proposition}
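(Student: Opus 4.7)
The plan is to mirror the argument used for $\GL$ in Proposition~\ref{prop:GL-trans} and its supporting Lemmas~\ref{lem:GL-trans-1}--\ref{lem:GL-trans-4}, checking at each step that the analogous constructions go through with an alternating form in place of a perfect pairing on a graded space. As in the $\GL$ case, it suffices to show the following lemma: if $V \in \fC$ and $R$ is a $G$-stable equivalence relation on $X(V)$, then either $R = R_\Gamma$ for some $\Gamma \subset \Aut(V)$, or $R$ contains $R_W$ for some proper substructure $W \subset V$. Granted this, an induction on $\dim V$ applied to $X_{m,n}/R$ gives the proposition.

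Define the \emph{defect} of a pair $(i,j) \in X(V)^2$ to be the codimension of $i(V) \cap j(V)$ in $i(V)$, and the defect of $R$ to be the maximum defect of a pair in $R$. The argument of Lemma~\ref{lem:GL-trans-2} goes through verbatim: if $R$ has defect~$0$ then $R = R_\Gamma$ where $\Gamma = \{\gamma \in \Aut(V) : (i, i\gamma) \in R \text{ for some } i\}$; the proof uses only $G$-transitivity on $X(V)$ and $G$-stability of $R$. So it remains to handle the positive-defect case.

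The first key step is the analog of Lemma~\ref{lem:GL-trans-3}: if $R$ has defect~$d \ge 1$ and $0 \le d' \le d$, there is a pair $(i',j') \in R$ of defect $d'$ agreeing on a codimension-$d'$ substructure $W$ of $V$. The construction takes $(i,j) = (i, gi) \in R$, fixes $n$ large enough that $i(V), j(V) \subset \bV^{\le n}$, chooses a codimension-$d'$ subspace $W$ of $V$ with $i(W) \supset i(V) \cap j(V)$, lets $U = i(V) + g^{-1} i(W)$, and picks $h \in G$ fixing each element of $U$ with the kernel of $\bV^{\le n} \xrightarrow{h} \bV \to \bV/\bV^{\le n}$ exactly $U$. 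The existence of such $h$ reduces to producing an injective linear map $a \colon U' \to \bV^{>n}$ (where $U'$ is a complement to $U$ in $\bV^{\le n}$) with \emph{totally isotropic} image; since $\bV^{>n}$ is infinite-dimensional and symplectic, such a map exists. The map $x \mapsto x + a(x)$ then preserves the alternating form: the cross terms $\langle x, a(y)\rangle$ vanish because $\bV^{\le n} \perp \bV^{>n}$, and the $\langle a(x), a(y)\rangle$ term vanishes by total isotropy. Homogeneity of $\bV$ then lifts this embedding to some $h \in G$. Taking $(i', j') = (i, ghg^{-1} i)$ and checking $i'(V) \cap j'(V) = i'(W)$ as in the $\GL$ proof completes this step.

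The final step, adapting Lemma~\ref{lem:GL-trans-4}, uses a defect-$1$ pair $(i,j) \in R$ agreeing on a codim-$1$ substructure $W$ to show $R \supseteq R_W$. Given another defect-$1$ pair $(i,j')$ agreeing on $W$, with homogeneous basis $x_1,\dots,x_{n-1}$ of $W$ and $x_n \in V \setminus W$, define the linear map $a \colon i(V) + j(V) \to i(V) + j'(V)$ fixing each $i(x_\alpha)$ and sending $j(x_n) \mapsto j'(x_n)$. The verification that $a$ preserves the alternating form requires checking $\langle i(x_\alpha), j(x_n)\rangle = \langle i(x_\alpha), j'(x_n)\rangle$ for $\alpha<n$ (which follows from $i|_W = j|_W = j'|_W$) and $\langle j(x_n), j(x_n)\rangle = \langle j'(x_n), j'(x_n)\rangle$ (which is automatic since both are zero by alternating-ness). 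Homogeneity of $\bV$ produces $g \in G$ inducing $a$, so $(i,j') \in R$. The reduction from arbitrary defect-$0$ pairs agreeing on $W$ to the defect-$1$ case proceeds exactly as in the $\GL$ proof. The main obstacle, if any, is purely bookkeeping: making sure the alternating form gives zero on the ``diagonal'' where the $\GL$ argument used the vanishing that came from matching gradings, which here comes for free from $\langle v, v \rangle = 0$.
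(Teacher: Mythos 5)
Your adaptation of Lemmas~\ref{lem:GL-trans-1}--\ref{lem:GL-trans-3} is fine and matches what the paper does (those lemmas carry over with essentially identical proofs, and your construction of the isotropic lift $a \colon U' \to \bV^{>n}$ is correct). The gap is in the final step, and it is exactly the one point where the symplectic case genuinely differs from $\GL$. When you verify that the map $a \colon i(V)+j(V) \to i(V)+j'(V)$ preserves the form, you list the checks $\langle i(x_\alpha), j(x_n)\rangle = \langle i(x_\alpha), j'(x_n)\rangle$ for $\alpha < n$ and $\langle j(x_n),j(x_n)\rangle = \langle j'(x_n),j'(x_n)\rangle$, but you omit the case $\alpha = n$: you must also have $\langle i(x_n), j(x_n)\rangle = \langle i(x_n), j'(x_n)\rangle$. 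In the $\GL$ proof this vanishes because $i(x_n)$ and $j(x_n)$ have the same parity; it does \emph{not} come for free from $\langle v,v\rangle = 0$, since $i(x_n)$ and $j(x_n)$ are distinct vectors (the pair has defect~1). The quantity $\langle j(x_n), i(x_n)\rangle$ is an invariant of the $G$-orbit of the pair $(i,j)$, so the orbit of your single pair $(i,j)$ only reaches those $(i,j')$ with the matching pairing value, and your argument does not show $R \supseteq R_W$.

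To close the gap you need an extra step: starting from $(i,j) \in R$, produce for every $a \in \bF$ a pair $(i,j_a) \in R$ of defect~1 agreeing with $i$ on $W$ and satisfying $\langle j_a(x_n), i(x_n)\rangle = a$. This is what the paper does: using non-degeneracy of the form on $\bV$, one finds $y_a \in \bV$ realizing a prescribed linear functional on $i(V)+j(V)$, builds an isomorphism of structures sending $i \mapsto j$ and $j \mapsto j_a$ (with $j_a(x_n) = y_a$), lifts it to $g_a \in G$ by homogeneity, and then uses $G$-stability plus transitivity of $R$ (from $(i,j)$ and $(j,j_a) = g_a(i,j)$ deduce $(i,j_a) \in R$). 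With all the pairs $(i,j_a)$ in hand, any defect-1 pair $(i,j')$ agreeing on $W$ lies in the $G$-orbit of $(i,j_a)$ for $a = \langle j'(x_n), i(x_n)\rangle$, and the rest of your argument goes through.
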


\begin{proof}
The proof is similar to that of Proposition~\ref{prop:GL-trans}. We follow the same strategy of classifying equivalence relations, and use the same definition of defect. Lemmas~\ref{lem:GL-trans-1}, \ref{lem:GL-trans-2}, and~\ref{lem:GL-trans-3} remain valid with essentially identical proofs. The proof of Lemma~\ref{lem:GL-trans-4} requires some additional remarks.

Let $V$ be an object of $\fC$ and let $R$ be an equivalence relation on $X(V)$ of positive defect. Then there is a codimension~1 subspace $W$ of $V$ and a pair $(i,j) \in R$ such that $i(x)=j(x)$ for all $x \in W$. We must show that $R$ contains $R_W$. Let $x_1, \ldots, x_n$ be a basis of $V$ such that $x_1, \ldots, x_{n-1}$ is a basis of $W$. In the $\GL$-case, the pairing between $i(x_n)$ and $j(x_n)$ vanished since the two elements had the same parity. This need not be the case anymore, and introduces additional complications: indeed, this constrains which pairs $(i',j')$ can be in the $G$-orbit of $(i,j)$. We explain how to deal with this issue.

Let $a \in \bF$ be given. Let $U=i(V)+j(V)$, and define a linear functional $\lambda_a \colon U \to \bF$ by
\begin{displaymath}
\lambda_a(i(x_{\ell}) \rangle = \langle j(x_n), i(x_{\ell}) \rangle, \quad
\lambda_a(i(x_n) \rangle = a, \quad
\lambda_a(j(x_n) \rangle = \langle j(x_n), i(x_n) \rangle.
\end{displaymath}
Here, and in what follows, $\ell$ is an index in $\{1, \ldots, n-1\}$. Since the pairing on $\bV$ is non-degenerate, there is $y_a \in \bV$ such that $\lambda_a(-)=\langle y_a, - \rangle$. Let $U_a$ be the subspace of $\bV$ spanned by the $i(x_{\ell})$, $j(x_n)$, and $y_a$. We have a linear isomorphism $U \to U_a$ given by
\begin{displaymath}
i(x_{\ell}) \mapsto i(x_{\ell}), \quad
i(x_n) \mapsto j(x_n), \quad
j(x_n) \mapsto y_a.
\end{displaymath}
This map is an isomorphism in $\fC$ by our choice of $y_a$. It follows that there is $g_a \in G$ inducing this isomorphism. We have $j=g_a i$. Put $j_a=g_a j$; note $j_a(x_n)=y_a$. Since $(i,j)$ belongs to $R$, so does $g_a(i,j)=(j,j_a)$, and thus so does $(i,j_a)$ by transitivity. The pair $(i,j_a)$ has defect~1, the embeddings $i$ and $j_a$ agree on $W$, and they satisfy $\langle j_a(x_n), i(x_n) \rangle = a$.

We can now resume following the proof of Lemma~\ref{lem:GL-trans-4}, using an appropriate $j_a$ when required. To say a bit more, suppose $j' \colon V \to \bV$ is an embedding that agrees with $i$ on $W$, and such that $(i,j')$ has defect~1. Let $a=\langle j'(x_n), i(x_n) \rangle$. Then, following the proof of Lemma~\ref{lem:GL-trans-4}, we see that $(i,j')$ belongs to the $G$-orbit of $(i,j_a)$, and thus belongs to $R$. The remainder of the proof is exactly the same.
\end{proof}

\begin{corollary}
The stabilizer class $\sE(\bV)$ is large.
\end{corollary}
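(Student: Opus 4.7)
The plan is to derive largeness of $\sE(\bV)$ as a direct translation of Proposition~\ref{prop:Sp-trans} into the language of stabilizer classes, so essentially all the structural work is already done. Recall that $\sE(\bV)$ consists of subgroups of the form $G(A)$ for $A \subset \bV$ finite, and largeness means that every open subgroup $U \subset G$ contains, with finite index, some conjugate of a member of $\sE(\bV)$. First I would reformulate largeness in terms of covers: an open $U$ corresponds to the transitive $G$-set $G/U$, and the condition I must verify is that $G/U$ is a finite quotient of some $\bV$-smooth transitive $G$-set $X_{m,n}$.

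The main step is then to apply Proposition~\ref{prop:Sp-trans} directly. Given an open subgroup $U$, I write $G/U \cong X_{m,n}/\Gamma$ for some $m,n \ge 0$ and some subgroup $\Gamma \subset \Aut_G(X_{m,n})$. The automorphism group $\Aut_G(X_{m,n})$ is finite (being the mark set $\Hom_G(X_{m,n}, X_{m,n})$, which is finite by the general property of finitary $G$-sets for oligomorphic groups, cf.\ \cite[Proposition~2.8]{repst}), so $\Gamma$ is finite and the quotient map $X_{m,n} \to X_{m,n}/\Gamma$ has fibers of uniformly bounded size $|\Gamma|$. Translating back to subgroups, fix an isomorphism $X_{m,n} \cong G/U_{m,n}$ with $U_{m,n} = G(A_{m,n}) \in \sE(\bV)$ the stabilizer of a standard embedding $V_{m,n} \hookrightarrow \bV$, and let $g \in G$ be such that the $G$-surjection $G/U_{m,n} \to G/U$ sends the identity coset to $gU$. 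Then $U_{m,n} \subset gUg^{-1}$ with index equal to the fiber size $|\Gamma| < \infty$. Conjugating by $g^{-1}$ and using closure of stabilizer classes under conjugation, $g^{-1}U_{m,n}g \in \sE(\bV)$ is a finite-index subgroup of $U$, which is the largeness condition.

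I do not expect a real obstacle: the only non-formal input is the finiteness of $\Aut_G(X_{m,n})$, which is an immediate consequence of oligomorphy and is used throughout \cite{repst}. The bulk of the work — showing that every transitive $G$-set arises as a quotient $X_{m,n}/\Gamma$ rather than as a more exotic quotient — was already carried out in Proposition~\ref{prop:Sp-trans} via the defect analysis, so this corollary amounts to unwinding definitions.
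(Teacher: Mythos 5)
Your proof is correct and is essentially the paper's own (implicit) argument: the paper derives largeness from Proposition~\ref{prop:Sp-trans} in exactly this way, noting (in the parallel $\GL$ discussion around Corollary~\ref{cor:GL-large}) that the proposition is "nearly equivalent" to the statement that every open subgroup contains a conjugate of some $U_{m,n}$ with finite index. The only minor imprecision is that the index equals the common size of the $\Gamma$-orbits on $X_{m,n}$, which is at most $|\Gamma|$ rather than exactly $|\Gamma|$; this does not affect the conclusion.
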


We next show that $G$ is smoothly approximable.

\begin{proposition} \label{prop:Sp-smooth}
The $G$-set $X_{a,b}$ is simply connected for all $a$, $b$. The family $\{X_{m,0}\}_{m \ge 0}$ is a smooth approximation of $G$.
\end{proposition}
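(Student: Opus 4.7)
The plan is to follow the same two-step strategy used for the general linear group in Propositions~\ref{prop:GL-simply-conn} and~\ref{prop:GL-smooth}, using the explicit formula for $P_{a,b}$ from Proposition~\ref{prop:Sp-P} and Witt's theorem on embeddings of symplectic spaces.

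For simple connectedness, I first reduce to the case of a finite cover of the form $i^* \colon X_{a',b'} \to X_{a,b}$ induced by an embedding $i \colon V_{a,b} \to V_{a',b'}$. By Proposition~\ref{prop:fincover}, there exists $h$ such that every embedding $V_{a,b} \to V_{m,n}$ extends to $V_{a',b'}$ in at most $h$ ways, giving $N_{a',b'}^{m,n} \le h \cdot N_{a,b}^{m,n}$ for all $m,n$. Specializing to $n=0$ and using Proposition~\ref{prop:Sp-P}, we have $P_{a,b}(q^{2m})$ of degree $2a+b$ in $q^{2m}$ (and $P_{a',b'}(q^{2m})$ of degree $2a'+b'$). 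The bound thus forces $2a'+b' \le 2a+b$, i.e.\ $\dim V_{a',b'} \le \dim V_{a,b}$, so $i$ is an isomorphism and the cover is trivial. To handle a general finite cover $f \colon X \to X_{a,b}$, I invoke the fact that $\sE(\bV)$ is large (Corollary following Proposition~\ref{prop:Sp-trans}) to find a finite cover $g \colon X_{a',b'} \to X$; then $fg$ is an isomorphism by the special case, so $f$ itself is, as in the GL argument.

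For the second assertion, I need to verify the three hypotheses of a smooth approximation. First, each structure $V_{m,0}$ is homogeneous in the sense of \S\ref{ss:homo}: this is the classical Witt extension theorem for non-degenerate symplectic spaces, which says any isometric embedding of a subspace extends to an automorphism of the ambient non-degenerate space. By Proposition~\ref{prop:struct-equiv}, this translates to $X_{m,0}$ being homogeneous relative to $\sE(\bV)$. Combined with simple connectedness from the first part, Proposition~\ref{prop:rel-homo} upgrades this to absolute homogeneity in $\bS(G)$. Finally, every structure $V_{a,b}$ embeds in all but finitely many $V_{m,0}$: from the explicit formula for $P_{a,b}(t)$ in Proposition~\ref{prop:Sp-P}, for $m \ge a+b$ each factor $q^{2m}-q^{2i}$ with $0 \le i \le a+b-1$ is positive, so $N_{a,b}^{m,0}=P_{a,b}(q^{2m})>0$. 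This completes all conditions required for $\{X_{m,0}\}_{m\ge 0}$ to be a smooth approximation.

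The only step with any real content beyond the GL analog is the verification of Witt's theorem in the symplectic setting, but this is entirely standard, and the rest is a direct transcription. The expected main technical obstacle, namely classifying transitive $G$-sets and proving the stabilizer class is large, has already been carried out in Proposition~\ref{prop:Sp-trans}; with that result available, the present proposition reduces to the polynomial degree computation and an appeal to Witt.
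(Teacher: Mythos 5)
Your proposal is correct and is exactly the argument the paper intends: the paper's own proof of this proposition consists of the single line that it is ``similar to those of Propositions~\ref{prop:GL-simply-conn} and~\ref{prop:GL-smooth},'' and your transcription (degree comparison via Proposition~\ref{prop:Sp-P}, largeness from Proposition~\ref{prop:Sp-trans}, Witt's theorem for homogeneity, Proposition~\ref{prop:rel-homo}, and positivity of $P_{a,b}(q^{2m})$ for large $m$) fills in precisely the intended details.
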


\begin{proof}
The proofs are similar to those of Propositions~\ref{prop:GL-simply-conn} and~\ref{prop:GL-smooth}.
\end{proof}

\subsection{Measures}

Let $N^{*,0}_{a,b}$ be the function on $\bN$ given by $m \mapsto N^{m,0}_{a,b}$. Since $\{V_{m,0}\}_{m \ge 0}$ is a smooth approximation of $G$ (Proposition~\ref{prop:Sp-smooth}), by \S \ref{ss:count} we obtain a counting measure
\begin{displaymath}
\Theta(G; \bV) \to \Fun^{\circ}(\bN, \bZ), \qquad x_{a,b} \mapsto N^{*,0}_{a,b}.
\end{displaymath}
Since $N^{m,0}_{a,b}=P_{a,b}(q^m)$, it follows that we have a measure
\begin{displaymath}
\phi \colon \Theta(G; \bV) \to \bZ_q[t], \qquad
\phi(x_{a,b})=P_{a,b}(t).
\end{displaymath}
This measure is universal:

\begin{proposition} \label{prop:Sp-Theta}
The map $\phi$ is an isomorphism after inverting $q$.
\end{proposition}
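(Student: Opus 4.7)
The plan is to adapt the proof of Theorem~\ref{thm:GL-measure}. By Corollary~\ref{cor:Sp-inf-burnside} and Proposition~\ref{prop:Sp-burnside}, $\Theta(G;\bV)[1/q]$ is a quotient of $\hat{\BB}[1/q] = \BB[1/q] = \bZ_q[x_{1,0}, x_{0,1}]$, so it is generated as a $\bZ_q$-algebra by the images $a$ and $b$ of $x_{1,0}$ and $x_{0,1}$. Under $\phi$, the generator $b = [X_{0,1}]$ maps to $P_{0,1}(t) = t-1$, which freely generates $\bZ_q[t]$; so if we can show that $a$ lies in the subring generated by $b$, the composition
\[
\bZ_q[b] \twoheadrightarrow \Theta(G;\bV)[1/q] \xrightarrow{\phi} \bZ_q[t]
\]
will be an isomorphism, forcing both arrows to be isomorphisms. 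It therefore suffices to produce a relation expressing $a$ as a polynomial in $b$.

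To this end, consider the $G$-equivariant projection $\pi \colon X_{1,0} \to X_{0,1}$ sending $(v_1,v_2) \mapsto v_1$, where $X_{1,0}$ is identified with the set of pairs $(v_1,v_2) \in \bV^2$ with $\langle v_1,v_2 \rangle = 1$. Since $\pi$ is a map of transitive $G$-sets, the measure axioms give $a = \mu(F) \cdot b$, where $F \subset \bV$ is the fiber over $e_1$, namely $\{v \in \bV : \langle e_1, v \rangle = 1\}$. Writing any such $v$ uniquely as $v = \alpha e_1 + f_1 + w$ with $\alpha \in \bF$ and $w \in \bV(1)$ yields a $G(1)$-equivariant isomorphism $F \cong \bF \times \bV(1)$, since $G(1)$ fixes both $e_1$ and $f_1$ and stabilizes $\bV(1)$. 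Hence $\mu(F) = q \cdot \mu(\bV(1))$.

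To compute $\mu(\bV(1))$, decompose $\bV$ as a $G(1)$-set via $\bV = \bigsqcup_{\alpha,\beta \in \bF}(\alpha e_1 + \beta f_1 + \bV(1))$, where each coset is $G(1)$-equivariantly isomorphic to $\bV(1)$. This gives $\mu(\bV) = q^2 \mu(\bV(1))$. Combined with $\mu(\bV) = 1 + b$ (from the orbit decomposition $\bV = \{0\} \sqcup X_{0,1}$), we obtain $\mu(\bV(1)) = (1+b)/q^2$, hence $\mu(F) = (1+b)/q$, and therefore $a = b(b+1)/q$.

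The principal obstacle is the fiber calculation in the two preceding paragraphs: one must track carefully how the measure behaves under restriction to $G(1)$ and how the $G(1)$-orbit decompositions of $\bV$ and $F$ yield the relevant powers of $q$. Once this single relation is established, the rest of the argument is formal, and the situation is strictly simpler than in the $\GL$ case since we have only two generators to begin with and no symmetry argument is needed.
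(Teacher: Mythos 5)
Your proposal is correct and follows essentially the same route as the paper: generation of $\Theta(G;\bV)[1/q]$ by $a=[X_{1,0}]$ and $b=[X_{0,1}]$ via the Burnside ring computations, the relation $a=q^{-1}b(b+1)$ obtained from the fiber of $(v_1,v_2)\mapsto v_1$ over $e_1$ together with $[\bV]=q^2[\bV(1)]$, and the conclusion that $b\mapsto t-1$ freely generates $\bZ_q[t]$. No gaps.
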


\begin{proof}
The proof is similar to that of Theorem~\ref{thm:GL-measure}. Let $a=[X_{1,0}]$ and $b=[X_{0,1}]$. These classes generate $\Theta(G; \bV)[1/q]$ as a $\bZ_q$-algebra. We now produce a relation between them

Identify $X_{1,0}$ with the set of pairs $(v_1,v_2)$ in $\bV^2$ such that $\langle v_1, v_2 \rangle=1$, and $X_{0,1}$ with $\bV \setminus \{0\}$. We have a map of $G$-sets $X_{1,0} \to X_{0,1}$ via $(v_1,v_2) \mapsto v_1$. Let $F$ be the fiber of this map over $e_1$. Then
\begin{displaymath}
[X_{1,0}]=[F] [X_{0,1}]
\end{displaymath}
holds in $\Theta(G, \bV)$. Now, $F$ is identified with the set of vectors of the form $ae_1+f_1+x$, where $a \in \bF$ is arbitrary and $x \in \bV(1)$. Thus $[F]=q[\bV(1)]$. On the other hand, $[\bV]=q^2 [\bV(1)]$, and so $[F]=q^{-1} [\bV]$. As $[\bV]=b+1$, we find
\begin{displaymath}
a=q^{-1}b(b+1)
\end{displaymath}
In particular, $b$ generates $\Theta(G, \bV)[1/q]$ as a $\bZ_q$-algebra. Since $\mu_t(b)=P_{0,1}=t-1$, the result follows.
\end{proof}

\begin{corollary}
The map $\phi$ induces an isomorphism $\Theta(G) \otimes \bQ \to \bQ[t]$.
\end{corollary}

\begin{proof}
This follows since $\sE(\bV)$ is large; see \S \ref{ss:stab} and \S \ref{ss:Sp-trans}.
\end{proof}

Given a field $k$ of characteristic~0 and an element $t \in k$, we let
\begin{displaymath}
\mu_t \colon \Theta(G) \to k
\end{displaymath}
by the measure obtained by composing the isomorphism $\phi$ with the homomorphism $\bQ[t] \to k$ mapping $t$ to $t$. The analysis of $\Theta(G)$ shows that $\mu_t$ is the unique $k$-valued measure satisfying $\mu_t(X_{0,1})=t-1$, or equivalently, satisfying $\mu_t(\bV)=t$. The restriction of $\mu_t$ to $G(n)$ is $\mu_{t/q^{2n}}$, after identifying $G(n)$ with $G$.

\begin{proposition} \label{prop:Sp-reg}
The measure $\mu_t$ is regular if and only if $t \not\in \{0,q^{2i}\}_{i \ge 0}$, and is quasi-regular if and only if $t \ne 0$.
\end{proposition}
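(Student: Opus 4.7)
The plan is to mimic the proof of Proposition~\ref{prop:GL-reg}, using the explicit formula from Proposition~\ref{prop:Sp-P} together with the classification of transitive $G$-sets (Proposition~\ref{prop:Sp-trans}). Regularity asks that $\mu_t(X)$ be a unit in the field $k$ for every transitive $G$-set $X$, not just for the $\bV$-smooth ones $X_{a,b}$, so we must first reduce to the smooth case.

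First I would verify that on $\bV$-smooth transitive sets, $\mu_t(X_{a,b}) = P_{a,b}(t)$; this is immediate from the construction of $\mu_t$ via the smooth approximation $\{X_{m,0}\}$ and the identity $N^{m,0}_{a,b} = P_{a,b}(q^m)$. Next, using Proposition~\ref{prop:Sp-trans}, any transitive $G$-set has the form $X_{m,n}/\Gamma$ with $\Gamma \subset \Aut_G(X_{m,n}) = N_G(U_{m,n})/U_{m,n}$, and this $\Gamma$ acts freely on $X_{m,n}$ by the standard calculation for $G/U$. Thus $X_{m,n} \to X_{m,n}/\Gamma$ is a finite covering with all fibers of size $|\Gamma|$, which gives $\mu_t(X_{m,n}) = |\Gamma| \cdot \mu_t(X_{m,n}/\Gamma)$. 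Since $k$ has characteristic~$0$, $|\Gamma|$ is already a unit, so $\mu_t(X_{m,n}/\Gamma)$ is a unit of $k$ if and only if $P_{m,n}(t)$ is.

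Now I would apply Proposition~\ref{prop:Sp-P}. The product $\prod_{i=0}^{m+n-1}(t-q^{2i})$ vanishes precisely when $t = q^{2i}$ for some $0 \le i \le m+n-1$, and the factor $t^m$ vanishes precisely when $t=0$ (and $m \ge 1$). Taking the union over all $m,n \ge 0$, we see that $P_{m,n}(t) \ne 0$ for every $m,n$ iff $t \notin \{0\} \cup \{q^{2i}\}_{i \ge 0}$. This establishes part (a).

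For (b), recall that the restriction of $\mu_t$ to $G(n)$ is $\mu_{t/q^{2n}}$ under the canonical identification $G(n) \cong G$, and the subgroups $G(n)$ are cofinal among open subgroups. If $t = 0$, then $\mu_{t/q^{2n}} = \mu_0$ for all $n$, and $\mu_0$ is not regular by (a), so $\mu_t$ is not quasi-regular. Conversely, if $t \ne 0$, then $t/q^{2n}$ is a unit and, for all $n$ sufficiently large, $t/q^{2n} \notin \{q^{2i}\}_{i \ge 0}$ (only finitely many $n$ make $t/q^{2n}$ a nonnegative power of $q^2$). For any such $n$, part (a) gives regularity of $\mu_{t/q^{2n}}$, so $\mu_t$ is quasi-regular. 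The only step requiring care is the free-action reduction in the first paragraph; the rest is a direct application of the explicit formula.
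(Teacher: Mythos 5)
Your proof is correct and follows essentially the same route as the paper, which simply invokes the argument of Proposition~\ref{prop:GL-reg}: read off the zero set of the polynomials $P_{a,b}$ from Proposition~\ref{prop:Sp-P} for regularity, and use that the restriction of $\mu_t$ to the cofinal family $G(n)$ is $\mu_{t/q^{2n}}$ for quasi-regularity. Your explicit reduction from arbitrary transitive $G$-sets to the $\bV$-smooth ones via Proposition~\ref{prop:Sp-trans} and the free $\Gamma$-action is a detail the paper leaves implicit, and it is handled correctly.
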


\begin{proof}
The proof is similar to that of Proposition~\ref{prop:GL-reg}.
\end{proof}

We now discuss ultraproduct measures, as in \S \ref{ss:GL-ultra-meas}. The following is the main result we need along these lines:

\begin{proposition} \label{prop:Sp-ultra-meas}
Let $k$ be a field of characteristic~0 and let $t \in k$ be non-zero. Then $\mu_t$ is equivalent to some ultraproduct measure, and in particular, satisfies (Nil).
\end{proposition}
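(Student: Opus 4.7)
The plan mirrors the proof of the analogous statement for $\GL$, namely Proposition~\ref{prop:ultra-equiv}. The first step is to identify the parameter of an ultraproduct measure in the symplectic setting, following the template of Proposition~\ref{prop:GL-ultra-meas-param}. Fix algebraically closed fields $k_m$ for $m \in \bN$ of some characteristics, let $\cF$ be a non-principal ultrafilter on $\bN$, and let $k$ be the ultraproduct; assume $k$ has characteristic~0. The smooth approximation $\{X_{m,0}\}_{m \ge 0}$ has $\Aut_G(X_{m,0}) = \Sp_{2m}(\bF)$, and the set of embeddings $V_{0,1} \to V_{m,0}$ is just the set of nonzero vectors in a $2m$-dimensional space, hence has size $q^{2m}-1$. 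So the ultraproduct measure $\nu$ satisfies $\nu(x_{0,1}) = (q^{2m}-1)_{m \ge 0} = \tau - 1$, where $\tau \in k$ is the class of $(q^{2m})_{m \ge 0}$. By the universality established in Proposition~\ref{prop:Sp-Theta}, this gives $\nu = \mu_\tau$.

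The second step is to realize any given non-zero $t \in k$ (up to equivalence) as such a $\tau$. If $t$ is transcendental over $\bQ$, take every $k_m$ of characteristic~0; then $\tau$ is transcendental (since it is not algebraic of any fixed degree on a cofinite set), and $\mu_t$ and $\mu_\tau$ are equivalent. If $t$ is algebraic, let $K = \bQ(t)$ and apply Lemma~\ref{lem:ultra-equiv-2} with $a = t$ and $b = q^2$ (the lemma already allows arbitrary integer base $b \ge 2$). This yields strictly increasing integers $n_1 < n_2 < \cdots$ and prime ideals $\fp_i$ of $\cO_K$ of increasing residue characteristic with $t \equiv (q^2)^{n_i} \pmod{\fp_i}$. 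Set $k_{n_i}$ to be an algebraic closure of $\cO_K/\fp_i$, choose the remaining $k_m$ arbitrarily in characteristic~0, and let $\cF$ be any non-principal ultrafilter containing $S = \{n_1, n_2, \ldots\}$. Then $h(\tau) = 0$ in $k$ for the minimal polynomial $h$ of $t$, since this equation holds in each $k_{n_i}$; hence $\tau$ and $t$ share a minimal polynomial, so $\mu_t \sim \mu_\tau$.

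The final step is a formal deduction: by Corollary~\ref{cor:ultra-nil}, every ultraproduct measure satisfies (Nil); by Proposition~\ref{prop:nil}, the condition (Nil) depends only on the kernel of $\mu \colon \Theta(G) \to k$, i.e., only on the equivalence class of the measure. Combining these gives that $\mu_t$ satisfies (Nil).

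The only conceivable obstacle is the number-theoretic input, but since Lemma~\ref{lem:ultra-equiv-2} is already proved for arbitrary integer base $b \ge 2$, substituting $b = q^2$ requires no new work; thus the symplectic case reduces cleanly to the $\GL$ argument with $q$ replaced by $q^2$ throughout.
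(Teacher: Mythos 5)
Your proposal is correct and follows exactly the route the paper takes: identify the ultraproduct measure for the smooth approximation $\{X_{m,0}\}_{m\ge 0}$ as $\mu_\tau$ with $\tau=(q^{2m})_{m\ge 0}$ (the symplectic analogue of Proposition~\ref{prop:GL-ultra-meas-param}), then repeat the argument of Proposition~\ref{prop:ultra-equiv} with Lemma~\ref{lem:ultra-equiv-2} applied to base $q^2$, and deduce (Nil) from Corollary~\ref{cor:ultra-nil} and Proposition~\ref{prop:nil}. The paper's proof is a compressed version of precisely this; no discrepancies.
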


\begin{proof}
Let $\kappa_n$, for $n \in \bN$, be a field, let $\cF$ be a non-principal ultrafilter on $\bN$, and let $\kappa$ be the ultraproduct field, which we assume to have characteristic~0. Let $\nu$ be the ultraproduct measure associated to the smooth approximation $\{X_{m,0}\}_{m \ge 0}$, as in \S \ref{ss:ultra}. Similar to Proposition~\ref{prop:GL-ultra-meas-param}, we have $\nu=\mu_{\tau}$, where $\tau$ is the element of $\kappa$ defined by the sequence $(q^{2n})_{n \ge 0}$. The remainder of the proof is similar to that of Proposition~\ref{prop:ultra-equiv}.
\end{proof}

\subsection{Tensor categories}

For the remainder of \S \ref{s:Sp}, we fix an algebraically closed field $k$ of characteristic~0 and a non-zero element $t \in k$. We say that $t$ is a \defn{regular parameter} if $\mu_t$ is regular, which simply means that $t$ is not an even power of $q$. Let $\uPerm_k(\Sp_t)$ be the category $\uPerm_k(G, \mu_t)$, and let $\uRep_k(\Sp_t)$ be the category $\uRep_k(G, \mu_t)$. We omit the $k$ when possible.

\begin{theorem}
For $t \ne 0$, the category $\uRep(\Sp_t)$ is pre-Tannakian, and the abelian envelope of $\uPerm(\Sp_t)$. If $t$ is a regular parameter then $\uRep(\Sp_t)$ is semi-simple.
\end{theorem}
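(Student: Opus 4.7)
The plan is to invoke the general abelian envelope machinery from \S\ref{ss:abenv} after verifying its two hypotheses, namely quasi-regularity and (Nil), both of which have essentially been established in the preceding subsections.

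First I would note that for any $t \neq 0$, Proposition~\ref{prop:Sp-reg} gives that $\mu_t$ is quasi-regular, and Proposition~\ref{prop:Sp-ultra-meas} gives that $\mu_t$ satisfies (Nil) (since the ultraproduct categories live inside a rigid $k$-linear tensor category where traces of nilpotents vanish, and this property transfers through the equivalence of measures via Proposition~\ref{prop:nil}). These are precisely the two hypotheses of the second theorem in \S\ref{ss:abenv}, so applying it yields that $\uRep(\Sp_t)$ is a pre-Tannakian category and is the abelian envelope of $\uPerm(\Sp_t)$.

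For the second statement, assume $t$ is a regular parameter, i.e., $t$ is not of the form $q^{2i}$ for $i \in \bN$. Then Proposition~\ref{prop:Sp-reg} upgrades quasi-regularity to regularity of $\mu_t$. The first theorem of \S\ref{ss:abenv} applies and shows that the Karoubi envelope of $\uPerm(\Sp_t)$ is a semi-simple pre-Tannakian category. Since $\mu_t$ is regular and satisfies (Nil), the remark at the end of \S\ref{ss:abenv} identifies $\uRep(\Sp_t)$ with this Karoubi envelope, giving semi-simplicity.

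There is no real obstacle: this is a bookkeeping argument that simply collects the work already done. The only thing worth checking carefully is that the ultraproduct construction used to verify (Nil) indeed produces a measure equivalent to $\mu_t$ for \emph{every} non-zero $t$, which was handled in Proposition~\ref{prop:Sp-ultra-meas} by the symplectic analog of Laxton's theorem. All the substantive work, namely computing $\Theta(G)$, constructing smooth approximations, and producing ultraproduct realizations, has been done upstream, so the proof of this theorem itself reduces to a two-line citation of Propositions~\ref{prop:Sp-reg} and~\ref{prop:Sp-ultra-meas} followed by the two theorems of \S\ref{ss:abenv}.
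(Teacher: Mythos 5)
Your proposal is correct and follows exactly the route the paper takes: the paper's proof is a one-line citation of the general theory of \S\ref{ss:abenv} together with Propositions~\ref{prop:Sp-reg} and~\ref{prop:Sp-ultra-meas}, and your verification of quasi-regularity, (Nil), and the regular/semi-simple case via the Karoubi envelope is precisely the intended unpacking of that citation.
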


\begin{proof}
This follows from the general theory in \S \ref{ss:abenv}, combined with Proposition~\ref{prop:Sp-reg} and Proposition~\ref{prop:Sp-ultra-meas}.
\end{proof}

Let $\bV_0$ be the subspace of $\bV$ spanned by the $e_i$'s, and let $\bV_1$ be the subspace spanned by the $f_i$'s. The symplectic form on $\bV$ induces a pairing $\bV_0 \times \bV_1 \to \bF$. Thus $\bV$ can be regarded as an object in the category of structures for $\GL$ defined in \S \ref{ss:GL-setup}. Let $H$ be the automorphism group of $\bV$ in this category (i.e., the infinite general linear group). Elements of $H$ preserve the symplectic form on $\bV$, and so $H$ is a subgroup of $G$.

\begin{proposition} \label{prop:Sp-res-GL}
Let $u \in k$ be a square root of $t$. Then inclusion $H \subset G$ induces a faithful and exact tensor functor
\begin{displaymath}
\uRep(\Sp_t) \to \uRep(\GL_u), \qquad \cC(\bV) \to \cC(\bV)
\end{displaymath}
\end{proposition}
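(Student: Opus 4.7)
The plan is to apply Proposition~\ref{prop:res-ab} to the inclusion $H \hookrightarrow G$, with $\nu = \mu_u$ on $H$ and the measure $\mu$ on $G$ being the one produced by pullback. There are four things to check: condition $(\dag)$ of \S\ref{ss:pullback}, condition $(\ast)$ of Proposition~\ref{prop:pullback}, the identification of the pulled-back measure with $\mu_t$, and the hypotheses (quasi-regular + (Nil)) needed for the abelian envelope version.

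For condition~$(\dag)$, it suffices to verify that $\bV$ is a finitary $H$-set, since $(G,\bV)$ is oligomorphic. Writing $v=v_0+v_1$ with $v_i\in\bV_i$, the $H$-orbit of $v$ is determined by which of $v_0,v_1$ vanish and by $\langle v_0,v_1\rangle \in \bF$ when both are non-zero; this yields $q+3$ orbits. For condition~$(\ast)$, I will use the criterion in Proposition~\ref{prop:pullback-crit}: let $G'\subseteq G$ be the elements with open centralizer in $H$. Any element $g\in\Sp(V_N)\subset G$ (extended by the identity on $\bV(N)$) is centralized by the pointwise stabilizer $H(N)$ of $e_1,f_1,\dots,e_N,f_N$ in $H$, because $H(N)$ fixes $V_N$ pointwise (using the non-degeneracy of the $\bV_0\times\bV_1$ pairing together with the fact that $H(N)$ also fixes the $f_i$) and preserves $\bV(N)$ setwise. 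Hence $G'\supseteq \bigcup_N \Sp(V_N)$. Density of the latter in $G$ is a Witt extension argument: any coset $g G(n)$ is determined by the images of $e_1,f_1,\dots,e_n,f_n$, which form a symplectic basis of a $2n$-dimensional non-degenerate subspace $W\subset V_N$ for $N$ large; Witt's theorem extends this to a symplectic basis of $V_N$, yielding an element of $\Sp(V_N)$ (extended by the identity) representing the coset. Thus $\langle G',H\rangle$ is dense in $G$.

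Next I identify the induced measure on $G$. By Proposition~\ref{prop:Sp-Theta} a measure on $G$ is determined by its value on $\bV$, and by the classification the value $\mu(\bV)$ is the parameter $t$. Since $H$ preserves the grading we have $\bV \cong \bV_0 \times \bV_1$ as $H$-sets, so
\begin{displaymath}
\mu(\bV) \;=\; \mu_u(\bV_0)\cdot\mu_u(\bV_1) \;=\; u\cdot u \;=\; t,
\end{displaymath}
which shows that the pulled-back measure is exactly $\mu_t$. Finally, since $u\ne 0$ the measure $\mu_u$ is quasi-regular (Proposition~\ref{prop:GL-reg}) and satisfies (Nil) (Corollary~\ref{cor:GL-nil}), and similarly $\mu_t$ is quasi-regular (Proposition~\ref{prop:Sp-reg}). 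Proposition~\ref{prop:res-ab}(c) then delivers the required exact faithful tensor functor $\uRep(\Sp_t) \to \uRep(\GL_u)$, with $\cC(\bV)\mapsto\cC(\bV)$ built into the construction.

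The main obstacle is the density step: one must observe that $G'$ is large enough to contain the finitary symplectic group and that the latter is dense in $G$ via Witt. Everything else is a formal application of the machinery already set up in \S\ref{ss:pullback} and \S\ref{ss:abenv}, together with the measure classifications already obtained for $G$ and $H$.
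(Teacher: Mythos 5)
Your proposal is correct and follows essentially the same route as the paper's proof: pull back $\mu_u$ along $H\subset G$ using Propositions~\ref{prop:pullback} and~\ref{prop:pullback-crit} (with $G'$ containing the finitary elements, which are dense), identify the resulting measure as $\mu_t$ via $\mu(\bV)=\mu_u(\bV_0)\mu_u(\bV_1)=u^2=t$, and conclude with Proposition~\ref{prop:res-ab}. Your treatment is somewhat more detailed than the paper's (the explicit orbit count for $(\dag)$ and the Witt-extension density argument are left implicit there), but the argument is the same.
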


\begin{proof}
Since $\bV$ is a finitary and smooth $H$-set, there is a restriction functor $\bS(G) \to \bS(H)$. Let $G'$ be the set of elements $g \in G$ whose centralizer in $H$ is open. If $g \in G$ fixes all but finitely many basis vectors then $g$ clearly belongs to $G'$. Since the set of such elements is dense in $G$, it follows that $G'$ is dense in $G$. Thus Proposition~\ref{prop:pullback-crit} shows that the condition from Proposition~\ref{prop:pullback} is fulfilled, and so any measure on $H$ pulls back to one on $G$. Since the measure $\mu_u$ on $H$ assigns to $\bV$ the measure $u^2=t$, it follows the pull-back of $\mu_u$ to $G$ assigns to $\bV$ the measure $t$, and is thus the measure $\mu_t$. The result thus follows from Proposition~\ref{prop:res-ab}.
\end{proof}

\begin{remark}
Recall that $t$ is interpolating the value $q^{2n}$ in the finite case. This functor is interpolating the restriction functors $Rep(Sp_{2n}(\mathbb{F}_q)) \to Rep(GL_n(\mathbb{F}_q))$ for the natural embedding of $GL_n(\mathbb{F}_q)$ in $Sp_{2n}(\mathbb{F}_q)$.
\end{remark}

\subsection{Ultraproduct categories}

We now examine the ultraproduct categories associated to the symplectic group. As in the $\GL$ case, we need to first understand central characters:

\begin{theorem} \label{thm:Sp-ultra}
Let $t$ be a regular parameter and let $L$ be a non-trivial simple object in $\uRep(\Sp_t)$. Then the central character of $L$ is non-trivial.
\end{theorem}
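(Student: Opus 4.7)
The plan mirrors the proof of Theorem~\ref{thm:GL-char} for $\GL_t$: parametrize simples by level, prove an $\Sp_t \to \Sp_{t/q^2}$ restriction formula, handle the low-level base case directly, and induct on level for transcendental $t$, extending to algebraic regular parameters via the rationality argument of Lemma~\ref{lem:char-poly}. The abstract induction skeleton of Lemmas~\ref{lem:GL-char-2}--\ref{lem:GL-char-3} is purely formal and carries over once the three ingredients are in place; the Sp-specific issues are all concentrated in the base case.

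First I would parametrize simples as in \S\ref{ss:GL-simple}. Let $A_n = \cC(X_{0,n})$; here $X_{0,n}$ is the space of embeddings $V_{0,n} \to \bV$, equivalently linearly independent $n$-tuples of pairwise-orthogonal vectors, and it carries a commuting $\GL_n(\bF) = \Aut(V_{0,n})$ action by precomposition. Letting $B_n \subset A_n$ be the intersection of the kernels of the pushforwards $A_n \to A_{n-1}$ along codimension-one inclusions $V_{0,n-1} \hookrightarrow V_{0,n}$, the amalgamation analysis of two copies of $V_{0,n}$ (the form on $V_{0,n}$ is zero, so amalgams reduce to vector-space fiber products over common subspaces) gives, just as in \S\ref{ss:GL-simple}, that the $\GL_n(\bF)$-isotypic summands $L_\lambda(t)$ of $B_n$ are the simples of $\uRep(\Sp_t)$ of level $n$.

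Next, I would establish the restriction formula. Using the decomposition $\bV = (\bF e_1 \oplus \bF f_1) \oplus \bV(1)$ of $G(1) = \Sp$-sets and a calculation in the spirit of Proposition~\ref{prop:Sp-delta}, one shows
\begin{displaymath}
L_\lambda(t)\big\vert_{\Sp_{t/q^2}} \cong L_\lambda(t/q^2)^{\oplus q^{2n}} \oplus X, \qquad \lev(X) < n,
\end{displaymath}
where $n$ is the level of $\lambda$; the multiplicity $q^{2n}$ reflects the $q^2$ factor in the restriction of $\cC(\bV)$. Then the base case at level~$1$: the $\bF^*$-scaling on $\bV \setminus \{0\}$ decomposes $\cC(\bV \setminus \{0\})$ into isotypic pieces $M_\omega$, and $M_1 = \bone \oplus M_1'$ accounts together with the $M_\omega$ ($\omega \ne 1$) for all level-$1$ simples. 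For $\omega$ with $\omega(-1) \ne 1$, the central element $-1 \in \Sp(\bV)$ acts via $\omega(-1)$ and forces non-trivial central character. For the remaining $M_\omega$ and for $M_1'$, I would evaluate $\chi$ on an explicit c-smooth symplectic transvection (e.g.\ $g \colon v \mapsto v + \langle v, f_1\rangle f_1$, which fixes $\bV(2)$ pointwise) and check that $\chi(g) \ne \udim$, giving both linear independence of all level-$\le 1$ characters and non-triviality of their central characters (the analog of Lemma~\ref{lem:GL-char-1}).

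With these three ingredients the induction goes exactly as before: if $L_\lambda(t)$ of level $n+1$ had trivial central character, so would $L_\lambda(t/q^2)$ by transcendence of $t$, and the lower-level summand $X$ in the restriction would have vanishing character; by the inductive hypothesis (non-trivial central characters below level $n+1$) and linear independence of distinct central characters (Lemma~\ref{lem:char-ind}), $X = 0$; iterating restriction to $G(r)$ then produces $G(r)$-invariants in $L_\lambda(t)$ for every $r$, contradicting its positive level. The passage from transcendental to algebraic regular $t$ is identical to the end of \S\ref{ss:GL-char}, using the Sp-analog of Lemma~\ref{lem:char-poly} (permutation-module traces are measures of fixed-point sets, hence rational in $t$). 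The main obstacle is the base case: unlike $\GL$, the $\bF^*$-scaling on $\bV$ is not by symplectic maps, so for characters trivial on $-1$ one cannot appeal to a central element and must work with an explicit unipotent transvection, taking care to locate a c-smooth one on which the relevant trace is computable in closed form.
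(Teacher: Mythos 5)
There is a genuine gap, and it occurs at the very first step. Your plan transports the classification of simples from \S\ref{ss:GL-simple} to $\Sp$, asserting that the amalgamation analysis for two copies of $V_{0,n}$ ``reduces to vector-space fiber products over common subspaces'' because the form on $V_{0,n}$ is zero. This is false: an amalgamation of two isotropic subspaces inside a symplectic space carries the additional datum of the cross-pairing $\langle i(x), j(y)\rangle$ between the two images, which can be arbitrary. Concretely, the $G$-orbits on $X_{0,1}\times X_{0,1}$ are classified by Witt's theorem as the $q-1$ orbits $\{(v,cv)\}$ together with $q$ orbits of linearly independent pairs indexed by $\langle v,w\rangle=a\in\bF$, so $\dim\End(A_1)=2q-1$ and hence $\dim\End(B_1)=2q-2$, which strictly exceeds $\dim k[\GL_1(\bF)]=q-1$. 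The operators attached to orbits with nonzero cross-pairing are not of the form $g^*f_*$ through a smaller $X(U)$ and do not kill $B_n$, so $\End(B_n)$ is not a quotient of $k[\GL_n(\bF)]$ and the $\GL_n(\bF)$-isotypic pieces of $B_n$ are not simple. In particular your level-one base case is wrong: each $M_\omega$ splits further into two non-isomorphic simples (this is the even/odd splitting of the Weil representation), which is consistent with $\dim\End(B_1)=2(q-1)$. The downstream steps (the restriction formula with multiplicity $q^{2n}$, the trace computation on a transvection) all presuppose this incorrect decomposition, and repairing them would amount to classifying the simples of $\uRep(\Sp_t)$ at low level --- something the paper explicitly declines to do.

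The paper's actual proof sidesteps all of this. It uses the restriction functor $\uRep(\Sp_t)\to\uRep(\GL_u)$ with $u^2=t$ (Proposition~\ref{prop:Sp-res-GL}): if $L$ is simple with trivial central character, then by Proposition~\ref{prop:chi-psi} its character is the constant $\udim(L)$, so by the already-established linear independence of $\GL_u$-characters (Corollary~\ref{cor:GL-char-2}) the restriction of $L$ to $\GL_u$ is a sum of trivials; hence $L$ is a finite-dimensional vector space, every element of every finite $\Sp_{2n}(\bF)$ acts with trace $\udim(L)$ and therefore trivially, and density of $\bigcup_n \Sp_{2n}(\bF)$ in $G$ forces $L\cong\bone$. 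No classification of $\Sp$-simples, no restriction formula, and no base case are needed. If you want to salvage your approach, the essential missing ingredient is a correct description of $\End(B_n)$ for the symplectic group, which involves the oscillator representation and is substantially harder than the $\GL$ analysis.
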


\begin{proof}
Let $u$ be a square root of $t$. Since $t$ is not an even power of $q$, it follows that $u$ is not a power of $q$, and is thus a regular parameter for $\GL$. We have a restriction functor
\begin{displaymath}
\uRep(\Sp_t) \to \uRep(\GL_u)
\end{displaymath}
from Proposition~\ref{prop:Sp-res-GL}.

Suppose that $L$ is a simple of $\uRep(\Sp_t)$ with trivial central character. We show that $L$ is trivial. By Proposition~\ref{prop:chi-psi}, the character of $L$ is constant, equal to $\udim(L)$. It follows that the same is true for the character of $L$ as a representation of $\GL_u$. Since the characters of simples of $\GL_u$ are linearly independent (Corollary~\ref{cor:GL-char-2}), it follows that $L$ only contains copies of the trivial representation of $\GL_u$. In particular, $L$ is a finite dimensional vector space. Here we treat $L$ as a smooth module over the completed group algebra $A(G)$.

The trace of any element of $H_n=\Sp_{2n}(\bF)$ on $L$ is equal to $\udim(L)$; note that since $L$ is finite dimensional, the categorical trace coincides with the usual trace. Thus $L$, as an ordinary representation of $H_n$, has constant character. It follows that $H_n$ acts trivially on $L$. Thus $G$ acts trivially on $L$: indeed, the union of the $H_n$'s is a dense subgroup of $G$, and the action of $G$ is smooth. Let $x_1, \ldots, x_d$ be a basis for $L$. Since $L$ is a smooth module over $A(G)$, for each $i$ there is an open subgroup $U_i$ such that $A(U_i)$ acts trivially on $x_i$. Let $U=U_1 \cap \cdots \cap U_d$. Then $A(U)$ acts trivially on $L$. Since $A(G)$ is generated by $A(U)$ and $G$ \cite[Proposition~11.20]{repst}, it follows that $A(G)$ acts trivially on $L$, completing the proof.
\end{proof}

Now, let $k_n$ be an algebraically closed field for each $n \in \bN$, fix a non-principal ultrafilter on $\bN$, and let $k$ be the ultraproduct of the $k_n$'s, which we assume to be of characteristic~0. Let $t$ be the element of $k$ defined by $(q^{2n})_{n \ge 0}$. Let $H_n=\Sp_{2n}(\bF)$, let $\cT_n=\Rep_{k_n}(H_n)$, let $\tilde{\cT}$ be the ultraproduct of the $\cT_n$'s, and let $\cT$ be the subcategory $\tilde{\cT}$ defined in \S \ref{ss:ultra}. Let
\begin{displaymath}
\Phi \colon \uRep(\Sp_t) \to \cT
\end{displaymath}
be the tensor functor from \S \ref{ss:ultra-ab}. We can finally prove one of our main results:

\begin{theorem}
The above functor $\Phi$ is an equivalence.
\end{theorem}

\begin{proof}
The proof is similar to that of Theorem~\ref{thm:GL-ultra}.
\end{proof}

This proves Conjectures~\ref{conj:interp1} and~\ref{conj:interp2}.

\begin{remark}
From here a next natural question is to classify the simple objects in these categories.  This seems likely to be doable using similar methods to the $\GL$ case, but we do not pursue it further here.   This will also be the case for the orthogonal and unitary cases below.
\end{remark}

\section{The orthogonal group} \label{s:O}

\subsection{Set-up}

Let $\bF$ be a finite field of odd cardinality $q$. Fix a non-square $\pi \in \bF^{\times}$, and let $\chi \colon \bF^* \to \{\pm 1\}$ be the quadratic character. Let $\hat{\fC}$ be the following category. Objects are quadratic spaces, i.e., $\bF$-vector spaces equipped with a symmetric bilinear form. Morphisms are injective linear maps that respect the forms. We let $\fC$ be the full subcategory spanned by finite dimensional spaces.

Let $\bV$ be the vector space with basis $\{e_i, f_i\}_{i \ge 1}$. Define a symmetric bilinear form on $\bV$ by
\begin{displaymath}
\langle e_i, e_j \rangle = \langle f_i, f_j \rangle = 0, \qquad
\langle e_i, f_j \rangle = 1.
\end{displaymath}
Thus $\bV$ is an orthogonal direct sum of countably many copies of the hyperbolic plane. We regard $\bV$ as an object of $\hat{\fC}$, and let $G$ be its automorphism group, which is one version of the infinite orthogonal group. The action of $G$ on $\bV$ is oligomorphic, and $\fC$ is the category of structures for $(G, \bV)$ as defined in \S \ref{ss:struct}

For a finite dimensional quadratic space $V$, we define the \defn{modified discriminant} of $V$ to be $(-1)^r \delta$, where $\delta$ is the usual discriminant, and $\dim(V)$ is equal to $2r$ or $2r+1$. If $U$ is a hyperbolic plane then $V$ and $V \oplus U$ have equal modified discriminant; this is the point of the definition. Let $V_{m,\delta}$ be the non-degenerate quadratic space of dimension $m$ and modified discriminant $\delta \in \bF^{\times}/(\bF^{\times})^2$. Let $V_{m,\delta,n}$ be the orthogonal direct sum of $V_{m,\delta}$ and an $n$-dimensional space with zero form, and let $X_{m,\delta,n}$ be the space of embeddings $V_{m,\delta,n} \to \bV$. One easily sees that every object of $\fC$ is isomorphic to a unique $V_{m,\delta,n}$ (unless $m=0$ in which case $\delta$ is irrelevant), and so the $X_{m,\delta,n}$ are the transitive $\bV$-smooth $G$-sets by Proposition~\ref{prop:struct-equiv}.

\subsection{Enumeration of norms}

For a quadratic space $V$, let $N_a(V)$ be the number of elements $v \in V$ with $\langle v, v \rangle = a$. We will often use the following identity:
\begin{displaymath}
N_a(V \oplus W) = \sum_{a=b+c} N_b(V) N_c(W).
\end{displaymath}

\begin{proposition}
For $a \ne 0$, we have
\begin{align*}
N_a(V_{2r,\delta}) &= q^{2r-1}-\chi(\delta) q^{r-1} \\
N_0(V_{2r,\delta}) &= q^{2r-1}+\chi(\delta) (q^r-q^{r-1}) \\
N_a(V_{2r+1,\delta}) &= q^{2r} + \chi(a \delta) q^r \\
N_0(V_{2r+1,\delta}) &= q^{2r}
\end{align*}
\end{proposition}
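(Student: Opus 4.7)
The plan is to evaluate $N_a(V)$ via character sums. Fix a non-trivial additive character $\psi \colon \bF \to \mathbb{C}^\times$, and let $g = \sum_{x \in \bF} \psi(x^2)$ be the quadratic Gauss sum, which satisfies the classical identity $g^2 = \chi(-1) q$. The starting point is the Fourier identity
\begin{displaymath}
N_a(V) = \frac{1}{q} \sum_{v \in V} \sum_{t \in \bF} \psi(t(\langle v, v \rangle - a)),
\end{displaymath}
valid for any finite-dimensional quadratic space $V$.

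I would then evaluate the inner sum $S(t) = \sum_{v \in V} \psi(t \langle v, v \rangle)$ by diagonalizing: if $\langle v, v \rangle = \sum_i a_i v_i^2$ with $d = \prod_i a_i$ the discriminant, then the identity $\sum_{x \in \bF} \psi(tax^2) = \chi(ta) g$ for $t \ne 0$ gives $S(t) = \chi(t)^n \chi(d) g^n$ when $t \ne 0$ and $S(0) = q^n$. Substituting and separating the $t = 0$ term yields
\begin{displaymath}
N_a(V) = q^{n-1} + \frac{\chi(d) g^n}{q} \sum_{t \ne 0} \chi(t)^n \psi(-ta).
\end{displaymath}

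The remaining analysis splits into cases by parity of $n$ and whether $a = 0$. In the even case $n = 2r$, the inner sum $\sum_{t \ne 0} \psi(-ta)$ equals $q - 1$ when $a = 0$ and $-1$ otherwise; combined with $g^{2r} = \chi(-1)^r q^r$ and the identity $\chi(\delta) = \chi((-1)^r d)$ (coming from the definition of modified discriminant), this yields both even-dimensional formulas directly. In the odd case $n = 2r+1$, the inner sum $\sum_{t \ne 0} \chi(t) \psi(-ta)$ vanishes at $a = 0$ (giving $N_0 = q^{2r}$) and equals $\chi(-a) g$ for $a \ne 0$ via the substitution $u = -ta$; combining with $g^{2r+2} = \chi(-1)^{r+1} q^{r+1}$ and the modified-discriminant identity collapses the sign factors to $\chi(a\delta)$, yielding $q^{2r} + \chi(a\delta) q^r$.

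The main obstacle is purely bookkeeping: tracking the $\chi(-1)^r$ factors against the modified-discriminant convention $\delta = (-1)^r d$, and handling the odd-dimensional case where an extra factor of $g$ survives after applying $g^2 = \chi(-1) q$. An alternative route would be induction on $r$ using the decomposition $V_{2r,\delta} = V_{2r-2,\delta} \oplus H$ with $H$ a hyperbolic plane, together with the convolution identity $N_a(V \oplus W) = \sum_{b+c = a} N_b(V) N_c(W)$ already displayed in the paper; this requires only the base cases of the hyperbolic plane (where a direct count gives $N_0(H) = 2q-1$, $N_a(H) = q-1$) and of the anisotropic $2$-dimensional space (identified with $\bF_{q^2}$ and its norm form, for which $N_a = q+1$ follows from the surjectivity of the norm map with fibers of size $q+1$), but is longer than the character-sum route.
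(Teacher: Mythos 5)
Your character-sum argument is correct, and I checked the sign bookkeeping: with $\delta=(-1)^r d$ the factors $\chi(-1)^r$ coming from $g^{2r}=\chi(-1)^r q^r$ (resp.\ $g^{2r+2}=\chi(-1)^{r+1}q^{r+1}$ in the odd case, after the substitution $u=-ta$ produces an extra $\chi(-a)g$) collapse exactly to the stated $\chi(\delta)$ and $\chi(a\delta)$. This is, however, a genuinely different route from the paper's. The paper proves the proposition by the "alternative route" you mention only in passing: it splits into three cases (split even, non-split even, odd), uses the convolution identity $N_a(V\oplus W)=\sum_{b+c=a}N_b(V)N_c(W)$ to peel off a hyperbolic plane, an anisotropic plane ($\bF_{q^2}$ with its norm form, where $N_a=q+1$ for $a\ne 0$), or a line, and solves the resulting linear recurrences together with the relation $N_0+(q-1)N_1=q^{\dim V}$. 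What each approach buys: the Gauss-sum computation is uniform across all parities and discriminants, derives the closed form in one pass, and makes the $\pm q^{r}$ correction terms conceptually transparent as powers of $g$; the paper's induction is longer but entirely elementary, avoids introducing characters and roots of unity (the argument is a priori over $\bC$, harmless since the conclusion is an integer identity), and reuses the same orthogonal-decomposition machinery that drives the later recurrences for $N_{a,\epsilon,b}^{\ell,\delta,n}$. Either proof would be acceptable; if you write yours up in full, the only care needed is the diagonalization step (legitimate since $q$ is odd and the form is non-degenerate) and keeping the modified-discriminant convention straight, as you already anticipated.
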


\begin{proof}
We break the analysis into three cases:

\textit{Case 1: split.}
The space $V_{2r,1}$ is split. If we multiple the form by a non-zero scalar, the space is still split. It follows that $N_a(V_{2r,1})$ is independent of $a$, for $a \ne 0$. Since each of the $q^{2r}$ elements of $V_{2r,1}$ has some norm, we find
\begin{displaymath}
N_0(V_{2r,1}) + (q-1) N_1(V_{2r,1}) = q^{2r}.
\end{displaymath}
Now, we have $V_{2r,1}=V_{2,1} \oplus V_{2r-2,1}$, and so
\begin{align*}
N_1(V_{2r,1})
&= \sum_{1=b+c} N_b(V_{2,1}) N_c(V_{2r-2,1}) \\
&= N_0(V_{2,1}) N_1(V_{2r-2,1}) + N_1(V_{2,1}) N_0(V_{2r-2,1}) + (q-2) N_1(V_{2,1}) N_1(V_{2r-2,1}).
\end{align*}
A simple computation shows $N_1(V_{2,1})=q-1$ and $N_0(V_{2,1}) = 2q-1$. Using our relation between $N_0$ and $N_1$, the above equation yields a recurrence for $N(V_{2r,1})$, which leads to our stated value. We can then solve for $N_0(V_{2r,1})$ using the relation.

\textit{Case 2: even dimension and non-split.}
We have $V_{2r,\pi}=V_{2,\pi} \oplus V_{2r-2,1}$, and so
\begin{displaymath}
N_a(V_{2r,\pi}) = \sum_{a=b+c} N_b(V_{2,\pi}) N_c(V_{2r-2,1}).
\end{displaymath}
Now, $N_b(V_{2,\pi})$ is $q+1$ if $b \ne 0$, and~1 if $b=0$; this is easy to see since the form here is the norm form for the extension $\bF_{q^2}/\bF_q$. For $a \ne 0$, we have
\begin{displaymath}
N_a(V_{2r,\pi}) = (q+1) N_0(V_{2r-2,1}) + N_1(V_{2r-2,1}) + (q-2)(q+1) N_1(V_{2r-2,1}).
\end{displaymath}
Here the first and second terms come from taking $(b,c)$ to be $(a,0)$ and $(0,a)$, and the third term is all that remains. This simplifies to the stated formula. For $a=0$, we have
\begin{displaymath}
N_0(V_{2r,\pi}) = N_0(V_{2r-2,1}) + (q-1) (q+1) N_1(V_{2r-2,1}).
\end{displaymath}
Here the first term comes from $(b,c)=(0,0)$, and the second term is all that remains. This again simplifies to the stated formula.

\textit{Case 3: odd dimension}
We have $V_{2r+1,\delta} = V_{1,\delta} \oplus V_{2r,1}$, and so
\begin{displaymath}
N_a(V_{2r+1,\delta}) = \sum_{a=b+c} N_b(V_{1,\delta}) N_c(V_{2r,1}).
\end{displaymath}
Now, $N_b(V_{1,\delta})$ is $1+\chi(b \delta)$ if $b \ne 0$, and is~1 if $b=0$. If $a \ne 0$, we find
\begin{displaymath}
N_a(V_{2r+1,\delta})
= N_1(V_{2r,1}) + 2\sum_{b \in \delta (\bF^{\times})^2} N_{a-b}(V_{2r,1})
\end{displaymath}
Now, if $a \delta$ is not a square then $a-b$ is non-zero for all $b$ above; if $a \delta$ is a square, then there is a unique $b$ for which $a-b=0$. We thus find
\begin{displaymath}
N_a(V_{2r+1,\delta}) = N_1(V_{2r,1}) + (q-1) N_1(V_{2r,1}) + (1+\chi(a \delta) (N_0(V_{2r,1})-N_1(V_{2r,1}))
\end{displaymath}
This simplifies to the stated formula. For $a=0$, we have
\begin{displaymath}
N_0(V_{2r+1,\delta}) = N_0(V_{2r,1}) + (q-1) N_1(V_{2r,1}).
\end{displaymath}
This again simplifies to the stated formula.
\end{proof}

\subsection{Enumeration of embeddings}

Let $N_{a,\epsilon,b}^{\ell,\delta,n}$ be the number of embeddings $V_{a,\epsilon,b} \to V_{\ell,\delta,n}$. Note that
\begin{displaymath}
N_{1,\epsilon,0}^{\ell,\delta,n} = q^n \cdot N_{\epsilon}(V_{\ell,\delta}), \qquad
N_{0,0,1}^{\ell,\delta,n} = q^n \cdot N_0(V_{\ell,\delta})-1.
\end{displaymath}
There is one additional base case we require:

\begin{proposition}
We have:
\begin{align*}
N_{2,\pi,0}^{2r,\delta,n} &= (q^{2r}-\chi(\delta) q^r)(q^{2r-1}-\chi(\delta) q^r) q^{2n-2} \\
N_{2,\pi,0}^{2r+1,\delta,n} &= (q^{4r}-q^{2r}) q^{2n-1}
\end{align*}
\end{proposition}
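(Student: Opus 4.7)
My plan is to count embeddings by choosing a convenient orthogonal basis of the domain and then using the previous proposition on $N_a(V)$ together with a Witt-type analysis of orthogonal complements.

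Since $V_{2,\pi,0}$ has modified discriminant $\pi$, I can find an orthogonal basis $x, y$ of $V_{2,\pi,0}$ with $\langle x,x\rangle = 1$ and $\langle y,y\rangle = -\pi$ (the condition $-\det = \pi \pmod{(\bF^\times)^2}$ forces this up to a choice of non-square representative). Then an embedding $V_{2,\pi,0} \to V_{\ell,\delta,n}$ is the same data as a pair $(v,w)$ with $\langle v,v\rangle = 1$, $\langle w,w\rangle = -\pi$, and $\langle v,w\rangle = 0$. I will count such pairs as
\begin{displaymath}
N_{2,\pi,0}^{\ell,\delta,n} = N_1(V_{\ell,\delta,n}) \cdot N_{-\pi}(v^\perp),
\end{displaymath}
where the second factor is independent of the choice of $v$ of norm~$1$.

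The key intermediate step is to identify the isomorphism type of $v^\perp$ as a degenerate quadratic space. Since $V_{0,0,n}$ is the radical of $V_{\ell,\delta,n}$, the pairing factors through $V_{\ell,\delta}$, and so $v^\perp$ has radical $V_{0,0,n}$ with non-degenerate quotient $\bar v^\perp \subset V_{\ell,\delta}$. A direct determinant computation (keeping track of the sign twist in the definition of modified discriminant) shows that removing an anisotropic vector of norm~$c$ from $V_{2r,\delta}$ yields $V_{2r-1,-c\delta}$, and removing one from $V_{2r+1,\delta}$ yields $V_{2r,c\delta}$. With $c = 1$ this gives $v^\perp \cong V_{2r-1,-\delta,n}$ in the even case and $v^\perp \cong V_{2r,\delta,n}$ in the odd case. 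In either situation $N_a$ on a space with $n$-dimensional radical is $q^n$ times $N_a$ on the non-degenerate quotient (for $a \ne 0$), so the previous proposition supplies all the required norm counts.

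Putting the ingredients together in the even case gives
\begin{displaymath}
N_1(V_{2r,\delta,n}) \cdot N_{-\pi}(V_{2r-1,-\delta,n})
 = q^n(q^{2r-1}-\chi(\delta)q^{r-1}) \cdot q^n(q^{2r-2} - \chi(\delta)q^{r-1}),
\end{displaymath}
where I have used $\chi(\pi) = -1$ to simplify $\chi(-\pi \cdot -\delta)$; factoring out $q^{r-1}$ from each parenthesis reorganizes this as the claimed expression $(q^{2r}-\chi(\delta)q^r)(q^{2r-1}-\chi(\delta)q^r)q^{2n-2}$. In the odd case the analogous product is $q^n(q^{2r}+\chi(\delta)q^r) \cdot q^n(q^{2r-1}-\chi(\delta)q^{r-1})$, and the cross-terms in $\chi(\delta)$ cancel via $\chi(\delta)^2 = 1$, leaving $q^{2n-1}(q^{4r}-q^{2r})$ as required; the fact that $\delta$ disappears in this case is precisely this cancellation. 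I expect no serious obstacle: the main thing to be careful about is the bookkeeping between ordinary and modified discriminants when identifying $v^\perp$, which is where a sign error would propagate into $\chi$ values and break the match.
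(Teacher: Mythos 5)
Your argument is correct and is essentially the paper's own proof: both count an embedding as a norm-$1$ vector $v$ together with a norm-$(-\pi)$ vector in $v^\perp$, identify $v^\perp$ as $V_{2r-1,-\delta,n}$ (even case) or $V_{2r,\delta,n}$ (odd case) using the modified-discriminant bookkeeping, and multiply the counts from the preceding proposition. Your sign-tracking ($\chi(\pi)=-1$, the $(-1)^r$ twist) and the final simplifications all check out.
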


\begin{proof}
The space $V_{2,\pi,0}$ has (ordinary) discriminant $-\pi$, and thus has an orthogonal basis consisting of two vectors with norms 1 and $-\pi$. To give an embedding of $N_{2,\pi,0}$ into a quadratic space $V$ thus amounts to giving a norm~1 vector $v$ in $V$, and a norm~$(-\pi)$ vector $w$ in the orthogonal complement $V'$ of $v$. Since the isomorphism type of $V'$ is independent of $v$, we see that the number of such embeddings is $N_1(V) N_{-\pi}(V')$. If $V=V_{2r,\delta,n}$ then $V'=V_{2r-1,-\delta,n}$ while if $V=V_{2r+1,\delta,n}$ then $V'=V_{2r,\delta,n}$; note that the sign in the first case comes since we are using the modified discriminant. The result now follows from our previous computations.
\end{proof}

We now derive some recurrences for $N$.

\begin{proposition}
We have:
\begin{align*}
N_{a,\epsilon,b}^{\ell,\delta,n} &= (N_0(V_{\ell,\delta})-1) q^{\ell+2n-2} N_{a-2,\epsilon,b}^{\ell-2,\delta,n} \\
N_{a,\epsilon,b}^{\ell,\delta,n} &= (N_0(V_{\ell,\delta})-1) q^{n+a+b-1} N_{a,\epsilon,b-1}^{\ell-2,\delta,n} + (q^n-1) q^{a+b-1} N_{a,\epsilon,b-1}^{\ell,\delta,n-1}
\end{align*}
For the first identity we require $a \ge 3$, or $a=2$ and $\epsilon=1$; for the second, we require $b \ge 1$.
\end{proposition}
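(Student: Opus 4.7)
The plan is to follow the same two-stage strategy used in the $\GL$ and $\Sp$ cases: for (a), peel off a non-degenerate summand from $V_{a,\epsilon,b}$ and use Witt's theorem to identify the orthogonal complement in $V_{\ell,\delta,n}$; for (b), peel off a rank-one null summand and split into cases by whether the image lands in the radical.

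For part (a), the key observation is that under the hypothesis ($a\ge 3$, or $a=2$ with $\epsilon=1$), the quadratic space $V_{a,\epsilon}$ contains a non-zero isotropic vector. In the split case $a=2,\epsilon=1$ this is obvious; for $a\ge 3$ it is the standard fact that a non-degenerate quadratic form over $\bF_q$ in at least three variables has a non-trivial zero. Hence $V_{a,\epsilon,b}$ splits as an orthogonal direct sum $V_{2,1,0}\oplus V_{a-2,\epsilon,b}$ (the modified discriminant is additive under orthogonal sum, and the hyperbolic plane has modified discriminant $1$). Giving an embedding $V_{a,\epsilon,b}\hookrightarrow V_{\ell,\delta,n}$ is therefore the same as giving an embedding $V_{2,1,0}\hookrightarrow V_{\ell,\delta,n}$ together with an embedding of $V_{a-2,\epsilon,b}$ into the orthogonal complement, which by Witt's theorem is isomorphic to $V_{\ell-2,\delta,n}$. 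This gives
\[
N_{a,\epsilon,b}^{\ell,\delta,n}=N_{2,1,0}^{\ell,\delta,n}\cdot N_{a-2,\epsilon,b}^{\ell-2,\delta,n},
\]
so it remains to verify $N_{2,1,0}^{\ell,\delta,n}=(N_0(V_{\ell,\delta})-1)q^{\ell+2n-2}$. To count ordered hyperbolic pairs $(y,z)$, first choose $y$ isotropic with non-zero component in $V_{\ell,\delta}$ (so that a partner $z$ with $\langle y,z\rangle=1$ can exist); writing $y=y_0+y_1$ with $y_0\in V_{\ell,\delta}$ and $y_1\in R$ (the radical), the count is $(N_0(V_{\ell,\delta})-1)q^n$. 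Given $y$, pick a Witt partner $z_0'$ of $y_0$ in $V_{\ell,\delta}$ and decompose a candidate $z=z_0'+c y_0+w+z_1$ with $w\in\{y_0,z_0'\}^\perp\subset V_{\ell,\delta}$ and $z_1\in R$. The condition $\langle z,z\rangle=0$ becomes $2c+\langle w,w\rangle=0$, which uniquely determines $c$ (using $q$ odd). So $w$ ranges freely over $q^{\ell-2}$ values and $z_1$ over $q^n$, giving $q^{\ell+n-2}$ choices for $z$ and the claimed formula.

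For part (b), write $V_{a,\epsilon,b}=V_{a,\epsilon,b-1}\oplus\bF x$ with $\langle x,x\rangle=0$, and specify an embedding by the pair $(y,j)$ with $y=i(x)$ isotropic in $V_{\ell,\delta,n}$, $j=i|_{V_{a,\epsilon,b-1}}$ an embedding into $y^\perp$, and $y\notin j(V_{a,\epsilon,b-1})$. Split on whether $y$ lies in the radical $R$. If $y\in R\setminus\{0\}$ ($q^n-1$ choices), then $y^\perp=V_{\ell,\delta,n}$, and composing $j$ with the quotient $V_{\ell,\delta,n}\to V_{\ell,\delta,n}/\langle y\rangle\cong V_{\ell,\delta,n-1}$ gives a bijection between embeddings $j$ satisfying $y\notin j(V_{a,\epsilon,b-1})$ and embeddings $\bar j$ into the quotient, each of which has $q^{a+b-1}$ lifts; this yields the term $(q^n-1)q^{a+b-1}N_{a,\epsilon,b-1}^{\ell,\delta,n-1}$. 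If $y$ is isotropic but not in $R$, then writing $y=y_0+y_1$ with $y_0\neq 0$ we have $(N_0(V_{\ell,\delta})-1)q^n$ choices for $y$; the quotient $y^\perp/\langle y\rangle$ is isomorphic to $V_{\ell-2,\delta,n}$ (its radical is the image of $R$, the non-degenerate part has dimension $\ell-2$, and the modified discriminant is preserved because we killed a null line). Again embeddings $\bar j$ into $y^\perp/\langle y\rangle$ are in bijection with pairs $(j,y)$ as above, with $q^{a+b-1}$ lifts each, giving the term $(N_0(V_{\ell,\delta})-1)q^{n+a+b-1}N_{a,\epsilon,b-1}^{\ell-2,\delta,n}$.

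The main obstacle is the bookkeeping in the second case of (b): one must argue that $y^\perp/\langle y\rangle\cong V_{\ell-2,\delta,n}$, which requires checking that the radical of $y^\perp$ is $\langle y_0\rangle\oplus R$ (dimension $n+1$) and that quotienting by the null line $\langle y\rangle$ cuts this down to an $n$-dimensional radical while leaving a non-degenerate complement of the right modified discriminant. This is really a Witt-style computation and is the only place where the quadratic (rather than alternating or bilinear) nature of the form enters nontrivially, via the identity $c=-\langle w,w\rangle/2$ that needed odd characteristic in part (a). Once these structural identifications are established, the two recurrences follow by the direct count sketched above.
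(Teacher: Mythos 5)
Your proof is correct and follows essentially the same strategy as the paper's: split off a hyperbolic plane $V_{2,1,0}$ for the first recurrence and a null line for the second, then count by cases according to whether the image of the null vector lies in the radical. The only (minor) divergence is in computing $N_{2,1,0}^{\ell,\delta,n}$: the paper drops the isotropy condition on the second vector and quotients by the free action $a\cdot(v,w)=(v,w+av)$ of $\bF$, whereas you parametrize the partner directly as $z_0'+cy_0+w+z_1$ and solve $2c+\langle w,w\rangle=0$ for $c$ — both counts agree and both use that $q$ is odd.
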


\begin{proof}
(a) We have $V_{a,\epsilon,b}=V_{a-2,\epsilon,b} \oplus V_{2,1,0}$. To give an embedding of this space into $V_{\ell,\delta,n}$, we give an embedding $i$ of $V_{2,1,0}$ together with an embedding of $V_{a-2,\epsilon,b}$ into the orthogonal complement of $i$, which is isomorphic to $V_{\ell-2,\delta,n}$. We thus find
\begin{displaymath}
N_{a,\epsilon,b}^{\ell,\delta,n} = N_{2,1,0}^{\ell,\delta,n} \cdot N_{a-2,\epsilon,b}^{\ell-2,\delta,n}.
\end{displaymath}
It remains to compute the first factor.

Let $X$ be the set of pairs $(v,w)$ of vectors in $V_{\ell,\delta,n}$ such that $v$ and $w$ are isotropic, and $\langle v, w \rangle=1$. Then $N_{2,1,0}^{\ell,\delta,n}$ is the cardinality of $X$. Define $Y$ like $X$, except drop the condition that $w$ be isotropic. The additive group $\bF$ acts on $Y$ by $a \cdot (v,w) = (v,w+av)$. This action is free since $v$ and $w$ are linearly independent (as $\langle v,v \rangle=0$ but $\langle v,w \rangle=1$). Every oribt meets $X$ exactly once. We thus see that $\# X=q^{-1} \#Y$.

We now count the size of $Y$. The vector $v$ can be any isotropic vector of $V_{\ell,\delta,n}$ not in the nullspace. The number of such vectors is $(N_0(V_{\ell,\delta})-1) q^n$. Given $v$, the only constraint on $w$ is $\langle v , w \rangle=1$. The set of such vectors is a torsor for the orthogonal complement of $v$, which is a vector space of dimension $\ell+n-1$; there are thus $q^{\ell+n-1}$ choices for $w$. The result follows.

(b) Write $V_{a,\epsilon,b} = V_{a,\epsilon,b-1} \oplus V_{0,1,1}$, and let $x$ be a basis of $V_{0,1,1}$. Giving an embedding $i$ of $V_{a,\epsilon,b}$ into $V_{\ell,\delta,n}$, let $y$ be the image of $x$ and let $j$ be the restriction of $i$ to $V_{a,\epsilon,b-1}$. Then $i$ is determined by the pair $(y,j)$. Such a pair comes from an $i$ if and only if $y$ is non-zero isotropic, and the image of $j$ is orthogonal to and linearly independent from $y$. We count how many pairs $(y,j)$ occur.

First consider the case where $y$ belongs to the nullspace of $V_{\ell,\delta,n}$. There are $q^n-1$ choices for such $y$. A pair $(y,j)$ comes from an $i$ if and only if the induced map $\ol{j} \colon V_{a,\epsilon,b-1} \to V_{\ell,\delta,n}/\bF y$ is an embedding in $\fC$. The quotient space is isomorphic to $V_{\ell,\delta,n-1}$, and so there are $N_{a,\epsilon,b-1}^{\ell,\delta,n-1}$ choices for $\ol{j}$. Each $\ol{j}$ admits exactly $q^{a+b-1}$ lifts to a $j$. Thus the total number of $i$'s in this case is the second term in the stated formula.

Now suppose $y$ does not belong to the nullspace. There are $(N_0(V_{\ell,\delta})-1)q^n$ choices for such $y$. Let $V'$ be the orthogonal complement of $y$. A pair $(y,j)$ comes from an $i$ if and only if $j$ maps into $V'$ and the induced map $\ol{j} \colon V_{a,\epsilon,b-1} \to V'/\bF y$ is an embedding. The quotient space is always isomorphic to $V_{\ell-2,\delta,n}$, and so there are $N_{a,\epsilon,b-1}^{\ell-2,\delta,n}$ choices for $\ol{j}$. Each $\ol{j}$ admits $q^{a+b-1}$ lifts to a $j$. Thus the total number of $i$'s in this case is the first term in the stated formula. The result follows.
\end{proof}

We now define polynomials $Q^*_{a,\epsilon,b}$ in $\bZ_q[t,u]$, where $*$ is either ``even'' or ``odd.'' To begin, we put $Q^*_{0,1,0} = 1$ and
\begin{align*}
Q^{\rm even}_{2,\pi,0} &= q^{-3} (t-1)(t-q) t^2 u^2 &
Q^{\rm even}_{1,\epsilon,0} &= q^{-1} (t-1) tu \\
Q^{\rm odd}_{2,\pi,0} &= q^{-1}(t^2-1) t^2 u^2 &
Q^{\rm odd}_{1,\epsilon,0} &= (t+\chi(\epsilon)) tu
\end{align*}
We then impose the following recurrences: 
\begin{align*}
Q^{\rm even}_{a,\epsilon,b}(t,u) &= q^{-3} (t+q)(t-1)t^2 u^2 Q^{\rm even}_{a-2,\epsilon,b}(q^{-1}t, u) \\
Q^{\rm even}_{a,\epsilon,b}(t,u) &=  q^{a+b-2} (t+q)(t-1)u Q^{\rm even}_{a,\epsilon,b-1}(q^{-1} t, u) + (u-1) q^{a+b-1} Q^{\rm even}_{a,\epsilon,b-1}(t, q^{-1} u) \\
Q^{\rm odd}_{a,\epsilon,b}(t,u) &= q^{-1} (t^2-1) t^2 u^2 Q^{\rm odd}_{a-2,\epsilon,b}(q^{-1} t, u) \\
Q^{\rm odd}_{a,\epsilon,b}(t,u) &= q^{a+b-1} (t^2-1) u Q^{\rm odd}_{a,\epsilon,b-1}(q^{-1} t, u) + (u-1) q^{a+b-1} Q^{\rm odd}_{a,\epsilon,b-1}(t, q^{-1} u)
\end{align*}
For the first and third, we require $a \ge 3$, or $a=2$ and $\epsilon=1$; for the second and fourth, we require $b \ge 1$.

\begin{proposition}
The $Q$ polynomials are well-defined, and we have
\begin{displaymath}
N_{a,\epsilon,b}^{2r,\delta,n} = Q^{\rm even}_{a,\epsilon,b}(\chi(\delta) q^r, q^n) \qquad
N_{a,\epsilon,b}^{2r+1,\delta,n} = Q^{\rm odd}_{a,\epsilon,b}(\chi(\delta) q^r, q^n)
\end{displaymath}
for all parameter values.
\end{proposition}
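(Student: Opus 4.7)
The plan is to prove both claims --- well-definedness of the $Q$ polynomials as elements of $\bZ_q[t,u]$ and the enumeration identities --- simultaneously by induction on $a+b$. The base cases $Q^*_{0,1,0}=1$, $Q^*_{1,\epsilon,0}$, and $Q^*_{2,\pi,0}$ are handled by direct substitution: one checks that plugging $(t,u) = (\chi(\delta) q^r, q^n)$ into the explicit formulas reproduces the values of $N_{1,\epsilon,0}^{\ell,\delta,n}$ and $N_{2,\pi,0}^{\ell,\delta,n}$ coming from the preceding two propositions, using only $\chi(\delta)^2=1$ and $t^2 = q^{2r}$.

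The heart of the argument is a translation lemma: for $t=\chi(\delta) q^r$, one has the identities
\begin{displaymath}
N_0(V_{2r,\delta}) - 1 = q^{-1}(t-1)(t+q), \qquad N_0(V_{2r+1,\delta}) - 1 = t^2 - 1,
\end{displaymath}
both of which follow immediately from the explicit formulas for $N_0$. Given these, each coefficient appearing in an $N$-recurrence (namely $(N_0(V_{\ell,\delta})-1)\,q^{\ell+2n-2}$ in the first recurrence, and $(N_0(V_{\ell,\delta})-1)\,q^{n+a+b-1}$, $(q^n-1)\,q^{a+b-1}$ in the second) matches, under the substitution, the corresponding coefficient in the $Q$-recurrence: for instance, the first coefficient becomes $q^{-3}(t+q)(t-1)t^2u^2$ in the even case and $q^{-1}(t^2-1)t^2u^2$ in the odd case. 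Thus each $N$-recurrence translates term-by-term into the corresponding $Q$-recurrence.

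Given the translation lemma, the induction proceeds as follows. Choose any one of the applicable $Q$-recurrences to define $Q^*_{a,\epsilon,b}$ from polynomials of smaller total degree, which exist by induction. Under the substitution $(t,u) = (\chi(\delta)q^r, q^n)$, the resulting polynomial takes the value $N^{\ell,\delta,n}_{a,\epsilon,b}$ for all $r,n \ge 0$ and $\delta \in \bF^{\times}/(\bF^{\times})^2$, by the induction hypothesis together with the translation lemma. As $\delta$ ranges over both cosets, $t=\chi(\delta)q^r$ takes all values in $\{\pm q^r\}_{r\ge 0}$, which together with $u \in \{q^n\}_{n\ge 0}$ forms a Zariski-dense subset of $\bA^2$. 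A polynomial in $\bZ_q[t,u]$ is determined by its values on such a set, so the polynomial $Q^*_{a,\epsilon,b}$ is uniquely determined --- in particular, when two $Q$-recurrences simultaneously apply (e.g., $a \ge 3$ and $b \ge 1$), they yield the same polynomial, so well-definedness and the enumeration identity follow at once.

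The main obstacle is the careful bookkeeping inside the translation lemma: matching, for each recurrence and each parity, the coefficients $q^{\ell+2n-2}$, $q^{n+a+b-1}$, $q^n-1$ appearing on the $N$ side to the coefficients $q^{-3}$, $q^{a+b-2}$, $u-1$ on the $Q$ side, and keeping straight which argument of the $Q$ polynomial is scaled by $q^{-1}$ under which recurrence. Nothing here is conceptually deep, but the even/odd dichotomy means the verification has to be carried out separately in each parity, and the algebraic manipulations involving $\chi(\delta)$ and $t$ must be done with some care to avoid sign errors --- the factorization $N_0(V_{2r,\delta})-1 = q^{-1}(t-1)(t+q)$ is the key observation that makes the even-parity coefficients factor in precisely the right way.
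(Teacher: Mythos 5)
Your proof is correct and is essentially the argument the paper intends (the paper defers to the $\GL$ case, where the verification is left as "follows easily from the recurrences"): the coefficient identities $N_0(V_{2r,\delta})-1=q^{-1}(t-1)(t+q)$ and $N_0(V_{2r+1,\delta})-1=t^2-1$ under $t=\chi(\delta)q^r$ are exactly the right translation, and the Zariski-density of $\{\pm q^r\}\times\{q^n\}$ in characteristic~0 is the standard way to get both well-definedness and the enumeration identity simultaneously. No gaps.
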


\begin{proof}
The proof is similar to that of Proposition~\ref{prop:GL-Qpoly}.
\end{proof}

We also define polynomials $P^*_{a,\delta,b}$ in $\bZ_q[t]$ by
\begin{displaymath}
P^*_{a,\epsilon,b}(t)=Q^*_{a,\epsilon,b}(t,1).
\end{displaymath}
We compute these polynomials explicitly:

\begin{proposition}
We have
\begin{displaymath}
P^{\rm even}_{a,\epsilon,b}(t) = q^{-e} t^a \prod_{i=1}^{a+2b} (t-\rho_i), \qquad
P^{\rm odd}_{a,\epsilon,b}(t) = q^{-f} t^a \prod_{i=1}^{a+2b} (t-\sigma_i),
\end{displaymath}
where
\begin{align*}
e &= \binom{a+b+1}{2} & f &= \binom{a+b}{2} \\
\{ \rho_1, \ldots, \rho_{2b} \} &= \{1, \pm q, \pm q^2, \ldots, \pm q^{b-1}, -q^b \} &
\{ \sigma_1, \ldots, \sigma_{2b} \} &= \{\pm 1, \ldots, \pm q^{b-1}\} \\
\rho_{2b+1} &= q^b &
\sigma_{2b+1+i} &= (-1)^{i+1} \chi(\epsilon) q^{b+\lfloor i/2 \rfloor} \\
\rho_{2b+2+i} &= (-1)^{i+1} \chi(\epsilon) q^{b+1+\lfloor i/2 \rfloor}.
\end{align*}
for $i \ge 0$.
\end{proposition}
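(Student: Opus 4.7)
The plan is to prove both formulas by induction on $a+b$ using the recurrences that define $Q^*_{a,\epsilon,b}$. The crucial preliminary observation is that setting $u=1$ collapses each of the two $b$-recurrences into a single-term recurrence, since the $(u-1)$ coefficient of the second summand vanishes. This leaves
\begin{align*}
P^{\rm even}_{a,\epsilon,b}(t) &= q^{a+b-2}(t+q)(t-1)\, P^{\rm even}_{a,\epsilon,b-1}(q^{-1}t), \\
P^{\rm odd}_{a,\epsilon,b}(t) &= q^{a+b-1}(t^2-1)\, P^{\rm odd}_{a,\epsilon,b-1}(q^{-1}t),
\end{align*}
together with the unchanged $a$-recurrences. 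It is this collapse that makes a closed product formula possible in the first place.

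I would begin by verifying the base cases $(0,1,0)$, $(1,\epsilon,0)$, and $(2,\pi,0)$ in both formulas, reading off the single or double root from the initial values of $Q^*_{a,\epsilon,b}(t,1)$. I would then induct on $a+b$, applying the $b$-recurrence whenever $b\ge 1$ and the $a$-recurrence otherwise, noting that the $a$-recurrence is valid for the needed case $(2,1,0)$ (the missing $(2,\pi,0)$ is a base case).

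For each induction step two things must be checked. The exponent of $q$ is an elementary telescoping using $\binom{a+b+1}{2}-\binom{a+b}{2}=a+b$ for the even $b$-step, $\binom{a+b+1}{2}-\binom{a+b-1}{2}=2a+2b-1$ for the even $a$-step, and the analogous identities for $f=\binom{a+b}{2}$ in the odd case. The content is in the root bookkeeping: the substitution $t\mapsto q^{-1}t$ multiplies every inductive root by $q$, and the explicit prefactor $(t+q)(t-1)$ or $(t^2-1)$ adjoins two further roots. One then checks that the resulting multiset is literally $\{\rho_i\}$ or $\{\sigma_i\}$ at the new level. For example, in the even $b$-step the shifted first family $\{q,\pm q^2,\ldots,\pm q^{b-1},-q^b\}$, the shifted $q\rho'_{2b-1}=q^b$, and the new $\{-q,1\}$ assemble to $\{1,\pm q,\ldots,\pm q^b\}$, which is exactly the new first $2b+1$ roots; the shifted second family aligns since the exponent $b+\lfloor i/2\rfloor$ becomes $b+1+\lfloor i/2\rfloor$.

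The only step needing genuine care is the $a$-recurrence when $a$ is odd, because the second family then has an odd number of entries and the unpaired root involves $\chi(\epsilon)$ honestly rather than through a symmetric pair $\{\pm q^j\}$. I would split by parity of $a$ and verify that the extra root produced at $i=a-3$ in the $(a-2,\epsilon,b)$ formula, namely $(-1)^{a-2}\chi(\epsilon)q^{b+(a-1)/2}$ after shifting by $q$, matches the unpaired root $(-1)^{a}\chi(\epsilon)q^{b+(a-1)/2}$ at $i=a-1$ in the $(a,\epsilon,b)$ formula. This sign bookkeeping is the main obstacle, but it reduces to the identity $(-1)^{a-2}=(-1)^a$. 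With that in hand, no further difficulty appears, and the induction closes.
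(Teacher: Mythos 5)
Your proof is correct and is exactly the calculation the paper leaves implicit (the paper states this proposition without proof, just as in the symplectic case where it notes the $u=1$ recurrence "is easily solved"): setting $u=1$ kills the second summand of each $b$-recurrence, and the rest is the exponent telescoping and root bookkeeping you describe, all of which checks out. The only blemish is a harmless off-by-one in your indexing of the last unpaired root (it sits at $i=a-2$, not $i=a-1$, in the $(a,\epsilon,b)$ formula, and at $i=a-4$ in the $(a-2,\epsilon,b)$ formula), which does not affect the parity argument or the conclusion.
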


Note that if $a=2m-1$ is odd and positive then
\begin{displaymath}
\{ \rho_1, \ldots, \rho_{2b+a} \} = \{ 1, \pm q, \ldots, \pm q^{b+m-1} \},
\end{displaymath}
while if $a=2m$ is even and positive then there is one additional $\rho$ root, namely $-\chi(\epsilon) q^{b+m}$. Similarly, if $a=2m$ is even then
\begin{displaymath}
\{ \sigma_1, \ldots, \sigma_{2b+a} \} = \{ \pm 1, \ldots, \pm q^{b+m-1} \},
\end{displaymath}
while if $a=2m+1$ is odd then there is one additional $\sigma$ root, namely $-\chi(\epsilon) q^{b+m}$. In particular, $P^{\rm even}_{a,\epsilon,b}$ is independent of $\epsilon$ when $a$ is odd, and $P^{\rm odd}_{a,\epsilon,b}$ is independent of $\epsilon$ when $a$ is even.

\subsection{Burnside rings}

Let $\BB=\BB(G, \bV)$ be the Burnside ring for $G$ relative to $\sE(\bV)$. Define $x_{a,\epsilon,b}$ to be the class $\lbb X_{a,\epsilon,b} \rbb$; these elements form a $\bZ$-basis of $\BB$. We now determine the ring structure when $2q$ is inverted (note that in all other cases we only invert $q$):

\begin{proposition} \label{prop:O-burnside}
We have a ring isomorphism
\begin{displaymath}
\phi \colon \bZ_{2q}[X,Y,Z]/(F) \to \BB[1/2q], \qquad
X \mapsto x_{1,1,0}, \quad Y \mapsto x_{1,\pi,0}, \quad Z \mapsto x_{0,0,1}
\end{displaymath}
where
\begin{displaymath}
F = (1-\tfrac{1}{2}X-\tfrac{1}{2}Y+Z)(X-Y).
\end{displaymath}
\end{proposition}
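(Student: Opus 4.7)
The plan follows Propositions~\ref{prop:GL-burnside} and~\ref{prop:Sp-burnside}, with a small twist to accommodate the single relation. Filter $\BB[1/2q]$ by level, so that $F^n\BB[1/2q]$ is spanned over $\bZ_{2q}$ by the $x_{a,\epsilon,b}$ with $a+b\le n$, and filter $R:=\bZ_{2q}[X,Y,Z]/(F)$ by polynomial degree; since each generator has level $1$, the candidate map $\phi$ is filtered. The leading part $\bar F=(X-Y)(Z-\tfrac12(X+Y))$ of $F$ is a degree-two non-zero-divisor in $\bZ_{2q}[X,Y,Z]$, so $\mathrm{gr}(R)=\bZ_{2q}[X,Y,Z]/(\bar F)$ has degree-$n$ piece of rank $\binom{n+2}{2}-\binom{n}{2}=2n+1$; on the other side, the structures of dimension $n$ are $V_{0,0,n}$ together with $V_{a,\epsilon,n-a}$ for $1\le a\le n$ and $\epsilon\in\{1,\pi\}$, also $2n+1$ in total. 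Thus $F^nR$ and $F^n\BB[1/2q]$ are free $\bZ_{2q}$-modules of common rank $(n+1)^2$, and it suffices to show that $\phi\otimes\bF_\ell$ is injective for every residue field $\bF_\ell$ of $\bZ_{2q}$: an injective map between equidimensional $\bF_\ell$-spaces is an isomorphism, so $\det\phi$ is a unit in $\bZ_{2q}$ and $\phi$ is an isomorphism.

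The main obstacle is verifying that $\phi(F)$ vanishes in $\BB[1/2q]$. Using the orbit decomposition $[\bV]=1+x_{0,0,1}+\tfrac{q-1}{2}(x_{1,1,0}+x_{1,\pi,0})$, the second factor of $F$ becomes $[\bV]-\tfrac{q}{2}(x_{1,1,0}+x_{1,\pi,0})$. I would decompose the $G$-orbits on $X_{1,\epsilon,0}\times\bV$ and $X_{1,\epsilon,0}^{\,2}$---fixing the first factor as a vector $v$ of norm $\epsilon$ and parametrizing the second by its projection onto $\bF v$ and the norm of its orthogonal component---to obtain
\begin{align*}
(x_{1,1,0}-x_{1,\pi,0})\cdot[\bV] &= q(x_{1,1,0}-x_{1,\pi,0})+q(x_{1,1,1}-x_{1,\pi,1}),\\
x_{1,1,0}^{\,2}-x_{1,\pi,0}^{\,2} &= 2(x_{1,1,0}-x_{1,\pi,0})+2(x_{1,1,1}-x_{1,\pi,1}).
\end{align*}
The two-dimensional non-degenerate contributions cancel in each difference because their modified discriminants depend on $\epsilon$ only through the square $\epsilon^2$. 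Multiplying the second identity by $q/2$ and subtracting from the first gives $\phi(F)=0$.

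For injectivity modulo $\ell$, introduce the coordinates $A=X-Y$, $B=1+Z-\tfrac12(X+Y)$, $C=X+Y$, which form an invertible change of variables over $\bZ_{2q}$ and present $R$ as $\bZ_{2q}[A,B,C]/(AB)$. Package the enumerative polynomials $Q^{\mathrm{even}}_{a,\epsilon,b}$ and $Q^{\mathrm{odd}}_{a,\epsilon,b}$ into marks homomorphisms $\psi^{\mathrm{even}},\psi^{\mathrm{odd}}\colon\BB\to\bZ_q[t,u]$, and let $\tilde\Psi=(\psi^{\mathrm{even}},\psi^{\mathrm{odd}})\circ\phi$; a direct calculation gives
\[
\tilde\Psi(A)=(0,2tu),\qquad \tilde\Psi(B)=(tu,0),\qquad \tilde\Psi(C)=(2q^{-1}(t-1)tu,\,2t^2u),
\]
the relation $\tilde\Psi(A)\cdot\tilde\Psi(B)=0$ ensuring that $\tilde\Psi$ is well-defined on $R$. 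Write a general element of $R$ uniquely as $p_A(A,C)+p_B(B,C)$ with $p_B(0,C)=0$; the second coordinate of its image is $p_A(2tu,2t^2u)$, whose monomials occupy distinct $(t,u)$-bidegrees, so vanishing forces $p_A=0$. After the triangular substitution $C\mapsto C+2q^{-1}B$ (valid because $q$ is a unit in $\bF_\ell$), the first coordinate becomes $p_B(tu,2q^{-1}t^2u)$, which vanishes iff $p_B=0$ by the same bidegree argument. Hence $\tilde\Psi$, and thus $\phi$, is injective modulo every residue field of $\bZ_{2q}$, completing the proof.
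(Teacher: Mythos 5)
Your proof is correct, and it reaches the conclusion by a genuinely different route in the one step where the paper is at its most indirect: the verification that $\phi(F)=0$. The paper never computes a single product in $\BB$; instead it shows $\tilde\phi\colon\bZ_{2q}[X,Y,Z]\to\BB[1/2q]$ is surjective on each filtered piece by a rank count, observes that $\ker(\psi\circ\tilde\phi)=(F)$, and then deduces that $F$ itself lies in $\ker\tilde\phi$ because the degree-$\le 2$ filtered pieces have ranks $10$ and $9$, forcing a nontrivial (and saturated) kernel inside the rank-one module $\bZ_{2q}\cdot F$. You instead verify the relation honestly, by decomposing $X_{1,\epsilon,0}\times\bV$ and $X_{1,\epsilon,0}^2$ into orbits; I checked your two identities (one finds $x_{1,\epsilon,0}\cdot\lbb\bV\rbb=qx_{1,\epsilon,0}+qx_{1,\epsilon,1}+\tfrac{q(q-1)}{2}(x_{2,1,0}+x_{2,\pi,0})$ and $x_{1,\epsilon,0}^2=2x_{1,\epsilon,0}+2x_{1,\epsilon,1}+\sum_{c^2\ne 1}x_{2,c^2-1,0}$, with the two-dimensional terms independent of $\epsilon$ exactly as you say), and the cancellation $q(\cdot)-\tfrac q2\cdot 2(\cdot)=0$ follows. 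Your approach costs an explicit Burnside-ring computation but buys a concrete explanation of where the relation comes from; the paper's trick is slicker but leaves the relation unmotivated. For injectivity, your change of coordinates to $\bZ_{2q}[A,B,C]/(AB)$ with $\tilde\Psi(A)=(0,2tu)$, $\tilde\Psi(B)=(tu,0)$, and the bidegree argument (the map $(i,j)\mapsto(i+2j,i+j)$ being injective) is essentially the paper's argument — which uses the basis $\{X^aY^bZ^c: a\in\{0,1\}\}$ and the fact that each marks homomorphism kills one factor of $F$ — but organized more transparently; the triangular substitution $C\mapsto C+2q^{-1}B$ is a nice touch that makes the algebraic-independence claim checkable by inspection. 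The surrounding filtration and determinant argument matches the paper's treatment of the $\GL$ and $\Sp$ cases.
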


\begin{proof}
The ring $\BB$ is filtered, where $F^n \BB$ is spanned by the classes $x_{a,\epsilon,b}$ with $a+b \le n$. Define a ring homomorphism
\begin{displaymath}
\tilde{\phi} \colon \bZ_{2q}[X,Y,Z] \to \BB[1/2q], \qquad
X \mapsto x_{1,1,0}, \quad Y \mapsto x_{1,\pi,0}, \quad Z \mapsto x_{0,0,1}.
\end{displaymath}
This is a homomorphism of filtered rings, if we define $X$, $Y$, and $Z$ to have filtration level~1. We have ring homomorphisms
\begin{displaymath}
\psi^* \colon \BB[1/2q] \to \bZ_{2q}[t,u], \qquad x_{a,\epsilon,b} \mapsto Q^{\ast}_{a,\epsilon,b},
\end{displaymath}
where $\ast$ is ``even'' or ``odd.'' Let
\begin{displaymath}
\psi \colon \BB[1/2q] \to \bZ_{2q}[t,u] \times \bZ_{2q}[t,u]
\end{displaymath}
be the product map $\psi^{\rm even} \times \psi^{\rm odd}$. Explicitly, we have
\begin{align*}
\psi(\tilde{\phi}(X)) &= (q^{-1}(t^2-t)u, t^2u+tu), \\
\psi(\tilde{\phi}(Y)) &= (q^{-1}(t^2-t)u, t^2u-tu), \\
\psi(\tilde{\phi}(Z)) &= (q^{-1}(t^2-t)u+tu-1, t^2u-1).
\end{align*}
Put
\begin{displaymath}
F^{\rm even} = X-Y, \qquad F^{\rm odd} = 1-\tfrac{1}{2}X-\tfrac{1}{2}Y+Z,
\end{displaymath}
so that $F=F^{\rm even} \cdot F^{\rm odd}$. The above formulas show that $F^*$ belongs to the kernel of $\psi^* \circ \tilde{\phi}$, and so $F$ belongs to the kernel of $\psi \circ \tilde{\phi}$.

Fix a prime $\ell \nmid 2q$. We claim that the map
\begin{displaymath}
\psi^* \circ \tilde{\phi} \colon \bZ_{2q}[X,Y,Z]/(F^*) \to \bZ_{2q}[t,u]
\end{displaymath}
is injective modulo $\ell$. Indeed, the source is simply the polynomial ring on $Y$ and $Z$, and the images of $Y$ and $Z$ in the target are algebraically independent modulo $\ell$, so the claim follows. It follows that the map
\begin{displaymath}
\psi \circ \tilde{\phi} \colon \bZ_{2q}[X,Y,Z]/(F) \to \bZ_{2q}[t,u] \times \bZ_{2q}[t,u]
\end{displaymath}
is injective modulo $\ell$.

Let $S$ be the set of all monomials of the form $X^aY^bZ^c$ with $a \in \{0,1\}$. Then $S$ forms a basis for $\bZ_{2q}[X,Y,Z]/(F)$ modulo $\ell$, as $F$ is quadratic degree polynomial in $X$ and the coefficient of $X^2$ is a unit. It follows that the image of $S$ under $\psi \circ \tilde{\phi}$ is linearly independent modulo $\ell$, and so the image of $S$ under $\tilde{\phi}$ is also linearly independent modulo $\ell$. Now, the set $F^n S$ of monomials $\{X^aY^bZ^c\}$ with $a \in \{0,1\}$ and $a+b+c \le n$ has cardinality $2n+1$, which coincides with the dimension of the filtered piece $F^n \BB$. It follows that $\tilde{\phi}(F^n S)$ forms a basis of $F^n \BB$ modulo $\ell$, and so the map
\begin{displaymath}
F^n \tilde{\phi} \colon F^n \bZ_{2q}[X,Y,Z] \to F^n \BB[1/2q]
\end{displaymath}
is surjective modulo $\ell$. Since this holds for all $\ell \nmid 2q$, it follows that $F^n \tilde{\phi}$ is surjective. This implies that $\tilde{\phi}$ is surjective too.

The kernel of $\tilde{\phi}$ is contained in the kernel of $\psi \circ \tilde{\phi}$, which is $(F)$. Consider the map
\begin{displaymath}
F^2 \tilde{\phi} \colon F^2 \bZ_{2q}[X,Y,Z] \to F^2 \BB[1/2q].
\end{displaymath}
The source is a free $\bZ_{2q}$ module of rank~10, while the target is a free $\bZ_{2q}$-module of rank~9. Thus the kernel is non-trivial. Since the kernel is contained in the second filtered piece of $(F)$, it follows that the kernel contains $F$. We thus see that $\tilde{\phi}$ induces a ring homomorphism $\phi$ as in the statement of the proposition. Since $\tilde{\phi}$ is surjective, so is $\phi$; and since $\psi \circ \tilde{\phi}$ has kernel $(F)$, it follows that $\phi$ is injective, which completes the proof.
\end{proof}

Let $\hat{\BB}$ be the infinitesimal Burnside ring for $G$ relative to $\sE(\bV)$. We now analyze this ring. Let $G(n) \subset G$ be the open subgroup fixing each $e_i$ and $f_i$ for $1 \le i \le n$, and let $\bV(n) \subset \bV$ be the span of the $e_i$ and $f_i$ with $i>n$. Then $(G(n), \bV(n))$ is isomorphic to $(G, \bV)$. We define a ring homomorphism
\begin{displaymath}
\delta \colon \BB \to \BB
\end{displaymath}
by restricting to $G(1)$ and then identifying this with $G$. We now compute this map.

\begin{proposition}
We have
\begin{align*}
\delta(x_{1,1,0}) &= q x_{1,1,0} + (q-1) z \\
\delta(x_{1,\pi,0}) &=  q x_{1,\pi,0} + (q-1) z \\
\delta(x_{0,0,1}) &= qx_{0,0,1}+(q-1)(z+1)
\end{align*}
where
\begin{displaymath}
z = 1+x_{0,0,1} + \tfrac{1}{2}(q-1) x_{1,1,0} + \tfrac{1}{2}(q-1)x_{1,\pi,0}.
\end{displaymath}
In particular, $\delta$ is an isomorphism after inverting $2q$.
\end{proposition}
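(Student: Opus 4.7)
First I would fix the decomposition $\bV = \bF^2 \oplus \bV(1)$ of $G(1)$-sets, where $\bF^2$ denotes the hyperbolic plane spanned by $e_1$ and $f_1$ (on which $G(1)$ acts trivially). Partitioning $\bV$ by norm value shows $[\bV] = z$ in $\BB$: the zero vector contributes $1$, the nonzero isotropic vectors contribute $x_{0,0,1}$, and the $(q-1)/2$ nonzero norms in each square class contribute $x_{1,1,0}$ or $x_{1,\pi,0}$ respectively (the transitive $G$-set of vectors of a given nonzero norm depends only on the norm's square class, via the $G$-equivariant scaling $v \mapsto \lambda v$ with $\lambda^2$ adjusted). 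The decomposition $\bV = \bF^2 \oplus \bV(1)$ immediately gives $\delta([\bV]) = q^2 [\bV]$, so $\delta(z) = q^2 z$; this provides a free linear relation among the three quantities to be computed.

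Next I would compute $\delta(x_{1,1,0})$ directly: represent $x_{1,1,0}$ by the norm-$1$ locus $\bV_1 \subset \bV$ and decompose $\bV_1 = \{(u,v) : \langle u, u \rangle + \langle v, v \rangle = 1\}$. Since $G(1)$ fixes each $u \in \bF^2$ pointwise,
\[ \delta(x_{1,1,0}) = \sum_{u \in \bF^2} [\bV(1)_{1 - \langle u, u\rangle}]. \]
The hyperbolic plane $\bF^2$ has $2q-1$ isotropic vectors and $q-1$ vectors of each nonzero norm. Grouping by $\alpha = \langle u, u \rangle$: the $\alpha = 0$ term contributes $(2q-1) x_{1,1,0}$, the $\alpha = 1$ term contributes $(q-1)(1 + x_{0,0,1})$, and the terms with $\alpha \ne 0, 1$ contribute $(q-1)\sum_{\alpha \ne 0, 1}\epsilon(1 - \alpha)$, where $\epsilon(b)$ denotes $x_{1,1,0}$ or $x_{1,\pi,0}$ according to whether $b$ is a nonzero square or nonsquare. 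The substitution $\beta = 1 - \alpha$ preserves $\bF \setminus \{0, 1\}$ and turns this into $\sum_{\beta \ne 0, 1}\epsilon(\beta) = \tfrac{q-1}{2}(x_{1,1,0} + x_{1,\pi,0}) - x_{1,1,0}$. Collecting terms and simplifying via $(2q-1) + \tfrac{(q-1)(q-3)}{2} = q + \tfrac{(q-1)^2}{2}$ recovers $qx_{1,1,0} + (q-1)z$. The formula for $\delta(x_{1,\pi,0})$ is entirely analogous (with $1$ replaced by $\pi$ throughout), and $\delta(x_{0,0,1})$ then follows from $\delta(z) = q^2 z$ together with the previous two; it can also be verified directly by decomposing the nonzero isotropic locus of $\bV$ by its $u$-component, with the three cases $u = 0$, $u \ne 0$ isotropic, and $u$ non-isotropic contributing $x_{0,0,1}$, $2(q-1)(x_{0,0,1} + 1)$, and $\tfrac{(q-1)^2}{2}(x_{1,1,0} + x_{1,\pi,0})$ respectively.

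For the invertibility claim, $\delta$ respects the filtration $F^\bullet \BB$ defined by $a + b$, and the formulas above show that on $F^1/F^0$, with basis $\{x_{1,1,0}, x_{1,\pi,0}, x_{0,0,1}\}$, the $q^2$-eigenspace is spanned by the class of $z - 1$, while the $q$-eigenspace contains $x_{1,1,0} - x_{1,\pi,0}$ and $(x_{1,1,0} + x_{1,\pi,0}) - (2/q)(z-1)$; the determinant on this graded piece is $q^4$, a unit of $\bZ_{2q}$. Since $\BB[1/2q]$ is generated by $F^1$ as a ring (Proposition~\ref{prop:O-burnside}) and $\delta$ is a ring homomorphism, an induction on filtration degree yields that $\delta$ is an automorphism of $\BB[1/2q]$. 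The main obstacle will be the character-sum bookkeeping in the computation of $\delta(x_{1,\epsilon,0})$: without the translation invariance of $\bF \setminus \{0, \eta\}$ under $\alpha \mapsto \eta - \alpha$, the sum $\sum \epsilon(\eta - \alpha)$ would involve the quadratic character in a genuine way and would depend delicately on $q \bmod 4$; the translation trick collapses it to $\sum_{\bF \setminus \{0, \eta\}} \epsilon(\beta)$, eliminating the case analysis and producing the clean answer $qx_{1,\epsilon,0} + (q-1)z$.
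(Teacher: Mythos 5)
Your computations are correct, and you are using the same underlying decomposition as the paper, namely $\bV = W \oplus \bV(1)$ with $W$ the hyperbolic plane spanned by $e_1,f_1$, together with the counts $N_b(W)=q-1$ for $b\ne 0$ and $N_0(W)=2q-1$. The paper organizes the computation slightly more efficiently: writing $Y_a(V)$ for the norm-$a$ locus, it derives the single uniform identity $\lbb Y_a(\bV)\rbb = q\,\lbb Y_a(\bV(1))\rbb + (q-1)\lbb \bV(1)\rbb$ for every $a\in\bF$ directly from the constancy of $N_b(W)$ on $b\ne 0$, which yields all three formulas at once and entirely avoids the square-class bookkeeping; your substitution $\beta=\eta-\alpha$ on $\bF\setminus\{0,\eta\}$ is exactly the step that recovers this collapse by hand, so the two arguments are really the same observation packaged differently. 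For the invertibility, the paper argues that $z=\lbb\bV\rbb$ satisfies $\delta(z)=q^2z$, hence lies in the image after inverting $q$, hence so do the three generators, so $\delta$ is a surjective endomorphism of the Noetherian ring $\BB[1/2q]$ and therefore injective; your filtration argument (determinant $q^4$ on $F^1/F^0$, then induction using that the associated graded is generated in degree one) is a legitimate, slightly more hands-on alternative, though you should note that surjectivity of $\mathrm{gr}^n\delta$ for $n\ge 2$ uses that $\mathrm{gr}\,\delta$ is a graded ring homomorphism and that $\mathrm{gr}\,\BB[1/2q]$ is generated in degree one, which follows from Proposition~\ref{prop:O-burnside}.
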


\begin{proof}
For a quadratic space $V$, let $Y_a(V)$ be the set of vectors in $a$ of norm $a$. We have
\begin{displaymath}
Y_a(V \oplus W) = \coprod_{a=b+c} Y_b(V) \times Y_c(W)
\end{displaymath}
for an orthogonal direct sum. If $V$ is finite then the cardinality of $Y_a(V)$ is the number $N_a(V)$ previously studied. Now, we have $\bV = W \oplus \bV(1)$, where $W$ is a two-dimensional space isomorphic to $V_{2,1,0}$. We thus find
\begin{displaymath}
\lbb Y_a(\bV) \rbb = \sum_{a=b+c} N_b(W) \lbb Y_c(\bV(1)) \rbb.
\end{displaymath}
As we have already seen, $N_b(W)$ is $q-1$ if $b \ne 0$, and is $2q-1$ if $b=0$. We thus have
\begin{displaymath}
\lbb Y_a(\bV) \rbb = q \lbb Y_a(\bV(1)) \rbb + (q-1) \sum_{c \in \bF} \lbb Y_c(\bV(1)) \rbb
\end{displaymath}
We have
\begin{displaymath}
\lbb \bV \rbb = \sum_{c \in \bF} \lbb Y_c(\bV) \rbb = \lbb Y_0(\bV) \rbb + \tfrac{1}{2} (q-1) \lbb Y_1(\bV) \rbb + \tfrac{1}{2} (q-1) \lbb Y_{\pi}(\bV) \rbb
\end{displaymath}
and similarly with $\bV(1)$ in place of $\bV$. Since
\begin{displaymath}
x_{1,1,0} = \lbb Y_1(\bV) \rbb, \qquad
x_{1,\pi,0} = \lbb Y_{\pi}(\bV) \rbb, \qquad
x_{0,0,1} = \lbb Y_0(\bV) \rbb - 1,
\end{displaymath}
the formulas for $\delta$ follow.

We now show that $\delta$ is an isomorphism. Since $z=\lbb \bV \rbb$, we have $\delta(z)=q^2 z$. This shows that $z$ belongs to the image of $\delta$ (when $q$ is inverted). From the formulas for $\delta$, we thus see that the image of $\delta$ contains each of $x_{1,1,0}$, $x_{1,\pi,0}$, and $x_{0,0,1}$ (when $q$ is inverted). Since these elements generate $\BB[1/2q]$ as a $\bZ_q$-algebra, it follows that $\delta \colon \BB[1/2q] \to \BB[1/2q]$ is surjective, and so it is an isomorphism \stacks{06RN}.
\end{proof}

As $\hat{\BB}$ is the direct limit of iterates of $\delta$, we find:

\begin{corollary}
The natural map $\BB[1/2q] \to \hat{\BB}[1/2q]$ is an isomorphism.
\end{corollary}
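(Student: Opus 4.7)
The plan is to deduce this directly from the preceding proposition that $\delta \colon \BB \to \BB$ becomes an isomorphism after inverting $2q$. First, I would unravel the definition: by construction, $\hat{\BB}$ is the direct limit of the Burnside rings $\BB(G(n), \bV(n))$ along the restriction maps $\BB(G(n), \bV(n)) \to \BB(G(n+1), \bV(n+1))$, as $n$ ranges over $\bN$. The stabilizer class $\sE(\bV)$ has a cofinal collection of subgroups of the form $G(n)$, so this direct limit computes $\hat{\BB}$.

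Next I would use the fact, noted in the paper's setup, that the pair $(G(n), \bV(n))$ is isomorphic to $(G, \bV)$ via shifting basis vectors by $n$. Under this identification, each transition map in the directed system is isomorphic to $\delta$ (and indeed this is how $\delta$ was defined, in the case $n=1$). A small but essential check is that the resulting map is genuinely independent of $n$, so that the system is the constant tower $\BB \xrightarrow{\delta} \BB \xrightarrow{\delta} \cdots$; this is immediate from the compatibility of the shifts.

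After inverting $2q$, the preceding proposition tells us that $\delta$ is an automorphism of $\BB[1/2q]$. A directed system of abelian groups in which every transition map is an isomorphism has its colimit canonically isomorphic to any term, with the structure map from that term being an isomorphism. Applying this observation to the tower $\BB[1/2q] \xrightarrow{\delta} \BB[1/2q] \xrightarrow{\delta} \cdots$ yields exactly the statement that the natural map $\BB[1/2q] \to \hat{\BB}[1/2q]$ is an isomorphism.

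There is essentially no obstacle here; the only thing to verify with any care is the identification of restriction with $\delta$ under the shift isomorphism, which is routine. The real content lives in the preceding proposition, whose computation of $\delta(x_{1,1,0})$, $\delta(x_{1,\pi,0})$, and $\delta(x_{0,0,1})$ required the norm counts and the splitting $\bV = W \oplus \bV(1)$.
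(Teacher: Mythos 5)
Your proof is correct and follows exactly the paper's route: the paper likewise observes that $\hat{\BB}$ is the direct limit of the tower $\BB \xrightarrow{\delta} \BB \xrightarrow{\delta} \cdots$ (using cofinality of the subgroups $G(n)$ and the shift identification of $(G(n),\bV(n))$ with $(G,\bV)$), and then invokes the preceding proposition that $\delta$ is an isomorphism on $\BB[1/2q]$. Nothing further is needed.
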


\begin{remark}
Let $z$ be as above, and put
\begin{displaymath}
w_1 = x_{1,1,0} - q^{-1} z, \qquad
w_2 = x_{1,\pi,0} - q^{-1} z.
\end{displaymath}
Then $w_1$, $w_2$, and $z$ generate $\BB[1/2q]$ as a $\bZ_q$-algebra. Moreover, $w_1$ and $w_2$ are $q$-eigenvectors of $\delta$, while $z$ is a $q^2$-eigenvector of $\delta$. It follows that $\delta$ induces a grading on the ring $\BB[1/2q]$.
\end{remark}

\subsection{Transitive sets and smooth approximation}

Once again, we classify transitive $G$-sets, and obtain a smooth approximation.

\begin{proposition} \label{prop:O-trans}
Every transitive $G$-set is isomorphic to one of the form $X_{m,\delta,n}/\Gamma$, where $\Gamma$ is a subgroup of $\Aut(X_{m,\delta,n})$. In particular, $\sE(\bV)$ is a large stabilizer class.
\end{proposition}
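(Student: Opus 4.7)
The plan is to follow the strategy already used for Propositions~\ref{prop:GL-trans} and~\ref{prop:Sp-trans}: classify the $G$-stable equivalence relations $R$ on $X(V)$ for $V \in \fC$ and then conclude by induction on $\dim V$. I will reuse the notion of \emph{defect} of a pair $(i,j) \in X(V) \times X(V)$---the codimension of $i(V) \cap j(V)$ in $i(V)$---and of $R$---the maximum defect over pairs in $R$. The analogs of Lemmas~\ref{lem:GL-trans-1}, \ref{lem:GL-trans-2}, and~\ref{lem:GL-trans-3} should go through verbatim in the orthogonal setting, since their proofs only invoke homogeneity of $\bV$ and $G$-equivariance of $R$, not the specific nature of the bilinear form. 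So the real content will be in adapting Lemma~\ref{lem:GL-trans-4}.

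Suppose then that $(i,j) \in R$ has defect~1 and agrees on a codimension~1 substructure $W \subset V$; pick a basis $x_1,\ldots,x_{n-1}$ of $W$ extended by $x_n$ to a basis of $V$. The goal is to show $R \supseteq R_W$. Following the symplectic template, for each $a \in \bF$ I will produce an auxiliary $j_a \colon V \to \bV$ agreeing with $i$ on $W$, with $(i, j_a) \in R$ and $\langle i(x_n), j_a(x_n) \rangle = a$. Setting $U = i(V) + j(V)$, I define a linear functional $\lambda_a$ on $U$ prescribing the desired cross-pairings (exactly as in the Sp argument) and seek $y_a \in \bV$ with $\lambda_a(-) = \langle y_a, -\rangle$, so that the linear map $U \to U + \bF y_a$ sending $j(x_n) \mapsto y_a$ has a chance of being a morphism in $\fC$.

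The main new obstacle, compared to the symplectic case, is that this map must now preserve a \emph{symmetric} form, and so in addition to the cross-pairing conditions I need the self-pairing constraint $\langle y_a, y_a \rangle = \langle j(x_n), j(x_n) \rangle = \langle x_n, x_n \rangle$. To meet it, I will first pick any $y_a^{(0)} \in \bV$ satisfying $\lambda_a = \langle y_a^{(0)}, -\rangle$ and then modify by a vector $z \in U^{\perp} \cap \bV$ to adjust the self-pairing of $y_a := y_a^{(0)} + z$ to the required value. Since $U$ is finite-dimensional while $\bV$ is an infinite orthogonal sum of hyperbolic planes, $U^{\perp} \cap \bV$ is a non-degenerate quadratic space of infinite rank; its norm form is surjective onto $\bF$ already on any hyperbolic summand (recall $q$ is odd), so every desired self-pairing value is attained. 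Homogeneity of $\bV$ then produces $g_a \in G$ realizing the isomorphism $U \to U + \bF y_a$, and the argument of Lemma~\ref{lem:GL-trans-4}---namely $g_a(i,j) = (j, j_a)$, followed by transitivity of the equivalence relation---yields $(i, j_a) \in R$.

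The remainder is a direct transcription of the $\GL$ and $\Sp$ arguments: for an arbitrary $j' \colon V \to \bV$ agreeing with $i$ on $W$ with $(i,j')$ of defect~1, set $a = \langle i(x_n), j'(x_n) \rangle$ and match $(i,j')$ against $(i, j_a)$ via an element of $G$ fixing $i$ pointwise on $V$; the residual defect~0 case $j' = i\gamma$ with $\gamma \in \Aut(V)$ fixing $W$ is handled by composing $j$ with $\gamma$ and invoking the defect~1 case. This gives $R \supseteq R_W$, completing the classification of equivalence relations and hence of transitive $G$-sets. The ``in particular'' clause---that $\sE(\bV)$ is large---then follows formally, exactly as in the discussion immediately after Lemma~\ref{lem:GL-trans-1}.
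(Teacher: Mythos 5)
Your proposal is correct and takes the same route as the paper, which for this proposition simply defers to the symplectic case; you have correctly isolated the one genuinely new point, namely the self-pairing constraint $\langle y_a, y_a \rangle = \langle x_n, x_n \rangle$ forced by symmetry of the form. One small repair: choose the correcting vector $z$ orthogonal to $U + \bF y_a^{(0)}$ (not merely to $U$) so that the cross term $2\langle y_a^{(0)}, z\rangle$ disappears; the norm form is still surjective on that complement because it contains hyperbolic planes, even though $U^{\perp}$ need not be non-degenerate when $U$ is degenerate.
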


\begin{proof}
The proof is similar to the symplectic case (Proposition~\ref{prop:Sp-trans}).
\end{proof}

\begin{proposition} \label{prop:Sp-smooth}
The $G$-set $X_{m,\delta,n}$ is simply connected for all $m$, $\delta$, $n$. The family $\{X_{m,\delta}\}$, with $m \ge 0$ and $\delta \in \bF^*/(\bF^*)^2$, is a smooth approximation of $G$.
\end{proposition}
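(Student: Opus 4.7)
The plan is to mirror, step by step, the proofs given for the $\GL$ and symplectic cases (Propositions~\ref{prop:GL-simply-conn} and~\ref{prop:GL-smooth}, with the symplectic analog just cited as being similar). The two ingredients we have in hand for the orthogonal group are: (i) the explicit polynomials $P^{\rm even}_{a,\epsilon,b}(t)$ and $P^{\rm odd}_{a,\epsilon,b}(t)$ counting embeddings into the non-degenerate spaces $V_{m,\delta}$, and (ii) the largeness of the stabilizer class $\sE(\bV)$, from Proposition~\ref{prop:O-trans}.

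For simple connectedness of $X_{a,\epsilon,b}$, I would first consider a finite cover $X_{a',\epsilon',b'} \to X_{a,\epsilon,b}$ arising from an embedding $i \colon V_{a,\epsilon,b} \hookrightarrow V_{a',\epsilon',b'}$. By Proposition~\ref{prop:fincover}, this gives a constant $h$ with $N_{a',\epsilon',b'}^{2r,1,0} \le h \cdot N_{a,\epsilon,b}^{2r,1,0}$ for all $r \ge 0$. Both sides are polynomials in $q^r$ of degree $2a+2b$ and $2a'+2b'$ respectively (by inspection of the explicit formula for $P^{\rm even}$), so $\dim V_{a',\epsilon',b'} \le \dim V_{a,\epsilon,b}$. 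Since $i$ is injective, it must be an isomorphism, and so the cover is trivial. For a general finite cover $f \colon X \to X_{a,\epsilon,b}$, largeness of $\sE(\bV)$ provides a finite cover $X_{a',\epsilon',b'} \to X$; the composition is then an isomorphism by the previous case, forcing $f$ to be an isomorphism as well.

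For the smooth approximation claim, the key input is Witt's theorem for finite quadratic spaces: any isometric embedding of a subspace of a non-degenerate space $V_{m,\delta}$ into $V_{m,\delta}$ extends to a self-isometry. This makes each structure $V_{m,\delta}$ homogeneous in the sense of \S\ref{ss:homo}, so $X_{m,\delta}$ is homogeneous relative to $\sE(\bV)$. Combining with simple connectedness (just proved) and largeness of $\sE(\bV)$, Proposition~\ref{prop:rel-homo} promotes this to absolute homogeneity of $X_{m,\delta}$ in $\bS(G)$. Finally, for each structure $V_{a,\epsilon,b}$ and each discriminant $\delta$, the explicit formula shows $P^{\rm even}_{a,\epsilon,b}(\chi(\delta) q^r)$ and $P^{\rm odd}_{a,\epsilon,b}(\chi(\delta) q^r)$ are non-zero for $r$ sufficiently large, i.e., $V_{a,\epsilon,b}$ embeds into $V_{m,\delta}$ for all but finitely many $m$ (for each fixed $\delta$). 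By the criterion recalled in \S\ref{ss:smooth}, this proves $\{X_{m,\delta}\}$ is a smooth approximation.

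The only potential subtlety — and it is a minor one rather than the main obstacle — is that in the orthogonal setting we have a two-parameter family indexed by $(m,\delta)$ rather than a single index, and the embedding count depends on the discriminant via $\chi(\delta)$. This causes no real trouble because the degree of $P^{\rm even}_{a,\epsilon,b}$ in $t$ is independent of $\delta$ and equals $\dim V_{a,\epsilon,b}$, which is all that is needed for the simple connectedness argument; for the embedding existence statement, the explicit root structure shows both $P^{\rm even}$ and $P^{\rm odd}$ have positive leading coefficient, so positivity for large $r$ holds uniformly in $\delta$ and the parity of $m$.
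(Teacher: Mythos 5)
Your proof is correct and follows exactly the route the paper intends (the paper's own proof is the one-line remark that the arguments are similar to the $\GL$ case, Propositions~\ref{prop:GL-simply-conn} and~\ref{prop:GL-smooth}); you have filled in the same degree-comparison, largeness, and Witt/homogeneity steps, and you handle the two-parameter indexing by $(m,\delta)$ and the sign $\chi(\delta)$ correctly. One trivial slip: the degree of $P^{\ast}_{a,\epsilon,b}$ in $t$ is $2(a+b)=2\dim V_{a,\epsilon,b}$ rather than $\dim V_{a,\epsilon,b}$, but since the degree is still strictly increasing in the dimension this does not affect the argument.
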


\begin{proof}
The proofs are similar to those of Propositions~\ref{prop:GL-simply-conn} and~\ref{prop:GL-smooth}.
\end{proof}

\subsection{Measures}

Let $N^{\rm even}_{a,\epsilon,b}$ be the function on $\bN$ defined by $r \mapsto N^{2r,1,0}_{a,\epsilon,b}$. Similarly, let $N^{\rm odd}_{a,\epsilon,b}$ by the function $r \mapsto N^{2r+1,1,0}_{a,\epsilon,b}$. The spaces $V_{\ell,\delta,0}$ are homogeneous objects of $\cC$. Thus, by \S \ref{ss:count}, we obtain two counting measures
\begin{displaymath}
\Theta(G; \bV) \to \Fun^{\circ}(\bN, \bZ), \qquad x_{a,\epsilon,b} \mapsto N^*_{a,\epsilon,b}
\end{displaymath}
where $*$ is either ``even'' or ``odd.'' Since $N$ can be expressed in terms of the $P$ polynomials, we obtain two measures
\begin{displaymath}
\phi^* \colon \Theta(G; \bV) \to \bZ_q[t], \qquad x_{a,\epsilon,b} \mapsto P^*_{a,\epsilon,b}(t)
\end{displaymath}
Taken together, these two measures are universal:

\begin{theorem} \label{thm:O-meas}
The map
\begin{displaymath}
\phi^{\rm even} \times \phi^{\rm odd} \colon \Theta(G, \bV)[1/2q] \to \bZ_{2q}[t] \times \bZ_{2q}[t]
\end{displaymath}
is an injection, and the image consists of pairs $(f,g)$ such that $f(0)=g(0)$. In particular, we have a ring isomorphism
\begin{displaymath}
\Theta(G, \bV)[1/2q] \cong \bZ_{2q}[t,t']/(tt').
\end{displaymath}
\end{theorem}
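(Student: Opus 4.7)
The plan is to reformulate $\BB[1/2q]$ in convenient generators and then track the map $\phi^{\rm even}\times\phi^{\rm odd}$ explicitly. Combining Proposition~\ref{prop:O-burnside} with the isomorphism $\hat\BB[1/2q]\cong\BB[1/2q]$ (from the $\delta$-computation), we have $\BB[1/2q]\cong\bZ_{2q}[X,Y,Z]/(F)$. I change generators to $z=\lbb\bV\rbb=1+Z+\tfrac{q-1}{2}(X+Y)$, $w_1=X-q^{-1}z$, and $w_2=Y-q^{-1}z$. A direct substitution rewrites the polynomial $F$ as $-\tfrac{q}{2}(w_1^2-w_2^2)$, so $\BB[1/2q]\cong\bZ_{2q}[z,w_1,w_2]/(w_1^2-w_2^2)$; equivalently, with $a=w_1+w_2$ and $b=w_1-w_2$, one gets $\BB[1/2q]\cong\bZ_{2q}[z,a,b]/(ab)$.

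Next I compute images using the base cases of the $Q$-polynomial recursions:
\[
\phi(z)=(t^2,qt^2), \qquad \phi(a)=(-2q^{-1}t,0), \qquad \phi(b)=(0,2t).
\]
Each of these vanishes at $t=0$ in both components, so the image is contained in $R:=\{(f,g):f(0)=g(0)\}$. After inverting $2q$ we recover $(t,0)=-\tfrac{q}{2}\phi(a)$ and $(0,t)=\tfrac{1}{2}\phi(b)$, which together with $(1,1)=\phi(1)$ generate $R$ as a $\bZ_{2q}$-algebra; so the image equals $R$, and sending the generators $t\mapsto(t,0)$, $t'\mapsto(0,t)$ yields the isomorphism $\bZ_{2q}[t,t']/(tt')\cong R$ promised in the theorem. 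The element $K:=z-\tfrac{q^2}{4}a^2-\tfrac{q}{4}b^2$ maps to zero under $\phi$ by direct substitution, and $\BB[1/2q]/(K)\cong\bZ_{2q}[a,b]/(ab)$ (solve for $z$), which matches $R$; hence the kernel of $\BB[1/2q]\to R$ is exactly the ideal $(K)$.

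The main remaining step, which I expect to be the principal obstacle, is to show that $K$ also lies in the kernel of $\BB[1/2q]\to\Theta(G,\bV)[1/2q]$; this will force the surjection $\Theta(G,\bV)[1/2q]\twoheadrightarrow R$ to be an isomorphism, yielding all three assertions of the theorem. The key input is the fiber-multiplicativity axiom applied to the projection $X_{2,1,0}\to X_{0,0,1}$ sending a hyperbolic pair $(v,w)$ to its first vector $v$: taking $v=e_1$ and decomposing $w=f_1+\alpha e_1+g'$ with $\alpha=-\tfrac{1}{2}\langle g',g'\rangle$ determined by $g'\in\bV(1)$, one sees the fiber is in bijection with $\bV(1)$ as a $G_v$-set, whose measure is $q^{-2}z$; hence $\mu(X_{2,1,0})=q^{-2}z\cdot\mu(X_{0,0,1})$ in $\Theta$. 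Combining this with analogous fiber identities (e.g.\ from the pairing map $\bV\times\bV\to\bF$, which gives $\mu(S_c)=q^{-1}z(z-1)$ for $c\neq 0$) and with the Burnside-ring expansions of products like $XZ$, $YZ$, and $Z^2$ — which decompose into $[X_{2,1,0}]$ together with the new structure classes $[V_{1,1,1}]$, $[V_{1,\pi,1}]$, $[X_{0,0,2}]$ — will produce the identity $K=0$ in $\Theta$ after algebraic bookkeeping.
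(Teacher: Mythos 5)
Your proposal follows the same strategy as the paper's proof, just in different coordinates: the paper works with $u=c+1-\tfrac{1}{2}a-\tfrac{1}{2}b$ and $v=\tfrac{1}{2}(a-b)$ (where $a=[X_{1,1,0}]$, $b=[X_{1,\pi,0}]$, $c=[X_{0,0,1}]$), which are exactly $-\tfrac{q}{2}$ times your $a$ and $\tfrac{1}{2}$ times your $b$, and your relation $K=0$, i.e.\ $z=\tfrac{q^2}{4}a^2+\tfrac{q}{4}b^2$, is literally equivalent to the paper's relation $c=-1+q^{-1}u(u+q-1)+v^2$. Your reduction of the theorem to ``$K$ dies in $\Theta$'' is clean and correct, and your identification of the key input is also the paper's: the measure axiom applied to $X_{2,1,0}\to X_{0,0,1}$, whose fiber you correctly compute to be $\bV(1)$ with measure $q^{-2}z$. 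The one place you stop short is the ``algebraic bookkeeping'': you do not actually verify that this single identity forces $K=0$, and you hedge by invoking additional fiber identities and the orbit decompositions of $XZ$, $YZ$, $Z^2$. None of that extra machinery is needed. The only other ingredient required is the expansion of $x_{2,1,0}$ as a polynomial in $x_{1,1,0}$, $x_{1,\pi,0}$, $x_{0,0,1}$ inside $\BB[1/2q]$ itself, which is most efficiently obtained not by decomposing products of orbits but by checking the candidate identity after applying the injective marks homomorphism $\psi^{\rm even}\times\psi^{\rm odd}$ into $\bZ_{2q}[t,u]^2$ (injectivity being part of the proof of Proposition~\ref{prop:O-burnside}). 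Equating that Burnside expression with $q^{-2}z\cdot[X_{0,0,1}]$ and rewriting in your generators does produce exactly $K=0$ (I checked this matches the paper's computation), so your plan closes; but as written the proposal asserts rather than proves the decisive identity, and you should carry out that last computation explicitly.
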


\begin{proof}
Let $a=[X_{1,1,0}]$ and $b=[X_{1,\pi,0}]$ and $c=[X_{0,1,1}]$, and put
\begin{displaymath}
u=c+1-\tfrac{1}{2} a-\tfrac{1}{2} b, \qquad v=\tfrac{1}{2} (a-b).
\end{displaymath}
Letting $\phi=\phi^{\rm even} \times \phi^{\rm odd}$, we have
\begin{displaymath}
\phi(u) = (t,0), \qquad \phi(v)=(0,t),
\end{displaymath}
which shows that the image of $\phi$ contains all pairs $(f,g)$ with $f(0)=g(0)$ (when $2q$ is inverted). Our results on the Burnside rings show that $a$, $b$, and $c$ generate $\Theta(G; \bV)[1/2q]$ as a $\bZ_{2q}$-algebra, and that $uv=0$ holds in this ring. Of course, this implies that $\Theta(G; \bV)[1/2q]$ is generated by $u$, $v$, and $c$. We show that $c$ is generated by $u$ and $v$, which will complete the proof.

Identify $X_{2,1,0}$ with the set of pairs $(v_1,v_2)$ in $\bV^2$ such that $v_1$ and $v_2$ are isotropic and $\langle v_1, v_2 \rangle=1$, and identify $X_{0,1,1}$ with the set of non-zero isotropic vectors in $\bV$. We have a map $X_{2,1,0} \to X_{0,1,1}$ via $(v_1, v_2) \mapsto v_1$. Let $F$ be the fiber of this map over $e_1$. We thus have the relation
\begin{displaymath}
[X_{2,1,0}] = [F] \cdot [X_{0,1,1}].
\end{displaymath}
We now analyze the structure of $F$.

Let $F'$ be the set of vectors $v \in \bV$ such that $(e_1,v) \in F$. Then the map $F \to F'$ given by $(v_1,v_2) \mapsto v_2$ is an isomorphism. We have $v \in F'$ if and only if
\begin{displaymath}
\langle e_1, v \rangle = 1, \qquad \langle v, v \rangle = 0.
\end{displaymath}
Write $v=\alpha e_1 + \beta f_1 + w$ with $w \in \bV(2)$. Then
\begin{displaymath}
\langle e_1, v \rangle = \beta, \qquad \langle v, v \rangle = \alpha \beta + \langle w, w \rangle.
\end{displaymath}
Thus $v$ belongs to $F'$ if and only if $\beta=1$ and $\alpha=-\langle w, w \rangle$. We therefore have an isomorphism
\begin{displaymath}
\bV(2) \to F', \qquad w \mapsto -\langle w, w \rangle e_1 + f_1 + w.
\end{displaymath}
Thus $F \cong \bV(2)$ as well.

Since $\lbb \bV \rbb = q^2 \lbb \bV(2) \rbb$ and $[\bV] =c+1+\tfrac{1}{2} (q-1)(a+b)$, we have
\begin{displaymath}
[X_{2,1,0}] = q^{-2}(c+1+\tfrac{1}{2} (q-1)(a+b)) c.
\end{displaymath}
On the other hand, in $\BB$ we have
\begin{displaymath}
x_{2,1,0} = q^{-2} ((q+1) (x_{0,1,1}+1) - \tfrac{1}{2}(q+1) x_{1,1,0} +\tfrac{1}{2} (q-1) x_{1,\pi,0}) x_{1,1,0},
\end{displaymath}
since the two sides have the same image under the homomorphism $\psi$ from the proof of Proposition~\ref{prop:O-burnside}. Mapping this identity to $\Theta(G; \bV)$ gives another expression for $[X_{2,1,0}]$ in terms of $a$, $b$, and $c$. Combining the two expressions for $[X_{2,1,0}]$ yields
\begin{displaymath}
((q+1)(c+1) - \tfrac{1}{2}(q+1)a +\tfrac{1}{2} (q-1)b)a
= (c+1+\tfrac{1}{2} (q-1)(a+b)) c.
\end{displaymath}
Rewriting this in terms of the generators $u$, $v$, and $c$ leads to the equation
\begin{displaymath}
c=-1+q^{-1} u(u+q-1)+v^2,
\end{displaymath}
which completes the proof.
\end{proof}

Since $\sE(\bV)$ is a large stabilizer class (Proposition~\ref{prop:O-trans}), we find:

\begin{corollary}
The map $\phi$ induces an isomorphism $\Theta(G) \to \bQ[t,t']/(tt')$.
\end{corollary}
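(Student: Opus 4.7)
The plan is to combine the just-proved Theorem~\ref{thm:O-meas} with the general large-stabilizer-class principle recalled in \S\ref{ss:stab}. Specifically, Theorem~\ref{thm:O-meas} gives an isomorphism $\Theta(G,\bV)[1/2q] \cong \bZ_{2q}[t,t']/(tt')$; tensoring with $\bQ$ therefore yields $\Theta(G,\bV)\otimes \bQ \cong \bQ[t,t']/(tt')$, since inverting $2q$ is harmless once we have all of $\bQ$. It remains only to identify $\Theta(G,\bV)\otimes \bQ$ with $\Theta(G)\otimes \bQ$.

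For this identification, I would invoke the general fact stated in \S\ref{ss:stab}: if $\sE$ is a large stabilizer class on $G$, then the natural map $\Theta(G;\sE)\otimes \bQ \to \Theta(G)\otimes \bQ$ is an isomorphism. The needed largeness of $\sE(\bV)$ was established as part of Proposition~\ref{prop:O-trans}, which classified arbitrary transitive $G$-sets as quotients of the $\bV$-smooth ones $X_{m,\delta,n}$ by subgroups of their automorphism groups. Thus every open subgroup of $G$ contains some $G_A$ (for $A$ a finite subset of $\bV$) with finite index, which is exactly the largeness condition.

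Putting these two ingredients together gives the chain of isomorphisms
\begin{displaymath}
\Theta(G)\otimes \bQ \;\cong\; \Theta(G,\bV)\otimes \bQ \;\cong\; \bQ[t,t']/(tt'),
\end{displaymath}
where the first isomorphism comes from largeness of $\sE(\bV)$ and the second from Theorem~\ref{thm:O-meas}. One should also verify that the composite map is induced by $\phi^{\rm even}\times \phi^{\rm odd}$ in the manner asserted, but this is automatic from the construction of the maps in Theorem~\ref{thm:O-meas} and the fact that the comparison map $\Theta(G;\sE)\to \Theta(G)$ is the one induced by enlarging the stabilizer class.

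The argument is essentially a bookkeeping step; there is no real obstacle, since all the hard work (computing the Burnside ring, the recursion for $\delta$, the enumeration of embeddings, and the explicit generators $u,v,c$ together with the relation $uv=0$ and the expression of $c$ in terms of $u,v$) has already been carried out in the proof of Theorem~\ref{thm:O-meas}. The only small point to notice is that the $2$ which was inverted in Theorem~\ref{thm:O-meas} (coming from the $\tfrac{1}{2}$ coefficients in the definitions of $u$ and $v$) causes no trouble once we tensor with $\bQ$.
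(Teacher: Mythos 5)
Your proposal is correct and is exactly the paper's argument: the paper derives this corollary in one line from Theorem~\ref{thm:O-meas} together with the largeness of $\sE(\bV)$ (Proposition~\ref{prop:O-trans}) and the general fact from \S\ref{ss:stab} that for a large stabilizer class the map $\Theta(G;\sE)\otimes\bQ \to \Theta(G)\otimes\bQ$ is an isomorphism. Your observation that inverting $2$ is harmless after tensoring with $\bQ$ is the right (and only) extra point to check.
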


Let $k$ be a field of characteristic~0, let $t \in k$, and let $\ast$ be ``even'' or ``odd.'' We define
\begin{displaymath}
\mu_t^{\ast} \colon \Theta(G) \to k
\end{displaymath}
be the measures obtained by composing $\phi^{\ast}$ with the map $\bZ_q[t] \to k$ mapping $t$ to $t$. These measures satisfy
\begin{align*}
\mu_t^{\rm even}(u) &=t & \mu_t^{\rm odd}(u) &=0 \\
\mu_t^{\rm even}(v) &=0 & \mu_t^{\rm odd}(v) &=t
\end{align*}
where $u$ and $v$ are as in the proof of Theorem~\ref{thm:O-meas}, and these identities uniquely characterize the measures. We also have $\mu_t^{\ast}(\bV)=t^2$. Every $k$-valued measure is of the form $\mu^{\ast}_t$ for some choice of $t$ and $\ast$, and these measures are all distinct except for the coincidence $\mu^{\rm even}_0=\mu^{\rm odd}_0$. A simple computation shows that restricting $\mu_t^{\ast}$ to $G(n)$ yields the measure $\mu^{\ast}_{t/q^n}$ after identifying $G(n)$ with $G$. The next proposition thus follows:

\begin{proposition}
Maintain the above notation.
\begin{enumerate}
\item The measure $\mu_t^{\rm even}$ is regular if and only if $t \not\in \{0,1,\pm q^i\}_{i \ge 1}$.
\item The measure $\mu_t^{\rm odd}$ is regular if and only if $t \not\in \{0, \pm q^i\}_{i \ge 0}$.
\item The measure $\mu_t^{\ast}$ is quasi-regular for all $t \ne 0$.
\end{enumerate}
\end{proposition}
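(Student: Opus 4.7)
The plan is to reduce each clause to an explicit root analysis of the polynomials $P^*_{a,\epsilon,b}$ just computed. First, by Proposition~\ref{prop:O-trans}, every transitive $G$-set is a quotient $X_{a,\epsilon,b}/\Gamma$ by a finite subgroup $\Gamma \subset \Aut_G(X_{a,\epsilon,b})$. The projection $X_{a,\epsilon,b} \to X_{a,\epsilon,b}/\Gamma$ is a $G$-equivariant cover whose fiber is a finite $\Gamma$-orbit $F$ on which the stabilizer in $G$ acts trivially (as $\Gamma$ commutes with the $G$-action), so $\mu^*_t(F) = |F|$ is a nonzero element of $k$ since $\mathrm{char}(k)=0$. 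Multiplicativity then gives $\mu^*_t(X_{a,\epsilon,b}) = |F| \cdot \mu^*_t(X_{a,\epsilon,b}/\Gamma)$, so regularity of $\mu^*_t$ is equivalent to $P^*_{a,\epsilon,b}(t) \ne 0$ for every admissible $(a,\epsilon,b)$.

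For parts (a) and (b) I would just read off the union of roots of the $P^*_{a,\epsilon,b}$ from the preceding proposition. Each $P^{\rm even}$ factors as $q^{-e}\, t^a \prod(t-\rho_i)$ and each $P^{\rm odd}$ as $q^{-f}\, t^a \prod(t-\sigma_i)$. Letting $(a,\epsilon,b)$ range, the total set of even-case roots is $\{0\} \cup \{1\} \cup \{\pm q^i : i \ge 1\}$: every listed $\rho_i$ has this form, each such value is actually realised (e.g.\ $1$ is a root of $P^{\rm even}_{0,1,1}$, and $\pm q^i$ arise for sufficiently large $b$), and the value $-1$ never appears, since $q \ge 3$ and the formula only produces $1$ or $\pm q^i$ with $i \ge 1$. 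In the odd case the analogous enumeration with $\sigma_i$'s yields $\{0\} \cup \{\pm q^i : i \ge 0\}$, where $\pm 1$ do appear already as roots of $P^{\rm odd}_{1,\epsilon,0}(t) = t(t+\chi(\epsilon))$. This proves (a) and (b).

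For (c) I would use the identification $\mu^*_t|_{G(n)} = \mu^*_{t/q^n}$ (under $G(n) \cong G$) given just before the proposition, reducing the question to producing an $n \ge 0$ with $t/q^n$ outside the exceptional set; both exceptional sets are contained in $\{0\} \cup \{\pm q^i : i \ge 0\}$, and since $\bQ \hookrightarrow k$ the powers of $q$ are pairwise distinct in $k$. If $t \ne 0$, then $t/q^n = \pm q^i$ forces $t = \pm q^{n+i}$, so the set of bad $n$ is empty unless $t = \pm q^m$ for some $m$, in which case it is bounded by $m$. Hence for $n$ large the restriction is regular. I anticipate no serious obstacle: the proof is essentially unpacking the already-established data for the $P^*$'s, and the only check requiring honest care is the root enumeration in part~(a)---specifically, confirming that the $\chi(\epsilon)$-dependent tail of $\{\rho_i\}$ contributes no values outside $\{\pm q^i\}_{i \ge 1}$, and in particular not $-1$.
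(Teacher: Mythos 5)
Your argument is correct and is essentially the proof the paper intends: reduce regularity to the nonvanishing of all $P^*_{a,\epsilon,b}(t)$ via the classification of transitive $G$-sets, read off the union of roots from the explicit factorizations, and get quasi-regularity from $\mu^*_t|_{G(n)}=\mu^*_{t/q^n}$. One small inaccuracy: the stabilizer of a point of $X_{a,\epsilon,b}/\Gamma$ need not act trivially on the fibre $F$ (an element $g$ with $gx=\gamma_0 x$ acts on the $\Gamma$-orbit by $\gamma x\mapsto\gamma\gamma_0 x$); the conclusion $\mu^*_t(F)=|F|$ is nevertheless correct, since $F$ is finite and one may shrink the group of definition to an open subgroup acting trivially on it, so $F$ is a disjoint union of $|F|$ singletons.
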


\begin{proposition}
Suppose $t \ne 0$. Then $\mu_t^{\ast}$ is equivalent to some ultraproduct measure, and in particular, satisfies (Nil).
\end{proposition}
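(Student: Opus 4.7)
The plan is to follow the same two-step template used in Proposition~\ref{prop:ultra-equiv} for $\GL$ and sketched in Proposition~\ref{prop:Sp-ultra-meas} for $\Sp$: first identify how ultraproduct measures look in the orthogonal setting, and then use the Laxton-based Lemma~\ref{lem:ultra-equiv-2} to realize any non-zero $t \in k$ as an ultraproduct parameter. The novelty compared to the $\GL$ and $\Sp$ cases is that the two parities give two genuinely different one-parameter families of measures (reflecting $\Theta(G) \otimes \bQ \cong \bQ[t,t']/(tt')$), so the proof must be run separately for $\ast = \text{even}$ and $\ast = \text{odd}$.

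First I fix a parity $\ast$ and consider a smooth approximation consisting of orthogonal spaces of that parity, i.e.\ $\{X_{m_\alpha,\delta_\alpha,0}\}_{\alpha \in \bN}$ with $m_\alpha = 2 r_\alpha$ when $\ast = \text{even}$ and $m_\alpha = 2 r_\alpha + 1$ when $\ast = \text{odd}$, $r_\alpha \to \infty$, and $\delta_\alpha \in \bF^\times/(\bF^\times)^2$ to be chosen. Arguing exactly as in Proposition~\ref{prop:GL-ultra-meas-param}, the identities $N^{m,\delta,0}_{a,\epsilon,b} = P^{\ast}_{a,\epsilon,b}(\chi(\delta) q^r)$ (recorded in the proof of Theorem~\ref{thm:O-meas}) show that any ultraproduct measure $\nu$ built from such a smooth approximation, a non-principal ultrafilter $\cF$ on $\bN$, and fields $\kappa_\alpha$ with characteristic-$0$ ultraproduct $\kappa$, is of the form $\mu^{\ast}_\tau$ where $\tau = (\chi(\delta_\alpha) q^{r_\alpha})_\alpha \in \kappa$. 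In particular no cross-parity contamination can occur: an even smooth approximation kills the generator $v = \tfrac12(x_{1,1,0} - x_{1,\pi,0})$ of $\Theta(G) \otimes \bQ$, and an odd one kills $u = 1 + x_{0,0,1} - \tfrac12(x_{1,1,0} + x_{1,\pi,0})$, which are precisely the constraints defining the two families.

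Second I show that any non-zero $t \in k$ can be realized by $\tau$. If $t$ is transcendental over $\bQ$, take every $\kappa_\alpha$ of characteristic~$0$ and arbitrary $r_\alpha, \delta_\alpha$; the resulting $\tau$ is transcendental, so $\mu^{\ast}_\tau$ and $\mu^{\ast}_t$ define the same point of $\Spec \Theta(G)$. If $t$ is algebraic, set $K = \bQ(t)$ and apply Lemma~\ref{lem:ultra-equiv-2} with $a = t^2$ and $b = q^2$; after discarding the finitely many primes of residue characteristic dividing $2q$, I obtain a strictly increasing sequence $r_i$ and primes $\fp_i \subset \cO_K$ of increasing residue characteristic such that $t^2 \equiv q^{2 r_i} \pmod{\fp_i}$. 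Setting $\kappa_i = \cO_K/\fp_i$, the image of $t$ equals $\epsilon_i q^{r_i}$ for some $\epsilon_i \in \{\pm 1\}$, and I pick $\delta_i$ so that $\chi(\delta_i) = \epsilon_i$ (which is possible since the residue characteristic of $\fp_i$ is coprime to $q$, so $\bF^\times/(\bF^\times)^2$ acts independently). Taking any non-principal ultrafilter concentrated on the set of indices $\{i\}$ and padding out the remaining indices arbitrarily gives an ultraproduct measure equivalent to $\mu^\ast_t$.

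The (Nil) conclusion then comes for free: the ultraproduct measure satisfies (Nil) by Corollary~\ref{cor:ultra-nil}, and Proposition~\ref{prop:nil}, which depends only on the kernel of $\mu \colon \Theta(G) \to k$, transports this to $\mu^\ast_t$. The single genuinely new technical wrinkle, relative to the $\GL$ and $\Sp$ cases, is the need to track the sign $\epsilon_i$; my expectation is that this is purely bookkeeping once one applies Laxton's theorem to $t^2$ rather than $t$, and that no deeper number-theoretic input is required.
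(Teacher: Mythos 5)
Your proof is correct and follows essentially the same route as the paper: take the homogeneous spaces $V_{2r,\delta}$ (resp.\ $V_{2r+1,\delta}$) as a smooth approximation of fixed parity, identify the resulting ultraproduct measure as $\mu^{\ast}_{\tau}$ via the $P^{\ast}$ polynomials, and realize an arbitrary algebraic $t$ using Lemma~\ref{lem:ultra-equiv-2}. The only divergence is that the paper fixes $\delta=1$ throughout and applies that lemma directly with $(a,b)=(t,q)$, so your sign-tracking via $(t^2,q^2)$ and varying discriminants is correct but an unnecessary detour.
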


\begin{proof}
The proof is similar to previous cases, but due to the even/odd subtlety we provide an argument. The sets $\{X_{2r,1}\}_{r \ge 0}$ are a smooth approximation of $G$. For each $r \ge 0$, pick a field $\kappa_r$, let $\cF$ be a non-principal ultrafilter on $\bN$, and let $\kappa$ be the ultraproduct, which we assume to be of characteristic~0. Let $\nu$ be the ultraproduct measure. Since the number of embeddings $V_{a,\epsilon,b} \to V_{2r,1}$ is $P^{\rm even}_{a,\epsilon,b}(q^r)$, it follows that $\nu(x_{a,\epsilon,b})$ is the element of $\kappa$ defined by the sequence $(P^{\rm even}_{a,\epsilon,b}(q^r))_{r \ge 0}$. Thus, letting $\tau=(q^r)_{r \ge 0}$, we find
\begin{displaymath}
\nu(x_{a,\epsilon,b}) = P^{\rm even}_{a,\epsilon,b}(\tau) = \mu^{\rm even}_{\tau}(x_{a,\epsilon,b}).
\end{displaymath}
Since the $x_{a,\epsilon,b}$ generate $\Theta(G; \bV)[1/2q]$, it follows that $\nu=\mu^{\rm even}_{\tau}$. Thus, by choosing the $\kappa$'s and $\cF$ appropriately, for any $t \ne 0$ we can obtain a measure equivalent to $\mu_t^{\rm even}$. The odd case is similar.
\end{proof}

\begin{remark}
As far as we know, Deligne was the first to observe the phenomenon that there are two 1-parameter families of measures in this case \cite{DeligneLetter}.
\end{remark}

\subsection{Tensor categories}

Fix an algebraically closed field $k$ of characteristic~0, a non-zero element $t \in k$, and let $\ast$ be ``even'' or ``odd.'' We let $\uPerm_k(\bO^{\ast}_t)$ be the category $\uPerm_k(G, \mu_t^{\ast})$, and let $\uRep_k(\bO^{\ast}_t)$ be the category $\uRep(G, \mu_t^{\ast})$. We omit the $k$ when possible.

\begin{theorem}
For $t \ne 0$, the category $\uRep(\bO_t^{\ast})$ is pre-Tannakian, and the abelian envelope of $\uPerm(\bO_t^{\ast})$. If $t$ is a regular parameter then $\uRep(\bO_t^{\ast})$ is semi-simple.
\end{theorem}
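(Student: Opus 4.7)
The plan is to follow exactly the same template used for the symplectic case (see the proof of the analogous theorem just before \S \ref{s:O}), since all the ingredients needed from the general theory of \S \ref{ss:abenv} are now in place for $G$ equipped with $\mu_t^{\ast}$. Concretely, the two assertions we need to invoke have already been established in the previous two propositions: first, that $\mu_t^{\ast}$ is quasi-regular whenever $t \ne 0$; and second, that $\mu_t^{\ast}$ is equivalent to some ultraproduct measure, hence satisfies condition (Nil) by Corollary~\ref{cor:ultra-nil} and Proposition~\ref{prop:nil}.

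Granted these two inputs, the first sentence of the theorem is an immediate application of the main theorem of \S \ref{ss:abenv}: since $\mu_t^{\ast}$ is quasi-regular and satisfies (Nil), $\uRep(G, \mu_t^{\ast})$ is naturally a pre-Tannakian category, and the natural functor $\uPerm(G, \mu_t^{\ast}) \to \uRep(G, \mu_t^{\ast})$ realizes the target as the abelian envelope of the source in the sense of \cite[Definition~3.1.2]{CEAH}. Translating this to the notation $\bO^{\ast}_t$ yields the first claim.

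For the semi-simplicity statement, suppose that $t$ is a regular parameter. Then $\mu_t^{\ast}$ is regular, not merely quasi-regular, so by the simple abelian result at the start of \S \ref{ss:abenv} the Karoubi envelope of $\uPerm(\bO^{\ast}_t)$ is already a semi-simple pre-Tannakian category. As noted at the end of \S \ref{ss:abenv}, when the measure is regular and satisfies (Nil) the category $\uRep(\bO^{\ast}_t)$ is equivalent to this Karoubi envelope, and is therefore semi-simple.

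There is no real obstacle here: the entire content of the theorem is packaged into the two preparatory propositions on regularity and ultraproduct equivalence of the measures $\mu_t^{\ast}$, and once those are in hand the conclusion is a formal citation of the general machinery. The only point requiring any care is that there are now two families of measures indexed by the parity $\ast \in \{\text{even}, \text{odd}\}$, but since each $\mu_t^{\ast}$ individually satisfies the needed hypotheses, the argument goes through identically in both cases.
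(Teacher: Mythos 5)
Your proof is correct and is exactly the intended argument: the paper's (placeholder) proof here is meant to mirror the symplectic case, which cites the general theory of \S\ref{ss:abenv} together with the quasi-regularity and ultraproduct/(Nil) propositions, precisely as you do. No gaps.
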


\begin{proof}
do this
\end{proof}

Suppose now that $k$ is the ultraproduct of fields $\{k_r\}_{r \ge 0}$, with respect to some non-principal ultrafilter on $\bN$, and let $t=(q^r)_{r \ge 0}$. Let
\begin{displaymath}
H_r^{\rm even} = \Aut(V_{2r,1}) = \bO_{r,r}(\bF), \qquad
H_r^{\rm odd} = \Aut(V_{2r+1,1}) = \bO_{r+1,r}(\bF),
\end{displaymath}
be the finite Chevalley groups of type $D_r$ and $B_r$. Let $\cT^{\ast}_r=\Rep_{k_r}(H_r^{\ast})$, let $\tilde{\cT}^{\ast}$ be the ultraproduct of the $\cT^{\ast}_r$, and let $\cT^{\ast}$ be the subcategory defined in \S \ref{ss:ultra}. Let
\begin{displaymath}
\Phi \colon \uRep_k(\bO_t^{\ast}) \to \cT^{\ast}
\end{displaymath}
be the tensor functor defined in \S \ref{ss:ultra-ab}. Then, as in other cases, we find:

\begin{theorem}
The above functor $\Phi$ is an equivalence.
\end{theorem}

\section{The unitary group} \label{s:U}

\subsection{Set-up}

Let $\bF$ be a finite field of odd cardinality $q$, let $\bK$ be the quadratic extension of $\bF$, and let $\tau$ be the non-trivial Galois automorphism of $\bK/\bF$. As usual, $q$ is the cardinality of $\bF$, and so $q^2$ is the cardinality of $\bK$. Define a category $\hat{\fC}$ as follows. An object is a $\bK$-vector space $V$ equipped with a symmetric sesquilinear form
\begin{displaymath}
\langle, \rangle \colon V \times V \to \bK.
\end{displaymath}
This means that $\langle, \rangle$ is biadditive, conjugate $\bK$-linear in the first variable, $\bK$-linear in the second variable, and satisfies $\tau(\langle x, y \rangle)=\langle y, x \rangle$. A morphism $V \to W$ is a injective $\bK$-linear map that respects the forms. We let $\fC$ be the full subcategory spanned by finite dimensional spaces.

Let $\bV$ be the $\bK$-vector space with basis $\{e_i\}_{i \ge 1}$, equipped with the symmetric sesquilinear form satisfying $\langle e_i, e_j \rangle=\delta_{i,j}$. We regard $\bV$ as an object of $\hat{\fC}$, and let $G$ be its automorphism group; this is one version of the infinite unitary group over $\bF$. The action of $G$ on $\bV$ is oligomorphic, and $\fC$ is the category of structures for $(G, \bV)$ as defined in \S \ref{ss:struct}.

Define $V_m$ to be the subspace of $\bV$ spanned by $e_1, \ldots, e_m$ with the induced form, and define $V_{m,n}$ to be the orthogonal direct sum of $V_m$ with an $n$-dimensional space carrying the zero form. Let $X_{m,n}$ be the space of embeddings $V_{m,n} \to \bV$. One easily sees that every object of $\fC$ is isomorphic to a unique $V_{m,n}$, and so the $X_{m,n}$ are exactly the transitive $\bV$-smooth $G$-sets by Proposition~\ref{prop:struct-equiv}.

\subsection{Enumeration}

Let $V$ be an object of $\fC$. If $x \in V$ then $\langle x, x \rangle$ is invariant under $\tau$, and thus belongs to $\bF$. For $a \in \bF$, we let $N_a(V)$ denote the number of vectors $x \in V$ such that $\langle x, x \rangle = a$.

\begin{proposition} \label{prop:U-norm}
For $a \in \bF$ non-zero, we have
\begin{align*}
N_a(V_m) &= q^{2m-1}+(-q)^{m-1} \\
N_0(V_m) &= q^{2m-1}+(-q)^m+(-q)^{m-1}
\end{align*}
\end{proposition}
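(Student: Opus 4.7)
The plan is to proceed by induction on $m$, exploiting the orthogonal decomposition $V_m = V_{m-1} \perp \bK e_m$. The base case $m=1$ is immediate: for $\alpha e_1 \in V_1$, we have $\langle \alpha e_1, \alpha e_1 \rangle = \alpha \tau(\alpha) = N_{\bK/\bF}(\alpha)$, so $N_a(V_1)$ reduces to counting the fibers of the norm map, which gives $q+1$ for $a \ne 0$ and $1$ for $a=0$; both agree with the stated formulas.

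For the inductive step, the key input is the standard fact that the norm map $N_{\bK/\bF} \colon \bK \to \bF$ is surjective, with every nonzero fiber of size $q+1$ (the kernel is the norm-one subgroup, of order $q+1$) and the zero fiber of size $1$. Writing $x = y + \alpha e_m$ with $y \in V_{m-1}$ and $\alpha \in \bK$, we have $\langle x, x \rangle = \langle y, y \rangle + N_{\bK/\bF}(\alpha)$, so splitting on the value of $\langle y, y \rangle$ yields the recurrence
\begin{displaymath}
N_a(V_m) = N_a(V_{m-1}) + (q+1)\bigl(q^{2(m-1)} - N_a(V_{m-1})\bigr) = (q+1)q^{2m-2} - q \cdot N_a(V_{m-1}),
\end{displaymath}
valid uniformly in $a \in \bF$ (including $a = 0$).

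The inductive verification is then a one-line check. For $a \ne 0$, substituting $N_a(V_{m-1}) = q^{2m-3} + (-q)^{m-2}$ gives
\begin{displaymath}
(q+1)q^{2m-2} - q(q^{2m-3} + (-q)^{m-2}) = q^{2m-1} + (-q)^{m-1},
\end{displaymath}
and the analogous substitution for $N_0(V_{m-1})$ yields $q^{2m-1} + (-q)^m + (-q)^{m-1}$, matching the claimed formulas. There is no real obstacle here; the only thing to be careful about is tracking the signs in $(-q)^{m-1}$ when multiplying through by $-q$. As a sanity check, one may verify $\sum_{a \in \bF} N_a(V_m) = q^{2m}$, i.e. $N_0(V_m) + (q-1) N_1(V_m) = q^{2m}$, which is immediate from the formulas.
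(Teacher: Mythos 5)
Your proof is correct and is essentially the paper's argument: both rest on the decomposition $V_m = V_{m-1} \perp \bK e_m$ and the fiber sizes of the norm map $\bK \to \bF$. The only (cosmetic) difference is that you derive a single recurrence $N_a(V_m) = (q+1)q^{2m-2} - q\,N_a(V_{m-1})$ valid uniformly in $a$ and induct directly, whereas the paper writes the recurrence only for $N_0$ and closes the system using the scaling invariance of $N_a$ for $a \ne 0$ together with the total count $\sum_a N_a(V_m) = q^{2m}$.
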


\begin{proof}
If $b \in \bK^*$ then $\langle bx, bx \rangle = \rN(b) \langle x, x \rangle$, where $\rN \colon \bK^* \to \bF^*$ is the norm map. Since the norm is surjective, it follows that $N_a(V_m)$ is independent of $a$, for $a$ non-zero. Since every element of $V_m$ has some norm in $\bF$, we thus find
\begin{displaymath}
q^{2m} = \sum_{a \in \bF} N_a(V_m) = N_0(V_m) + (q-1) N_1(V_m).
\end{displaymath}
From the decomposition $V_m=V_1 \oplus V_{m-1}$, we find
\begin{displaymath}
N_0(V_m) = \sum_{a \in \bF} N_a(V_1) N_{-a}(V_{m-1}) = N_0(V_{m-1})+(q-1)(q+1)N_1(V_{m-1}),
\end{displaymath}
where here we used $N_0(V_1)=1$ and $N_1(V_1)=q+1$. Solving this system of equations gives the stated result.
\end{proof}

Let $N_{a,b}^{m,n}$ be the number of embeddings $V_{a,b} \to V_{m,n}$. We put $N_{a,b}^{m,n}=0$ if $m$ or $n$ is negative.

\begin{proposition}
We have
\begin{align*}
N_{a,b}^{m,n} &= (q^{2m-1}+(-q)^{m-1}) q^{2n} N_{a-1,b}^{m-1,n} \\
N_{a,b}^{m,n} &= -((-q)^m-1)((-q)^{m-1}-1) q^{2(n+a+b-1)} N_{a,b-1}^{m-2,n} + (q^{2n}-1) q^{2(a+b-1)} N_{a,b-1}^{m,n-1}
\end{align*}
The first equation is valid for $a \ge 1$, and the second for $b \ge 1$.
\end{proposition}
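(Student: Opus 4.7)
The plan is to follow exactly the same strategy as in the symplectic case (the proof of the corresponding recurrence in Section~\ref{s:Sp}), using orthogonal decompositions of $V_{a,b}$ to reduce embedding counts to counts into smaller spaces. For part~(a), I will write $V_{a,b} = V_{1,0} \oplus V_{a-1,b}$ as an orthogonal direct sum, so that an embedding into $V_{m,n}$ factors as a choice of embedding $i \colon V_{1,0} \to V_{m,n}$ together with an embedding of $V_{a-1,b}$ into the orthogonal complement $i(V_{1,0})^\perp$. Since $V_{1,0}$ is the one-dimensional non-degenerate Hermitian line spanned by a norm-one vector, $i(V_{1,0})^\perp$ is isomorphic to $V_{m-1,n}$, giving the factorization $N_{a,b}^{m,n} = N_{1,0}^{m,n} \cdot N_{a-1,b}^{m-1,n}$. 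To finish~(a), it suffices to compute $N_{1,0}^{m,n}$, the number of norm-one vectors in $V_{m,n}$: decomposing $V_{m,n} = V_m \oplus R$ with $R$ the $n$-dimensional radical and observing $\langle v+w, v+w \rangle = \langle v, v \rangle$ for $v \in V_m$, $w \in R$, this count is $N_1(V_m) \cdot q^{2n} = (q^{2m-1}+(-q)^{m-1}) q^{2n}$ by Proposition~\ref{prop:U-norm}.

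For part~(b), I will write $V_{a,b} = V_{a,b-1} \oplus V_{0,1}$ with $x$ a basis vector for $V_{0,1}$, and for each embedding $i$ set $y = i(x)$ and $j = i|_{V_{a,b-1}}$. The pair $(y,j)$ determines $i$, and it arises from some $i$ exactly when $y$ is isotropic, $j(V_{a,b-1}) \subset y^\perp$, and $y \notin j(V_{a,b-1})$. I count such pairs by splitting on whether $y$ lies in the radical $R$ of $V_{m,n}$. When $y \in R \setminus \{0\}$, there are $q^{2n}-1$ choices for $y$, and the pair is admissible iff the induced map $\bar j \colon V_{a,b-1} \to V_{m,n}/\bK y \cong V_{m,n-1}$ is an embedding, each of which admits $q^{2(a+b-1)}$ lifts to $j$, producing the second term in the formula. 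When $y$ is isotropic but not in $R$, the count of such $y$ equals $(N_0(V_m)-1) \cdot q^{2n}$, which expands (using $(-q)^{2m-1} = -q^{2m-1}$) to $-((-q)^m-1)((-q)^{m-1}-1) q^{2n}$; for each such $y$, $j$ must land in $y^\perp$ and descends to an embedding $\bar j \colon V_{a,b-1} \to y^\perp/\bK y$ with again $q^{2(a+b-1)}$ lifts per $\bar j$.

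The main point requiring care is verifying that $y^\perp/\bK y \cong V_{m-2,n}$ when $y$ is isotropic but not in $R$. Writing $y = v+w$ with $v \in V_m$ non-zero isotropic and $w \in R$, I will pick a Witt decomposition $V_m = H \oplus V_{m-2}$ with $v \in H$ and compute $y^\perp = (\bK v \oplus V_{m-2}) \oplus R$; passing to the quotient by $\bK(v+w)$ identifies the line $\bK v$ with (a line in) the radical, leaving a space whose non-degenerate part is $V_{m-2}$ and whose radical has dimension $n$, hence isomorphic to $V_{m-2,n}$. Combining the two cases then produces the second recurrence and completes the proof.
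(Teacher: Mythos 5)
Your proof is correct and follows essentially the same argument as the paper: the orthogonal splittings $V_{a,b}=V_{1,0}\oplus V_{a-1,b}$ and $V_{a,b}=V_{a,b-1}\oplus V_{0,1}$, the case split on whether $y$ lies in the radical, and the count of lifts of $\ol{j}$. The two points you spell out that the paper leaves implicit --- the identity $N_0(V_m)-1=-((-q)^m-1)((-q)^{m-1}-1)$ and the isomorphism $y^{\perp}/\bK y\cong V_{m-2,n}$ (which the paper asserts, with a typo reading $V_{n-2,m}$) --- are both verified correctly.
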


\begin{proof}
(a) Giving an embedding of $V_{a,b}$ into $V_{m,n}$ is equivalent to giving an embedding $i$ of $V_{1,0}$ together with an embedding of $V_{a-1,b}$ into the orthogonal complement of the image of $i$, which is isomorphic to $V_{m-1,n}$. We thus find
\begin{displaymath}
N_{a,b}^{m,n} = N_{1,0}^{m,n} \cdot N_{a-1,b}^{m-1,n}.
\end{displaymath}
Since $N_{1,0}^{m,n}=N_1(V_{m,n})=q^{2n} N_1(V_m)$, the result follows.

(b) Write $V_{a,b} = V_{a,b-1} \oplus V_{0,1}$, and let $x$ be a basis of $V_{0,1}$. Given en embedding $i$ of $V_{a,b}$ into $V_{m,n}$, let $y$ be the image of $x$ and let $j$ be the restriction of $i$ to $V_{a,b-1}$. Then $i$ is determined by $(y,j)$, and such a pair comes from an $i$ if and only if $y$ is non-zero isotropic and the image of $j$ is orthogonal to and linearly independent from $y$. We count the pairs $(y,j)$.

First consider the case where $y$ belongs to the nullspace of $V_{m,n}$. There are $q^{2n}-1$ choices for such $y$. A pair $(y,j)$ comes from an $i$ if and only if the induced map $\ol{j} \colon V_{a,b-1} \to V_{m,n}/\bK y$ is an embedding in $\fC$. The quotient is isomorphic to $V_{m,n-1}$, and so there are $N_{a,b-1}^{m,n-1}$ choices for $\ol{j}$. Each $\ol{j}$ admits exactly $q^{2(a+b-1)}$ lifts to a $j$. Thus the total number of $i$'s in this case is the second term in the formula.

Now suppose that $y$ does not belong to the nullspace. There are $(N_0(V_m)-1)q^{2n}$ choices for such a $y$. Let $V'$ be the orthogonal complement of $y$. A pair $(y,j)$ comes from an $i$ if and only if $j$ maps into $V'$ and the induced map $\ol{j} \colon V_{a,b-1} \to V'/\bK y$ is an embedding. The quotient space is always isomorphic to $V_{n-2,m}$, and so there are $N_{a,b-1}^{m-2,n}$ choices for $\ol{j}$. Each $\ol{j}$ admits $q^{2(a+b-1)}$ lifts to a $j$, and so the total number of $i$'s in this case is the first term of the stated formula.
\end{proof}

Define polynomials $Q_{a,b}(t,u) \in \bZ_q[t,u]$ by $Q_{0,0}=1$ and
\begin{align*}
Q_{a,b}(t,u) =& q^{-1} (t-1) t u^2 Q_{a-1,b}(-q^{-1}t, u) \\
Q_{a,b}(t,u) =& q^{2(a+b)-3} (t-1)(t+q) u^2 Q_{a,b-1}(q^{-2} t, u) + (u^2-1)q^{2(a+b-1)} Q_{a,b-1}(t, q^{-1} u).
\end{align*}
As usual, we have:

\begin{proposition}
We have $N_{a,b}^{m,n}=Q_{a,b}((-q)^m,(-q)^n)$ for all $a, b, m, n \in \bN$.
\end{proposition}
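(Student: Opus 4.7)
The plan is a joint induction on $a+b$, in the spirit of Propositions~\ref{prop:GL-Qpoly} and~\ref{prop:Sp-Q}. The base case $a=b=0$ is immediate since both sides equal~$1$, so the work is all in the inductive step: I will show that after the substitution $t = (-q)^m$, $u = (-q)^n$, each of the two $Q$-recurrences translates exactly to the corresponding $N$-recurrence.

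The key algebraic identities powering this translation are $(-q)^{m-1} = -q^{-1}t$, $(-q)^{m-2} = q^{-2}t$, and $(-q)^{n-1} = -q^{-1}u$, together with $t^2 = q^{2m}$ and $u^2 = q^{2n}$. From these one computes
\[
q^{2m-1}+(-q)^{m-1} = q^{-1}(t-1)t, \qquad -((-q)^m-1)((-q)^{m-1}-1) = q^{-1}(t-1)(t+q),
\]
which are precisely the $t$-factors appearing in the two $Q$-recurrences. Plugging in, the first $N$-recurrence matches the first $Q$-recurrence on the nose, while the second $N$-recurrence matches the second $Q$-recurrence in the first summand, and produces $Q_{a,b-1}(t,-q^{-1}u)$ rather than $Q_{a,b-1}(t,q^{-1}u)$ in the second summand.

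To bridge this sign discrepancy, I will prove the auxiliary claim that each $Q_{a,b}(t,u)$ is an even polynomial in~$u$. This is easy by the same induction: the first $Q$-recurrence introduces $u$ only via $u^2$, and the second introduces $u$ via $u^2$ and via $q^{-1}u$ as the argument of a polynomial that is even in $u$ by the inductive hypothesis. With evenness in hand, $Q_{a,b-1}(t,-q^{-1}u) = Q_{a,b-1}(t,q^{-1}u)$, and the second $N$-recurrence matches the second $Q$-recurrence.

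Finally, for well-definedness of $Q_{a,b}$, note that when both $a\ge 1$ and $b\ge 1$ the two recurrences each yield a candidate polynomial; the preceding analysis shows both candidates take the common value $N_{a,b}^{m,n}$ at $(t,u)=((-q)^m,(-q)^n)$ for all $m,n\in\bN$, and since these points are Zariski dense in $\bA^2$ the two polynomials coincide. The only real subtlety in the whole argument is the parity-in-$u$ bookkeeping needed to absorb the sign $(-q)^{n-1}=-q^{-1}u$; everything else is routine once the substitutions are made.
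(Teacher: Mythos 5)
Your proof is correct and is exactly the argument the paper intends (the paper omits the verification, saying only ``as usual''): one checks that under $t=(-q)^m$, $u=(-q)^n$ the $Q$-recurrences specialize to the $N$-recurrences, using identities such as $q^{-1}(t-1)t=q^{2m-1}+(-q)^{m-1}$ and $(t-1)(t+q)=-q((-q)^m-1)((-q)^{m-1}-1)$. Your parity-in-$u$ observation, needed because $q^{-1}u=-(-q)^{n-1}$, is precisely the remark the paper makes immediately after the proposition (``$Q_{a,b}(t,u)$ is actually a polynomial in $u^2$''), and the Zariski-density argument for well-definedness is the standard one.
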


Note that $Q_{a,b}(t,u)$ is actually a polynomial in $u^2$, so the sign of the second argument does not really matter. Put $P_{a,b}(t)=Q_{a,b}(t,1)$. We compute these polynomials explicitly, by simply solving the above recurrences:

\begin{proposition}
We have
\begin{displaymath}
P_{a,b}(t) = q^{-(a+b)^2} t^a \prod_{i=0}^{a+2b-1}(t-(-q)^i).
\end{displaymath}
\end{proposition}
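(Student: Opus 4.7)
The plan is to verify the claimed closed form by induction on $a+b$, feeding it into the two recurrences that define the $Q$-polynomials, specialized to $u=1$.

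First I would set up the base case and the specialized recurrences. At $u=1$, the defining recurrences for $Q_{a,b}$ give
\begin{align*}
P_{a,b}(t) &= q^{-1}(t-1)t\, P_{a-1,b}(-q^{-1}t), \qquad a\ge 1,\\
P_{a,b}(t) &= q^{2(a+b)-3}(t-1)(t+q)\, P_{a,b-1}(q^{-2}t), \qquad b\ge 1,
\end{align*}
since the second summand in the $b$-recurrence carries the factor $u^2-1$ and so vanishes. The base case $P_{0,0}(t)=1$ matches the proposed formula (empty product).

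Next, I would induct on $a+b$. For the inductive step via the $a$-recurrence, the key substitution identity is
\[
t-(-q)^i \;\longmapsto\; -q^{-1}t-(-q)^i \;=\; (-q^{-1})\bigl(t-(-q)^{i+1}\bigr),
\]
which shifts the product $\prod_{i=0}^{a+2b-2}$ into $\prod_{j=1}^{a+2b-1}$, with total scalar $(-q^{-1})^{a+2b-1}$ from the $t$-independent factors and $(-q^{-1})^{a-1}$ from $(-q^{-1}t)^{a-1}$. A short bookkeeping check (with $(-1)^{2a+2b-2}=1$) shows the resulting $q$-exponent equals $-(a+b)^2+1$, so multiplying by the external $q^{-1}(t-1)t$ supplies both the correct prefactor $q^{-(a+b)^2}$ and the missing factor $(t-1)=(t-(-q)^0)$ needed to extend the product back to $\prod_{j=0}^{a+2b-1}$.

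For the $b$-recurrence, the analogous substitution uses $q^2(-q)^i=(-q)^{i+2}$, so
\[
t-(-q)^i \;\longmapsto\; q^{-2}t-(-q)^i \;=\; q^{-2}\bigl(t-(-q)^{i+2}\bigr),
\]
shifting $\prod_{i=0}^{a+2b-3}$ to $\prod_{j=2}^{a+2b-1}$. The external factor $q^{2(a+b)-3}(t-1)(t+q)$ precisely supplies $(t-(-q)^0)(t-(-q)^1)$ to fill in the missing $j=0,1$ terms, and the $q$-exponents again telescope to $-(a+b)^2$. This establishes the formula whenever $b\ge 1$, and the case $b=0$ is handled by the $a$-recurrence alone. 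There is no serious obstacle; the only thing to be careful about is bookkeeping of signs (using $2a+2b-2$ being even) and the $q$-exponent arithmetic $-(a+b-1)^2-2(a+b)+1=-(a+b)^2$, which is what makes the formula collapse into the stated product.
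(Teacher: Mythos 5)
Your verification is correct, and it is exactly the argument the paper intends: the paper omits the computation, saying only that the polynomials are obtained ``by simply solving the above recurrences,'' which is precisely the induction you carry out (the $u^2-1$ term killing the second summand at $u=1$, the substitutions $-q^{-1}t-(-q)^i=-q^{-1}(t-(-q)^{i+1})$ and $q^{-2}t-(-q)^i=q^{-2}(t-(-q)^{i+2})$, and the exponent identity $-(a+b-1)^2-2(a+b)+1=-(a+b)^2$). Nothing is missing.
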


\subsection{Burnside rings}

Let $\BB=\BB(G, \bV)$ be the Burnside ring for $G$ relative to $\sE(\bV)$. Let $x_{a,b}$ be the class $\lbb X_{a,b} \rbb$; these elements form a $\bZ$-basis for $\BB$. We determine the ring structure:

\begin{proposition} \label{prop:U-burnside}
We have a ring isomorphism
\begin{displaymath}
\phi \colon \bZ_q[X,Y] \to \BB[1/q], \qquad
X \mapsto x_{1,0}, \quad Y \mapsto x_{0,1}
\end{displaymath}
\end{proposition}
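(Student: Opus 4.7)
The plan is to mimic the argument of Proposition~\ref{prop:Sp-burnside} essentially verbatim. First I would equip $\BB$ with the level filtration, noting that $F^n\BB$ is the $\bZ$-span of the classes $x_{a,b}$ with $a+b \le n$: an embedding $V_{a,b} \hookrightarrow \bV$ is specified by the images of its $a+b$ basis vectors, so $X_{a,b}$ appears as a $G$-orbit on $\bV^{a+b}$. Giving $\bZ_q[X,Y]$ the total-degree filtration makes $\phi$ a filtered ring homomorphism whose graded pieces $F^n/F^{n-1}$ are free $\bZ_q$-modules of the same rank $n+1$ on both sides. It therefore suffices to show that $\phi$ is injective modulo $\ell$ for every prime $\ell \nmid q$.

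For this I would use the marks homomorphisms. By the earlier identity $N^{m,n}_{a,b} = Q_{a,b}((-q)^m,(-q)^n)$, they assemble into a ring homomorphism
\[
\psi \colon \BB \to \bZ_q[t,u], \qquad x_{a,b} \mapsto Q_{a,b}(t,u).
\]
A direct calculation from the defining recursion gives $\psi(x_{1,0}) = q^{-1}(t-1)tu^2$ and $\psi(x_{0,1}) = q^{-1}t(t+q-1)u^2 - 1$, whose difference telescopes to the key identity $\psi(x_{0,1}) - \psi(x_{1,0}) + 1 = tu^2$. Combined with $q\,\psi(x_{1,0}) = t\cdot(tu^2) - tu^2$, this lets one successively produce $t^2u^2$, then $u^2 = (tu^2)^2/(t^2u^2)$, and finally $t = (tu^2)/u^2$ inside the fraction field of $\bZ_q[\psi(x_{1,0}),\psi(x_{0,1})]$. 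Thus $t$ and $u^2$ lie in an extension of $\bZ_q$ of transcendence degree~$2$, which forces $\psi(x_{1,0})$ and $\psi(x_{0,1})$ to be algebraically independent modulo every $\ell \nmid q$. Hence $\psi \circ \phi$ is injective modulo $\ell$, and so is $\phi$.

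The whole argument is essentially a direct transcription of the symplectic case; the one place meriting a little care is the algebraic independence step at small residue characteristics (notably $\ell = 2$, where the naive Jacobian criterion is inadequate), but the elementary fraction-field computation above handles every residue characteristic coprime to $q$ uniformly. I do not expect any substantial obstacle.
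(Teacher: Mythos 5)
Your proof is correct and follows the paper's argument essentially verbatim: the level filtration with graded pieces of rank $n+1$, reduction to injectivity modulo each $\ell \nmid q$, and the marks homomorphism $\psi \colon x_{a,b} \mapsto Q_{a,b}$ together with the algebraic independence of $\psi(x_{1,0})$ and $\psi(x_{0,1})$. The only difference is that you actually justify the algebraic independence modulo $\ell$ (via the fraction-field computation recovering $t$ and $u^2$, which checks out, and your expression for $\psi(x_{0,1})$ agrees with $Q_{0,1}=q^{-1}(t-1)(t+q)u^2+u^2-1$), whereas the paper simply asserts it.
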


\begin{proof}
We have a filtration on $\BB$ by taking $F^n \BB$ to be the span of the classes $x_{a,b}$ with $a+b \le n$. Defining $F^n \bZ[X,Y]$ to be the span of the monomial $X^i Y^j$ with $i+j \le n$, we see that $\phi$ is a map of filtered rings. The graded pieces of the two rings are free $\bZ$-modules of the same rank, and so it suffices to verify that $\phi$ is injective modulo $\ell$, for all $\ell \nmid q$. For this, we use the ring homomorphism.
\begin{displaymath}
\psi \colon \BB \to \bZ_q[t,u], \qquad x_{a,b} \mapsto Q_{a,b}.
\end{displaymath}
Since the polynomials
\begin{displaymath}
\psi(x_{1,0})=q^{-1}(t-1)tu^2, \qquad
\psi(x_{0,1})=-q^{-1}(t-1)(t+q)u^2+(u^2-1)
\end{displaymath}
are algebraically independent modulo $\ell$, the result follows.
\end{proof}

Let $\hat{\BB}$ be the infinitesimal Burnside ring of $G$ relative to $\sE(\bV)$. We now analyze it. Let $G(n) \subset G$ be the subgroup fixing each $e_i$ for $1 \le i \le n$, and let $\bV(n) \subset \bV$ be the span of the $e_i$ with $i>n$. Then $(G(n), \bV(n))$ is isomorphic to $(G, \bV)$. We define a ring homomorphism
\begin{displaymath}
\delta \colon \BB \to \BB
\end{displaymath}
by restricting to $G(1)$ and identifying with $G$. We compute this map.

\begin{proposition}
We have
\begin{align*}
\delta(x_{1,0}) &= (q+1)+(q^2-q-1)x_{1,0}+(q+1) x_{0,1} \\
\delta(x_{0,1}) &= (q^2-1) x_{1,0} + x_{0,1}.
\end{align*}
In particular, $\delta \colon \BB[1/q] \to \BB[1/q]$ is an isomorphism.
\end{proposition}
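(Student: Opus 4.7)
The plan is to decompose the restrictions of $X_{1,0}$ and $X_{0,1}$ to $G(1)$ into orbits using the orthogonal splitting $\bV = \bK e_1 \oplus \bV(1)$, then read off the formulas via the identification $(G(1), \bV(1)) \cong (G, \bV)$. This mirrors the symplectic-case argument (Proposition~\ref{prop:Sp-delta}), adapted to sesquilinear norms. The key structural input is that $G(1)$ fixes the $\bK e_1$-coordinate and acts on $\bV(1)$ via the standard infinite unitary action, and that this action is transitive on norm-shells $Y_c(\bV(1))$ for each $c \in \bF$: on nonzero isotropic vectors (one copy of $X_{0,1}(1)$) and on vectors of any fixed nonzero norm (one copy of $X_{1,0}(1)$, after $\bK^\times$-scaling identifies the shells).

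For $\delta(x_{0,1})$, identify $X_{0,1}$ with the nonzero isotropic vectors of $\bV$ and write $v = ae_1 + w$; the condition $\langle v,v\rangle = \rN(a) + \langle w,w\rangle = 0$ stratifies by $a \in \bK$. The stratum $a=0$ contributes a single $G(1)$-orbit isomorphic to $X_{0,1}(1)$, while each of the $q^2-1$ strata $a \in \bK^\times$ contributes a single orbit isomorphic to the norm-shell $Y_{-\rN(a)}(\bV(1)) \cong X_{1,0}(1)$. This gives $\delta(x_{0,1}) = x_{0,1} + (q^2-1)x_{1,0}$.

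For $\delta(x_{1,0})$, the same decomposition applied to $\rN(a)+\langle w,w\rangle = 1$ produces three kinds of orbits. When $\rN(a)=1$ (which happens for $q+1$ values of $a$), the vector $w$ is isotropic in $\bV(1)$: the subcase $w=0$ gives the $G(1)$-fixed vector $ae_1$ (a singleton orbit, contributing $1 \in \BB$), and the subcase $w \ne 0$ gives an orbit isomorphic to $X_{0,1}(1)$. When $\rN(a) \ne 1$ (the remaining $q^2-q-1$ values of $a$, including $a=0$), the $w$'s form a single nonzero norm-shell, yielding one orbit isomorphic to $X_{1,0}(1)$. Summing the three contributions yields the stated formula. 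Isolating the $w=0$ subcase of the $\rN(a)=1$ stratum is the main bookkeeping point — it is responsible for the constant term $q+1$, which would be lost under a naive treatment.

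For the second assertion, $\BB[1/q] \cong \bZ_q[x_{1,0}, x_{0,1}]$ by Proposition~\ref{prop:U-burnside}, and modulo constants $\delta$ acts on the degree-one piece by the matrix
\[
M = \begin{pmatrix} q^2-q-1 & q^2-1 \\ q+1 & 1 \end{pmatrix}
\]
(in the ordered basis $x_{1,0}, x_{0,1}$), with determinant $-q^3$. Since this determinant is a unit in $\bZ_q$, the generators $x_{1,0}$ and $x_{0,1}$ lie in the $\bZ_q$-subalgebra generated by $1$, $\delta(x_{1,0})$, and $\delta(x_{0,1})$; thus $\delta$ is a surjective $\bZ_q$-algebra endomorphism of a Noetherian ring, hence an automorphism by \stacks{06RN}.
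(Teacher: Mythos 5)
Your proof is correct and follows essentially the same route as the paper: both decompose the restriction to $G(1)$ via the orthogonal splitting $\bV = \bK e_1 \oplus \bV(1)$ and the stratification by norm shells, using that each nonzero shell of $\bV(1)$ is a single $G(1)$-orbit isomorphic to $X_{1,0}(1)$. The only (harmless) differences are that you compute $\delta(x_{1,0})$ by direct orbit enumeration — correctly isolating the $q+1$ singleton orbits $\{ae_1\}$ with $\rN(a)=1$ and $w=0$, which produce the constant term — whereas the paper extracts it from the relation $\delta(\lbb \bV \rbb)=q^2\lbb \bV \rbb$ combined with the decomposition of $Y_0(\bV)$; and for invertibility you use the determinant $-q^3$ of the linearized map together with the surjective-endomorphism-of-a-Noetherian-ring argument, where the paper instead exhibits explicit $\delta$-eigenvectors with unit eigenvalues.
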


\begin{proof}
For $V \in \hat{\fC}$ and $a \in \bF$, let $Y_a(V)$ be the set of elements $x \in V$ such that $\langle x, x \rangle=a$. As in the proof of Proposition~\ref{prop:U-norm}, we have an isomorphism $Y_a(V) \cong Y_1(V)$ if $a \ne 0$. We have
\begin{displaymath}
\lbb Y_0(\bV) \rbb = 1+x_{0,1}, \qquad
\lbb Y_1(\bV) \rbb = x_{1,0}, \qquad
\bV \cong Y_0(\bV) \amalg Y_1(\bV)^{\amalg (q-1)}.
\end{displaymath}
Since $\bV = \bK e_1 \oplus \bV(1)$, we have $\delta(\lbb \bV \rbb)=q^2 \lbb \bV \rbb$, and so
\begin{displaymath}
\delta(1+x_{0,1}+(q-1)x_{1,0}) = q^2(1+x_{0,1}+(q-1)x_{1,0}).
\end{displaymath}
Now, we have
\begin{displaymath}
Y_0(\bV) \cong \coprod_{a \in \bF} \big( Y_a(\bK e_1) \times Y_{-a}(\bV(1))\big),
\end{displaymath}
which gives
\begin{displaymath}
Y_0(\bV) \cong Y_0(\bV(1)) \amalg Y_1(\bV(1))^{\amalg (q^2-1)}.
\end{displaymath}
This is again similar to the reasoning used in Proposition~\ref{prop:U-norm}. We thus find
\begin{displaymath}
\delta(1+x_{0,1}) = 1+x_{0,1}+(q^2-1)x_{1,0}.
\end{displaymath}
Combining this and the previous equation gives the stated formulas for $\delta$. It follows easily that $\delta$ is an isomorphism (see Remark~\ref{rmk:U-grading} for details).
\end{proof}

As usual, $\hat{\BB}$ is the direct limit of iterates of $\delta$, and so:

\begin{corollary}
The natural map $\BB[1/q] \to \hat{\BB}[1/q]$ is an isomorphism.
\end{corollary}

\begin{remark} \label{rmk:U-grading}
Putting
\begin{displaymath}
w_1=1+x_{0,1}-x_{1,0}, \qquad
w_2=\lbb \bV \rbb=1+x_{0,1}+(q-1)x_{1,0}
\end{displaymath}
we have
\begin{displaymath}
\delta(w_1)=-qw_1, \qquad \delta(w_2)=q^2 w_2.
\end{displaymath}
Since
\begin{displaymath}
qx_{1,0} = w_2-w_1, \qquad qx_{0,1}=-q+(q-1)w_1+w_2,
\end{displaymath}
we see that $w_1$ and $w_2$ generated $\BB[1/q]$. It follows that the action of $\delta$ on $\BB[1/q]$ is diagonalizable, and induces a grading; the degree $n$ piece is the $q^{2n}$ eigenspace of $\delta^2$.
\end{remark}

\subsection{Measures}

Let $N^{*,0}_{a,b}$ be the function on $\bN$ given by $m \mapsto N^{m,0}_{a,b}$. As in other cases, $\{V_{m,0}\}_{m \ge 0}$ is a smooth approximation of $G$. Thus, by \S \ref{ss:count}, we obtain a counting measure
\begin{displaymath}
\Theta(G, \bV) \to \Fun^{\circ}(\bN, \bZ), \qquad x_{a,b} \mapsto N^{*,0}_{a,b}.
\end{displaymath}
Since $N^{m,0}_{a,b}=P_{a,b}((-q)^m)$, it follows that we have a measure
\begin{displaymath}
\phi \colon \Theta(G, \bV) \to \bZ_q[t], \qquad \phi(x_{a,b})=P_{a,b}(t).
\end{displaymath}
This measure is universal:

\begin{theorem}
The map $\phi$ is an isomorphism after inverting $q$.
\end{theorem}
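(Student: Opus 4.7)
The plan is to mimic the symplectic proof (Proposition~\ref{prop:Sp-Theta}) but with an extra wrinkle: unlike the Sp case, there is no single natural fibration of $X_{1,0}$ over $X_{0,1}$, so the reduction from two generators to one will come from a different kind of fiber-product identity. First I would observe that by Proposition~\ref{prop:U-burnside} and the surjection $\hat{\BB}[1/q] \twoheadrightarrow \Theta(G;\bV)[1/q]$, the ring $\Theta(G;\bV)[1/q]$ is generated as a $\bZ_q$-algebra by $a := [X_{1,0}]$ and $b := [X_{0,1}]$. Direct computation gives $\phi(a) = q^{-1}t(t-1)$ and $\phi(b) = q^{-1}(t-1)(t+q)$, so $\phi(1-a+b) = t$; this already proves surjectivity after inverting $q$.

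The heart of the argument is to establish the single relation
\[
(a-b)^2 \;=\; (q+1)\,a - b
\]
in $\Theta(G;\bV)[1/q]$. Granted this, if one sets $s = 1-a+b$ then $qa = s(s-1)$ and $qb = (s-1)(s+q)$, so both $a$ and $b$ lie in $\bZ_q[s]\subset\Theta(G;\bV)[1/q]$. Since $\phi(s) = t$ generates $\bZ_q[t]$ freely, this forces $\phi$ to be an isomorphism.

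To prove the relation I would consider the $G$-set $Y = \{(u,v)\in \bV^2 : \langle u, v\rangle = 0\}$ and compute $[Y]$ two ways. On the Burnside side, decomposing $Y$ into $G$-orbits according to the invariants $(\langle u,u\rangle,\langle v,v\rangle)$ and linear (in)dependence yields
\[
[Y] \;=\; 1 + 2(q-1)a + (q^2+1)b + x_{0,2} + 2(q-1)\,x_{1,1} + (q-1)^2 x_{2,0}.
\]
On the measure side, I use the projection $\pi\colon Y \to \bV$, $(u,v)\mapsto u$. The pairing $\langle u,-\rangle\colon \bV\to \bK$ is a $G_u$-equivariant surjection with trivial action on $\bK$ and kernel $u^\perp$, so $[\bV] = q^2 [u^\perp]$ for every $u\neq 0$. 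Thus on $Y' := \pi^{-1}(\bV\setminus\{0\})$, the map $\pi$ is $\mu$-constant with value $q^{-2}[\bV]$ in the sense of \S\ref{ss:const}, while the fiber over $u=0$ is $\bV$ itself, of measure $[\bV]$. This yields
\[
[Y] \;=\; [\bV] + q^{-2}[\bV]\big([\bV]-1\big) \;=\; q^{-2}[\bV]\big([\bV]+q^2-1\big).
\]
Equating the two expressions and substituting the Burnside ring identities for $x_{2,0}$, $x_{1,1}$, $x_{0,2}$ in terms of $a$ and $b$ (coming from the orbit decompositions of $a^2$, $ab$, $b^2$) produces exactly $(a-b)^2 - (q+1)a + b = 0$.

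The main obstacle is the computation in Step~3: one must locate a fibration whose fiber measure is controlled across all orbits of the base, and identifying $Y\to \bV$ together with the uniform fiber formula $[u^\perp] = q^{-2}[\bV]$ (valid for both isotropic and anisotropic nonzero $u$) is the crucial geometric input. All subsequent verifications are routine expansions which can be cross-checked numerically against $\phi$.
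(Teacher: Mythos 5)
Your overall strategy is the paper's: prove surjectivity via $s=1+b-a$, then establish one quadratic relation in $\Theta(G;\bV)[1/q]$ forcing $a,b\in\bZ_q[s]$, and conclude by sandwiching $\Theta(G;\bV)[1/q]$ between $\bZ_q[s]$ and $\bZ_q[t]$. The algebra in your second step is correct: $(a-b)^2=(q+1)a-b$ together with $a-b=1-s$ gives $qa=s(s-1)$ and $qb=(s-1)(s+q)$, which finishes the proof. The gap is in your Step~3: the fibration $Y\to\bV$ does \emph{not} produce the relation $(a-b)^2-(q+1)a+b=0$ on the nose, but only $(q^2-1)$ times it, and $q^2-1$ is not a unit in $\bZ_q=\bZ[1/q]$. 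Concretely, let $\psi\colon\BB[1/q]\to\bZ_q[t,u]$, $x_{m,n}\mapsto Q_{m,n}$, be the injective homomorphism from the proof of Proposition~\ref{prop:U-burnside}, and set $F=(x_{1,0}-x_{0,1})^2-(q+1)x_{1,0}+x_{0,1}$, so that $\psi(F)=t^2u^2(u^2-1)$. Your orbit decomposition of $Y$ is a correct identity in $\BB$, and evaluating it on $V_{m,n}$ counts pairs $(v,w)\in V_{m,n}^2$ with $\langle v,w\rangle=0$; writing $T=t^2u^2$ and $U=u^2$, that count is $q^{-2}T\bigl(T+(q^2-1)U\bigr)$, because a vector in the radical has all of $V_{m,n}$ as its orthogonal complement rather than an index-$q^2$ subspace. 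Your fibration formula, with $\psi(\lbb\bV\rbb)=T$, gives $q^{-2}T(T+q^2-1)$. The difference of the two sides is therefore $q^{-2}(q^2-1)T(U-1)=q^{-2}(q^2-1)\psi(F)$, so by injectivity of $\psi$ the relation you actually obtain in $\Theta(G;\bV)[1/q]$ is $(q^2-1)F=0$, not $F=0$. This does prove the rational statement ($\Theta(G)\otimes\bQ\cong\bQ[t]$), but it does not rule out, say, $\Theta(G;\bV)[1/q]\cong\bZ_q[x_{1,0},x_{0,1}]/\bigl((q^2-1)F\bigr)$, which has $(q^2-1)$-torsion and is not $\bZ_q[t]$.

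The degeneration is structural: your two expressions for $[Y]$ already agree identically on all the homogeneous spaces $V_{m,0}$ (i.e.\ at $u=1$), so their difference is forced into the $(u^2-1)$ direction, where the radical-versus-nonradical dichotomy of fibers contributes the extraneous factor $q^2-1$. The paper instead fibers a nondegenerate configuration over an isotropic one: it maps $X_{2,0}\to X_{0,1}$ by $(v_1,v_2)\mapsto v_1+\epsilon v_2$ with $\rN(\epsilon)=-1$, computes the fiber to be isomorphic to $q$ copies of $\bV(2)$, obtains $[X_{2,0}]=q^{-3}[\bV][X_{0,1}]$, and combines this with the single Burnside identity $x_{2,0}=q^{-3}\bigl((q+1)(x_{0,1}+1)-x_{1,0}\bigr)x_{1,0}$ to get $F=0$ with unit coefficient. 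To repair your argument you would need to replace the fibration $Y\to\bV$ with one of this kind (or otherwise show directly that the image of $F$ in $\Theta(G;\bV)[1/q]$ is not a nonzero torsion element).
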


\begin{proof}
Let $a=[X_{1,0}]$ and $b=[X_{0,2}]$, and let $c=1+b-a$; these are each elements of $\Theta(G, \bV)$. We have $\phi(a)=P_{1,0}(t)$ and $\phi(b)=P_{0,1}(t)$, and so $\phi(c)=t$, which shows that $\phi$ is surjective. Since $x_{1,0}$ and $x_{0,1}$ generate $\hat{\BB}[1/q]$, it follows that $a$ and $b$ generate $\Theta(G, \bV)[1/q]$. We show that $c$ generates, which will complete the proof.

Identify $X_{2,0}$ with the set of orthonormal pairs $(v_1,v_2)$ in $\bV^2$, and identify $X_{0,1}$ with the set of non-zero isotropic vectors in $\bV$. Let $\epsilon \in \bK$ be an element of norm $-1$. We then have a $G$-map $X_{2,0} \to X_{0,1}$ via $(v_1,v_2) \mapsto v_1+\epsilon v_2$. Let $F$ be the fiber of this map over $e_1+\epsilon e_2$. We thus have $[X_{2,0}]=[F] [X_{0,1}]$ in $\Theta(G; \bV)$. We now analyze the structure of $F$.

Suppose $(v_1,v_2) \in F$. Then $v_1+\epsilon v_2=e_1+\epsilon e_2$, and so we can solve for $v_2$ in terms of $v_1$. Let $F'$ be the of vectors $v \in \bV$ such that
\begin{displaymath}
\langle v,v \rangle = 1, \qquad \langle v, e_1+\epsilon e_2 \rangle = 1.
\end{displaymath}
Then one finds that the map $F \to F'$ given by $(v_1,v_2) \mapsto v_1$ is an isomorphism. Now, given $v \in \bV$, write $v=\alpha e_1+\beta e_2+w$, where $\alpha,\beta \in \bK$ and $w \in \bV(2)$. Then one sees that $v$ belongs to $F'$ if and only if
\begin{displaymath}
\alpha \ol{\alpha} + \beta \ol{\beta} + \langle w, w \rangle = 1, \qquad
\alpha + \ol{\epsilon} \beta = 1,
\end{displaymath}
where the bar denotes the non-trivial Galois automorphism $\tau$. Thus, if $v \in F'$, then we can solve for $\beta$ in terms of $\alpha$. We thus see that $F'$ is isomorphic to the set $F''$ of pairs $(\alpha, w) \in \bK \times \bV(2)$ such that
\begin{displaymath}
\langle w, w \rangle = 2-(\alpha+\ol{\alpha}).
\end{displaymath}
Now, the function $\bK \to \bF$ given by $\alpha \mapsto 2-(\alpha+\ol{\alpha})$ is surjective and everywhere $q$-to-1. We thus see that $F''$ is isomorphic to $q$ copies of $\bV(2)$.

The above analysis shows that $[F]=q[\bV(2)]$. Since $[\bV]=q^4[\bV(2)]$, we have
\begin{displaymath}
[X_{2,0}]=q^{-3} [\bV] [X_{0,1}].
\end{displaymath}
On the other hand, we have
\begin{displaymath}
x_{2,0}=q^{-3}((q+1)(x_{0,1}+1)-x_{1,0}) x_{1,0}
\end{displaymath}
in $\BB[1/q]$ since the two sides agree after applying the injective homomorphism $\psi$ from the proof of Proposition~\ref{prop:U-burnside}. This gives an analogous identity in $\Theta$. Combining the above two equations and using the identity $[\bV]=1+b+(q-1)a$, we obtain the identity
\begin{displaymath}
(1+b+(q-1)a) b = ((q+1)(b+1)-a)a,
\end{displaymath}
which simplifies to
\begin{displaymath}
(a-b)^2=(q+1)a-b.
\end{displaymath}
A simple algebraic manipulation now shows
\begin{displaymath}
a=q^{-1}c(c-1), \qquad b = q^{-1}(c-1)(c+q),
\end{displaymath}
which completes the proof.
\end{proof}

As in other cases, the stabilizer class $\sE(\bV)$ is large, and so we find:

\begin{corollary}
The map $\phi$ induces an isomorphism $\Theta(G) \otimes \bQ \to \bQ[t]$.
\end{corollary}

Given a field $k$ of characteristic~0 and $t \in k$, we let $\mu_t$ be the measure for $G$ obtained from the homomorphism $\bQ[t] \to k$ mapping $t$ to $t$. This is the unique measure satisfying
\begin{displaymath}
\mu_t(X_{0,1})-\mu_t(X_{0,1})=t-1.
\end{displaymath}
The restriction of $\mu_t$ to $G(1)$ coincides with $\mu_{-t/q}$ when the latter is identified with $G$. The following two propositions are proved similarly to the other cases.

\begin{proposition}
The measure $\mu_t$ is regular if and only if $t \not\in \{0,(-q)^i\}_{i \ge 0}$, and quasi-regular if and only if $t \ne 0$.
\end{proposition}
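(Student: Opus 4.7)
The plan is to run the same argument as Propositions~\ref{prop:GL-reg} and~\ref{prop:Sp-reg}, using the explicit factorization of $P_{a,b}$ together with the restriction rule $\mu_t|_{G(1)} = \mu_{-t/q}$ stated just before the proposition. As in the symplectic and orthogonal cases, I first reduce regularity of $\mu_t$ to the non-vanishing of $\mu_t(X_{a,b})$ for all $a,b \ge 0$. This uses that the stabilizer class $\sE(\bV)$ is large (the argument being parallel to Proposition~\ref{prop:Sp-trans}/Proposition~\ref{prop:O-trans}, which I would record as a lemma here), so every transitive $G$-set is a quotient of some $X_{a,b}$ by a finite group; hence $\mu_t(X_{a,b})$ is a non-zero integer multiple of $\mu_t$ of that quotient, and non-vanishing of the former for all $(a,b)$ is equivalent to non-vanishing of $\mu_t$ on all transitive $G$-sets.

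For part~(a), I appeal to the explicit formula
\[ P_{a,b}(t) = q^{-(a+b)^2}\, t^a \prod_{i=0}^{a+2b-1}(t-(-q)^i). \]
If $t \notin \{0\} \cup \{(-q)^i : i \ge 0\}$, every factor on the right is non-zero, so $P_{a,b}(t)\ne 0$ for all $a,b$, and $\mu_t$ is regular. Conversely, $P_{1,0}(0)=0$, and if $t=(-q)^j$ then $P_{0,b}(t)=0$ as soon as $2b \ge j+1$; so in either excluded case some $X_{a,b}$ has zero measure.

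For part~(b), iterating the identification $\mu_t|_{G(1)} = \mu_{-t/q}$ gives $\mu_t|_{G(n)} = \mu_{(-1)^n t/q^n}$ after identifying $G(n)$ with $G$. If $t=0$ this restriction is again $\mu_0$, which fails to be regular by (a); since the $G(n)$ are cofinal among open subgroups, $\mu_0$ is not quasi-regular. If $t \ne 0$, then $(-1)^n t/q^n$ is non-zero and equals $(-q)^i$ for at most one pair $(n,i)$ with $n+i$ fixed by $|t|=q^{n+i}$; in particular for all sufficiently large $n$ the parameter $(-1)^n t/q^n$ lies outside the bad set of part~(a), and so the restriction is regular, giving quasi-regularity of $\mu_t$.

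The only non-routine ingredient is the reduction of regularity to the $X_{a,b}$, i.e.\ the classification of transitive $G$-sets (or equivalently, largeness of $\sE(\bV)$), which I expect to be the main technical point. The argument should proceed exactly as in Proposition~\ref{prop:GL-trans}/Proposition~\ref{prop:Sp-trans}: classify $G$-stable equivalence relations on $X(V)$ by defect, handling the positive-defect case via the freedom to pre-compose with automorphisms of $V$ and using non-degeneracy of the Hermitian form to realize prescribed pairings on extensions $V \to \bV$, in the spirit of the modification made for the symplectic form in \S\ref{ss:Sp-trans}. Once this combinatorial classification is in hand the arithmetic part above is immediate.
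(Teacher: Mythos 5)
Your argument is correct and is exactly the proof the paper intends: the paper states this proposition is "proved similarly to the other cases," and your proposal carries out the GL/Sp template — reduce regularity to non-vanishing of $P_{a,b}(t)$ via the classification of transitive $G$-sets (largeness of $\sE(\bV)$, which the paper has already recorded at this point, so you need not re-prove it), read off the zero locus from the explicit factorization of $P_{a,b}$, and handle quasi-regularity by iterating $\mu_t|_{G(1)}=\mu_{-t/q}$. The only cosmetic issue is the phrase "$|t|=q^{n+i}$" (there is no absolute value on the abstract field $k$); the correct statement of the same point is that $(-1)^nt/q^n=(-q)^i$ forces $t=(-1)^{n+i}q^{n+i}$, which for fixed non-zero $t$ can hold for at most one value of $n+i$, so only finitely many $n$ are bad.
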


\begin{proposition}
If $t \ne 0$ then $\mu_t$ is equivalent to some ultraproduct measure; in particular, it satisfies (Nil).
\end{proposition}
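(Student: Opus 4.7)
The plan is to follow the template of Proposition~\ref{prop:ultra-equiv} (the $\GL$ case). Applied to the smooth approximation $\{X_{m,0}\}_{m \ge 0}$, the machinery of \S\ref{ss:ultra} produces an ultraproduct measure $\nu$ on $G$ valued in the ultraproduct $\kappa$ of any chosen fields $\kappa_n$ along any non-principal ultrafilter $\cF$ on $\bN$. The identity $N^{m,0}_{a,b} = P_{a,b}((-q)^m)$ shows $\nu(x_{a,b}) = P_{a,b}(\tau)$, where $\tau \in \kappa$ is the element represented by the sequence $((-q)^n)_{n \ge 0}$; by the classification of measures, $\nu = \mu_\tau$. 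So it suffices to arrange, for a given non-zero $t \in k$, that $\tau$ and $t$ are equivalent parameters in $\Spec(\Theta(G)\otimes\bQ) = \Spec \bQ[t]$.

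If $t$ is transcendental, take each $\kappa_n$ of characteristic $0$. For any non-zero $h \in \bQ[x]$, $h((-q)^n)$ is a non-zero rational for all but finitely many $n$, so $h(\tau) \ne 0$ in $\kappa$ (this set of $n$ is cofinite, hence lies in any non-principal ultrafilter), showing $\tau$ is transcendental.

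If $t$ is algebraic, let $K$ be the number field generated by $t$ and let $h \in \bQ[x]$ be its monic minimal polynomial. I would construct a strictly increasing sequence $n_1 < n_2 < \cdots$ of positive integers together with prime ideals $\fp_i$ of $\cO_K$ whose residue characteristics tend to infinity, satisfying $t \equiv (-q)^{n_i} \pmod{\fp_i}$. Setting $\kappa_{n_i} = \cO_K/\fp_i$ (and $\kappa_n$ arbitrary otherwise), and choosing $\cF$ to contain the set $\{n_i\}$, we then have $h(\tau) = 0$ in $\kappa$, so $\tau$ is algebraic with minimal polynomial $h$ and hence equivalent to $t$. To produce such $(n_i, \fp_i)$, invoke Lemma~\ref{lem:ultra-equiv-2} with base $b = q^2$: this yields $m_1 < m_2 < \cdots$ and suitable $\fp_i$ with $t \equiv q^{2m_i} \pmod{\fp_i}$; then $n_i := 2m_i$ works because $(-q)^{2m_i} = q^{2m_i}$. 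The sign in the base $-q$ is thus harmlessly absorbed by restricting to even indices, and no ingredient beyond the Laxton-type estimate already used for $\GL$ is required.

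The (Nil) conclusion follows from Proposition~\ref{prop:nil} and Corollary~\ref{cor:ultra-nil}: $\nu = \mu_\tau$ automatically satisfies (Nil) as an ultraproduct measure, and (Nil) is an invariant of the equivalence class of a measure (since it depends only on the kernel of $\mu_t\colon \Theta(G)\to k$). I do not anticipate a significant obstacle; the entire argument is a routine adaptation of the $\GL$ proof, with the only (mild) wrinkle being the reduction to even indices to bypass the alternating sign of $(-q)^n$.
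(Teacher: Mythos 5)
Your proof is correct and follows exactly the route the paper intends: the paper simply states that this proposition is "proved similarly to the other cases," deferring to the $\GL$ argument (Proposition~\ref{prop:ultra-equiv} via Lemmas~\ref{lem:ultra-equiv-1} and~\ref{lem:ultra-equiv-2}), and your identification $\nu=\mu_\tau$ with $\tau=((-q)^n)_{n\ge 0}$ together with the reduction to even indices (base $b=q^2$, so $(-q)^{2m_i}=q^{2m_i}$) is precisely the adaptation needed. The only wrinkle specific to the unitary case — the alternating sign of $(-q)^n$ — is handled cleanly by your choice of ultrafilter supported on even indices, and the (Nil) deduction from Proposition~\ref{prop:nil} and Corollary~\ref{cor:ultra-nil} matches the paper's logic.
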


\subsection{Tensor categories}

Let $k$ be a field of characteristic~0 and let $t \in k$ be non-zero. We say $t$ is \defn{regular} if $\mu_t$ is. We write $\uPerm_k(\bU_t)$ for $\uPerm_k(G, \mu_t)$, and similarly $\uRep_k(\bU_t)$. We now analyze these categories.

\begin{theorem}
The category $\uRep(\bU_t)$ is pre-Tannakian, and the abelian envelope of $\uPerm(\bU_t)$. If $t$ is a regular parameter then $\uRep(\bU_t)$ is semi-simple.
\end{theorem}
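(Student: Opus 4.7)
The plan is to invoke the general abelian-envelope machinery recorded in \S \ref{ss:abenv} directly, feeding it the two niceness properties of $\mu_t$ that were just established in the previous subsection for the unitary case. Concretely, for any non-zero $t \in k$ the preceding two propositions tell us that $\mu_t$ is quasi-regular and that $\mu_t$ satisfies condition (Nil); note that (Nil) is transported from an ultraproduct measure via the equivalence of measures and Proposition~\ref{prop:nil}, exactly as in the $\GL$ and $\Sp$ cases.

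With those two inputs in hand, the second theorem of \S \ref{ss:abenv} (which applies verbatim to any pro-oligomorphic group with a quasi-regular measure satisfying (Nil)) immediately yields that $\uRep(\bU_t)$ is pre-Tannakian and is the abelian envelope of $\uPerm(\bU_t)$ in the sense of \cite[Definition~3.1.2]{CEAH}.

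For the semisimplicity statement, I would strengthen quasi-regularity to regularity: when $t$ is a regular parameter, the proposition on regularity of $\mu_t$ gives that $\mu_t$ itself is regular, and (Nil) still holds. The first theorem of \S \ref{ss:abenv} then produces a semisimple pre-Tannakian category as the Karoubi envelope of $\uPerm(\bU_t)$, and the remark following that theorem identifies this Karoubi envelope with $\uRep(\bU_t)$ in the regular case.

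There is no real obstacle here: all of the substantive work, namely determining $\Theta(G)$, pinning down the regular and quasi-regular loci, and realizing $\mu_t$ as equivalent to an ultraproduct measure so that (Nil) can be transferred, has already been carried out in the preceding subsections. The proof of this theorem is an assembly step that should occupy at most one or two sentences, parallel to the corresponding proofs in the $\GL$, $\Sp$, and $\bO$ sections.
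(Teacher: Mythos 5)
Your proposal is correct and is exactly the argument the paper uses (the unitary case is handled just as in the $\GL$ and $\Sp$ cases: quasi-regularity from the classification of measures, (Nil) transferred from an equivalent ultraproduct measure via Proposition~\ref{prop:nil}, then the two theorems of \S\ref{ss:abenv}). Nothing is missing.
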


Let $k_n$ be an algebraically closed field for each $n \in \bN$, fix a non-principal ultrafilter on $\bN$, and let $k$ be the ultraproduct, which we assume to be of characteristic~0. Let $t$ be the element $((-q)^n)_{n \ge 0}$. Let $H_n=\bU_n(\bF)$ be the finite unitary group, let $\cT_n=\Rep_{k_n}(H_n)$, let $\tilde{\cT}$ be the ultraproduct of the $\cT_n$'s, and let $\cT$ be the subcategory of $\tilde{\cT}$ defined in \S \ref{ss:ultra}. Let
\begin{displaymath}
\Phi \colon \uRep(\bU_t) \to \cT
\end{displaymath}
be the tensor functor defined in \S \ref{ss:ultra-ab}. Then, as in other cases, we obtain:

\begin{theorem}
The above functor $\Phi$ is an equivalence.
\end{theorem}

\addtocontents{toc}{\medskip}

\end{document}